\DeclareMathOperator{\cof}{\cof}
\newcommand{\nmod}[1]{~(\mathrm{mod}~#1)}
\newcommand{\mc}[1]{\mathcal{#1}}
\theoremstyle{plain}
\newcommand{\eps}{\varepsilon}
\theoremstyle{plain}
\newtheorem{theorem}{Theorem}
\newtheorem{lemma}{Lemma}
\newtheorem{proposition}{Proposition}
\newtheorem{corollary}{Corollary}
\newtheorem{example}{Example}
\theoremstyle{definition}
\newtheorem{definition}{Definition}
\theoremstyle{remark}
\newtheorem*{remark}{Remark}
\numberwithin{equation}{section}
\numberwithin{lemma}{section}
\numberwithin{proposition}{section}
\numberwithin{corollary}{section}
\numberwithin{definition}{section}
\numberwithin{claim}{section}
\begin{document}

\title[Efficient Equidistribution of Nilsequences]{Efficient Equidistribution of Nilsequences}
\author{James Leng}
\address[James Leng]{Department of Mathematics\\
     UCLA \\ Los Angeles, CA 90095, USA.}
\email{jamesleng@math.ucla.edu}

\maketitle

\begin{abstract}
We give improved bounds for the equidistribution of (multiparameter) nilsequences subject to any degree filtration. The bounds we obtain are single exponential in dimension, improving on double exponential bounds of Green and Tao. To obtain these bounds, we overcome ``induction of dimension'' which is ubiquitous throughout higher order Fourier analysis.

The improved equidistribution theory is a crucial ingredient in the quasi-polynomial $U^4[N]$ inverse theorem of the author and its extension to the quasi-polynomial $U^{s + 1}[N]$ inverse theorem in joint work with Sah and Sawhney. These results lead to further applications in combinatorial number theory such as bounds for linear equations in the primes which save an arbitrary power of logarithm, which match the bounds Vinogradov obtained for the odd Goldbach conjecture.
\end{abstract}

\section{Introduction}
In 2001, Gowers \cite{Gow98, Gow01a} introduced \emph{Gowers norms} in groundbreaking work giving effective bounds for Szemer\'edi's theorem as a way to measure ``pseudorandomness" in Szemer\'edi's theorem. An important technical result proved in work of Gowers was that a function with large Gowers norm correlates with a function that is roughly constant along short arithmetic progressions. Motivated by ergodic theoretic work of Host and Kra \cite{HK05} and Ziegler \cite{Zie07}, Green and Tao, and Green, Tao, and Ziegler in a series of works \cite{GT12, GT10, GT12c, GTZ12, GTZ11} formulated and proved the Gowers inverse conjecture, which stated that nilsequences are precisely the obstructions to Gowers uniformity. Using these tools, they were able to generalize Vinogradov's approach to a ``nilpotent" circle method and count linear configurations in the primes. \\\\
A key result needed in their analysis is a quantitative equidistribution theorem of nilsequences \cite[Theorem 1.16]{GT12}, which we list below (relevant definitions such as $\|\cdot \|_{C^\infty[N]}$ can be found in Section~\ref{s:not}).
\begin{theorem}\label{t:GreenTao}
Let $F(g(n)\Gamma)$ be a nilsequence on a nilmanifold $G/\Gamma$ with a $\delta^{-1}$-rational Mal'cev basis such that $F$ has Lipschitz norm (which is the sum of the Lipschitz constant and the $L^\infty$ norm) at most $1$. If 
$$\left|\mathbb{E}_{n \in [N]} F(g(n)\Gamma) - \int_{G/\Gamma} F d\mu\right| \ge \delta$$
then either $N \ll_{G/\Gamma, \delta} 1$ or there exists a nonzero homomorphism $\eta: G \to \mathbb{R}$ of modulus at most $\delta^{-O_{G/\Gamma}(1)}$ such that $\eta(\Gamma) \subseteq \mathbb{Z}$ such that $\|\eta \circ g\|_{C^\infty[N]} \le \delta^{-O_{G/\Gamma}(1)}.$
\end{theorem}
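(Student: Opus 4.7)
The plan is to proceed by induction on the degree $s$ of the filtration $G_\bullet$, tracking the loss in $\delta$ polynomially at each step. The base case $s = 1$ reduces to the torus: $G/\Gamma \cong \mathbb{T}^d$ and $g(n)$ is a polynomial map into $\mathbb{R}^d$, so Fourier expanding $F$ on $\mathbb{T}^d$ and applying a quantitative Weyl sum estimate to each nontrivial frequency supplies the desired horizontal character with polynomial loss in $\delta$.

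For the inductive step, the first move is to decompose $F$ into vertical Fourier modes with respect to the central torus $G_s/(G_s \cap \Gamma)$, writing $F = \sum_\xi F_\xi$ with $F_\xi$ transforming by the character $\xi$ under the central action. If the zero-frequency component contributes at least $\delta/2$ to the discrepancy, then $F_0$ descends to the quotient nilmanifold $G/(G_s \Gamma)$, which has degree $s - 1$, and the inductive hypothesis produces a character on the quotient that lifts to a horizontal character on $G$. Otherwise, some nonzero $\xi$ contributes significantly, and one applies van der Corput's inequality in $L^2$ to obtain, for a positive density of shifts $h \in [N]$, a non-equidistribution estimate for the derivative sequence $n \mapsto g(n+h)\, g(n)^{-1}$. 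By the filtration identities this derived sequence takes values in $G_{\geq 2}$, which is nilpotent of step $s - 1$, so the inductive hypothesis applies and produces, for each ``good'' $h$, a horizontal character $\eta_h$ on $G_{\geq 2}$ whose smoothness norm evaluated on the derivative is small.

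The principal technical obstacle is to synthesize the family $\{\eta_h\}_h$ into a single horizontal character on $G$. After pigeonholing to a common $\eta$ valid for many $h$, the resulting statement says that $\eta\bigl(g(n+h)\, g(n)^{-1}\bigr)$ lies close to $\mathbb{Z}$ in a bilinear fashion in $(n,h)$; one then applies a lattice or geometry-of-numbers argument (a quantitative Bogolyubov-type extraction together with the rationality of the Mal'cev basis) to promote this bilinear smallness to a linear horizontal character on $G$. This is the step where dimension enters most severely: in the Green--Tao approach, controlling the bad $h$ requires an inner recursion that reduces to a lower-dimensional sub-nilmanifold, and iterating this sub-recursion inside the outer induction on $s$ produces bounds doubly exponential in $\dim G$. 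The plan, following the strategy advertised in the abstract, is to replace this sub-induction with a direct Fourier/Mal'cev-coordinate extraction whose cost is only polynomial per outer step, so that over the $s$ outer steps the total bound becomes single exponential in $\dim G$.
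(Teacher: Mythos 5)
A point of orientation first: this paper does not prove Theorem~\ref{t:GreenTao} at all — it is quoted as background from Green and Tao \cite{GT12}, and the paper's own work goes into the refinement Theorem~\ref{t:mainresult1}, which is proved by a different mechanism. Measured as a sketch of the Green--Tao argument, your proposal has one concrete structural error and leaves the genuinely hard step unexecuted. The error is in the van der Corput step: the derivative sequence $n \mapsto g(n+h)g(n)^{-1}$ does \emph{not} take values in $G_{\ge 2}$ (take $g(n) = g_1^{n}$; then $g(n+h)g(n)^{-1} = g_1^{h}$, which has a nontrivial horizontal projection), so you cannot simply invoke the inductive hypothesis for a step-$(s-1)$ group on it. The object that actually works after van der Corput is the pair sequence $g_h(n) = (g(n+h), g(n))$ on $G^\square = G \times_{G_2} G$ (this is what both \cite{GT12} and Section~\ref{s:equidistribute} of this paper use), and the drop in step or degree comes from the nonzero vertical frequency: $F_\xi \otimes \overline{F_\xi}$ is invariant under the diagonal of the vertical group, so one descends to $G^\square/G_k^\triangle$. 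Your outline also conflates the degree of the filtration with the step of $G$, which is exactly the distinction that forces the two-case analysis in the paper's proof. Finally, the step you yourself flag as the principal obstacle — fusing the characters $\eta_h$ over many $h$ into a single horizontal character on $G$ — is where essentially all the content of any proof of this theorem lives (in \cite{GT12} it is the bracket-polynomial and Vinogradov-type analysis; in this paper it is Lemma~\ref{l:polynomialseparation} combined with the refined bracket polynomial lemmas, Lemma~\ref{l:refined1} and Lemma~\ref{l:refined2}); gesturing at ``a quantitative Bogolyubov-type extraction'' does not discharge it, so the proposal is not a proof.

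Two further remarks on scope. For the statement as quoted, the implied constants may depend on $G/\Gamma$, so no dimension bookkeeping is required and the Green--Tao route suffices; the single-exponential-in-dimension claim you append is the content of Theorem~\ref{t:mainresult1}, and the paper does not obtain it by tightening the sub-recursion you describe. Instead it changes the conclusion: assuming $F$ is a $G_{(s)}$-vertical character with frequency $\xi$, it produces an entire family of horizontal characters cutting out a subgroup $G'$ with $\xi([w_1,\dots,w_s]) = 0$ for all $w_i \in G'$ — a one-shot reduction in step, obtained via geometry of numbers (Minkowski's second theorem and Ruzsa covering) inside the refined bracket polynomial lemma — thereby avoiding induction on dimension entirely. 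Completing your outline would at best reproduce the qualitative theorem with Green--Tao-type losses, not the improved bounds.
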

A corollary of Theorem~\ref{t:GreenTao} is the ``Ratner-type factorization theorem" of Green and Tao \cite[Corollary 1.20]{GT12}, which is often used in applications. See \cite[Definition 1.2]{GT12} for definition of ``$\delta$-equidistributed" and ``totally $\delta$-equidistributed"; these notions are not quantitatively efficient for our purposes.
\begin{theorem}\label{t:RatnerFactorization}
Let $G/\Gamma$ be a nilmanifold with complexity $M \ge 2$ and $g(n)$ be a polynomial sequence on $G$. For each $A \ge 2$, there exists some $\delta$ with $M \le \delta^{-1} \le M^{O_{G, \Gamma, A}(1)}$ and a factorization $g = \varepsilon g_1 \gamma$ where
\begin{itemize}
    \item $\varepsilon$ is $(\delta^{-1}, N)$-smooth, meaning that for all $n \in [N]$, $d(\varepsilon(0), \mathrm{id}_G) \le \delta^{-1}$ and $d(\varepsilon(n - 1), \varepsilon(n)) \le \frac{\delta^{-1}}{N}$;
    \item $\gamma$ is $\delta^{-1}$-periodic;
    \item $g_1$ is $\delta^{A}$-equidistributed inside a subnilmanifold $\tilde{G}/\tilde{\Gamma}$ with complexity at most $\delta^{-1}$ where $\tilde{G}$ is a subgroup of $G$ with rationality at most $\delta^{-1}$. 
\end{itemize}
\end{theorem}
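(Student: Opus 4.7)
The plan is to prove the factorization by iterated application of Theorem~\ref{t:GreenTao}, performing a \emph{dimension reduction} on the nilmanifold. At stage $k$ I would maintain an equality $g = \varepsilon_k\, g^{(k)}\, \gamma_k$ in which $g^{(k)}$ takes values in a rational subgroup $G^{(k)}\le G$ of complexity at most $Q_k$, $\varepsilon_k$ is $(Q_k, N)$-smooth, and $\gamma_k$ is $Q_k$-periodic, where $Q_k$ is a quantity that grows polynomially (with exponents depending on $\dim G$ and $A$) from $Q_0 = M$. I would then test whether $g^{(k)}$ is $Q_k^{-A}$-equidistributed in $G^{(k)}/\Gamma^{(k)}$ with $\Gamma^{(k)} := G^{(k)}\cap\Gamma$. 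If it is, I set $\delta = Q_k^{-1}$ and halt; otherwise Theorem~\ref{t:GreenTao} supplies a nonzero horizontal character $\eta_k\colon G^{(k)}\to\RR$ with $\eta_k(\Gamma^{(k)})\subseteq\ZZ$, of modulus at most $Q_k^{O_A(1)}$, and with $\|\eta_k \circ g^{(k)}\|_{C^\infty[N]}\le Q_k^{O_A(1)}$.

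The core inductive step is to convert $\eta_k$ into a further peel of the factorization. Expanding $\eta_k\circ g^{(k)}(n) = \sum_{i=0}^d \alpha_i \binom{n}{i}$, the $C^\infty[N]$ estimate means each $\alpha_i$ can be split as $\alpha_i = \beta_i + \rho_i$ with $|\beta_i| \le Q_k^{O_A(1)}/N^i$ and $\rho_i$ rational of denominator $\le Q_k^{O_A(1)}$. Lifting this splitting back along a rational section of $G^{(k)} \to G^{(k)}/\ker\eta_k$, I obtain $g^{(k)} = \varepsilon_k'\, g^{(k+1)}\, \gamma_k'$ with $\varepsilon_k'$ smooth and $\gamma_k'$ periodic of polynomially controlled complexity, and $g^{(k+1)}$ taking values in $G^{(k+1)} := \ker\eta_k$. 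Absorbing $\varepsilon_k'$ and $\gamma_k'$ into $\varepsilon_k$ and $\gamma_k$ and setting $\Gamma^{(k+1)} = G^{(k+1)}\cap\Gamma$ completes one iteration, and since $\dim G^{(k+1)} < \dim G^{(k)}$ the algorithm terminates within $\dim G$ steps.

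The main obstacle is the quantitative bookkeeping in the peel-off step: one must verify that $G^{(k+1)}$ inherits a Mal'cev basis of rationality only polynomially worse than that of $G^{(k)}$, that the abstract splitting $\alpha_i = \beta_i + \rho_i$ lifts to actual group elements in $G^{(k)}$ whose coordinates in the chosen basis are polynomially controlled, and that the product of the new smooth and periodic pieces with the existing $\varepsilon_k$ and $\gamma_k$ remains smooth and periodic with only polynomial loss. These verifications rest on a suite of quantitative lemmas on rational subgroups and the algebra of polynomial sequences in Mal'cev coordinates. Tracking the polynomial degradation through the at most $\dim G$ iterations then yields $\delta^{-1} \le M^{O_{G, \Gamma, A}(1)}$, which is the required bound.
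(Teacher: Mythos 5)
Your proposal is correct and is essentially the argument the paper has in mind: Theorem~\ref{t:RatnerFactorization} is quoted from Green--Tao, and the induction-on-dimension iteration you describe (test equidistribution, extract a horizontal character via Theorem~\ref{t:GreenTao}, peel off smooth and periodic parts via a factorization lemma of the type of Lemma~\ref{l:factorization1}, pass to the kernel, and repeat at most $\dim G$ times) is exactly the proof sketched in Section~1.1 and carried out in the cited work of Green and Tao. The polynomial loss at each stage compounds to an exponent that is exponential in $\dim G$, but this is admissible for the statement as given, since the exponent $O_{G,\Gamma,A}(1)$ is allowed to depend on $G$ and $\Gamma$.
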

The abelian degree one case is the following.
\begin{proposition}\label{p:AbelianRatnerFactorization}
Let $\alpha \in \mathbb{R}^d/\mathbb{Z}^d$. Then given $M \ge 2$ and $A \ge 2$, there exists some $\delta$ with $M \le \delta^{-1} \le M^{O_{d, A}(1)}$ and a factorization $\alpha = \varepsilon + \alpha' + \gamma$ where
\begin{itemize}
    \item $\|\varepsilon\|_{\mathbb{R}/\mathbb{Z}} \le \frac{\delta^{-1}}{N}$;
    \item there exists some nonzero integer $k \le \delta^{-1}$ such that $k\gamma \in \mathbb{Z}^d$; and
    \item $n \mapsto \alpha'n$ is totally $\delta^{A}$-equidistributed inside a subgroup $\tilde{G}$ of $\mathbb{R}^d/\mathbb{Z}^d$ of rationality at most $\delta^{-1}$.
\end{itemize}
\end{proposition}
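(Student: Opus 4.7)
The plan is to prove this by a dimensional descent driven by a quantitative Weyl-type dichotomy. I will maintain at each stage $i$ a rational subtorus $\tilde{G}_i \subset \RR^d/\ZZ^d$ and a residual element $\alpha_i \in \tilde{G}_i$, starting from $\tilde{G}_0 = \RR^d/\ZZ^d$ and $\alpha_0 = \alpha$. At each stage I either certify that $n \mapsto \alpha_i n$ is totally $\delta_i^A$-equidistributed in $\tilde{G}_i$ and halt, or I extract from $\alpha_i$ a rational piece and a small smooth piece, leaving a residual $\alpha_{i+1}$ that lives in a subtorus $\tilde{G}_{i+1} \subset \tilde{G}_i$ of strictly smaller dimension. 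Since $\dim \tilde{G}_i \le d$, the iteration runs for at most $d$ rounds.

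\textbf{Step 1 (The dichotomy).} At stage $i$, suppose $n \mapsto \alpha_i n$ fails to be totally $\delta_i^A$-equidistributed in $\tilde{G}_i$. Then the quantitative Weyl equidistribution theorem for linear sequences on a torus, combined with Dirichlet's theorem on Diophantine approximation, yields a nonzero integer character $\xi_i$ of $\tilde{G}_i$ with $|\xi_i| \le \delta_i^{-C}$ and a rational $p_i/q_i$ with $1 \le q_i \le \delta_i^{-C}$ such that $\|\xi_i \cdot \alpha_i - p_i/q_i\|_{\RR/\ZZ} \le \delta_i^{-C}/N$, for some $C = C(d,A)$. This is the abelian degree-one special case of Theorem~\ref{t:GreenTao} and follows directly from Weyl's inequality.

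\textbf{Step 2 (The descent).} Use $\xi_i$ to decompose $\alpha_i = \alpha_{i+1} + \gamma_i + \varepsilon_i$ in $\RR^d/\ZZ^d$. Pick a dual vector $v_i \in \tilde{G}_i$ with $\xi_i \cdot v_i = 1$ and $|v_i|$ bounded by the rationality of $\tilde{G}_i$, via a reduced basis of the character lattice. Let $\tilde{\eta}_i$ be the lift of $\xi_i \cdot \alpha_i - p_i/q_i$ of smallest absolute value, and set $\gamma_i := (p_i/q_i)\,v_i$, $\varepsilon_i := \tilde{\eta}_i\,v_i$, and $\alpha_{i+1} := \alpha_i - \gamma_i - \varepsilon_i$. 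Then $\xi_i \cdot \alpha_{i+1} \in \ZZ$, so $\alpha_{i+1}$ lies in $(\ker \xi_i) \cap \tilde{G}_i$; after absorbing any coset representative of bounded denominator into $\gamma_i$, the residual $\alpha_{i+1}$ lives in the identity component $\tilde{G}_{i+1}$, which is a rational subtorus of strictly smaller dimension. The rationality of $\tilde{G}_{i+1}$ is bounded polynomially in $|\xi_i|$ and the rationality of $\tilde{G}_i$ by standard lattice reduction, the error satisfies $\|\varepsilon_i\|_{\RR/\ZZ} \le \delta_i^{-2C}/N$, and $q_i \gamma_i \in \ZZ^d$.

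\textbf{Step 3 (Iterate and collect).} Set $\delta_{i+1} := \delta_i^{C'}$ for a sufficiently large $C' = C'(d,A)$ absorbing all polynomial losses incurred in Steps~1 and~2. After at most $d$ stages the procedure halts, with terminal $\delta^{-1} \le M^{(C')^d} = M^{O_{d,A}(1)}$. Setting $\varepsilon := \sum_i \varepsilon_i$, $\gamma := \sum_i \gamma_i$, $\tilde{G} := \tilde{G}_{\text{final}}$, $\alpha' := \alpha_{\text{final}}$, and $k := \prod_i q_i \le \delta^{-1}$ yields the claimed factorization, provided $C'$ is chosen so that the accumulated smooth error satisfies $\|\varepsilon\|_{\RR/\ZZ} \le \delta^{-1}/N$. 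The main obstacle is the bookkeeping in Step~2: passing from a single near-rational character $\xi_i$ to an explicit subtorus $\tilde{G}_{i+1}$ with a controlled rationality bound, and choosing $v_i$ of bounded size. Both are routine Minkowski/LLL-type facts in the character lattice of $\tilde{G}_i$; once they are in place, the remaining exponent tracking is mechanical and merely compounds a constant number of times.
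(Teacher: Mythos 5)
Your argument is the standard induction-on-dimension algorithm --- test total $\delta_i^{A}$-equidistribution, otherwise extract a small character, split off a rational part and an $O(\delta_i^{-O(1)}/N)$-smooth part, descend to the kernel subtorus, and compound $\delta\mapsto\delta^{C'}$ at most $d$ times --- which is exactly the proof the paper recalls in items (i)--(iv) of its discussion of Proposition~\ref{p:AbelianRatnerFactorization}, and it is correct modulo the lattice bookkeeping you explicitly defer. The one point to make explicit there is that to get a bounded multiple of $\gamma_i=(p_i/q_i)v_i$ to be integral you should first replace $\xi_i$ by the primitive character of which it is an integer multiple in the character lattice of $\tilde{G}_i$ (this also makes $\ker\xi_i$ connected), so that an integer vector $v_i$ with $\xi_i\cdot v_i=1$ of polynomially bounded size exists; this costs only polynomial factors and does not affect the final $M^{O_{d,A}(1)}$ bound.
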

Tao and Ter{\"a}v{\"a}inen \cite{TT21}, quantified the above results and proved that quantities $M^{O_{A,G,\Gamma}(1)}$ are in fact $M^{A^{O_k(d^{O_k(1)})}}$ (here, $k$ is the degree of the polynomial sequence); e.g. the exponent is exponential in dimension. The authors of \cite{TT21} raise the question of whether the exponent may be taken to be polynomial in dimension. The starting point of this work is that even in the abelian degree one case, the Ratner-type factorization theorem cannot have bounds which are single exponential in dimension.
\begin{example}\label{e:badexample}
Let $L$ be a large integer and consider the example 
$$g(n) = (\alpha n, L\alpha n, L^2\alpha n, L^4\alpha n, \dots, L^{2^d}\alpha n)$$
(a linear orbit on $\mathbb{T}^{d + 2}$) with $\alpha$ not $O(L^{2^{2^d}}/N)$ close to any rational point with denominator at most $L^{2^{2^d}}$. The factorization theorem with $A = 2$ and $M = L$, yields that this $L^{2^{d + 1}}$-equidistributes in the subtorus $(x, Lx, \dots, L^{2^d}x)$, whose rationality is double exponential in $d$. Note that an application of the Ratner-type factorization theorem \textbf{does not} yield equidistribution in the larger group
$$(x, L x, \dots, L^{2^k}x, y_{k+ 1}, \dots, y_d).$$
This is because this group is $\varepsilon^{-1} = L^{2^k}$-rational, whereas such an orbit is not $\varepsilon^A$-equidistributed since it lies in the subgroup $y_{k + 1} = \varepsilon^{-2} x$. Note that if we worked with 
$$g_1(n) = (\alpha n, L^2\alpha n, L^{2^2}\alpha n, \dots, L^{2^{2^d}}\alpha n),$$
then the factorization theorem would yield equidistribution in $(x, L^2x, \dots, L^{2^{2^{O(1)}}}x, \dots, x_{d + 2})$, which give polynomial losses. In this case, $L^{A2^{2^k}}$ is far less than $L^{2^{2^{k+1}}}$, causing us to not need to pass to a subgroup after $O(1)$ many iterations.
\end{example}
This may seem like a rather silly example, for the reader can question why one cannot simply reformulate the Ratner-type factorization theorem with a choice of $A$ (say $A = 1 + o_{d \to \infty}(1)$) that does not yield losses double exponential in dimension. We will now explain why (for the sake of applications) the theorem specifies equidistribution of scale at most $\delta^A$, where $\delta^{-1}$ upper bounds the rationality of the sub-torus. Given a Lipschitz function $F\colon \mathbb{R}^d/\mathbb{Z}^d \to \mathbb{C}$ of norm $\le 1$, $\delta \in (0, 1/10)$, and $\alpha \in \mathbb{R}^d/\mathbb{Z}^d$, an application of Proposition~\ref{p:AbelianRatnerFactorization} gives us a decomposition of $\alpha = \varepsilon + \alpha' + \gamma$. Let $G'$ be the subgroup $\alpha'$ lives in, $H$ be any coset of $G'$, $Q \le \delta^{-1}$ be the period of $\gamma$, and $P$ be a subprogression of $[N]$ with common difference $Q$ and size at least $\delta^{4} N$. The key point is that $F$ has Lipschitz norm on $G'/\mathbb{Z}^d$ bounded by $d\delta^{-1}$ (say) and thus 
$$\left|\mathbb{E}_{n \in P} F(\alpha n) - \int_{G'} \tilde{F} d\mu\right| \le \left| \mathbb{E}_{n \in P} \tilde{F}(\alpha'n) - \int_{G'} \tilde{F} d\mu\right| + O(\delta^2) \ll \delta^A\|\tilde{F}\|_{\mathrm{Lip}(\tilde{G})} + \delta^{2} \ll \delta^2$$
Here $\tilde{F} = F(\varepsilon_{P} + Q\cdot)$ for an element $\varepsilon_P\in \mathbb{R}^d$; this essentially encodes the coset that $\alpha n$ nearly lives in for $n\in P$. We see that if $A$ is too small (say $A = 1 + o_{d \to \infty}(1)$), then $\delta^A \|\tilde{F}\|_{\mathrm{Lip}(\tilde{G})}$ would be large and the above inequality would be essentially useless for applications of the nilsequence form of the Ratner-type factorization theorem. This explains why we must have the threshold for equidistribution, which is the quantity $\delta^A$, to be significantly smaller than the rationality of the subgroup, which is $\delta^{-1}$. \\\\
Such a result has proven to be crucial throughout higher order Fourier analysis as it provides a ``structure vs. randomness'' theory for nilsequences. In particular, ``structure'' comes in the form of subgroup $G'$ on which the polynomial sequence lives and ``randomness' in that the polynomial sequence equidistributes on $G'/(G'\cap \Gamma)$. This theory has led to the Green-Tao-Ziegler deduction of the $U^{s + 1}$ inverse theorem \cite{GTZ12, GTZ11}, was an integral part of their result on linear equations in primes \cite{GT10, GT12c}, and has continued to be used in various more recent contexts in higher order Fourier analysis \cite{TT21, MW22, PW23}. \\\\
For establishing efficient bounds in higher order Fourier analysis, a double exponential loss is inefficient. To explain why, we require the statement of the quasi-polynomial $U^{s + 1}[N]$ inverse theorem proven in joint work with Sah and Sawhney \cite{LSS24b}. 
\begin{theorem}\label{thm:inverse}
Fix $\delta\in (0,1/2)$. Suppose that $f\colon[N]\to\mathbb{C}$ is $1$-bounded and
\[\|f\|_{U^{s+1}[N]}\ge\delta.\]
Then there exists a nilmanifold $G/\Gamma$ of degree $s$, complexity at most $M$, and dimension at most $d$ as well as a function $F$ on $G/\Gamma$ which is at most $K$-Lipschitz such that 
\[|\mathbb{E}_{n\in[N]}[f(n)\overline{F(g(n)\Gamma)}]|\ge\epsilon,\]
where we may take
\[d\le\log(1/\delta)^{O_s(1)}\emph{ and }\eps^{-1},K,M\le\exp(\log(1/\delta)^{O_s(1)}).\]    
\end{theorem}
We note that even assuming the quasi-polynomial inverse theorem, any further application of an equidistribution theorem with losses double exponential in dimension would result in a loss of the form $\exp(\exp(\log(1/\delta)^{O(1)}))$, which for applications is inefficient. Under $t$ many iterations, the losses compound to an $O(t)$ iterated exponential loss. In particular, a double exponential loss in equidistribution immediately results in $O(s^2)$ many iterated logarithms for the strategy of \cite{LSS24b}.
\subsection{Induction on dimension and Previous Work}
We recall the proof of Proposition~\ref{p:AbelianRatnerFactorization} and see where the double exponential bounds in dimension arise. The proof proceeds algorithmically as follows.
\begin{itemize}
    \item[(i)] If $\alpha n$ is totally $\delta$-equidistributed, we are done.
    \item[(ii)] Otherwise, there exists some nonzero frequency $k \in \mathbb{Z}^d$ with height $|k| \le \delta^{-1 - o_{d \to \infty}(1)}$ such that $\|k \cdot \alpha\|_{\mathbb{R}/\mathbb{Z}} \le \frac{\delta^{-O(d)}}{N}$. 
    \item[(iii)] We may thus write $\alpha = \varepsilon + \alpha' + \gamma$ where $\gamma$ is $\delta^{-O(d)}$-rational, $\varepsilon$ is $(d\delta^{-1 - o(1)}, N)$-smooth, and $\alpha'$ lies in a lower dimensional subgroup.
    \item[(iv)] Iterate this procedure along progressions $Q$ for which $\varepsilon n$ is roughly constant, and $\gamma n$ lies in a fixed coset of $\mathbb{Z}^d$ whenever $n \in Q$.
\end{itemize}
Notice that in item (iv), we have that the function $F'$ we consider is $\delta^{-1-o(1)}$-Lipschitz instead of the original $1$-Lipschitz. Upon iterating with $F'$, we need to check frequencies of height $\delta^{-2-o(1)}$; at the $t$-th step of iteration we have that the height of frequencies we are considering are $\delta^{-2^{t}}$ (say) and this is ultimately the cost of the double exponential loss. This however comes from the fact that we terminate if an only if $\alpha n$ equidistributes and thus force ourselves to consider all possible characters at each stage of the iteration. The issue present is that we are using an \emph{induction on dimensions} argument; in such a context, even an iteration $\delta \mapsto \delta^2$ is unacceptable since this compounds to losses of $\delta^{2^d}$ under induction. \\\\
One possible way to overcome this issue is to observe the situation in higher order Fourier analysis over the group $\mathbb{F}_p^n$. There, although there is no such Ratner-type factorization theorem, the work of Gowers and Wolf \cite{GW11b} still obtains a somewhat satisfactory structure vs. randomness theory from proving a decomposition theorem of the \emph{exponential of a polynomial} into well-equidistributed parts. \\\\
This motivates that over $\mathbb{Z}/N\mathbb{Z}$, one should still be able to obtain a satisfactory theory from merely considering the \emph{nilsequence} $F(g(n)\Gamma)$ rather than the \emph{polynomial sequence} $g(n)$. At least in the abelian case such a change in perspective is at least plausibly useful; one can replace $F$ by its Fourier approximation and understanding the exponential sum of a character is simple. Furthermore given a $1$-Lipschitz function on the torus on $d$-dimensions one only requires at most $\delta^{-O(d^{O(1)})}$ such characters which is an acceptable loss in our context. \\\\
Prior to this work, there was at least one model case where an equidistribution with losses which were single exponential in dimension is achieved. This was the work of Gowers and Wolf \cite{GW11}, Green and Tao \cite{GT17}, and the author \cite{Len22b}. As Bohr sets are morally equivalent to generalized arithmetic progressions, one can write the quadratic phase as a multivariate quadratic polynomial (with variables representing directions in the GAP) and then work with the equidistribution of multidimensional polynomial phase (see \cite{TaoBlog1}). This, combined with the observation in that the indicator function of a Bohr set or a generalized arithmetic progression has bounded Fourier complexity (see \cite[Section 3]{Len22b} for relevant definitions) shows that a non-equidistributed quadratic phase has (approximate) Fourier complexity which is single exponential in dimension. \\\\
The conclusion from above is qualitatively weaker than what Theorem~\ref{t:GreenTao} gives. Theorem~\ref{t:GreenTao} tells us not only that we have bounded Fourier complexity, but also that we should expect the polynomial sequence $g(n)$ to be equidistributed in a rational subgroup. Notably, the methods of \cite{GW11, GT17, Len22b} make no reference to a polynomial sequence. The methods of \cite{GW11, GT17, Len22b} merely state that a non-equidistributed two-step nilsequence has bounded Fourier complexity. After all, there are many, many functions of bounded approximate Fourier complexity, but only a few rational subspaces of a vector space. We highlight that applications of the equidistribution theory such as the complexity one polynomial Szemer\'edi theorem \cite{Len22, PSS23}, as well as results involving counting solutions to linear equations \cite{Alt22, Alt22b, CS14, GT10b, Kuc21, Kuc23} all rely on the qualitatively stronger Theorem~\ref{t:GreenTao} and are thus more robust and general than results obtained using the weaker equidistribution theory as in \cite{GW11, GT17, Len22b}. Therefore, it is desirable to obtain a result of the quality of Theorem~\ref{t:GreenTao} with bounds of the shape that \cite{GW11, GT17, Len22b} obtain.
\subsection{Statement of the main theorem}
We first require the definition of the lower central series.
\begin{definition}\label{d:lowercentraseries}
The \emph{lower central series} of a nilpotent Lie group $G$, is the sequence of nested subgroups $G \supseteq G_{(2)} \supseteq G_{(3)} \cdots $ where $G_{(i)} := [G, G_{(i - 1)}]$. If $s$ is the least integer such that $G_{(s + 1)} = \mathrm{Id}_G$, then $G$ is an $s$-step nilpotent Lie group.
\end{definition}
In order to state our main result we will also require the notion of a vertical character.
\begin{definition}
Consider a nilmanifold $G/\Gamma$ and a function $F\colon G/\Gamma\to\mathbb{C}$. Given a connected, simply connected subgroup $T$ of the center $Z(G)$ which is rational (i.e., $\Gamma\cap T$ is cocompact in $T$) and a continuous homomorphism $\eta\colon T\to\mathbb{R}$ such that $\eta(T\cap\Gamma)\subseteq \mathbb{Z}$, if 
\[F(gx)=e(\eta(g))F(x)\emph{ for all }g\in T\]
we say that $F$ is a $T$-vertical character with frequency $\eta$. If $T = G_{(s)}$, we refer to \emph{$T$-vertical character} as \emph{vertical character}.
\end{definition}
Note that decomposing a function $F$ into functions with a vertical frequency follows from standard Fourier analysis (e.g., Lemma~\ref{l:nilcharacters}). We now state our main result; various quantitative undefined notions can be found in Section~\ref{s:not}.
\begin{restatable}{theorem}{mainresulta}\label{t:mainresult1}
Let $\delta\in (0,1/10)$, $N_1, \ldots, N_\ell > 1$ be integers, $M \ge 1$, and $G/\Gamma$ be a nilmanifold given a degree $k$ filtration and such that $G$ is $s$-step nilpotent. Furthermore, let $F\colon G/\Gamma \to \mathbb{C}$ be an $M$-Lipschitz function with vertical character of nonzero $G_{(s)}$-frequency $\xi$ on an $s$-step nilmanifold of complexity $M$ and dimension $d$ with $|\xi| \le M/\delta$. 

Suppose $g(\vec{n})$ is a polynomial sequence in $G$ with respect to the degree $k$ filtration, and such that
\[|\mathbb{E}_{\vec{n} \in [\vec{N}]} F(g(\vec{n})\Gamma)| \ge \delta.\]

Then one of the following occurs.
\begin{itemize}
    \item $\min_{1\le i\le \ell}N_i\le (M/\delta)^{O_{k,\ell}(d^{O_{k,\ell}(1)})}$
    \item There exist $0< r\le \mathrm{dim}(G/[G,G])$ and a set of horizontal characters $\eta_1, \dots, \eta_r$ of size at most $(M/\delta)^{O_{k,\ell}(d^{O_{k,\ell}(1)})}$ such that 
    \[\|\eta_i \circ g\|_{C^\infty[\vec{N}]} \le (M/\delta)^{O_{k,\ell}(d^{O_{k,\ell}(1)})}\]
    and such that for $w_1,\ldots,w_s\in G' := \cap_{1\le i\le r}\mathrm{ker}(\eta_i)$ 
    \[\xi([[[w_1, w_2], w_3], \ldots,w_s]) = 0.\]
\end{itemize}
\end{restatable}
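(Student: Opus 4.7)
The plan is to avoid the inefficient ``induction on dimension'' strategy by applying van der Corput (or Cauchy--Schwarz) $s$ times to convert the nilsequence correlation into the correlation of a single torus-valued polynomial phase, and then invoking a quantitative equidistribution theorem on the torus whose losses are only polynomial in dimension. The horizontal characters $\eta_1, \ldots, \eta_r$ are then extracted in one shot from the abelian problem rather than peeled off one at a time while iterating on smaller subnilmanifolds.

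First, I would exploit that $F$ has a nonzero vertical character $\xi$ on $G_{(s)}$. An application of van der Corput in one coordinate of $\vec n$ yields, via the vertical character property, an average of the product $F(g(\vec n + \vec h)\Gamma)\overline{F(g(\vec n)\Gamma)}$; because $\xi$ transforms covariantly under the central action of $G_{(s)}$, this product descends to a nilsequence on the $(s-1)$-step quotient $G/G_{(s)}$ multiplied by a central phase $e(\xi(c_{\vec h}(\vec n)))$ where $c_{\vec h}$ is built from commutators. Iterating $s$ times with care in the multiparameter setting, I would arrive at a lower bound of the shape
\[\bigl|\mathbb{E}_{\vec n, \vec h_1, \ldots, \vec h_s}\, e\bigl(\xi(\psi(g; \vec n, \vec h_1, \ldots, \vec h_s))\bigr)\bigr| \gg \delta^{O_{k,s,\ell}(1)},\]
where $\psi$ is a polynomial expression in $s$-fold commutators of discrete derivatives of $g$. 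Because $G_{(s)}$ is abelian and the $s$-fold bracket factors through the abelianization in each slot, $\xi \circ \psi$ is a polynomial in the Mal'cev coordinates of $g$ modulo $[G,G]$, of degree bounded in terms of $k, s, \ell$, taking values in $\mathbb{R}/\mathbb{Z}$.

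Next, I would invoke an efficient quantitative equidistribution theorem for multivariate polynomial phases on $\mathbb{R}^d/\mathbb{Z}^d$ with losses polynomial in $d$, in the spirit of the prior work of Gowers--Wolf, Green--Tao, and the author's earlier work cited in the introduction. Applied to the phase $\xi \circ \psi$, this produces horizontal characters $\eta_1, \ldots, \eta_r$ on $G/[G,G]$ with $r \le \dim(G/[G,G])$, of the required size and smoothness norm $\|\eta_i \circ g\|_{C^\infty[\vec N]}$, such that $\xi \circ \psi$ becomes essentially trivial whenever the abelianization of the relevant arguments lies in the intersection $G' = \cap_i \ker(\eta_i)$. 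Since $\psi$ is built from $s$-fold commutators and the multilinear form $(w_1,\ldots,w_s)\mapsto \xi([\ldots[[w_1,w_2],w_3],\ldots,w_s])$ descends to $(G/[G,G])^s$, the triviality of $\xi \circ \psi$ on $G'$ translates algebraically into the claim $\xi([\ldots[[w_1,w_2],w_3],\ldots,w_s]) = 0$ for all $w_i \in G'$.

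The main obstacle is the first step: executing the van der Corput iteration in the multiparameter setting while preserving polynomial-in-dimension loss. One must verify at each stage that the ``derivative'' nilsequence descends to a genuine lower-step quotient with a well-controlled vertical character (so that no passage to a subnilmanifold is necessary), and the combinatorial expansion of the resulting $s$-fold commutator in Mal'cev coordinates of $g$ must be carried out so that the final phase $\xi \circ \psi$ is manifestly a multivariate polynomial whose coefficients are linear combinations of evaluations of horizontal characters at $g$. Any inefficiency introduced in this algebraic translation would compound when the abelian equidistribution theorem is applied and destroy the desired single-exponential-in-dimension bound.
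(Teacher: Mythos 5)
Your plan has a genuine gap at its very first step, and the missing piece is exactly the new content of the paper. After one application of van der Corput, the product $F(g(\vec n+\vec h)\Gamma)\overline{F(g(\vec n)\Gamma)}$ does \emph{not} descend to a nilsequence on the $(s-1)$-step quotient $G/G_{(s)}$ twisted by a central phase: it naturally lives on the group $G^\square = G\times_{G_2}G$ (modulo the diagonal of the top of the filtration), and whether this object has lower step depends on whether the $(s-1)$-fold commutator $\xi([G_2,G,\dots,G])$ vanishes. When it does not vanish the differenced nilsequence is still $s$-step (only its degree has dropped), so an unconditional ``iterate van der Corput $s$ times down to the abelian case'' scheme does not go through; the paper has to split into two cases, inducting on step in one and on degree in the other, precisely because of this. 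Relatedly, even when the differencing does terminate, the resulting phase is not a polynomial in the Mal'cev coordinates of $g$ modulo $[G,G]$: the coordinates of $g(\vec n)\Gamma$ involve fractional parts, so what comes out is a \emph{bracket} polynomial, with terms of the shape $n_j\, a_j\cdot\{\alpha_i h_i\}$. A Weyl/Vinogradov equidistribution theorem for genuine multivariate polynomial phases therefore does not apply to it, and this is where your claimed ``one-shot'' extraction of the $\eta_i$ breaks down.

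The paper's substitute for that missing step is the refined bracket polynomial lemma (Lemma~\ref{l:refined1}, Lemma~\ref{l:refined2}, Corollary~\ref{c:bracketcorollary2}): a geometry-of-numbers argument (Minkowski's second theorem plus Ruzsa covering) which, from the smallness of $\beta\cdot\vec n+\sum_j n_j a_j\cdot\{\sum_i\alpha_i h_i\}$ for many $\vec h$, produces simultaneously a family of linearly independent integer vectors $w_i$ and a complementary family of characters $\eta_j$, orthogonal to each other, with diophantine control on \emph{both} the $\alpha$'s and the $a$'s. It is this production of an entire subspace of characters in one shot, rather than a single character per iteration, that avoids induction on dimension; nothing in your abelian reduction supplies it. Finally, the algebraic endgame is also nontrivial: converting ``the extracted characters annihilate the relevant data'' into the conclusion $\xi([w_1,\dots,w_s])=0$ for all $w_i\in G'$ requires the commutator bookkeeping of Lemma~\ref{l:commutatorlemma} (Hall--Witt) and careful tracking of the vertical frequency $\xi\otimes\overline{\xi}$ on $\overline{G^\square}$ through the induction, not merely the observation that the $s$-fold bracket factors through the abelianization. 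As written, your proposal identifies the right obstacle in its last paragraph but does not overcome it.
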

The crucial feature to note from this theorem is that $G'/\mathrm{ker}(\xi)$ is nilpotent of step at most $s - 1$; this is a ``one-shot" reduction in step and hence avoids the issue of induction on dimensions. In order to do so, the proof requires a shift in perspective of locating a \emph{subspace} on the horizontal torus rather than locating a \emph{single horizontal character}. This is possible since we're proving a theorem for a $G_{(s)}$-vertical character rather than an arbitrary nilsequence which previous works \cite{GT12, Lei05} prove. This shift in perspective is related to a crucial insight of \cite{LSS24b} that improves on \cite{GTZ12}. \\\\
We further remark that the assumption of the $G_{(s)}$-vertical character can be replaced with a vertical character with respect to any rational subgroup of the center of rationality bounded by $(M/\delta)^{O_{k, \ell}(d^{O_{k, \ell}(1)})}$ (see \cite[Theorem~5.5]{LSS24b}) using Theorem~\ref{t:mainresult1} as a black box. Such a statement can be morally proven by iterating Theorem~\ref{t:mainresult1}; however, due to the step decreasing at each stage such iterative arguments take $O_s(1)$ steps instead of $O(d)$ many iterations. \\\\
A consequence of Theorem~\ref{t:mainresult1} is that in view of Theorem~\ref{thm:inverse}, \emph{any} bounded number of iterated applications of Theorem~\ref{t:mainresult1} would result in only quasi-polynomial losses. This is because the composition of two quasi-polynomial functions is still quasi-polynomial and is ultimately why an application of Theorem~\ref{t:mainresult1} instead of \cite{GT12} results in a quasi-polynomial bound in \cite{LSS24b} which in turn has been used to improve bounds on Szemer{\'e}di's theorem \cite{LSS24c}. \\\\
We now briefly discuss applications of this statement. Slight modifications of Theorem~\ref{t:mainresult1} can be inserted in existing arguments of the complexity one polynomial Szemer\'edi theorem over $\mathbb{Z}/N\mathbb{Z}$ due to the author \cite{Len22} to improve the bounds from an iterated logarithm bound to a quasi-polynomial bound; see \cite{LenNew}. In principle, it can also be used in \cite{PSS23} to prove effective bounds on sets lacking shifted squares in place of \cite{GT12} to obtain a quasi-logarithmic bound over the iterated logarithmic bound obtained. \\\\
The improved $U^{s + 1}[N]$ inverse theorem, combined with Theorem~\ref{t:mainresult1}, and the approach to effective linear equations in the primes due to Tao and Ter{\"a}v{\"a}inen \cite{TT21} gives the following result. As is standard, $\Lambda$ denotes the von Mangoldt function. 

\begin{theorem}\label{t:quantthm}
Let $N,d,t,L,A$ be positive integers, and let $\Psi = (\psi_1,\dots,\psi_t)$ be a system of affine-linear forms $\psi_i \colon \mathbb{Z}^d \to \mathbb{Z}$ of the form 
\[\psi_i(\vec{n}) = v_i\cdot \vec{n} + \psi_i(\vec{0})\]
with $v_i\in \mathbb{Z}^d$ such that $|\psi_i(\vec{0})|\le LN$ and $\|v_i\|_{\infty}\le L$. Suppose that $v_i$ are pairwise linearly independent. Let $\Omega\subset [-N,N]^d$ be a convex body. Then 
\begin{align}\label{e:count}
\sum_{\vec{n}\in \Omega\cap \mathbb{Z}^d} \prod_{i=1}^{t}\Lambda(\psi_i(\vec{n})) = \beta_\infty \prod_p \beta_p + O_{t,d,L,A}(N^d (\log N)^{-A}).
\end{align}
Here $\beta_{\infty} := \mathrm{vol}(\Omega\cap \Psi^{-1}(\mathbb{R}_{>0}^{t}))$ and for each prime $p$ we have 
\[\beta_p := \mathbb{E}_{\vec{n}\in (\mathbb{Z}/p\mathbb{Z})^d} \prod_{i=1}^t \frac{p}{p-1} \mathbf{1}_{\psi_i(\vec n) \neq 0}.\]
\end{theorem}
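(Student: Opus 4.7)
The plan is to slot the improved quantitative inputs into the framework of Tao and Ter{\"a}v{\"a}inen \cite{TT21}, which is itself a quantitative version of the Green--Tao nilpotent circle method \cite{GT10}. Three ingredients drive the final bound: the quasi-polynomial $U^{s+1}[N]$-inverse theorem of \cite{LSS24b}, the equidistribution theorem (our Theorem~\ref{t:mainresult1}), and the M\"obius--nilsequence orthogonality estimate that follows from them. Pairwise linear independence of $v_1,\dots,v_t$ guarantees that $\Psi$ has finite true complexity $s=s(\Psi)$, so we may fix a target Gowers norm $U^{s+1}$.

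First I would run the $W$-trick: set $W := \prod_{p \le w} p$ with $w \asymp \log\log N$ and write $\Lambda_{b,W}(n) := \tfrac{\phi(W)}{W}\Lambda(Wn+b)\mathbf{1}_{(b,W)=1}$. Expanding the sum in \eqref{e:count} over residue tuples $\vec b \pmod{W}$ (restricted to $(\psi_i(\vec b),W)=1$ for every $i$) extracts the local factors $\beta_p$ for $p \le w$, while $\prod_{p>w}\beta_p = 1 + o(1)$ contributes a negligible multiplicative error. Writing $\Lambda_{b,W} = 1 + f_{b,W}$ and expanding the resulting product multilinearly, a generalized von Neumann theorem (cf.\ \cite[Appendix~C]{GT10}) bounds each non-main term by a constant multiple of $\|f_{b,W}\|_{U^{s+1}[N']}$, so it suffices to control this Gowers norm.

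If the Gowers norm is already $\ll_A (\log N)^{-A}$ we are done. Otherwise the quantitative inverse theorem of \cite{LSS24b} yields correlation of $f_{b,W}$ with a nilsequence $F(g(n)\Gamma)$ on an $s$-step nilmanifold of complexity and dimension quasi-polynomial in $\log N$. I would decompose $F$ into $G_{(s)}$-vertical Fourier components (Lemma~\ref{l:nilcharacters}); the zero-frequency piece is an $(s-1)$-step nilsequence handled by an outer induction on $s$, and each nonzero frequency falls under Theorem~\ref{t:mainresult1}. From Theorem~\ref{t:mainresult1} one extracts a Ratner-type factorization $g = \varepsilon g' \gamma$ (as in Theorem~\ref{t:RatnerFactorization} but with losses single-exponential in dimension rather than double-exponential). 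On each subprogression on which $\varepsilon$ is essentially constant and $\gamma$ is fixed modulo $\Gamma$, the remaining correlation reduces to a M\"obius--nilsequence correlation against an equidistributed polynomial orbit, and the orthogonality estimate saves $(\log N)^{-A}$.

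The main obstacle is calibrating the quantitative parameters so that the compounded losses remain polylogarithmic. Each round of ``inverse theorem $\to$ factorization $\to$ orthogonality'' inflates the complexity parameter $\delta^{-1}$ by a quasi-polynomial factor $\delta^{-1} \mapsto \exp((\log \delta^{-1})^{O(1)})$. Since Theorem~\ref{t:mainresult1} strictly decreases the nilpotency step when applied to a nontrivial vertical character, the recursion terminates after $O_s(1)$ stages; starting with $\delta^{-1}$ a sufficiently large (but fixed) power of $\log N$ depending on $A$, the compounded complexity stays polynomial in $\log N$, delivering the saving of an arbitrary power of $\log N$ in \eqref{e:count}.
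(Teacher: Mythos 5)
There is a genuine gap, and it lies in your very first reduction. With the classical $W$-trick at level $w \asymp \log\log N$, the quantity you are left to control is $\|\Lambda_{b,W}-1\|_{U^{s+1}}$, and this is \emph{not} $O_A((\log N)^{-A})$: the function $\Lambda_{b,W}-1$ still correlates with additive characters $e(an/q)$ for moduli $q$ just above $w$ at level roughly $1/\phi(q)$, so its Gowers norm is bounded below by a power of $1/\log\log N$. Likewise the truncation $\prod_{p>w}\beta_p = 1+O(w^{-c})$ already produces an error of size $N^d(\log\log N)^{-c}$, which is far larger than $N^d(\log N)^{-A}$. To save an arbitrary power of $\log N$ one must build the approximant out of \emph{all} primes up to $Q=\exp(\log^{1/10}N)$ and account for a possible exceptional (Siegel) character; since $P(Q)=\prod_{p<Q}p$ vastly exceeds $N$, this cannot be done by splitting into residue classes, which is exactly why the paper (following \cite{TT21}) works with the approximants $\Lambda_Q$ and $\Lambda_{\mathrm{Siegel}}$ and proves $\|\Lambda-\Lambda_{\mathrm{Siegel}}\|_{U^{s+1}[N]}\ll \exp(-\log^{c_s}N)$ (Theorem~\ref{t:mobiusuniformity}) via a Vaughan-type decomposition into type I, twisted type I and type II sums, each handled by iterating Theorem~\ref{t:mainresult1}; Siegel's theorem then enters when comparing $\Lambda_{\mathrm{Siegel}}$ with $\Lambda_Q$, which is the (unavoidable) source of ineffectivity.

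A second, related problem: after writing $\Lambda_{b,W}=1+f_{b,W}$ you invoke the generalized von Neumann theorem of \cite[Appendix C]{GT10} to bound the non-main terms by $\|f_{b,W}\|_{U^{s+1}}$, but $f_{b,W}$ is only $O(\log N)$-bounded, so that step requires a pseudorandom majorant and the transference/dense-model machinery — precisely what this paper deliberately avoids. The paper instead uses the plain Cauchy--Schwarz generalized von Neumann inequality for $O(\log N)$-bounded functions (together with a Fourier expansion of $1_\Omega$), compares $\Lambda$ directly with $\Lambda_{\mathrm{Siegel}}$ in the Gowers norm, then compares $\Lambda_{\mathrm{Siegel}}$ with $\Lambda_Q$, and finally evaluates the main term for $\Lambda_Q$ via \cite[Proposition 5.2]{TT21}. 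Your later steps (vertical Fourier decomposition, Theorem~\ref{t:mainresult1} giving a step-lowering factorization, and an $O_s(1)$-stage recursion with quasi-polynomial parameter losses) are in the right spirit and mirror how the type I/II estimates are proved here, but they are applied to the wrong object: without replacing the $W$-tricked comparison to $1$ by the Siegel-model comparison, the stated error term in \eqref{e:count} is out of reach.
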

\begin{remark}
The constant $O_{t,d,L,A}(\cdot)$ is ineffective (as is standard) due to the use of Siegel's theorem.  
\end{remark}

The bounds here which save an arbitrary power of logarithm match that of the best known even for systems of complexity one such as three--term arithmetic progressions or the odd Goldbach conjecture due to Vinogradov \cite{Vin37}. \\\\
The key technical input in proving Theorem~\ref{t:quantthm} is proving that $\Lambda$ is well approximated in the Gowers norm by a certain model $\Lambda_{\mathrm{Siegel}}$ which accounts for Siegel zeros; here we obtain quasipolynomial bounds. We defer a formal statement to Section~\ref{s:linearprimes}. As in \cite{Len22b}, one feature of the proof is that we completely avoid the transference principle in proving Theorem~\ref{t:quantthm}. This is due to the fact that the inverse theorem is sufficiently strong to able to appropriately handle sets of logarithmic density such as the primes.

\subsection{Discussion of the proof}
Our proof follows the strategy of \cite{GT12} of using the van der Corput inequality and performing an analysis on the resulting nilsequence. As nilmanifold notation can become quite complex, we shall describe our proof in bracket polynomial formalism. In order to estimate exponential sums of the bracket polynomial, we require the following tools.
\begin{itemize}
    \item The van der Corput inequality; this is used to reduce to a bracket polynomial of lower degree.
    \item A Fourier approximation result (e.g., Lemma~\ref{l:nilcharacters}); this is used to isolate the term with the highest number of nested brackets of the bracket polynomial as an exponential sum.
    \item A Vinogradov-type lemma; as exponential sums of pure polynomial phases require this ingredient, we will need this ingredient too.
    \item The new ingredient in this work, the \emph{refined bracket polynomial lemma}, which can be thought of as a Vinogradov-type lemma for linear bracket polynomials. Roughly speaking, this lemma ``unwinds" or ``unnests" a bracket and converts the information of a linear bracket polynomial to a diophantine condition on its phases.
\end{itemize}
To very roughly describe the proof, we consider the term in the bracket polynomial with the most brackets, say $s$ many brackets. In the language of nilsequences, the highest number of brackets corresponds to (one less than) the ``step" of the nilpotent Lie group. Our goal is to prove diophantine information in the phases of that term. To do so, we argue via induction on the degree of the bracket polynomial via a van der Corput-type argument, obtaining information on $P_h = P(\cdot + h) - P(\cdot)$ for many $h$ if $P$ were our bracket polynomial. Our argument naturally falls into two cases.
\begin{itemize}
    \item The first case is when each polynomial ``in between the brackets'' is degree one (e.g., a term of the form $\alpha n\{\beta n\{\gamma n\}\}$). In this case, a van der Corput type argument would make the term with $s$ brackets into a term with $s - 1$ brackets, where one of the phases is a bracket polynomial of degree one in $h$. We can then use the refined bracket polynomial lemma to that degree one bracket polynomial in $h$ to obtain information on the phases of that bracket polynomial.
    \item The second case is when this does not occur (e.g., $\alpha n^2\{\beta n\}$; here note that $\alpha n^2$ is not degree one). A van der Corput type argument would not make the term with $s$ brackets into a term with $s - 1$ brackets; it will stay with $s$ brackets. Consequently, each phase in between the brackets is a genuine polynomial in $h$ (and not a bracket polynomial in $h$). We can thus use Vinogradov's lemma and induction to obtain information on the phases of this bracket polynomial.
\end{itemize}
Instead of bracket polynomial formalism, we have chosen to work with nilsequence formalism, the advantage being that the proof is much cleaner and illuminating in the language of nilsequences. However, we still feel it is useful for the reader to keep these bracket polynomial heuristics in mind while examining the proof of Theorem~\ref{t:mainresult1}. For a more motivated discussion of the proof, we direct the reader to \cite{LenNew}. \\\\

\subsection{Organization of the paper}
In Section~\ref{s:not}, we define the notation used in the paper. In Section~\ref{s:refbrack}, we prove the refined bracket polynomial lemma. In Section~\ref{s:equidistribute}, we prove Theorem~\ref{t:mainresult1}. In Section~\ref{s:linearprimes}, we prove Theorem~\ref{t:quantthm}. \\\\
In Appendix~\ref{s:aux}, we give further auxiliary lemmas used in the proof in Section~\ref{s:equidistribute}. In Appendix~\ref{s:quantnil}, we prove various quantitative results on Mal'cev bases.

\subsection{Acknowledgements}
We would like to thank Terry Tao for advisement and for suggesting a simpler proof (private communication) of the refined bracket polynomial lemma than the proof the author initially came up with, which can be found in \cite[Appendix C]{LenNew}. We would also like to thank Ashwin Sah and Mehtaab Sawhney for being extremely supportive and helpful to the author; their feedback has helped improve this manuscript immensely. We in addition thank Dan Altman, Jaume de Dios, Ben Green, Zach Hunter, Ben Johnsrude, Borys Kuca, Freddie Manners, Rushil Raghavan, and David Soukup for helpful discussions and suggestions related to this project. In particular, when the author was on the verge of giving up on this project, the preparation of giving an explanation of this project to Borys led the author to the key inspiration needed to prove Theorem~\ref{t:mainresult1}. We are grateful to Joni Ter\"av\"ainen as well for helpful comments and raising the question of formulating an equidistribution theory with better bounds (jointly with Terry Tao) as a remark in \cite{TT21}. The author is supported by an NSF Graduate Research Fellowship Grant No. DGE-2034835.

\section{Notation and conventions}\label{s:not}
In this section, we will mostly go over notation regarding nilpotent Lie groups. Most of our notation follows \cite{GT12}. First, we set some general notation. Given a function $f\colon A \to \mathbb{C}$ defined on a finite set $A$, we set
$$\mathbb{E}_{a \in A} f(a) := \frac{1}{|A|} \sum_{a \in A} f(a).$$
We also denote for $x \in \mathbb{C}$, $e(x) = \exp(2\pi i x)$. Given a real number $r \in \mathbb{R}$, we denote $\{r\} \in (-1/2, 1/2]$ as the difference between $r$ and a nearest integer to $r$. Given a general vector $a = (a_1, \dots, a_d) \in \mathbb{R}^d$, we denote $\{a\} = (\{a_1\}, \dots, \{a_d\})$. The variable $\delta$ will denote a positive small parameter less than $\frac{1}{100}$. If $v = (v_1, \dots, v_d) \in \mathbb{R}^d$ is a vector, we denote $|v| := \|v\|_\infty := \sup_{1 \le i \le d} |v_i|$. We shall also write $\mathbb{T}^d := \mathbb{R}^d/\mathbb{Z}^d$. \\\\
Nilmanifolds arise naturally as a means to ``clean up" bracket polynomials that show up in the inverse theory of Gowers norms. For further reference along these objects and how they relate to higher order Fourier analysis, we refer the reader to \cite{Tao12} or \cite{HK18}.\\\\
We shall work with the following (slightly nonstandard\footnote{\cite{GT12} works with the vertical component as $G_k$ component where $k$ is the smallest nontrivial element of an arbitrary filtration.}) conventions:
\begin{definition}[Horizontal and vertical component]
We denote $G_{(s)}$ as the \emph{vertical component} of $G$ and $G/G_{(2)} = G/[G, G]$ as the \emph{horizontal} component of $G$. We denote the quotient map as $\pi_{\mathrm{horiz}}\colon G \to G/G_{(2)}$. We denote the dimension of $G/G_{(2)}$ as $d_{\mathrm{horiz}}$ and the dimension of $G_{(s)}$ as $d_{(s)}$.
\end{definition}
For $\vec{N}$ a vector in $\mathbb{N}^\ell$ and enumerated as $\vec{N} = (N_1, \dots, N_\ell)$, we define $[\vec{N}] := [N_1] \times [N_2] \times \cdots \times [N_\ell]$ and $|\vec{N}| = N_1N_2\cdots N_\ell$. We will now define smoothness norms of polynomials on $\vec{N}$. 
\begin{definition}[Smoothness norms]
Given $p: \mathbb{Z}^\ell \to \mathbb{C}$ a multivariate polynomial, we expand
$$p(\vec{n}) = \sum_{\vec{j}} \alpha_{\vec{j}}\binom{\vec{n}}{\vec{j}}$$
where denoting $\vec{n} = (n_1, \dots, n_\ell)$ and $\vec{j} = (j_1, \dots, j_\ell)$, we define
$$\binom{\vec{n}}{\vec{j}} := \binom{n_1}{j_1}\binom{n_2}{j_2} \cdots \binom{n_\ell}{j_\ell}.$$
We define
$$\|p\|_{C^\infty[\vec{N}]} \le \sup_{\vec{i} \neq \vec{0}} \vec{N}^{\vec{i}} \|\alpha_i\|_{\mathbb{R}/\mathbb{Z}}$$
where we denote $\vec{N}^{\vec{i}} = N_1^{i_1}N_2^{i_2} \cdots N_\ell^{i_\ell}$.
\end{definition}
We require the following definition regarding the height of a rational number.
\begin{definition}[Height of a rational]
We say an element $r \in \mathbb{Q}$ has \emph{height} at $Q$ if written in reduced form $r = \frac{a}{b}$, we have $|a|, |b| \le Q$.
\end{definition}
We will now define a nilmanifold.
\begin{definition}[Nilmanifold]
A nilmanifold of degree $k$, step $s$, complexity at most $M$, dimension $d$ consists of the following data:
\begin{enumerate}
    \item An $s$-step connected and simply connect nilpotent Lie group $G$ and a discrete cocompact subgroup $\Gamma$;
    \item A filtration $G_\bullet = (G_i)_{i = 0}^\infty$ consisting of subgroups $G_i$ of $G_0$ with $G_{i + 1} \subseteq G_i$ and $G_0 = G_1$ such that $G_\ell = \mathrm{Id}_G$ for $\ell > k$, and $[G_i, G_j] \subseteq G_{i + j}$; and
    \item A basis $\{X_1, \dots, X_d\}$ of $\mathfrak{g} := \log(G)$ known as a \emph{Mal'cev basis}.
\end{enumerate}
We furthermore require the data to satisfy the following.
\begin{enumerate}
    \item For $1 \le i, j \le d$, we have
    $$[X_i, X_j] = \sum_{k > \max(i, j)} a_{ijk} X_k$$
    where $a_{ijk}$ has height at most $M$;
    \item Letting $d_i := \text{dim}(G_i)$, we have $\log(G_i) = \text{span}(X_j: d - d_i < j \le d)$;
    \item Each element $g$ can be expressed uniquely as $\exp(t_1X_1)\exp(t_2X_2) \cdots \exp(t_dX_d)$ with $t_1, \dots, t_d \in \mathbb{R}$ and we denote the map $\psi\colon G \to \mathbb{R}^d$ via $\psi(g) := (t_1, \dots, t_d)$;
    \item $\Gamma$ is precisely all the elements $g$ for which $\psi(g) \in \mathbb{Z}^d$.
\end{enumerate}
We will refer to any Mal'cev basis that satisfies (2), (3), and (4) as a Mal'cev basis \emph{adapted to} the filtration $G_\bullet$.
\end{definition}
Note that each nilpotent Lie group has the \emph{standard filtration} which is defined above as $G_i = [G, G_{i - 1}]$, but not all filtrations have to be the standard filtration. It is true that the standard filtration is minimal in the sense that given a filtration $(G_i)$, $G_i \supseteq G_{(i)}$ \cite[Exercise 1.6.2]{Tao12}. We will now need the definition of a multiparameter polynomial sequence.
\begin{definition}[Multivariable polynomial sequences]
Given a nilmanifold $G/\Gamma$ with filtration $(G_i)_{i = 0}^k$, the multiparameter polynomial sequences $\mathrm{poly}(\mathbb{Z}^\ell, G)$ are the set of maps $g\colon \mathbb{Z}^\ell \to G$ such that
$$g(\vec{n}) = \prod_{\vec{i}}g_{\vec{i}}^{\binom{\vec{n}}{\vec{i}}}$$
where $g_{\vec{i}} \in G_{|\vec{i}|}$ where we take the product with respect to some ordering such that $\vec{i}$ appears before $\vec{j}$ if $|\vec{i}| < |\vec{j}|$ and if $|\vec{i}| = |\vec{j}|$ we order arbitrarily. Note from \cite[Chapter 14]{HK18} that $\mathrm{poly}(\mathbb{Z}^\ell, G)$ forms a group under pointwise multiplication. Furthermore, any ordering gives results in the same group. 
\end{definition}
We will now need definitions of rational and smooth polynomial sequences.
\begin{definition}[Rational and smooth sequences] Given a positive integer $D$, we say an element $g$ of $G$ is \emph{rational with height $D$}, or \emph{$D$-rational}, if $g^D \in \Gamma$. We say a polynomial sequence is $D$-rational if $\gamma(\vec{n})$ is $D$-rational for each $\vec{n} \in\mathbb{Z}^\ell$.

Given a polynomial sequence $\eps(\vec{n})$, we say that $\eps$ is $(K, \vec{N})$-\emph{smooth} if for each $i$,
$$d_{G}(\eps(n + e_i), \eps(n)) \le \frac{K}{N_i} \text{ and } d_{G}(\eps(0), \mathrm{id}_G) \le K.$$
\end{definition}
We will need the definition of a metric on a nilmanifold.
\begin{definition}[Metric of a nilmanifold]\label{d:metricnilmanifold}
Given a nilmanifold $G/\Gamma$ with coordinate map $\psi$, we may define a distance map $d$ to be right invariant and satisfying
$$d_{\psi}(x, y) := \inf\left\{\sum_{i = 1}^n \min(|\psi(x_ix_{i - 1}^{-1})|, |\psi(x_{i - 1}x_i^{-1})|): x_0 = x, x_n = y\right\}.$$
This defines a metric on $G/\Gamma$ via $d(x\Gamma, y\Gamma) := \inf\{d(x', y'): x'\Gamma = x\Gamma, y'\Gamma = y\Gamma\}$.    
\end{definition}
We are finally in a position to define a nilsequence.
\begin{definition}[Nilsequence]
A \emph{nilsequence} is a sequence of the form $n \mapsto F(g(n)\Gamma)$ where $F\colon G/\Gamma \to \mathbb{C}$ is Lipschitz. We shall denote the \emph{Lipschitz norm} or \emph{Lipschitz parameter} of $H\colon X \to \mathbb{C}$ on a metric space $(X, d_X)$ as
$$\|H\|_{\mathrm{Lip}(X)} := \|H\|_{L^\infty(X)} + \sup_{x \neq y \in X} \frac{|H(x) - H(y)|}{d_X(x, y)}.$$
If $\|H\|_{\mathrm{Lip}(X)} \le M$, we say that $H$ is an \emph{$M$-Lipschitz function on $X$}, or simply an \emph{$M$-Lipschitz function}.
\end{definition}
The point of considering Lipschitz functions is that in the case that $G$ is abelian, they can be quantitatively Fourier approximated. See Lemma~\ref{l:nilcharacters} for the corresponding generalization for nilsequences.

%\begin{lemma}\label{l:automorphic}
 %If $F\colon G \to \mathbb{C}$ is $\Gamma$-automorphic, then $\|F\|_{\mathrm{Lip}(G)} = \|F\|_{\mathrm{Lip}(G/\Gamma)}$   
%\end{lemma}
%\begin{proof}
%We have
%\begin{align*}
%|F(x) - F(y)| &= \inf_{\gamma, \gamma' \in \Gamma} |F(x\gamma) - F(y\gamma')|  \\
%&\le \|F\|_{\mathrm{Lip}(G)} \inf_{\gamma, \gamma' \in \Gamma} d(xg, yg') \\
%&= \|F\|_{\mathrm{Lip}(G)} d_{G/\Gamma}(x\Gamma, y\Gamma).
%\end{align*}
%On the other hand, we also have
%$$|F(x) - F(y)| \le \|F\|_{\mathrm{Lip}(G/\Gamma)} d_{G/\Gamma}(x\Gamma, y\Gamma) \le \|F\|_{\mathrm{Lip}(G/\Gamma)}d_{G}(x, y).$$
%This chain of inequalities give $\|F\|_{\mathrm{Lip}(G)} \le \|F\|_{\mathrm{Lip}(G/\Gamma)}$ whereas taking a quotient and taking a supremum over $x$ and $y$ and applying the previous inequalities give
%$$\sup_{x, y} \frac{|F(x) - F(y)|}{d_{G/\Gamma}(x\Gamma, y\Gamma)} = \|F\|_{\mathrm{Lip}(G/\Gamma)} \le \|F\|_{\mathrm{Lip}(G)}.$$
%\end{proof}

We require the following definitions of horizontal characters.
\begin{definition}[Horizontal characters]
\emph{Horizontal characters} on $G/\Gamma$ are continuous homomorphisms $\eta\colon G \to \mathbb{R}$ such that $\eta(\Gamma) \subseteq \mathbb{Z}$; note that since $\mathbb{R}$ is abelian, $\eta$ annihilates $[G, G]$. They are referred to as such since $G/[G, G]\Gamma$ is known as the \emph{horizontal torus}. Since we were given a Mal'cev basis, each horizontal character $\eta$ corresponds to a vector $k \in \mathbb{Z}^d$ and thus embeds in $\Gamma$.    
\end{definition}

\begin{definition}[Orthogonality]
Given a horizontal character $\eta$ and an element $w \in G$, we define an inner product $\langle \eta, w \rangle := \eta(w)$ and we say that $w$ and $\eta$ are \emph{orthogonal} (or correspondingly $w$ is orthogonal to $\eta$ or $\eta$ is orthogonal to $w$) if $\langle \eta, w \rangle = 0$.
\end{definition}
If $H$ is a rational subgroup that contains $[G, G]$, $G/H$ can be identified with the orthogonal complement $\mathfrak{h} = \log(H)$ with respect to the Mal'cev basis; it thus makes sense to talk about linear independence of elements in $G/H$. \\\\
Note that given elements $w_1, \dots, w_{s - 1}$ in $\Gamma/(\Gamma \cap [G, G])$, that $g \mapsto \xi([[[[g, w_1], w_2], \dots], w_{s - 1}])$ defines a horizontal character. Since the notation $[[[[g, w_1], w_2], \dots], w_{s - 1}]$ will appear very often, we introduce the following notation as shorthand.
\begin{definition}[$t - 1$-fold commutator]
    Given $g_1, \dots, g_t$ in a group $G$, we define
    $$[g_1, g_2, \dots, g_t] := [[[g_1, g_2], g_3], \dots, g_t].$$
    We will denote any such commutator as a \emph{$t - 1$-fold commutator} of elements $g_1, \dots, g_t$.
\end{definition}
We will now need the notion of a size of a horizontal and vertical character.
\begin{definition}[Size of a character]
Consider a horizontal character $\eta\colon G \to \mathbb{R}$. To define the \emph{size} or \emph{modulus} of a horizontal character, we recall that writing in Mal'cev coordinates $\eta(g) = k \cdot \psi(g)$ for a vector $k \in \mathbb{Z}^d$. The size or modulus of $\eta$, denoted $|k|$ or $\|k\|_\infty$, is the $L^\infty$ norm of the vector $k$. 

Given a rational subgroup $H$ of the center of $G$ and a homomorphism $\xi\colon H/(\Gamma \cap H) \to \mathbb{R}/\mathbb{Z}$, we denote the \emph{size} or \emph{modulus} of $\xi$, denoted as $|\xi|$ to be the Lipschitz constant of $x \mapsto e(\xi(x))$ on $H/(\Gamma \cap H)$. That is,
$$|\xi| := \sup_{x \neq y \in H/(\Gamma \cap H)} \frac{|e(\xi(x)) - e(\xi(y))|}{d_{G/\Gamma}(x, y)}.$$    
\end{definition}
We will also need the size of an element of $G$.
\begin{definition}[Size of an element]
The \emph{size} of an element $g \in G$, is the $\ell^\infty$ norm of $\psi(g)$; if $H$ is a rational subgroup of $G$ that contains $[G, G]$, the size of an element $\overline{g} \in G/H$ is defined to be the $\inf_{g' \in G\colon \overline{g} = g'\nmod{H}} \|\psi(g')\|_\infty$.
\end{definition}
Finally, we will need notions of rationality of subspaces and subgroups.
\begin{definition}[Rationality of a subspace]
Let $V$ be a vector space with a basis $\mc{B}$ and let $W$ be a subspace. We say that $W$ is at most $Q$-rational if $W$ has a basis of elements $\mc{B}'$ with $w_j\in \mc{B}'$ satisfying
\[w_j = \sum_{v_k\in \mc{B}} a_{jk} v_k\]
such that $a_{jk}$ are rationals with height at most $Q$.
\end{definition}
\begin{definition}[Rationality of a subgroup]
A connected and simply connected subgroup $G'$ of a nilmanifold $G/\Gamma$ given Mal'cev basis $\mc{X}$ is $Q$-rational if $\mathfrak{g}' := \log(G')$ is $Q$-rational with respect to $\mathfrak{g}$ given basis $\mc{X}$.
\end{definition}
\subsection{Reduction techniques}
One technique we use is one used to reduce to the case that $g(0) = 1$ and $|\psi(g(e_i))| \le 1/2$. We encapsulte this in the following lemma.
\begin{lemma}\label{l:trick1}
Consider $G/\Gamma$, a nilmanifold of complexity $M$, dimension $d$, and equipped with a degree $k$ filtration. Given a nilsequence $F(g(\vec{n})\Gamma)$, there exists a nilsequence $(\tilde{F}(\tilde{g}(\vec{n})\Gamma))_{\vec{n} \in \mathbb{Z}^\ell}$ such that
\begin{itemize}
    \item $F(g(\vec{n})\Gamma) = \tilde{F}(\tilde{g}(\vec{n})\Gamma)$ for all $\vec{n} \in \mathbb{Z}^\ell$
    \item $\|\tilde{F}\|_{\mathrm{Lip}(G)} \le M^{O_k(d^{O(1)})}\|F\|_{\mathrm{Lip}(G)}$
    \item $\tilde{g}(0) = 1$ and $|\psi(\tilde{g}(e_i))| \le \frac{1}{2}$ for each $i \in \{1, \dots, \ell\}$
    \item For any horizontal character $\eta$, $\|\eta \circ g\|_{C^\infty[\vec{N}]} = \|\eta \circ \tilde{g}\|_{C^\infty[\vec{N}]}$. 
    \item If $F$ is a vertical character of frequency $\xi$, then $\tilde{F}$ is also a vertical character of frequency $\xi$.
\end{itemize}
\end{lemma}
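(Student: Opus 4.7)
The plan is to construct $\tilde{g}$ from $g$ by two successive modifications that preserve $F(g(\vec{n})\Gamma)$ pointwise: first a left translation by a bounded element (which is absorbed into $F$) to force $\tilde{g}(0)=1$, and then right multiplication by a degree-one polynomial sequence taking integer values in $\Gamma$ (which does not change the coset modulo $\Gamma$) to drop each $\tilde{g}(e_i)$ into the Mal'cev fundamental domain. The key facts I will use are that $F$ is right-$\Gamma$-invariant, that $\mathrm{poly}(\mathbb{Z}^\ell, G)$ with respect to the given filtration forms a group under pointwise multiplication, and that the Mal'cev basis provides a bounded fundamental domain for $\Gamma \backslash G$.

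For the first step, I write $g(0) = h_0 \gamma_0$ with $|\psi(h_0)| \le \tfrac{1}{2}$ and $\gamma_0 \in \Gamma$, set $\tilde{F}(x\Gamma) := F(h_0 x\Gamma)$, and define $\tilde{g}_1(\vec{n}) := h_0^{-1} g(\vec{n}) \gamma_0^{-1}$. Then $\tilde{g}_1$ is a polynomial sequence in the same filtration, $\tilde{g}_1(0) = 1$, and $\tilde{F}(\tilde{g}_1(\vec{n})\Gamma) = F(g(\vec{n})\Gamma)$. For the second step, I choose $\delta_i \in \Gamma$ so that $\tilde{g}_1(e_i)\delta_i$ has Mal'cev coordinates in $[-\tfrac{1}{2},\tfrac{1}{2}]^d$ and set
\[
\tilde{g}(\vec{n}) := \tilde{g}_1(\vec{n}) \prod_{j=1}^{\ell} \delta_j^{n_j}.
\]
This is again a polynomial sequence with respect to the same filtration, $\tilde{g}(0) = 1$ and $\tilde{g}(e_i) = \tilde{g}_1(e_i)\delta_i$ has $|\psi|\le \tfrac{1}{2}$, and $\prod_j \delta_j^{n_j} \in \Gamma$ implies $\tilde{g}(\vec{n})\Gamma = \tilde{g}_1(\vec{n})\Gamma$, so $\tilde{F}(\tilde{g}(\vec{n})\Gamma) = F(g(\vec{n})\Gamma)$.

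For the ancillary properties, I apply any horizontal character $\eta$ (a homomorphism $G \to \mathbb{R}$ sending $\Gamma$ into $\mathbb{Z}$) to the formula for $\tilde{g}$ to obtain
\[
\eta(\tilde{g}(\vec{n})) = \eta(g(\vec{n})) + \sum_{j=1}^{\ell} n_j\, \eta(\delta_j) - \eta(h_0) - \eta(\gamma_0),
\]
where $\eta(\delta_j), \eta(\gamma_0) \in \mathbb{Z}$. Thus the only changes to the Taylor coefficients of $\eta\circ g$ in the $\binom{\vec{n}}{\vec{i}}$-basis are an additive real constant in the $\vec{i} = \vec{0}$ slot and integer shifts in the $\vec{i} = e_j$ slots, neither of which affects the $C^\infty[\vec{N}]$ norm. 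For the vertical character condition, $G_{(s)}$ is central in the $s$-step nilpotent group $G$, so for $g' \in G_{(s)}$ we have $h_0 g' = g' h_0$ and hence $\tilde{F}(g'x) = F(g' h_0 x) = e(\xi(g'))\tilde{F}(x)$, so $\tilde F$ has the same vertical frequency $\xi$.

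The main obstacle is the Lipschitz bound for $\tilde{F}$: since the metric on $G/\Gamma$ is right-invariant, controlling $\tilde{F}(x) = F(h_0 x)$ amounts to bounding the operator norm of left multiplication by $h_0$, which via Baker--Campbell--Hausdorff reduces to bounding $\psi(h_0 x h_0^{-1})$ in terms of $\psi(x)$. Because $|\psi(h_0)|\le \tfrac{1}{2}$ and the filtration has degree $k$, this expansion involves $O_k(d^{O(1)})$ nested commutators, each introducing factors of the structure constants (bounded by $M$), giving $\|\tilde F\|_{\mathrm{Lip}} \le M^{O_k(d^{O(1)})} \|F\|_{\mathrm{Lip}}$. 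This is the only step requiring genuine quantitative work, and it is cleanest to import from the quantitative Mal'cev-basis estimates collected in the appendix rather than to unwind it inline.
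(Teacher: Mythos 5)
Your proposal is correct and follows essentially the same route as the paper: factor $g(0)$ into its fractional and integral parts, absorb the fractional part into $F$ (with the Lipschitz cost controlled by the quantitative Mal'cev-basis lemmas, i.e.\ Lemma~\ref{l:multiplication}), and then right-multiply by the degree-one sequence $\prod_j \delta_j^{n_j}$ with $\delta_j \in \Gamma$ to normalize the coordinates of $\tilde g(e_i)$, noting that horizontal characters only pick up a constant and integer multiples of $n_j$, so the $C^\infty[\vec N]$ norms are unchanged. Your spelled-out verification of the vertical-frequency preservation via centrality of $G_{(s)}$ matches what the paper asserts implicitly.
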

\begin{proof}
To prove this, we factorize $g(0) = \{g(0)\} [g(0)]$ where $\psi(\{g(0)\}) \in [-1/2, 1/2)^d$ and $[g(0)] \in \Gamma$. Letting $g_1(n) = \{g(0)\}^{-1}g(n)[g(0)]^{-1}$ and $\tilde{F}(x) = F(\{g(0)\}x)$, it follows by Lemma~\ref{l:multiplication} that $\tilde{F}$ has Lipschitz constant $M^{O_k(d^{O(1)})}\|F\|_{\mathrm{Lip}}$ and $\tilde{F}$ is a vertical character of frequency $\xi$. This allows us to reduce to the case that $g(0) = \mathrm{id}_G$. To reduce to the case that $|\psi(g(e_i))| \le 1/2$, we write
$$\tilde{g}(n) := g(n) \prod_{i = 1}^\ell [g(e_i)]^{-n_i}$$
where $g(e_i) = \{g(e_i)\}[g(e_i)]$ with $\psi(\{g(e_i)\}) \in [-1/2, 1/2)^d$ and $[g(e_i)] \in \Gamma$. Note that $|\psi(\tilde{g}(e_i))| = |\psi(\{g(e_i)\})| \le \frac{1}{2}$, for any horizontal character $\eta$ we have $\|\eta \circ \tilde{g}\|_{C^\infty[\vec{N}]}= \|\eta \circ g\|_{C^\infty[\vec{N}]}$, and $F(g(\vec{n})\Gamma) = \tilde{F}(\tilde{g}(\vec{n})\Gamma)$ for each $\vec{n} \in \mathbb{Z}^\ell$. This gives the result.
\end{proof}
Another technique we will mention here is a technique we can use to reduce to the one dimensional vertical torus case. 
\begin{lemma}\label{l:trick2}
Suppose $F$ is a vertical character with frequency a nonzero $\xi$ of size at most $L$ and $F(g(n)\Gamma)$ is a nilsequence on a nilmanifold $G/\Gamma$ of step $s$. Then there exists a nilsequence $\tilde{F}(\tilde{g}(n)\tilde{\Gamma})$ on a nilmanifold $\tilde{G}/\tilde{\Gamma}$ with a Mal'cev basis $(Y_i)_{i = 1}^{d - d_{(s)} + 1}$ such that
\begin{itemize}
    \item $\tilde{G} = G/\mathrm{ker}(\xi)$ and $\tilde{\Gamma} = \Gamma/(\Gamma \cap \mathrm{ker}(\xi))$, and $\tilde{F}(\tilde{g}(n)\tilde{\Gamma}) = F(g(n)\Gamma)$.
    \item The vertical component of $\tilde{G}$ is one-dimensional and for $s \ge 2$, the horizontal component of $G$ and the horizontal component of $\tilde{G}$ are naturally isomorphic. 
    \item If $\eta$ is a horizontal character of size at most $M$ in $\tilde{G}$, then $\eta$ also has size at most $(LM)^{O_k(d)^{O_k(1)}}$ in $G$.
    \item $\|\tilde{F}\|_{\mathrm{Lip}(\tilde{G}/\tilde{\Gamma})} \le (ML)^{O_k(d)^{O_k(1)}}\|F\|_{\mathrm{Lip}(G/\Gamma)}$ and $\tilde{G}/\tilde{\Gamma}$ has complexity at most $(ML)^{O_k(d)^{O(1)}}$.
\end{itemize}
\end{lemma}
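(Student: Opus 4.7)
The plan is to realize $\tilde G$ and $\tilde \Gamma$ as the quotients of $G$ and $\Gamma$ by $\ker(\xi)$, which is a codimension-one central subgroup of $G$ contained in $G_{(s)}$. Because $\xi$ is a nonzero homomorphism $G_{(s)} \to \mathbb{R}$ with $\xi(\Gamma \cap G_{(s)}) \subseteq \mathbb{Z}$ and Lipschitz size at most $L$, standard quantitative linear algebra on the lattice $\Gamma \cap G_{(s)} \subseteq G_{(s)} \cong \mathbb{R}^{d_{(s)}}$ shows that $\ker(\xi)$ is a rational subgroup of complexity polynomial in $L$, whence $\Gamma \cap \ker(\xi)$ is cocompact in $\ker(\xi)$ and $\tilde \Gamma$ is a discrete cocompact subgroup of $\tilde G$. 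Since $\ker(\xi)$ is normal (being central), the degree $k$ filtration $(G_i)$ pushes forward to a degree $k$ filtration $(\tilde G_i)$ on $\tilde G$, and $\tilde G$ remains $s$-step nilpotent because $\xi$ is nonzero, so $\ker(\xi)$ is a proper subgroup of $G_{(s)}$.

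Having set up the quotient, I would verify the structural claims directly. The vertical component $\tilde G_{(s)} = G_{(s)}/\ker(\xi) \cong \mathbb{R}$ is one-dimensional, since commutators commute with quotients by central subgroups. For $s \ge 2$ one has $\ker(\xi) \subseteq G_{(s)} \subseteq [G,G]$, hence $[\tilde G, \tilde G] = [G,G]/\ker(\xi)$, and the horizontal component $\tilde G/[\tilde G, \tilde G]$ is canonically isomorphic to $G/[G,G]$. Because $F$ is a $G_{(s)}$-vertical character with frequency $\xi$, it is constant on cosets of $\ker(\xi)$, hence descends to a well defined function $\tilde F$ on $\tilde G/\tilde \Gamma$, and composing with the induced polynomial sequence $\tilde g$ gives $\tilde F(\tilde g(n)\tilde \Gamma) = F(g(n)\Gamma)$.

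The substantive work is the construction of a Mal'cev basis for $\tilde G$ together with the tracking of complexity bounds. Starting from $X_1, \dots, X_d$ with $X_{d-d_{(s)}+1}, \dots, X_d$ spanning $\log G_{(s)}$, I would invoke the quantitative rational subspace lemmas of Appendix~\ref{s:quantnil} to produce a rational change of basis on $\log G_{(s)}$ of height polynomial in $L$ such that the first $d_{(s)}-1$ of the new basis vectors span $\log \ker(\xi)$ and the remaining one pairs nontrivially with $\xi$. Discarding the redundant vectors and projecting the result yields candidate basis vectors $Y_1,\dots,Y_{d-d_{(s)}+1}$ for $\tilde G$. Checking the Mal'cev basis axioms (compatibility with the pushed forward filtration, bounded structure constants, integer coordinates cutting out $\tilde \Gamma$) is then a direct linear algebra computation whose cost is controlled by $(ML)^{O_k(d)^{O_k(1)}}$.

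The main obstacle is the quantitative bookkeeping. Verifying that a horizontal character of size at most $M$ in $\tilde G$ pulls back to a horizontal character in $G$ of size at most $(LM)^{O_k(d)^{O_k(1)}}$ requires a controlled comparison of the old and new coordinate systems on the horizontal torus, and bounding $\|\tilde F\|_{\mathrm{Lip}(\tilde G/\tilde \Gamma)}$ requires showing that the metric $d_{\tilde G/\tilde \Gamma}$ is bi-Lipschitz equivalent to the quotient of $d_{G/\Gamma}$ up to the same type of factor. Both of these reduce, after careful accounting, to the quantitative Mal'cev basis results of Appendix~\ref{s:quantnil}; the delicate point is simply keeping the exponents of $d$ and $L$ inside the $O_k$ notation under control and ensuring that the polynomial structure of $\tilde g$ with respect to the induced filtration is preserved, and no idea beyond careful basis manipulation is required.
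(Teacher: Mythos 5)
Your proposal is correct and follows essentially the same route as the paper: quotient $G$ and $\Gamma$ by the central, rational subgroup $\ker(\xi)$, descend $F$ and $g$, and build a Mal'cev basis for the quotient via the quantitative weak-basis/Mal'cev-construction lemmas of Appendix~\ref{s:quantnil}, then track horizontal-character sizes and the Lipschitz constant through the coordinate comparison. The only difference is cosmetic (you rotate a basis inside $\log G_{(s)}$ to isolate $\log\ker(\xi)$ before projecting, whereas the paper picks a maximal linearly independent subset of the projected $\overline{X_i}$ and invokes Lemma~\ref{l:ConstructingMalcev}), so no further comment is needed.
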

\begin{proof}
We first define a Mal'cev basis on $\tilde{G}/\tilde{\Gamma}$. $\{X_1, \dots, X_d\}$ be a Mal'cev basis for $G/\Gamma$ with complexity at most $M$. Let $\pi: G \to G/\mathrm{ker}(\xi)$ be the quotient map. Let $\overline{X_i}$ be the quotient of $X_i$ in $\log(G/\mathrm{ker}(\xi))$. Then since $X_i$ forms a $M$-rational basis for $G/\Gamma$, it follows from Lemma~\ref{l:weakbases} that $X_i$ is a $M^{O_k(d^{O(1)})}$-weak basis. Pick a maximal linearly independent subset $\mathcal{Y} = \{Y_j: j = 1, \dots, d - d_{(s)} + 1\}$ of the $\overline{X_i}$. It follows that since $\mathrm{ker}(\xi)$ is $(ML)^{O_k(d^{O(1)})}$-rational, each $\overline{X_k}$ is a $(ML)^{O_k(d^{O(1)})}$-rational combination of elements in $\mathcal{Y}$. Hence, $\mathcal{Y}$ is an $(ML)^{O_k(d^{O(1)})}$-weak basis. Invoking Lemma~\ref{l:ConstructingMalcev}, we obtain the desired Mal'cev basis for $\mathcal{Y}$. \\\\
To show that the vertical component is one-dimensional, note that if $G$ is $s$-step, let $\overline{g_1}, \dots, \overline{g_s} \in \tilde{G}$ and suppose that they lift to $g_1, \dots, g_s$, respectively. Then since the projection to $\mathrm{ker}(\xi)$ is a homomorphism, $[\overline{g_1}, \dots, \overline{g_s}] = \pi([g_1, \dots, g_s])$ which is nonzero if and only if $\xi([g_1, \dots, g_s])$ is nonzero. Hence, the vertical component is one-dimensional. The horizontal tori are seen to be isomorphic as $\mathrm{ker}(\xi) \subseteq [G, G]$. \\\\
In addition, if $\eta$ is a horizontal torus on $\tilde{G}/\tilde{\Gamma}$ of size at most $Q$, then $\eta(\exp(Y_j)) \le Q$ for each $j$. On the other hand, $\eta$ descends to $G/[G, G]$, so $\eta(\exp(\pi_{\mathrm{horiz}}(Y_j))) \le Q$. Each $\pi_{\mathrm{horiz}}(X_j)$ can be written as a $(LM)^{O_k(d^{O_k(1)})}$ rational combination of $\pi_{\mathrm{horiz}}(Y_j)$, so defining $\eta(\exp(X_i)) = \eta(\exp(\pi_{\mathrm{horiz}}(X_i)))$, it follows that $\eta(\exp(X_i)) \le Q(LM)^{O_k(d^{O_k(1)})}$ as desired. \\\\
Since $F$ is invariant under the kernel of $\xi$, $F$ descends to a map $\tilde{F}$ on $\tilde{G}$ via $\pi$. Thus, defining $\tilde{g} = \pi \circ g$, it follows that $\tilde{F}(\tilde{g}(n)\tilde{\Gamma}) = F(g(n)\Gamma)$. To show the Lipschitz bound on $\tilde{F}$, we see that (writing $x \sim z$ if $xz^{-1} \in \mathrm{ker}(\xi)$)
$$|F(x) - F(y)| = \inf_{x \sim x', y \sim y'} |F(x') - F(y')| $$
$$\le \inf_{x \sim x', y \sim y'} \|F\|_{\mathrm{Lip}(G)}d_G(x', y') \le \|F\|_{\mathrm{Lip}(G)}\inf_{x \sim x', y \sim y'} \inf_{(x_j)} \sum_j \min(|\psi(x_j x_{j + 1}^{-1})|, |\psi(x_{j + 1}x_j^{-1})|)$$
But since $G_{(s)}$ commutes with all of $G$, it follows that
$$\inf_{x \sim x', y \sim y'} \inf_{(x_j)} \sum_j \min(|\psi(x_j x_{j + 1}^{-1})|, |\psi(x_{j + 1}x_j^{-1})|) $$
$$\le (ML)^{O_k(d^{O_k(1)})}\inf_{(\pi(x_i))} \sum_j \min(|\psi(x_j x_{j + 1}^{-1})|, |\psi(x_{j + 1}x_j^{-1})|) \le (ML)^{O_k(d^{O_k(1)})}d_{\tilde{G}}(x, y).$$
In addition, we have
$$\|F\|_{\mathrm{Lip}(\tilde{G})} = \sup_{x, y} \frac{|F(x) - F(y)|}{d_{\tilde{G}}(xG', yG')} \le (ML)^{O_k(d^{O_k(1)})}\|F\|_{\mathrm{Lip}(G)}.$$
Since $\|F\|_{\mathrm{Lip}(G)} = \|F\|_{\mathrm{Lip}(G/\Gamma)}$ and similarly $\|F\|_{\mathrm{Lip}(\tilde{G})} = \|F\|_{\mathrm{Lip}(\tilde{G}/\tilde{\Gamma})}$, we have the desired inequality.
\end{proof}
\begin{remark}
We note that it is not necessarily true that if $H$ lies in the center of $G$, the horizontal component of $G/H$ agrees with the horizontal component of $G$. This is due to $[G/H, G/H] = [G, G]H/H$, so $(G/H)/([G/H, G/H]) \cong G/(H[G, G])$.
\end{remark}
%$$\mathbb{E}_{n \in [N]} F(g(n)\Gamma) = \mathbb{E}_{n \in [N]} F(g(Pn)\Gamma_1)$$
\subsection{Asymptotic notation}
We will specify asymptotic notation here. We say that $f = O(g)$ if there exists some absolute constant $C$ such that $|f| \le C|g|$. If $h$ is a variable, we say that $f = O_h(g)$ if there exists a constant $C_h$ depending on $h$ such that $|f| \le C_h|g|$. We shall also adopt Vinogradov's notation and to $f = O(g)$ as $f \ll g$ and $f = O_h(g)$ as $f \ll_h g$. In this paper, $s$ will often denote ``step" of a nilmanifold, and $k$ the ``degree" of the nilmanifold. Since we are in the setting where they are bounded, $O(g)$ will actually often be $O_k(g)$. Since in applications to arithmetic combinatorics, $s$ and $k$ are often constant, we make no effort to specify the explicit losses in terms of $s$ and $k$, though we anticipate since there is an iteration in $s$ and $k$ that losses are double exponential in those parameters. In addition, the case of $d = 1$ is somewhat degenerate, so quantities like $d^{O(1)}$ or $d^{O(d)}$ will actually denote $(d + 1)^{O(1)}$, and $(d + 1)^{O(d)}$, respectively. In an effort to shorthand a lot of the exponentials and quantities in \cite[Appendix A]{TT21}, the authors adopted the use of ``$\mathrm{poly}_m(\delta)$" (or denoted in our notation as $\mathrm{poly}_d(\delta)$) as any quantity lower bounded by $\gg \exp(\exp(-d^{O_k(1)}))\delta^{\exp(-d^{O_k(1)})}.$ Since many of our quantities are bounded above by a similarly cumbersome quantity $(\delta/M)^{-O_k(d^{O_k(1)})}$, we shall adopt a similar practice and instead denote $c(\delta)$ as any quantity lower bounded by $\gg (\delta/M)^{O_{k, \ell}(d^{O_{k, \ell}(1)})}$.

\subsection{Parameters used}
Given a nilpotent Lie group $G$ with a discrete cocompact subgroup $\Gamma$, we will specify some accompanying parameters that will be often used. The dimension of $G$ will be denoted $d$, and the dimension of $G_i$ will be denoted $d_i$. We will also specify that $d_{(j)}$ is the dimension of $G_{(j)}$ where as we defined above $G_{(j)}$ is the $j$th element of the standard filtration. As stated above, the step of the nilpotent Lie group will often be denoted $s$ and the degree of the filtration will often be denoted $k$. The complexity of the nilpotent Lie group will be bounded by $M$. We will work with the convention of $M \ge 2$.

\section{Refined bracket polynomial lemmas}\label{s:refbrack}
In this section, we deduce the refined bracket polynomial lemmas. This is, in the author's opinion, the crux of the argument of Theorem~\ref{t:mainresult1} and the main source of quantitative gain over \cite{GT12}. \\\\
We begin with the single variable refined bracket polynomial lemma.
\begin{lemma}\label{l:refined1}
Let $\delta \in (0, 1/10)$, $K \ge 2$ and $N \ge 2$ an integer. Suppose $\alpha, a \in \mathbb{R}^{d}$, $|a| \le M$, and
    $$\|\beta + a \cdot \{\alpha h\}\|_{\mathbb{R}/\mathbb{Z}} < K/N$$
    for $\delta N$ many $h \in [N]$. Then at least one of the following holds.
    \begin{itemize}
        \item We have $N \ll (\delta/d^{d}M)^{-O(1)}$ or $K/N > 1/100$;
        \item or there exists linearly independent $w_1, \dots, w_r \in \mathbb{Z}^d$, with $|w_1||w_2| \cdots |w_r| \le (\delta/d^{d} M)^{-O(1)}$ and linearly independent $\eta_1, \dots, \eta_{d - r} \in \mathbb{Z}^{d}$ with size at most $(\delta/d^{d} M)^{-O(1)}$ such that $\langle w_i, \eta_j \rangle = 0$ and
        $$\|\eta_j \cdot \alpha\|_{\mathbb{R}/\mathbb{Z}}, |w_i \cdot a| \le \frac{(\delta/M)^{-O(1)}d^{O(d)}K}{N}.$$
    \end{itemize}
\end{lemma}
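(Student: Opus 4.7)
The plan is to reduce, by elementary pigeonhole manipulations, to a ``lattice points in a convex body'' problem on a carefully chosen unimodular lattice in $\mathbb{R}^{d+1}$, and then invoke Minkowski's second theorem (together with a duality argument) to extract the desired vectors $w_i$ and $\eta_j$. First I would dispose of the trivial cases where $K/N > 1/100$ or $N$ falls below the allowed threshold. Next, since $|a\cdot\{\alpha h\}|\le dM/2$, the condition $\|\beta+a\cdot\{\alpha h\}\|_{\mathbb{R}/\mathbb{Z}}<K/N$ forces $\beta + a\cdot\{\alpha h\}$ to lie within $K/N$ of one of $O(dM)$ integers $m$. Pigeonholing on $m$, I obtain a single $m$ and a subset $H\subseteq[N]$ of size $\gtrsim\delta N/(dM)$ on which $|\beta + a\cdot\{\alpha h\}-m|<K/N$. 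Fixing any $h_0\in H$ and subtracting removes $\beta$ and $m$, yielding $|a\cdot(\{\alpha h\}-\{\alpha h_0\})|<2K/N$. Writing $\{\alpha h\}-\{\alpha h_0\}=\alpha n - k_n$ with $n = h-h_0 \in [-N,N]$ and $k_n\in\mathbb{Z}^d$ satisfying $|\alpha n - k_n|_\infty\le 1$, I would have: for $\gtrsim\delta N/(dM)$ values of $n$, there exists $k_n\in\mathbb{Z}^d$ with $|\alpha n-k_n|_\infty\le 1$ and $|a\cdot(\alpha n-k_n)|\le 2K/N$.

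Next I would introduce the unimodular lattice $\tilde\Lambda\subset\mathbb{R}^{d+1}$ generated by $(1,-\alpha)$ and $(0,e_1),\ldots,(0,e_d)$, so that each pair $(n,\alpha n-k_n)$ is a lattice vector. The preceding step produces $\gtrsim\delta N/(dM)$ points of $\tilde\Lambda$ inside the symmetric convex body
\[B = [-N,N]\times\bigl\{v\in\mathbb{R}^d : |v|_\infty\le 1,\ |a\cdot v|\le 2K/N\bigr\}.\]
Applying Minkowski's second theorem to $\tilde\Lambda$ with respect to $B$, the successive minima $\lambda_1\le\cdots\le\lambda_{d+1}$ satisfy $\prod_j\lambda_j\asymp 2^{d+1}/\mathrm{vol}(B)$, and the lower bound on $|\tilde\Lambda\cap B|$ forces a specific number, say $r+1$, of the $\lambda_j$'s to be $\le 1$. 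A reduced basis then delivers $r+1$ linearly independent short lattice vectors $(n_j,v_j)$ with $|v_j|_\infty\le 1$ and $|a\cdot v_j|\le 2K/N$; after removing a trivial direction (corresponding to $n_j = 0$, which contributes no useful information on $\alpha$), the second coordinates $v_j$ become candidates for the $w_i$'s, satisfying the required bounds $|w_i\cdot a|\lesssim K/N$ and the product bound $|w_1|\cdots|w_r|\le(\delta/d^d M)^{-O(1)}$ via the Minkowski product identity.

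For the $\eta_j$'s I would pass to the dual lattice $\tilde\Lambda^*$, which a direct computation shows is generated by $(1,0)$ and $(\alpha_i,e_i)$; its short vectors with first coordinate of size $\epsilon$ and integer $\mathbb{R}^d$-part correspond exactly to integer vectors $\eta\in\mathbb{Z}^d$ with $\|\eta\cdot\alpha\|_{\mathbb{R}/\mathbb{Z}}\le\epsilon$, which is precisely the content of the desired $\eta_j$'s. By Minkowski's transference principle, the large primal minima $\lambda_{r+2},\ldots,\lambda_{d+1}$ correspond to small dual minima $\lambda_{d+1-r}^*,\ldots,\lambda_1^*$, yielding $d-r$ linearly independent dual lattice vectors. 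To guarantee the orthogonality $\langle w_i,\eta_j\rangle = 0$, I would construct the $\eta_j$'s as a reduced basis of the intersection of $\tilde\Lambda^*$ with the orthogonal complement of the span of the short primal vectors producing the $w_i$'s. The hardest part will be the careful bookkeeping of scales: balancing the pigeonhole losses $O(dM)/\delta$ against the Minkowski losses (which contribute $2^d/d!$ factors and successive-minima product relations) so that both $|w_i\cdot a|$ and $\|\eta_j\cdot\alpha\|_{\mathbb{R}/\mathbb{Z}}$ match the common target $(\delta/M)^{-O(1)}d^{O(d)}K/N$, while simultaneously enforcing the individual size bounds $|\eta_j|\le(\delta/d^d M)^{-O(1)}$ and the product bound on $|w_i|$. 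This coordinated primal/dual reduction is the crux of the argument, and I expect it is the step for which Tao's simplification suggestion is most valuable.
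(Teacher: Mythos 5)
Your overall strategy---pigeonhole away $\beta$, then feed a lattice-and-convex-body formulation into Minkowski's second theorem---is the same family of argument as the paper's, but as written it has genuine gaps precisely at the point where the $w_i$ and $\eta_j$ are extracted. Your $w_i$'s are the second coordinates $v_j=\alpha n_j-k_j$ of short vectors of the graph lattice $\tilde\Lambda$; these are real, not integer, vectors, and the exact orthogonality $\langle w_i,\eta_j\rangle=0$ against integer $\eta_j$ cannot be arranged this way: a dual vector of $\tilde\Lambda$ has the form $(m+\eta\cdot\alpha,\eta)$ with $m\in\mathbb{Z}$, $\eta\in\mathbb{Z}^d$, and orthogonality in $\mathbb{R}^{d+1}$ to $(n_j,v_j)$ only gives $\eta\cdot v_j=-(m+\eta\cdot\alpha)n_j$, which is small but not zero; forcing it to vanish would require $\eta\cdot\alpha$ to be rational with denominator dividing $n_j$. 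The paper's proof is organized exactly to avoid this: it first reduces (Lemma~\ref{l:easierrefined1}) to $\alpha$ rational with odd prime denominator $N$, so that $\alpha\mathbb{Z}+\mathbb{Z}^d$ is a genuine lattice in $\mathbb{R}^d$; it then takes $V$ to be the span of the short lattice vectors (the GAP generators from Proposition~\ref{p:properprogression}), proves by a residue-counting argument (using primality of $N$ and letting the dilate $L\to\infty$) that $V\cap\mathbb{Z}^d$ is a full-rank lattice in $V$ of covolume at most $\delta^{-1}Md^{O(d)}$, takes the $w_i$ to be short \emph{integer} vectors of $V\cap\mathbb{Z}^d$ (product bound from Minkowski; $|w_i\cdot a|$ small because, by Ruzsa covering, a small ball of $V$ is contained in the slab body), and builds the $\eta_j$ by Cramer/adjugate so that they are integral, of bounded size, and \emph{exactly} orthogonal to $V$; since each $\eta_j$ then annihilates the many lattice points $\{\alpha h\}$, primality gives $\|\eta_j\cdot\alpha\|_{\mathbb{R}/\mathbb{Z}}=0$, and the error from the rational approximation is handled afterwards by an explicit $(d+1)\to d$ linear-algebra step. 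Your plan contains no substitute for this integrality mechanism, and it is the crux of the lemma.

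Separately, the transference step you propose for the $\eta_j$'s is quantitatively unsound: your body $B$ has width of order $K/N$ in the direction of $a$, so its polar is elongated by a factor of order $N/K$ along $a$; short vectors of the dual lattice with respect to $B^{\circ}$ may therefore have $\mathbb{Z}^d$-part of size comparable to $N/K$, and the required bound $|\eta_j|\le(\delta/d^dM)^{-O(1)}$, uniform in $N$, does not follow. In the paper the smallness (indeed vanishing) of $\|\eta_j\cdot\alpha\|_{\mathbb{R}/\mathbb{Z}}$ never comes from dual norms at all, but from the annihilation-of-$V$ argument just described; if you want to keep the graph-lattice formulation you would still need to locate the integer sublattice $V\cap\mathbb{Z}^d$ inside the span of your short primal vectors and rebuild both the orthogonality and the size bounds from it.
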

This can be generalized to the multiparameter refined bracket polynomial lemma.
\begin{lemma}\label{l:refined2}
Let $\delta, M, K, d, r > 0$ fixed with $\delta \in (0, 1/10)$, $M > 1$, $d \in \mathbb{Z}$, $a_1, \dots, a_r, \alpha_1, \dots, \alpha_r$ be such that, $(a_1, \dots, a_r), (\alpha_1, \dots, \alpha_r) \in (\mathbb{R}^d)^r$, with $|a| \le M$, and $\beta \in \mathbb{R}^r$. Let $\vec{N} = (N_1, \dots, N_r)$ be an element in $\mathbb{Z}^r_{>0}$. Suppose that
$$\left\|\beta \cdot \vec{n} +  \sum_{j = 1}^r n_j a_j \cdot \left\{\sum_{i = 1}^r \alpha_i h_i\right\}\right\|_{C^\infty[\vec{N}]} \le K$$
    for $\delta |\vec{N}|$ many $h \in [\vec{N}]$. Then at least one of the following holds.
    \begin{itemize}
        \item We have $N_i \ll (\delta/(dr)^{d + r}M)^{-O(1)}$ for some $i$ or $K/N_i > \frac{1}{100r}$ for some $i$;
        \item or there exists linearly independent $w_1, \dots, w_r \in \mathbb{Z}^d$ and linearly independent $\eta_1, \dots, \eta_{d - r} \in \mathbb{Z}^{d}$ with size at most $(\delta/drM)^{-O(d + r)}$ in $\mathbb{Z}^{d}$ such that for each $(i, k)$, $\langle w_i, \eta_k \rangle = 0$ and for each $i$ and $j$
    $$\|\eta_i \cdot \alpha_j\|_{\mathbb{R}/\mathbb{Z}}, |w_i \cdot a_j| \le \frac{(\delta/drM)^{-O(d + r)}K}{N_j}.$$
    \end{itemize}
\end{lemma}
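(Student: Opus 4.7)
My plan is to reduce Lemma~\ref{l:refined2} to the single-parameter Lemma~\ref{l:refined1} by iterative pigeonholing over $r-1$ of the $h$-coordinates. First, I unpack the hypothesis: since the polynomial inside the $C^\infty[\vec N]$ norm is linear in $\vec n$, the condition is equivalent to
\[\|\beta_j + a_j \cdot \{\textstyle\sum_i \alpha_i h_i\}\|_{\mathbb{R}/\mathbb{Z}} \le K/N_j \quad (j = 1, \ldots, r)\]
holding for a $\delta$-density set of $\vec h \in [\vec N]$, giving $r$ simultaneous diophantine conditions.

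For each coordinate $i_0 \in \{1, \ldots, r\}$, I would pigeonhole over the other $h_i$'s to fix values $h_i^*$ so that for $\ge (\delta/2)N_{i_0}$ many $h_{i_0}$, all $r$ conditions hold simultaneously; write $c^* := \sum_{i \ne i_0} \alpha_i h_i^*$. To apply Lemma~\ref{l:refined1} I must isolate the dependence on $h_{i_0}$ inside the bracket $\{\alpha_{i_0} h_{i_0} + c^*\}$. Coordinate-wise, $\{\alpha_{i_0}^{(l)} h_{i_0} + c^{*(l)}\} = \{\alpha_{i_0}^{(l)} h_{i_0}\} + \{c^{*(l)}\} - \epsilon_l(h_{i_0})$ with $\epsilon_l(h_{i_0}) \in \{-1, 0, 1\}$, so a further pigeonhole on the carry vector $\vec\epsilon \in \{-1, 0, 1\}^d$ (at density cost $3^{-d}$) lets me treat $\vec\epsilon$ as fixed. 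The resulting condition reads $\|\tilde\beta_j + a_j \cdot \{\alpha_{i_0} h_{i_0}\}\|_{\mathbb{R}/\mathbb{Z}} \le K/N_j$ for $\gg \delta\, 3^{-d} N_{i_0}$ many $h_{i_0}$, where $\tilde\beta_j = \beta_j + a_j \cdot (\{c^*\} - \vec\epsilon)$. This is exactly the setup of Lemma~\ref{l:refined1}, applied either for the $j$ giving the tightest constraint or iterated over each $j$.

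Invoking Lemma~\ref{l:refined1} for each $(i_0, j)$ pair yields linearly independent vectors $w^{(i_0, j)}_\bullet, \eta^{(i_0, j)}_\bullet \in \mathbb{Z}^d$ controlling $\alpha_{i_0}$ through $\|\eta^{(i_0,j)}_\bullet \cdot \alpha_{i_0}\|_{\mathbb{R}/\mathbb{Z}}$ and $a_j$ through $|w^{(i_0,j)}_\bullet \cdot a_j|$, with the built-in orthogonality $\langle w^{(i_0,j)}_k, \eta^{(i_0,j)}_l\rangle = 0$. Varying over $(i_0, j) \in \{1, \ldots, r\}^2$ amasses joint controls on all $r$ $\alpha$'s and all $r$ $a$'s. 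The final assembly is linear-algebraic: let $V := \bigcap_{i_0, j}\mathrm{span}(\eta^{(i_0,j)}_\bullet)^\perp$, which has dimension $\ge d - r$ and approximately contains every $\alpha_{i_0}$ modulo vectors whose integer pairings are small; extract a bounded-height integer basis $\eta_1, \ldots, \eta_{d-r}$ of $V^\perp$, and take $w_1, \ldots, w_r$ to be a basis of $V$ (or a slightly enlarged complementary space) containing the $a_j$'s up to the previously extracted error.

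The main technical obstacle is the bookkeeping required to keep the compounding losses within the stated bound $(\delta/drM)^{-O(d+r)}$: each application of Lemma~\ref{l:refined1} costs a $3^d$ factor (from the carry pigeonhole) and polynomially amplifies the heights of the output vectors, while the $r^2$ iterations and the basis-completion step multiply these losses further. A secondary subtlety is maintaining a consistent orthogonality structure across coordinates $i_0$, which is handled by building $V$ as the common orthogonal complement of all $\eta$'s before selecting the $w$'s from $V$, rather than trying to match up $(w, \eta)$-pairs arising from different slices.
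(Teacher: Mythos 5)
Your reduction of the hypothesis to the $r$ simultaneous conditions $\|\beta_j + a_j\cdot\{\sum_i\alpha_ih_i\}\|_{\mathbb{R}/\mathbb{Z}}\le K/N_j$ is fine, and the slicing-plus-carry-pigeonhole step does legitimately place each fixed pair $(i_0,j)$ into the setting of Lemma~\ref{l:refined1}. The genuine gap is in the final assembly. Each application of Lemma~\ref{l:refined1} gives vectors $\eta^{(i_0,j)}_\bullet$ that carry diophantine information about $\alpha_{i_0}$ \emph{only}, and vectors $w^{(i_0,j)}_\bullet$ that control $a_j$ \emph{only}. The conclusion of Lemma~\ref{l:refined2}, however, demands a \emph{single} pair of mutually orthogonal families of complementary sizes in which \emph{every} $\eta_i$ nearly annihilates \emph{every} $\alpha_j$ and \emph{every} $w_i$ nearly annihilates \emph{every} $a_j$. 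Your choice $V:=\bigcap_{i_0,j}\mathrm{span}(\eta^{(i_0,j)}_\bullet)^{\perp}$ forces the $\eta$'s to be a (bounded-height) basis of the \emph{sum} $\sum_{i_0,j}\mathrm{span}(\eta^{(i_0,j)}_\bullet)$; a vector of that sum is an integer combination mixing $\eta^{(i_0,j)}$'s across different slices $i_0$, and nothing in the slice-wise data bounds $\|\eta^{(1,j)}_l\cdot\alpha_2\|_{\mathbb{R}/\mathbb{Z}}$, say. This is not a bookkeeping issue: knowing that $\alpha_1$ is nearly annihilated by a bounded basis of $V_1^{\perp}$ and $\alpha_2$ by a bounded basis of $V_2^{\perp}$ does not imply both are nearly annihilated by a basis of $(V_1\cap V_2)^{\perp}$ (already in $\mathbb{R}^2$ with $V_1=\mathrm{span}(e_1)$, $V_2=\mathrm{span}(e_2)$, $\alpha_1=(1/2,0)$, $\alpha_2=(0,1/2)$ the implication fails), so no purely linear-algebraic patching of the sliced outputs can reach the joint conclusion. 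Also, your claim that $\dim V\ge d-r$ has no justification; the intersection can be much smaller.

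The paper avoids this by never slicing: after a rational approximation step (its Lemma~\ref{l:easierrefined2}, where the $N_i$ are taken to be odd primes and $\alpha_j$ has denominator $N_j$), it runs one geometry-of-numbers argument on the single lattice $\alpha_1\mathbb{Z}+\cdots+\alpha_r\mathbb{Z}+\mathbb{Z}^d$ intersected with the single convex body $B=\bigcap_j\{x: |a_j\cdot x|<2K/N_j,\ |x_i|<1/2\}$, and extracts via Proposition~\ref{p:properprogression}, Ruzsa covering, and Minkowski's second theorem one rational subspace $V$ (with $w$'s a bounded basis of $V\cap\mathbb{Z}^d$ and $\eta$'s produced by the adjugate/Cramer construction) that is simultaneously compatible with all the $\alpha_i$'s and all the $a_j$'s. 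That joint treatment is exactly the information your slice-by-slice reduction discards, so to salvage your approach you would need an additional mechanism forcing the subspaces from different slices to agree, which is in effect the multiparameter lemma itself.
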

Although the multiparameter refined bracket polynomial lemma is ultimately the one we'll need for Theorem~\ref{t:mainresult1}, we have included the single variable one as the notation is less cumbersome and the argument is easier to absorb. We give a brief review of results from the geometry of numbers.
\subsection{The geometry of numbers}
We begin with the following Minkowski's second theorem below. Before we state it, we shall need some terminology. Given a lattice $\Gamma$ of $\mathbb{R}^d$ and a convex body $X$, the \emph{successive minima} of $X$ with respect to $\Gamma$, denoted $\lambda_i$, are defined as
$$\lambda_k := \inf\{\lambda > 0: \lambda \cdot X \text{ contains } k \text{ independent vectors of } \Gamma\}.$$
We have $\lambda_1 \le \lambda_2 \le \lambda_3 \le \lambda_4 \le \cdots \le \lambda_d < \infty.$
Minkowski's second theorem \cite[Theorem 3.30]{TV10} states that the successive minima satisfy the following property.
\begin{theorem}[Minkowski's second theorem]
Let $\Gamma$ be a lattice of full rank in $\mathbb{R}^d$, $B$ a symmetric convex body with successive minima $\lambda_1, \dots, \lambda_d$. Then there exists vectors $v_1, \dots, v_d$ such that
\begin{itemize}
    \item For each $1 \le j \le d$, $v_j$ lies on the boundary of $\lambda_j \cdot B$, but $\lambda_j \cdot B$ does not contain any vectors in $\Gamma$ outside the span of $v_1, \dots, v_{j - 1}$. 
    \item The cross-polytope with vertices $\pm v_i$ contain no elements of $\Gamma$ other than $0$.
    \item We have
    $$\frac{2^d |\Gamma/(\mathrm{Span}_{\mathbb{Z}}(v_1, \dots, v_d))|}{d!} \le \frac{\lambda_1 \cdots \lambda_d\mathrm{vol}(B)}{\mathrm{vol}(\mathbb{R}^d/\Gamma)} \le 2^d.$$
\end{itemize}
\end{theorem}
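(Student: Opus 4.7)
The plan is to construct the vectors $v_1, \ldots, v_d$ directly from the definition of successive minima and then to verify each of the three conclusions in turn. For the construction I would inductively pick $v_j$ to be any lattice vector of minimum $B$-norm that is linearly independent from $v_1, \ldots, v_{j-1}$; by the definition of $\lambda_j$ and a compactness argument this vector exists, lies on $\partial(\lambda_j B)$, and has the minimality property that no lattice vector of $B$-norm strictly smaller than $\lambda_j$ sits outside $\mathrm{span}(v_1,\ldots,v_{j-1})$.

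For the cross-polytope claim I would argue by contradiction. Suppose a nonzero lattice vector $w = \sum_{i=1}^d t_i v_i$ satisfies $\sum_i |t_i| < 1$, and let $k$ be the largest index with $t_k \neq 0$. Since each $v_i$ with $i \le k$ lies in $\lambda_i B \subseteq \lambda_k B$, convexity and symmetry of $B$ give $w \in \bigl(\sum_i |t_i|\bigr)\lambda_k B$, so the $B$-norm of $w$ is strictly less than $\lambda_k$; but $w$ is linearly independent from $v_1, \ldots, v_{k-1}$, so $v_1, \ldots, v_{k-1}, w$ are $k$ independent lattice vectors sitting inside $\lambda' B$ for some $\lambda' < \lambda_k$, contradicting the infimum definition of $\lambda_k$. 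The lower volume estimate is then immediate: the cross-polytope with vertices $\pm v_i/\lambda_i$ is contained in $B$ by the same convexity/symmetry argument, its volume equals $\tfrac{2^d\,|\det(v_1,\ldots,v_d)|}{d!\,\lambda_1\cdots\lambda_d}$, and substituting $|\det(v_1,\ldots,v_d)| = \mathrm{vol}(\mathbb{R}^d/\Gamma)\cdot|\Gamma/\mathrm{Span}_\mathbb{Z}(v_1,\ldots,v_d)|$ rearranges to the claimed lower bound.

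The main obstacle is the upper estimate $\lambda_1 \cdots \lambda_d \mathrm{vol}(B) \le 2^d \mathrm{vol}(\mathbb{R}^d/\Gamma)$, which does not follow from the cross-polytope inclusion. The classical approach is to pass to coordinates in which the successive minima are realized along the coordinate axes: one applies a linear automorphism sending each $v_j$ to a multiple of a fixed basis vector $e_j$, which preserves the scale-invariant ratio $\lambda_1\cdots\lambda_d\,\mathrm{vol}(B)/\mathrm{vol}(\mathbb{R}^d/\Gamma)$, and then decomposes $B$ into coordinate slabs of half-width $\lambda_j/2$ in the $e_j$-direction so that translates of suitably truncated half-bodies by lattice points are essentially disjoint. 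Summing the volume contributions and iterating Minkowski's first theorem on the successive quotients yields the bound. Since the result is classical and the excerpt cites it directly, in practice I would simply invoke the cited reference rather than reproduce this somewhat intricate argument.
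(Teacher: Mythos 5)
Your proposal is correct, and it matches the paper's treatment: the paper offers no proof of this statement at all, quoting it directly as \cite[Theorem 3.30]{TV10}, so deferring the genuinely hard upper bound $\lambda_1\cdots\lambda_d\,\mathrm{vol}(B)\le 2^d\,\mathrm{vol}(\mathbb{R}^d/\Gamma)$ to the cited reference is exactly what the paper does. Your added sketches of the greedy construction realizing the successive minima, the cross-polytope argument, and the lower volume bound via the inclusion of the cross-polytope with vertices $\pm v_i/\lambda_i$ in $B$ are all sound.
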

The next two statements below show, as a corollary of Minkowski's second theorem that such an intersection must contain a \emph{generalized arithmetic progression} (see below for definition) which has size roughly $d^{-O(d)}$ times the intersection of the convex body and the lattice. The next lemma we'll need, a statement and proof of which can be found in \cite[Lemma 3.14]{TV10}, is Ruzsa's covering lemma.
\begin{lemma}[Ruzsa's covering lemma]\label{l:Ruzsa}
For any bounded subsets $A$ and $B$ of $\mathbb{R}^d$ with positive measure, we may cover $B$ by at most $\min\left(\frac{\mathrm{vol}(A + B)}{\mathrm{vol}(A)}, \frac{\mathrm{vol}(A - B)}{\mathrm{vol}(A)}\right)$ many translates of $A - A$. 
\end{lemma}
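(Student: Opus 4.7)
The plan is to run the classical greedy packing argument. First I would extract a maximal (under inclusion) finite collection $\{b_1, \dots, b_k\} \subseteq B$ such that the translates $b_i + A$ are pairwise disjoint. Finiteness is automatic: all $b_i + A$ sit inside the bounded set $A + B$, so the disjoint union has volume at most $\mathrm{vol}(A+B) < \infty$, and each translate has the same positive volume $\mathrm{vol}(A)$. By greedy selection (pick $b_1 \in B$ arbitrarily, then at stage $j+1$ pick any $b_{j+1} \in B$ with $(b_{j+1}+A) \cap (b_i + A) = \emptyset$ for $i \le j$), this process terminates at some finite $k$ with
\[
k \cdot \mathrm{vol}(A) \;=\; \mathrm{vol}\Bigl(\bigsqcup_{i=1}^k (b_i + A)\Bigr) \;\le\; \mathrm{vol}(A+B),
\]
hence $k \le \mathrm{vol}(A+B)/\mathrm{vol}(A)$.

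Next I would exploit maximality to obtain the covering. For any $b \in B$, either $b = b_i$ for some $i$, or we could not add $b$ to the collection, meaning $(b + A) \cap (b_i + A) \neq \emptyset$ for some $i \in \{1,\dots,k\}$. The latter gives $b - b_i \in A - A$, i.e., $b \in b_i + (A - A)$. Either way $b \in b_i + (A-A)$ (since $0 \in A - A$), so
\[
B \;\subseteq\; \bigcup_{i=1}^k \bigl(b_i + (A - A)\bigr),
\]
which is a covering of $B$ by $k \le \mathrm{vol}(A+B)/\mathrm{vol}(A)$ translates of $A - A$.

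To obtain the second bound $\mathrm{vol}(A - B)/\mathrm{vol}(A)$, I would simply rerun the identical argument with $A$ replaced by $-A$. Note that $\mathrm{vol}(-A) = \mathrm{vol}(A)$ and $(-A) - (-A) = A - A$, while the bounding set changes from $A + B$ to $-A + B = -(A - B)$, whose volume equals $\mathrm{vol}(A - B)$ by the reflection symmetry $x \mapsto -x$. Taking the better of the two covers yields the stated $\min$.

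There is essentially no substantive obstacle here, as this is a classical fact; the only care is to justify the existence and finiteness of the maximal packing, which follows from the bounded containment inside $A + B$ and the positive measure hypothesis on $A$. The symmetric argument for $A - B$ is purely cosmetic, just relabeling $A$ as $-A$ and observing that $A - A$ is unchanged.
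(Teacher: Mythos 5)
Your proof is correct: the paper does not prove this lemma itself but cites \cite[Lemma 3.14]{TV10}, and your greedy packing argument (maximal disjoint family $b_i+A$ inside $A+B$, then maximality forces $B\subseteq\bigcup_i\bigl(b_i+(A-A)\bigr)$, with the $A-B$ bound obtained by replacing $A$ with $-A$) is exactly the standard argument given there. The only cosmetic caveat is that $A+B$ need not be Lebesgue measurable for arbitrary measurable $A,B$, so $\mathrm{vol}(A+B)$ should be read as outer measure, under which your volume comparison still holds verbatim.
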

We use Ruzsa's covering lemma in the various proofs of the refined bracket polynomial lemma to convex subsets of $\mathbb{R}^d$. In the case of convex and symmetric $A$ and $B$, Ruzsa's covering lemma is extra powerful, since $A + A$ and $A - A$ are both just dilates of $A$.\\\\
A \emph{generalized arithmetic progression} (GAP) in $\mathbb{R}^n$ is a subset of the form $\{\ell_1v_1 + \cdots + \ell_d v_d: \ell_i \in [N_i]\}$ where $v_1, \dots, v_d \in \mathbb{R}^n$. The generalized arithmetic progression is \emph{proper} if each of the elements $\ell_1 v_1+ \cdots + \ell_d v_d$ are distinct. The \emph{rank} of a proper generalized arithmetic progression is the quantity $d$. As a consequence of Minkowski's second theorem and Ruzsa's covering lemma, we have the following, which is \cite[Lemma 3.33]{TV10}.
\begin{proposition}\label{p:properprogression}
    Let $B$ be a symmetric convex body and let $\Gamma$ be a lattice in $\mathbb{R}^d$. Then there exists a GAP $P = \{\ell_1 v_1 + \cdots + \ell_{d'} v_{d'}: \ell_i \in [N_i]\}$ in $B \cap \Gamma$ with $v_1, \dots, v_{d'}$ linearly independent such that $|P| \ge O(d)^{-7d/2}|B \cap \Gamma|$.
\end{proposition}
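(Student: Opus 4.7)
The plan is to build the GAP directly from Minkowski's second theorem. Apply that theorem to the pair $(B, \Gamma)$ to obtain the successive minima $\lambda_1 \le \cdots \le \lambda_d$ of $B$ with respect to $\Gamma$ together with linearly independent lattice vectors $v_1, \ldots, v_d \in \Gamma$ with $v_j \in \lambda_j B$, the index bound $[\Gamma : \Gamma_0] \le d!$ for $\Gamma_0 := \mathrm{Span}_{\mathbb{Z}}(v_1, \ldots, v_d)$, and the volume relation $\lambda_1 \cdots \lambda_d \cdot \mathrm{vol}(B) \le 2^d \mathrm{vol}(\mathbb{R}^d/\Gamma)$.

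Next, let $d'$ be the largest index with $\lambda_{d'} \le 1/d$ (set $d' := 0$ if no such index exists), take $N_j := \lfloor 1/(d\lambda_j)\rfloor \ge 1$ for $1 \le j \le d'$, and define
\[P := \{\ell_1 v_1 + \cdots + \ell_{d'} v_{d'} : \ell_j \in [N_j]\}.\]
The GAP $P$ is proper because $v_1, \ldots, v_{d'}$ are linearly independent. To see $P \subseteq B$, write each $v_j = \lambda_j u_j$ with $u_j \in B$; then any element of $P$ equals $\sum_{j=1}^{d'}(\ell_j \lambda_j) u_j$, whose coefficients are nonnegative and sum to at most $\sum_{j=1}^{d'} N_j \lambda_j \le 1$. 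Convexity of $B$ (with $0 \in B$) then yields $P \subseteq B$, and $v_j \in \Gamma$ gives $P \subseteq B \cap \Gamma$. A direct count produces the lower bound $|P| \ge (2d)^{-d'} \prod_{j \le d'} \lambda_j^{-1}$.

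The principal obstacle is the matching upper bound $|B \cap \Gamma| \le d^{O(d)} \prod_{j \le d'} \lambda_j^{-1}$. I would establish it as follows. By the index bound, any $x \in B \cap \Gamma$ admits a representation $x = \sum_j c_j v_j$ with $c_j \in (d!)^{-1}\mathbb{Z}$. A polar/duality argument using the dual basis to $(v_j)$ together with the characterization of $\lambda_j$ as successive minima yields $|c_j| \ll d/\lambda_j$ whenever $x \in B$, giving $|B \cap \Gamma| \le d! \cdot \prod_j O(d^2/\lambda_j + 1)$. The factors with $j > d'$ are absorbed into a $d^{O(d)}$ constant since $\lambda_j > 1/d$ there. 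Combining with the lower bound on $|P|$ and carefully bookkeeping the combinatorial factors produces the claimed $O(d)^{-7d/2}$ ratio. An alternative route avoiding the direct dual estimate is to apply Ruzsa's covering lemma (Lemma~\ref{l:Ruzsa}): cover $B \cap \Gamma$ by $d^{O(d)}$ translates of $P - P$ and use $|P - P| \le 2^d |P|$. Either way, the delicate point is to convert the \emph{volume-theoretic} content of Minkowski's second theorem into a pointwise \emph{lattice-point count}, without losing more than a $d^{O(d)}$ factor.
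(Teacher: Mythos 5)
The paper does not give its own proof of this proposition: it is quoted verbatim from \cite[Lemma 3.33]{TV10} (see the sentence and remark surrounding it), and the construction half of your argument is exactly the one used there and is fine as you present it — directional vectors $v_j\in\lambda_j B$ from Minkowski's second theorem, $N_j\approx 1/(d\lambda_j)$ for the indices with $\lambda_j\le 1/d$, convexity (with $0\in B$) to get $P\subseteq B\cap\Gamma$, and $|P|\ge (2d)^{-d'}\prod_{j\le d'}\lambda_j^{-1}$.

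The genuine gap is precisely at the step you yourself call the principal obstacle: the upper bound $|B\cap\Gamma|\le d^{O(d)}\prod_{j\le d'}\lambda_j^{-1}$. Your asserted coordinate bound $|c_j|\ll d/\lambda_j$ for $x=\sum_j c_j v_j\in B$ is not proved, and the appeal to ``a polar/duality argument using the dual basis'' is not a standard one-line fact: the dual basis to the directional basis need not be short in the polar norm with only polynomial losses, and transference-type inequalities concern the dual lattice, not this dual basis. What one can prove directly (Cramer's rule, bounding the numerator determinant by $\tfrac{d!}{2^d}\mathrm{vol}(B)$ via the cross-polytope spanned by $d$ points of $B$, bounding the denominator below by $\mathrm{vol}(\mathbb{R}^d/\Gamma)$, and using Minkowski's second theorem) is only $|c_j|\le d!/\lambda_j$; feeding that into your count gives $|B\cap\Gamma|\le d!\prod_j(2\,d!/\lambda_j+1)$, a loss of order $(d!)^d=d^{O(d^2)}$, which misses the stated $O(d)^{-7d/2}$ by a wide margin (it would still be harmless for the paper's final $(M/\delta)^{d^{O(1)}}$-type bounds, but it does not prove the proposition as stated). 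Your fallback via Ruzsa's covering lemma is circular as sketched: to apply it with $A=P$ you need $|P+(B\cap\Gamma)|\le d^{O(d)}|P|$, and since $P+(B\cap\Gamma)\subseteq 2B\cap\Gamma$ this is again the same lattice-point-counting problem (the minima of $2B$ are $\lambda_j/2$). The missing ingredient, which is how the cited source handles this, is a genuine counting inequality of the shape $|B\cap\Gamma|\le\prod_{j=1}^{d}\bigl(2j/\lambda_j+1\bigr)$, proved by induction on dimension; with that imported, your construction immediately yields the claimed ratio. As written, neither that lemma nor a polynomial-in-$d$ pointwise coefficient bound is established, so the proof is incomplete at its crux.
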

\begin{remark}
\cite[Lemma 3.33]{TV10} do not require $v_1, \dots, v_{d'}$ to be linearly independent; however, their proof does give linearly independent $v_1, \dots, v_{d'}$.
\end{remark}
We are now ready to prove Lemma~\ref{l:refined1} and Lemma~\ref{l:refined2}.
\subsection{Deduction of Lemma~\ref{l:refined1}}

We deduce Lemma~\ref{l:refined1} from the following.
\begin{lemma}\label{l:easierrefined1}
    Let $\delta \in (0, 1/10)$, and $N$ be an odd prime. Suppose $\alpha, a \in \mathbb{R}^{d}$ and $\alpha$ is of denominator $N$, $|a| \le M$, and
    $$\|\beta + a \cdot \{\alpha h\}\|_{\mathbb{R}/\mathbb{Z}} < K/N$$
    for $\delta N$ many $h \in [N]$. Then one of the following holds.
    \begin{itemize}
        \item We have $N \ll (\delta/d^{d}M)^{-O(1)}$ or $K/N > 1/10$;
        \item or there exists linearly independent $w_1, \dots, w_r$, with $|w_1||w_2| \cdots |w_r| \le (\delta/d^{d} M)^{-O(1)}$ and linearly independent $\eta_1, \dots, \eta_{d - r}$ in $\mathbb{Z}^{d}$ with size at most $(\delta/d^{d} M)^{-O(1)}$ such that $\langle w_i, \eta_j \rangle = 0$ and
    $$\|\eta_j \cdot \alpha\|_{\mathbb{R}/\mathbb{Z}} = 0, \hspace{0.1in} |w_i \cdot a| \le \frac{(\delta/M)^{-O(1)}d^{O(d)}K}{N}.$$
    \end{itemize}
\end{lemma}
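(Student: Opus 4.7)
The plan is to reduce the fractional-part condition to a lattice-point problem in $\Lambda := v\mathbb{Z} + N\mathbb{Z}^d$ (writing $\alpha = v/N$), then apply the geometry of numbers to extract the desired structure. We may assume $v \not\equiv 0 \bmod N$, else $\alpha = 0$ and the conclusion holds trivially with $r = 0$; we also assume $K/N \le 1/10$ and $N$ exceeds any given polynomial in $1/\delta$, $d^d$, $M$. Since $N$ is prime, $h \mapsto \{\alpha h\}$ then injects $\mathbb{Z}/N$ onto the $N$ centered representatives in $[-1/2, 1/2)^d$ of the cyclic subgroup generated by $\alpha$ in $\mathbb{T}^d$.

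Two rounds of pigeonhole localize the problem. Since $\beta + a \cdot \{\alpha h\}$ lies in an interval of length $O(dM)$, a first pigeonhole extracts an integer $m_0$ and a subset of $S$ of size $\gtrsim \delta N/(dM)$ on which the nearest integer equals $m_0$. Partitioning $[-1/2, 1/2)^d$ into $(10dM)^d$ sub-boxes of side $1/(10dM)$ and pigeonholing again yields a single sub-box containing $\gtrsim \delta N/(dM)^{O(d)}$ values of $\{\alpha h\}$. Because the sub-box has diameter less than $1/2$ in each coordinate, no fractional-part wrap-around occurs when differencing, and one has $\{\alpha h_1\} - \{\alpha h_2\} = \{\alpha(h_1-h_2)\}$ exactly in $\mathbb{R}^d$. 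Differencing the phase condition and rescaling by $N$ then produces a set $D \subset \Lambda$ of size $\gtrsim \delta N/(dM)^{O(d)}$ lying inside the symmetric convex body
\[B'' := \{z \in \mathbb{R}^d : |z|_\infty \le N/(5dM),\ |a \cdot z| \le 2K\},\]
with $\Lambda$ of determinant $N^{d-1}$; the slab condition is genuine (not modular) since $|a \cdot z| \le dM \cdot N/(5dM) < N/2$.

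Apply Minkowski's second theorem to $(\Lambda, B'')$, obtaining successive minima $\lambda_1 \le \cdots \le \lambda_d$ with linearly independent lattice vectors $u_i \in \lambda_i B''$; the lattice-point upper bound $|D| \le |\Lambda \cap B''| \lesssim \prod_i \max(1, 1/\lambda_i)$ then forces $\prod_{i: \lambda_i \le 1} \lambda_i \lesssim (dM)^{O(d)}/(\delta N)$. Let $r$ count indices with $\lambda_i \le 1$ and set $w_i := u_i/N$; then $|a \cdot w_i| \le 2 \lambda_i K/N$ and $\prod_{i=1}^r |w_i|$ is bounded polynomially in $1/\delta$, $M$, $d^d$. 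For the $\eta_j$'s, let $V := \operatorname{Span}_{\mathbb{R}}(u_1, \ldots, u_r)$ and pick $\eta_1, \ldots, \eta_{d-r}$ as a short basis of the integer annihilator lattice $V^\perp \cap \mathbb{Z}^d$. Orthogonality with the $w_i$'s is automatic, and the integrality $\eta_j \cdot \alpha \in \mathbb{Z}$ follows from a clean observation: writing $u_i = c_i v + N k_i$ with $c_i \in \mathbb{Z}$, $k_i \in \mathbb{Z}^d$, the inequality $|u_i|_\infty < N$ forces $u_i \notin N\mathbb{Z}^d$, hence $c_i \not\equiv 0 \bmod N$; primality of $N$ makes $c_i$ invertible modulo $N$, and $\langle \eta_j, u_i\rangle = 0$ then yields $\eta_j \cdot v \equiv 0 \bmod N$.

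The main obstacle I anticipate is polynomially bounding $|\eta_j|$: a naive estimate via Hadamard applied to the $u_i$'s together with the duality $\det(V \cap \mathbb{Z}^d) = \det(V^\perp \cap \mathbb{Z}^d)$ yields only $N^{r-1}$-sized bounds, which is far too weak for $r \ge 2$. The resolution should come from replacing the Minkowski basis $(u_i)$ with the generators of a proper GAP inside $\Lambda \cap B''$ supplied by Proposition~\ref{p:properprogression}: these generators span the same subspace $V$ but generate a much denser sublattice of $V \cap \mathbb{Z}^d$, making its covolume polynomial in $1/\delta$, $M$, $d^d$, and hence by duality producing a short basis of $V^\perp \cap \mathbb{Z}^d$. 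The narrow-slab geometry of $B''$ in the $a$-direction (width only $O(K/M)$) plays a critical role here, since it forces the product of the GAP side-lengths to be polynomially controlled, and hence the dual covolume as well.
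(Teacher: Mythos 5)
Your reduction to counting points of $\Lambda = v\mathbb{Z}+N\mathbb{Z}^d$ in a convex body, the choice $w_i=u_i/N$ with the bound $|a\cdot w_i|\le 2\lambda_i K/N$, and the primality trick yielding $\eta_j\cdot\alpha\in\mathbb{Z}$ all match the paper's argument in spirit. The genuine gap is exactly at the point you flag, and your proposed resolution does not work. What must be bounded is $\det(\mathbb{Z}^d\cap V)$ (equivalently, by your duality, $\det(V^\perp\cap\mathbb{Z}^d)$), and this cannot be read off from the lattice generated by the GAP generators: those generators lie in $\Lambda$, so the lattice they generate is a sublattice of $\Lambda\cap V$, and a full-rank sublattice has covolume \emph{at least} that of the containing lattice; since $\Lambda\cap V$ contains $N(\mathbb{Z}^d\cap V)$ with index at most $N$, one has $\det(\Lambda\cap V)\ge N^{r-1}$, so the covolume of the GAP-generated lattice is of size at least $N^{r-1}$ and is never ``polynomial in $1/\delta, M, d^d$'' for $r\ge 2$. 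Likewise the heuristic that the narrow slab forces the product of GAP side-lengths to be polynomially controlled is not correct: that product is $\approx \delta N/(dM)^{O(d)}$, i.e.\ comparable to $N$. The missing idea — the crux of the paper's proof — is a counting argument showing directly that $\mathbb{Z}^d\cap V$ is a full-rank lattice in $V$ of covolume at most $\delta^{-1}Md^{O(d)}$: since $N$ is prime each GAP generator can be written as $\{\alpha h_i\}$, a combination $\sum_i n_i v_i$ lies in $\mathbb{Z}^d$ precisely when $\sum_i n_ih_i\equiv 0\pmod N$, and pigeonholing the $\approx L^{r}\cdot\delta N/(d^{O(d)}M)$ points of the dilate $P_L$ over the $N$ residue classes retains a $\delta/(d^{O(d)}M)$ fraction \emph{because} $|P|\gtrsim\delta N$ is comparable to $N$; letting $L\to\infty$ gives the covolume bound. (The paper also needs the Ruzsa covering step to fit a ball of $V$ of radius $\gg \delta/(Md^{O(d)})$ inside $B\cap V$, which is what gives the $|w_i\cdot a|$ bound for its short \emph{integer} spanning vectors of $V$; in your variant this step is not needed for the $w_i$, but the residue-class counting is indispensable for the $\eta_j$.) Only after that step does Minkowski's theorem on $\mathbb{Z}^d\cap V^\perp$, or the adjugate/Cramer construction, produce annihilators of size $(\delta/d^dM)^{-O(1)}$.

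A secondary, fixable issue: pigeonholing over $(10dM)^d$ boxes of side $1/(10dM)$ costs a factor $M^{O(d)}$, while the stated conclusion only tolerates losses of the shape $\delta^{-O(1)}d^{O(d)}M^{O(1)}$; pigeonholing on the sign pattern of $\{\alpha h\}$ (as the paper does), or on boxes of side $1/10$, avoids wrap-around with only an $O(1)^d$ loss.
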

\begin{proof}[Proof of Lemma~\ref{l:refined1} assuming Lemma~\ref{l:easierrefined1}]
We begin with
$$\|a \cdot \{\alpha h\} + \gamma\|_{\mathbb{R}/\mathbb{Z}} < \frac{K}{N}$$
for $\delta N$ many $h \in [N]$. By replacing $N$ with prime $N'$ with $10N \le N' \le 20N$ and $\delta$ with $\delta/20$, we may assume that all such $h \in [N/10]$. Now take $\alpha'$ with $N\alpha'\in\mathbb{Z}^d$ such that $|\alpha - \alpha'| \le \frac{1}{2N}$ and denoting $v = \alpha - \alpha'$. Then
$$a \cdot \{\alpha h\} \equiv a \cdot \{\alpha' h\} + a \cdot \{v h\} + a \cdot (\{\alpha h\} - \{\alpha' h\} - \{v h\}).$$
However, we observe that since $h \in [N/10]$, it follows that $\{vh\} = vh$, and since $\alpha - \alpha' - v = 0$, it follows that $\{\alpha h\} - \{\alpha' h\} - \{vh\} \in \mathbb{Z}^{d}$, and since $|\{\alpha h\} - \{\alpha' h\} - \{vh\}| \le \frac{3}{2}$, it follows that each coordinate of that vector is either $0, 1$, or $-1$. By the Pigeonhole principle, there exists some vector $w \in \mathbb{Z}^{d}$ such that for $\delta/3^{d}$ many elements $h \in [N]$ that
$$\|a \cdot \{\alpha' h\} + a \cdot w + \gamma + a \cdot vh \|_{\mathbb{R}/\mathbb{Z}} < \frac{K}{N}.$$
Finally, rewriting $a \cdot v h = a \cdot v N\{h/N\}$ (here we are using the fact that $h \in [N/10]$ so $h/N = \{h/N\}$), and replacing $\gamma$ with $\gamma + a \cdot w$, and letting $b = Na \cdot v$, it follows that
$$\|a \cdot \{\alpha' h\} + b\{1/N\} + \gamma\|_{\mathbb{R}/\mathbb{Z}} < \frac{K}{N}.$$
Applying Lemma~\ref{l:easierrefined1} gives $w_1, \dots, w_r$ and $\eta_1, \dots, \eta_{d + 1 - r}$ such that if $\eta_i = (\eta_i', \eta_i^{d+1})$ and $w_i = (w_1', w_1^{d+1})$, then 
$$\eta_i' \cdot \alpha' + \eta_i^{d+1} /N \equiv 0 \pmod{1}$$
$$|w_i' \cdot a + w_i^{d+1}| \le \frac{(\delta/M)^{-O(1)}d^{O(d)}K}{N}.$$
Suppose $w_1^{d+1} \neq 0$ and denote $\tilde{w}_i := w_i^{d+1} w_1' - w_i'w_1^{d+1}$ for $2 \le i \le d + 1 - r$. We see that 
$$|\tilde{w_i} \cdot a| \le \frac{(\delta/M)^{-O(1)}d^{O(d)}K}{N}.$$
We claim that $\tilde{w}_i$'s are linearly independent of each other. Suppose there exists $c_i$ such that
$$\sum_{i \neq 1} c_i(w_i^{d+1} w_1' - w_i'w_1^{d+1}) = 0.$$
Rearranging, we have
$$w_1'\left(\sum_{i \neq 1} c_i w_i^{d+1} \right) + \sum_{i \neq 1} (-c_i w_1^{d+1})w_i' = 0.$$
In addition,
$$\sum_{i \neq 1} w_1^{d+1}\left(\sum_{i \neq 1} c_i w_i^{d+1} \right) + \sum_{i \neq 1} (-c_i w_1^{d+1})w_i^{d+1} = 0$$
and therefore
$$w_1\left(\sum_{i \neq 1} c_i w_i^{d+1} \right) + \sum_{i \neq 1} (-c_i w_1^{d+1})w_i = 0.$$
By linear independence of the $w_i$'s and since $w_1^{d+1}$ is nonzero, it follows that $c_i = 0$. Thus, the $\tilde{w}_i$ are independent of each other. To see that $\tilde{w}_i$'s are orthogonal to the $\eta_j'$'s, note that
\begin{align*}
\tilde{w}_i \cdot \eta_j' &= w_i^{d+1} w_1' \cdot \eta_j' - w_i' \cdot \eta_j' w_1^{d+1} \\
\eta_j \cdot w_i &= \eta_j^{d+1} w_i^{d+1} + \eta_j' \cdot w_i' = 0 \\
\eta_j \cdot w_1 &= \eta_j^{d+1} w_1^{d+1} + \eta_j' \cdot w_1' = 0.
\end{align*}
Subtracting the second and third equations gives that the first expression is zero. Finally, we claim that the $\eta_j'$'s are linearly independent of each other. To see this, we note that $(\eta_j', \eta_j^{d+1})$ are orthogonal to $(\tilde{w}_i, 0)$ and $(w_1', w_1^{d+1})$. Since $(0, 1)$ is not orthogonal to $(w_1', w_1^{d+1})$, it follows that $(\eta_j', \eta_j^{d+1})$ cannot span $(0, 1)$. Thus, $(\eta_j', \eta_j^{d+1})$ and $(0, 1)$ are linearly independent, which implies that $\eta_j'$ are linearly independent. \\\\
If $w_1^{d+1} = 0$ and $w_i^{d+1} \neq 0$ for some $i$, we let $w_i^{d+1}$ play the role of $w_1^{d+1}$ in the above argument. If $w_i^{d+1} = 0$ for all $i$, we find that
$$|w_i' \cdot a| \le \frac{(\delta/M)^{-O(1)}d^{O(d)}K}{N}$$
and that
$$\|\eta_j' \cdot \alpha\|_{\mathbb{R}/\mathbb{Z}} \le \frac{(\delta/M)^{-O(1)}d^{O(d)}K}{N}.$$
Picking a linearly independent subset of the $\eta_j'$ completes the proof.    
\end{proof}

We now prove Lemma~\ref{l:easierrefined1}.
\begin{proof}[Proof of Lemma~\ref{l:easierrefined1}]
\textbf{Step 1: Reducing to the case of when $\beta = 0$}. Using the Pigeonhole principle, there exists at least $\delta N/M$ many $h$'s such that
$$|\beta + a \cdot \{\alpha h\} - k| < K/N$$
for some $k \in \mathbb{Z}$. By using the Pigeonhole principle again, there exists a sign pattern in $\{-1, 0, 1\}^{d}$ such that for $\delta N/(3^{d}Md)$ many $h \in [N]$ all satisfying the sign pattern (i.e., $\{\alpha h\}$ are all the same sign for all such $h$) and such that
$$|\beta + a \cdot \{\alpha h\} - k| < K/N.$$
Note that if $\{x\}$ and $\{y\}$ have the same sign then $\{x - y\} = \{x\} - \{y\}$. Thus, taking the difference of any two such $h$ and $h$',
$$|a \cdot \{\alpha (h - h')\}| < 2K/N.$$

\textbf{Step 2: Invoking Minkowski's second theorem}. Let 
$$B= \left\{|a \cdot x| < \frac{2K}{N}, |x_i| < \frac{1}{2} \text{ for all } i \in [d] \right\}$$
and $\Gamma =\alpha \mathbb{Z} + \mathbb{Z}^{d}$. It follows from Step 1 that at least $\delta \frac{N}{3^{d}dM}$ many elements of the lattice lie in $B$. Since $B$ is a convex body and $\Gamma$ is a lattice of $\mathbb{R}^d$, we have by Proposition~\ref{p:properprogression}, there exists linearly independent vectors $v_1, \dots, v_{d'}$ of the lattice and $N_1, \dots, N_{d'}$ with $N_1 \cdots N_d = \frac{\delta N}{d^{O(d)}M}$ and 
$$P := \{\ell_1v_1 + \cdots \ell_{d'} v_{d'}: |\ell_i| \le N_i\} \subseteq B.$$
Since $N$ is an odd prime, each $v_i$ is of the form $\{\alpha h_i\}$ since that is unique in the coset $\alpha h_i + \mathbb{Z}^d$ where each coordinate lies in $(-1/2, 1/2)$. \\\\
Let 
$$\Gamma' = v_1\mathbb{Z} + v_2\mathbb{Z} + \cdots v_{d'}\mathbb{Z} \subseteq \Gamma, \text{ and } V = \mathrm{span}(v_1, \dots, v_{d'}).$$
For $x \in \mathbb{R}^d$ and $r > 0$, let $B_{r, \infty}(x)$ denote the $\ell^\infty$ ball of radius $r$ around $x$ and $B_r(x)$ the $\ell^2$ ball of radius $r$ around $x$. Since $N$ is prime, $B_{1/2, \infty}(0)$ contains at most $N$ points of $\Gamma$ and therefore at most $N$ points of $\Gamma'$. Thus, by placing a fundamental parallelepiped dilated by two of $\Gamma'$ around each lattice point in the ball of radius $1/2$, we see that the volume of the ball of radius $1/2$ in $V$ is at most $N/2^d$ times the the volume of the fundamental parallelepiped of $\Gamma'$ in $V$. We have the upper bound
$$\mathrm{vol}(B_{1/2, \infty}(0)\cap V) \le 2^dN\mathrm{vol}(V/\Gamma').$$
Let $O$ denote the interior of the generalized arithmetic progression $P$ inside $V$. Note that we have the lower bound of
$$\mathrm{vol}(O) \ge N(\delta/(d^{O(d)}Mr)) \mathrm{vol}(V/\Gamma').$$

By Ruzsa's covering lemma (using the fact that $O - O = 2O$ since $O$ is the interior of a generalized arithmetic progression), we may cover the ball of radius $1/2$ inside $V$ with at most $T := \delta^{-1}Md^{O(d)}$ many translations of $O$. Since the ball of radius $1/2$ inside $V$ is convex and connected, it must be contained in a dilation of $T$ of $O$. This implies that $B_{T^{-1}, \infty}(0) \cap V \subseteq O \subseteq B \cap V$.\\\\
Let 
$$P_L = \{n_1v_1 + \cdots + n_{d'} v_{d'}: |n_i| \le LN_i\}$$
This is contained in $B_{dL/2}(0) \cap V$. Letting $v_j = \{h_j \alpha\}$, we see that if $n_1h_1 + n_2h_2 + \cdots n_{d'}h_{d'} \equiv 0 \pmod{N}$, then $n_1v_1 + \cdots + n_{d'}v_{d'}$ is a point in $\mathbb{Z}^{d}$. By the Pigeonhole principle, there exists a residue class $q$ such that $n_1h_1 + n_2h_2 + \cdots n_{d'}h_{d'} \equiv q \pmod{N}$ for at least $L^{d'} \delta/M d^{-O(d)}$ many elements in $P_L$. Thus, taking the difference of any two such elements, it follows that $P_L$ contains at least $L^{d'}\frac{\delta}{M} d^{-O(d)}$ many points in $\mathbb{Z}^{d}$. Consequently, letting $L$ go to infinity, $\mathbb{Z}^{d} \cap \Gamma'$ is a lattice in $V$ with covolume at most $\delta^{-1}M d^{O(d)}$. 

This implies, via Minkowski's second theorem that the product of the successive minima of $\mathbb{Z}^d \cap V$ with respect to $B_1(0) \cap V$ is at most $\delta^{-1} M d^{O(d)}$. Let $w_1, \dots, w_{d'}$ be those linearly independent vectors that correspond to the $d'$ successive minima. Then $|w_1| \cdots |w_{d'}| \le \delta^{-1} M d^{O(d)}$. Since the ball of radius $\delta/(Md^{O(d)})$ inside $V$ is contained in $B$, it follows that
$$|w_i \cdot a| \le \frac{(\delta/M)^{-O(1)}d^{O(d)}K}{N}.$$

\textbf{Step 3: Finishing the proof}. Now, letting $e_i$ be the coordinate vector in $\mathbb{R}^{d}$ with a $1$ in the $i$th coordinate and $0$ everywhere else, there exists $e_{i_1}, \dots, e_{i_{d - d'}}$ such that $w_1, \dots, w_{d'}, e_{i_1}, \dots, e_{i_{d - d'}}$ are linearly independent. Consider the $d \times d$ matrix $A = (w_1, \dots, w_{d'}, e_{i_1}, \dots, e_{i_{d - d'}})^{T}$. By Cramer's rule, $A^{-1} = \text{adj}(A)\det(A)^{-1}$ (where $\text{adj}(A)$ is the adjugate matrix). It follows by taking $\eta_1, \dots, \eta_{d - d'}$ be the last $d - d'$ columns of $\text{adj}(A)$, it follows (using Hadamard's inequality of $|v_1 \wedge v_2 \wedge \cdots \wedge v_d| \le d^{O(d)}|v_1| \cdots |v_d|$) that for all $i$,
\begin{itemize}
    \item $\eta_i \in \mathbb{Z}^d$,
    \item $|\eta_i| \le (\delta/M)^{-O(1)}d^{O(d)}$,
    \item $\langle \eta_i, w_j \rangle = 0$ for all $j$,
    \item and for $\delta N/(d^{O(d)}M)$ many $h \in [N]$,
    $\|\eta_i \cdot \{\alpha h\}\|_{\mathbb{R}/\mathbb{Z}} = 0.$
    Namely, $h\in\{\ell_1h_1+\cdots\ell_{d'}h_d:|\ell_i|\le N_i\}$ work since $\eta_i \in \mathbb{Z}^d$ and $\eta_i$ annihilates $P$.
\end{itemize}

Since $N$ is prime and sufficiently larger than each $|\eta_i|$, and $\alpha$ has denominator $N$, it follows that
$$\|\eta_i \cdot \alpha\|_{\mathbb{R}/\mathbb{Z}} = 0.$$
\end{proof}
\subsection{Deduction of Lemma~\ref{l:refined2}}
As in the one-dimensional refined bracket polynomial lemma, we invoke the following lemma to deduce Lemma~\ref{l:refined2}.
\begin{lemma}\label{l:easierrefined2}
Let $\delta, M, K, d, r > 0$ fixed with $\delta \in (0, 1/10)$, $M \ge 2$, $d \in \mathbb{Z}$, $a_1, \dots, a_r, \alpha_1, \dots, \alpha_r$ be such that, $(a_1, \dots, a_r), (\alpha_1, \dots, \alpha_r) \in (\mathbb{R}^d)^r$, and $\beta \in \mathbb{R}^r$, with $|a| \le M$. Let $\vec{N} = (N_1, \dots, N_r)$ be an element in $\mathbb{Z}^r$ with $N_i$ being distinct odd primes. Suppose that $\alpha_j$ has denominator $N_j$ and that there are at least $\delta |\vec{N}|$ many $\vec{h} \in [\vec{N}]$ such that
$$\left\|\beta \cdot \vec{n} +  \sum_{j = 1}^r n_j a_j \cdot \left\{\sum_{i = 1}^r \alpha_i h_i\right\}\right\|_{C^\infty[\vec{N}]} \le K.$$
Then at least one of the following holds.
    \begin{itemize}
        \item We have $N_i \ll (\delta/(dr)^{d + r}M)^{-O(1)}$ for some $i$ or $K/N_i > \frac{1}{10r}$ for some $i$;
        \item or there exists linearly independent $w_1, \dots, w_r$ and linearly independent $\eta_1, \dots, \eta_{d - r}$ in $\mathbb{Z}^{d}$ with size at most $(\delta/drM)^{-O(d + r)}$ in $\mathbb{Z}^{d}$ such that for each $(i, k)$, $\langle w_i, \eta_k \rangle = 0$ and for each $i$ and $j$
    $$\|\eta_i \cdot \alpha_j\|_{\mathbb{R}/\mathbb{Z}} = 0 \text{ and } |\eta_i \cdot w_j| \le \frac{(\delta/drM)^{-O(d + r)}K}{N_j}.$$
    \end{itemize}
\end{lemma}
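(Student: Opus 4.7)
The plan is to mirror the three-phase structure of the proof of Lemma~\ref{l:easierrefined1}: reduce to the case $\beta=0$ by pigeonholing on sign patterns, set up a convex-body/lattice pair and apply Minkowski's second theorem together with Proposition~\ref{p:properprogression} to extract a dense proper GAP, and finally build the dual annihilating vectors $\eta_i$ via a Cramer's rule / adjugate construction.

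Unpacking the $C^\infty[\vec{N}]$ hypothesis gives, for every $j\in[r]$,
\[\left\|\beta_j + a_j\cdot\left\{\sum_{i=1}^r \alpha_i h_i\right\}\right\|_{\mathbb{R}/\mathbb{Z}}\le \frac{K}{N_j}.\]
I would first pigeonhole simultaneously on the nearest-integer vector $\vec{k}\in\mathbb{Z}^r$ (whose entries are $O(M)$ since $|a_j|\le M$) and on the sign pattern of $\{\sum_i\alpha_i h_i\}\in(-\tfrac12,\tfrac12]^d$; taking differences of two $\vec{h}$'s lying in the same cell cancels both the $\beta_j$ and the integer parts and produces a set $H\subseteq[\vec{N}]$ of size at least $\delta|\vec{N}|/((dr)^{d+r}M^r)$ on which $|a_j\cdot\{\sum_i\alpha_i h_i\}|\le 2K/N_j$ holds simultaneously for every $j$. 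Now set
\[B=\bigl\{x\in\mathbb{R}^d: |a_j\cdot x|\le 2K/N_j\ \forall j,\ |x|_\infty\le 1/2\bigr\},\quad \Gamma=\alpha_1\mathbb{Z}+\cdots+\alpha_r\mathbb{Z}+\mathbb{Z}^d.\]
Because the $N_j$ are distinct primes and each $\alpha_j$ has denominator exactly $N_j$, a CRT check shows the map $[N_1]\times\cdots\times[N_r]\to\Gamma\cap(-\tfrac12,\tfrac12]^d$ given by $\vec{h}\mapsto\{\sum_i\alpha_ih_i\}$ is injective, so $H$ contributes the same number of distinct lattice points to $B\cap\Gamma$.

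Apply Proposition~\ref{p:properprogression} to produce linearly independent $v_1,\dots,v_{d'}\in B\cap\Gamma$ generating a proper GAP $P\subseteq B\cap\Gamma$ of size at least $|B\cap\Gamma|/d^{O(d)}$, and let $V=\mathrm{span}(v_1,\dots,v_{d'})$. The volume comparison of Lemma~\ref{l:easierrefined1} --- using that $B_{1/2,\infty}(0)\cap V$ contains at most $\prod_j N_j$ points of $\Gamma$ by the injectivity above --- combined with Ruzsa's covering lemma (Lemma~\ref{l:Ruzsa}) gives $B_{T^{-1},\infty}(0)\cap V\subseteq B\cap V$ for some $T\ll(\delta/(drM))^{-O(1)}$. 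Running Minkowski's second theorem on the lattice $\mathbb{Z}^d\cap V$ against $B_1(0)\cap V$ yields linearly independent $w_1,\dots,w_{d'}\in\mathbb{Z}^d\cap V$ with $\prod_k|w_k|\le(\delta/(drM))^{-O(d+r)}$; since each $w_k$ lies in a dilate of $B$ by a factor $\ll T|w_k|$, we obtain $|w_k\cdot a_j|\le(\delta/(drM))^{-O(d+r)}K/N_j$ for all $k,j$, as required.

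Completing $w_1,\dots,w_{d'}$ to a basis of $\mathbb{R}^d$ by standard basis vectors $e_{i_1},\dots,e_{i_{d-d'}}$ and taking the last $d-d'$ columns of the adjugate of the resulting matrix gives integer vectors $\eta_1,\dots,\eta_{d-d'}$ with $\langle\eta_i,w_k\rangle=0$ and $|\eta_i|\le(\delta/(drM))^{-O(d+r)}$ by Hadamard's inequality. Since $\eta_i\perp V\supseteq P$, each GAP generator $v_k=\{\sum_i\alpha_ih_{i,k}\}$ satisfies $\eta_i\cdot v_k=0$ in $\mathbb{R}$, which translates to $\sum_i h_{i,k}(\eta_i\cdot\alpha_i)\in\mathbb{Z}$. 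To upgrade this to $\eta_i\cdot\alpha_j\in\mathbb{Z}$ for every individual $j$, write $\eta_i\cdot\alpha_j\equiv p_{i,j}/N_j\pmod{\mathbb{Z}}$ (valid since $\alpha_j$ has prime denominator $N_j$) and reduce the relation $\sum_i h_{i,k}p_{i,j}\prod_{\ell\neq i}N_\ell\equiv 0\pmod{N_j}$ modulo each $N_j$ in turn: using that $\prod_{\ell\neq j}N_\ell$ is coprime to $N_j$, one concludes $h_{j,k}p_{j,j}\equiv 0\pmod{N_j}$, and the existence of a single $k$ with $h_{j,k}\not\equiv0\pmod{N_j}$ forces $p_{j,j}=0$. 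I expect the main obstacle to be exactly this last step: ensuring that for each $j$ the GAP generators do not all vanish in the $h_j$-coordinate modulo $N_j$, which requires verifying that $H$ does not accidentally sit inside a proper sub-torus and may force one to augment $P$ with $O(r)$ extra generators pulled from $H$ to break any such degeneracy. This CRT decoupling is the genuinely new ingredient compared to the single-variable argument, and is also the one place where the distinctness of the primes $N_j$ (rather than merely a bound on their sizes) is indispensable.
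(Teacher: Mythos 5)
Your overall route is the same as the paper's (pigeonhole and difference to remove $\beta$; a GAP from Proposition~\ref{p:properprogression} plus Ruzsa covering; an adjugate/Cramer construction of the $\eta_i$; CRT decoupling using that the $N_j$ are distinct primes), but there is a genuine gap at the Minkowski step. Applying Minkowski's second theorem to the lattice $\mathbb{Z}^d\cap V$ with respect to $B_1(0)\cap V$ only gives $\prod_k\lambda_k\asymp_d \mathrm{vol}\bigl(V/(\mathbb{Z}^d\cap V)\bigr)$, so the asserted bound $\prod_k|w_k|\le(\delta/drM)^{-O(d+r)}$ requires an upper bound on the covolume of $\mathbb{Z}^d\cap V$ inside $V$, and nothing you prove supplies one: $V$ is spanned by vectors $v_k\in\Gamma$ whose denominators can be as large as $N_1\cdots N_{r'}$, so a priori the integer points of $V$ could be extremely sparse and the successive minima enormous. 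This is exactly what the second half of the paper's Step~2 provides: dilate the GAP to $P_L\subseteq B_{dL/2}(0)\cap V$, pigeonhole the $\vec{h}$-labels of its points into residue classes modulo $(N_1,\dots,N_{r'})$, and difference to produce $\gg L^{d'}\delta/(rMd^{O(d)})$ integer points in $B_{dL}(0)\cap V$; letting $L\to\infty$ bounds the covolume of $\mathbb{Z}^d\cap V$ by $\delta^{-1}Mrd^{O(d)}$, and only then does Minkowski yield the $w_k$ with the stated product bound. Your write-up skips this entirely, and the containment $B_{T^{-1},\infty}(0)\cap V\subseteq B\cap V$ cannot substitute for it: that containment is what converts smallness of $|w_k|$ into smallness of $|w_k\cdot a_j|$, not what bounds $|w_k|$.

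Two smaller points. First, the hypothesis only says $N_j\alpha_j\in\mathbb{Z}^d$, so some $\alpha_j$ may be integral and the injectivity you invoke fails in those coordinates; the paper splits off the integral $\alpha_j$'s and works over $[\vec{N}']=\prod_{i\le r'}[N_i]$. Second, the ``obstacle'' you flag at the end is not really one, and your proposed fix would not work: augmenting $P$ by generators drawn from $H$ is useless, because the $\eta_i$ are constructed to annihilate only $V=\mathrm{span}(P)$, not arbitrary elements of $H$. The correct resolution is a counting argument: the set of residues $\vec{h}\bmod(N_1,\dots,N_{r'})$ satisfying $\sum_j h_j(\eta_i\cdot\alpha_j)\in\mathbb{Z}$ is a subgroup of $\prod_j\mathbb{Z}/N_j\mathbb{Z}$, hence (each $N_j$ prime) a product of full or trivial factors; since the GAP supplies at least $\delta|\vec{N}'|/(d^{O(d)}Mr)>|\vec{N}'|/N_j$ distinct such residues (we may assume each $N_j$ is large, else the first alternative of the lemma holds), the $j$-th factor must be full, forcing $\eta_i\cdot\alpha_j\in\mathbb{Z}$.
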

\begin{proof}[Proof of Lemma~\ref{l:refined2} assuming Lemma~\ref{l:easierrefined2}]
We have that for at least $\delta |\vec{N}|$ many $\vec{h} \in [\vec{N}]$,
$$\left\|\beta \cdot \vec{n} +  \sum_{j = 1}^r n_j a_j \cdot \left\{\sum_{i = 1}^r \alpha_i h_i\right\}\right\|_{C^\infty[\vec{N}]} \le K.$$
We pick distinct odd primes $N_1', \dots, N_r'$ such that $10rN_i \le N_i' \le 20rN_i$. \\\\
Replacing $N_i$ with $N_i'$ we may assume that each $h_i \in [N_i/(10r)]$. We now approximate $\alpha_i$ with $\alpha_i'$ with $N_i\alpha_i' \in \mathbb{Z}^d$ such that $\|\alpha_i - \alpha_i'\|_\infty \le 1/(2N_i)$. We see that
$$\left\|\beta \cdot \vec{n} + \sum_{j = 1}^r n_j a_j \cdot \left\{\sum_{i = 1}^r \alpha_i' h_i\right\} + \sum_{j = 1}^r n_j a_j \cdot \left\{\sum_{i = 1}^r (\alpha_i - \alpha_i')h_i\right\} + P(\vec{n}, \vec{h})\right\|_{C^\infty[\vec{N}]} \le K$$
where
$$P(\vec{n}, \vec{h}) = \sum_{j = 1}^r n_j a_j \cdot \left\{\sum_{i = 1}^r \alpha_i h_i\right\} - \sum_{j = 1}^r n_j a_j \cdot \left\{\sum_{i = 1}^r \alpha_i' h_i\right\} - \sum_{j = 1}^r n_j a_j \cdot \left\{\sum_{i = 1}^r (\alpha_i - \alpha_i')h_i\right\}.$$
The point is that $\left\{\sum_{i = 1}^r \alpha_i h_i\right\} - \left\{\sum_{i = 1}^r \alpha_i' h_i\right\} - \left\{\sum_{i = 1}^r (\alpha_i - \alpha_i')h_i\right\} \in \mathbb{Z}^d$ and this can take at most $(2r + 1)^d$ many values. Pigeonholing in one of those values, and observing that $\left\{\sum_{i = 1}^r (\alpha_i - \alpha_i')h_i\right\} = \sum_{i = 1}^r (\alpha_i - \alpha_i')h_i$, there exists some $\beta' \in \mathbb{R}^d$ such that for at least $\delta|\vec{N}|/(2r + 1)^{d}$ many values $h \in [\vec{N}]$ satisfying 
$$\left\|\beta' \cdot \vec{n} +  \sum_{j = 1}^r n_j a_j \cdot \left\{\sum_{i = 1}^r \alpha_i' h_i\right\} + \sum_i \left(\sum_{j} \tilde{a}_{ij} n_j\right)\{h_i/N_i\}\right\|_{C^\infty[\vec{N}]} \le 11Kr$$
where $\tilde{a}_{ij} = N_i(\alpha_i - \alpha_i')a_j$. Here, we note that since $h_i \in [N_i/(10r)]$, we have $h_i = N_i\{h_i/N_i\}$. \\\\
Applying Lemma~\ref{l:easierrefined2} gives $w_1, \dots, w_{d'}$ and $\eta_1, \dots, \eta_{d + r - d'}$ such that denoting $\eta_i = (\eta_i', \eta_i^{d+1}, \dots, \eta_i^{d + r})$ and $w_i = (w_i', w_i^{d+1}, \dots, w_i^{d + r})$, we have
$$\eta_i' \cdot \alpha'_j + \eta_i^{d+j} /N_j \equiv 0 \pmod{1}$$
$$|w_i' \cdot a_j + \sum_k w_i^{d+k} \cdot \tilde{a}_{jk}| \le 11Kr/N_j.$$
Now taking $u_1, \dots, u_m$ to be linearly independent and orthogonal to the subspace generated by the $\eta_1', \dots, \eta_{d + r - d'}'$, we see that $(u_1, 0), \dots, (u_m, 0)$ is orthogonal to $\eta_1, \dots, \eta_{d + r - r'}$, so is in the subspace spanned by $w_1, \dots, w_{d'}$. Furthermore, by Cramer's rule, we may choose $u_1, \dots, u_m$ to be integer vectors with size bounded by $(\delta/drM)^{-O(d + r)}$, so that $u_1, \dots, u_m$ are each spanned by $(\delta/drM)^{-O(d + r)}$-integer combination of $w_1, \dots, w_{d'}$. Thus,
$$\|\eta_i' \cdot \alpha_j\|_{\mathbb{R}/\mathbb{Z}} \le (\delta/drM)^{-O(d + r)}/N_j$$
$$|u_i \cdot a_j| \le \frac{K(\delta/drM)^{-O(d + r)}}{N_j}$$
as desired. 
\end{proof}
We now prove Lemma~\ref{l:easierrefined2}.
\begin{proof}[Proof of Lemma~\ref{l:easierrefined2}]
\textbf{Step 1: Reducing to the case of when $\beta = 0$}. We have for $\delta|\vec{N}|$ many $h \in [\vec{N}]$ that
$$\left\|\beta_j + a_j \cdot \left\{\sum_{i = 1}^r \alpha_i h_i\right\}\right\|_{\mathbb{R}/\mathbb{Z}} < \frac{K}{N_i}$$
for each $j = 1, \dots, r$. By the Pigeonhole principle, there exists $k_j \in \mathbb{Z}$ such that for a proportion of $\delta/(2Mdr)$ many $h_1, \dots h_r$, we have
$$\left|\beta_j + a_j \cdot \left\{\sum_{i = 1}^r \alpha_i h_i\right\} - k_j\right| < K/N_j.$$
Pigeonholing again in the sign pattern of $\{\sum_{i = 1}^r \alpha_i h_i\} \in \{0, 1, -1\}^d$ to obtain a proportion of $\delta/(2Mdr3^{d})$ many $h_1, \dots, h_r$ such that
$$\left|\beta_j + a_j \cdot \left\{\sum_{i = 1}^r \alpha_i h_i\right\} - k_j \right| < K/N_j$$
and all of $\{\sum_{i = 1}^r \alpha_i h_i\}$ have the same sign as $\vec{h}$ ranges over the set we Pigeonholed in. Taking the difference of any two such $\vec{h}$'s, we obtain
$$\left|a_j \cdot \left\{\sum_{i = 1}^r \alpha_i h_i\right\}\right|  < 2K/N_j$$
for $\delta|\vec{N}|/(2Mdr3^{d})$ many $\vec{h}$ in $[\vec{N}]$. \\\\
Let
$$B_j = \{|a_j \cdot x| < 2K/N_j, |x_i| < 1/2 \text{ for all } i \in [d]\}, \text{ } B = \bigcap_{i = 1}^r B_i, \text{ and }\Gamma = \alpha_1 \mathbb{Z} + \alpha_2\mathbb{Z} + \cdots \alpha_r\mathbb{Z} + \mathbb{Z}^{d}.$$
Without a loss of generality, let $\alpha_1, \dots, \alpha_{r'} \not\in \mathbb{Z}^d$ and let $\alpha_{r' + 1}, \dots, \alpha_r \in \mathbb{Z}^d$. Let $[\vec{N'}] = \prod_{i = 1}^{r'} [N_i]$. We now claim that $|B \cap \Gamma| \ge \delta/(2Mdr3^{d}) |\vec{N'}|$. To see this, suppose
$$\sum_i \alpha_i h_i \equiv \sum_i \alpha_i h_i' \pmod{1}$$
for $h_i, h_i'$ integers. Then
$$\sum_i \alpha_i (h_i - h_i') \equiv 0 \pmod{1}.$$
Clearing denominators,
$$\sum_i N_1 \cdots N_{r'} \alpha_i (h_i - h_i') \equiv 0 \pmod{|\vec{N'}|}$$
which implies
$$N_1 \cdots N_{r'} \alpha_i (h_i - h_i') \equiv 0 \pmod{N_i}.$$
As $\alpha_i$ is not integral for $1 \le i \le r'$ we have $h_i \equiv h_i' \pmod{N_i}$. \\\\
\textbf{Step 2: Invoking Minkowski's second theorem}. By Proposition~\ref{p:properprogression}, there exists a generalized arithmetic progression 
$$P := \{m_1v_1 + m_2v_2 + \cdots + m_{d'} v_{d'}: |a_i| \le M_i\} \subseteq B \cap \Gamma$$ 
with $v_1, \dots, v_{d'}$ linearly independent and $M_1M_2 \cdots M_{d'} = \frac{\delta}{rMd^{O(d)}} |\vec{N'}|$. Let 
$$\Gamma' = v_1\mathbb{Z} + v_2\mathbb{Z} + \cdots v_{d'}\mathbb{Z} \text{ and } V = \text{span}(v_1, \dots, v_{d'}).$$
By placing a fundamental parallelepiped dilated by two of $\Gamma'$ around each lattice point in the ball of radius $1/2$, we see that the volume of the ball of radius $1/2$ in $V$ is at most $N/2^d$ times the the volume of the fundamental parallelepiped of $\Gamma'$ in $V$. We have the upper bound:
$$\mathrm{vol}(B_{1/2, \infty}(0)) \le 2^dN\mathrm{vol}(V/\Gamma').$$
Let $O$ denote the interior of the generalized arithmetic progression $P$ inside $V$. Note that we have the lower bound of
$$\mathrm{vol}(O) \ge N(\delta/(d^{O(d)}Mr)) \mathrm{vol}(V/\Gamma').$$
By Ruzsa's covering lemma (using the fact that $O - O = 2O$ since $O$ is the interior of a generalized arithmetic progression), we may cover the ball of radius $1/2$ inside $V$ with at most $T := \delta^{-1}Mrd^{O(d)}$ many translations of $O$. Since the ball of radius $1/2$ inside $V$ is convex and connected, it must be contained in a dilation of $T$ of $O$. This implies that $B_{T^{-1}, \infty}(0) \cap V \subseteq O \subseteq B\cap V$. \\\\
Let $P_L = \{m_1v_1 + m_2v_2 + \cdots + m_{d'} v_{d'}: |a_i| \le LM_i\}$. This lies inside $B_{dL/2}(0) \cap V$. Since $N_1, \dots, N_{r'}$ are odd, there exists some $\vec{h_i} = (h_i^1, \dots, h_i^{r'}) \in [\vec{N'}]$ such that $v_i = \{\vec{h_i} \cdot (\alpha_1, \dots, \alpha_{r'})\}$. By the Pigeonhole principle, there exists a residue classes $(q_1, \dots, q_{r'})$ such that for at least $L^{d'}\frac{\delta}{rMd^{O(d)}}$ many $m_1, \dots, m_{d'}$,
$$m_1h_1^j + m_2h_2^j + \cdots + m_{d'} h_{d'}^j \equiv q_j \pmod{N_j}.$$
Taking the difference of any two such $m_1, \dots, m_{d'}$, we obtain that for at least $L^{d'}\frac{\delta}{rMd^{O(d)}}$ many $m_1, \dots, m_{d'}$, that
$$m_1h_1^j + m_2h_2^j + \cdots + m_{d'} h_{d'}^j \equiv 0 \pmod{N_j}$$
and so there are at least $L^{d'}\frac{\delta}{rMd^{O(d)}}$ many elements in $P_{2L}$ that lie in $\mathbb{Z}^{d}$. Consequently, $B_{dL}(0) \cap V$ contains at least $L^{d'}\frac{\delta}{rMd^{O(d)}}$ many elements in $\mathbb{Z}^d$. \\\\
By Minkowski's second theorem, $V \cap \mathbb{Z}^{d}$ has $d'$ linearly independent vectors $w_1, \dots, w_{d'}$ such that $|w_1| \cdots |w_{d'}| \le (\delta/(Mrd^{O(d)}))^{-1}$. Since the ball of radius $\frac{\delta}{Mrd^{O(d)}}$ lies inside $B$, it follows that
$$|w_i \cdot a_j| \le \frac{K(\delta/(Mrd^{O(d)}))^{-1}}{N_j}.$$
\textbf{Step 3: Finishing the argument}. Now, letting $e_i$ be the coordinate vector in $\mathbb{R}^{d}$ with a $1$ in the $i$th coordinate and $0$ everywhere else, there exists $e_{i_1}, \dots, e_{i_{d - d'}}$ such that $w_1, \dots, w_{d'}, e_{i_1}, \dots, e_{i_{d - d'}}$ are linearly independent. Consider the $d \times d$ matrix $A = (w_1, \dots, w_{d'}, e_{i_1}, \dots, e_{i_{d - d'}})^{T}$. By Cramer's rule, $A^{-1} = \text{adj}(A)\det(A)^{-1}$ (where $\text{adj}(A)$ is the adjugate matrix). By taking $\eta_1, \dots, \eta_{d - d'}$ be the last $d - d'$ columns of $\text{adj}(A)$, it follows (using Hadamard's inequality of $|v_1 \wedge v_2 \wedge \cdots \wedge v_d| \le d^{O(d)}|v_1| \cdots |v_d|$) that for each $i$,
\begin{itemize}
    \item $\eta_i \in \mathbb{Z}^d$
    \item $|\eta_i| \le (\delta/(Mrd^{O(d)}))^{-1}$
    \item $\langle \eta_j, w_j \rangle = 0$ for each $i, j$
    \item and for $\delta N/(d^{O(d)}Mr)$ many $\vec{h} \in [\vec{N}']$, $\|\eta_i \cdot \{(\alpha_1, \dots, \alpha_{r'}) \cdot \vec{h}\}\|_{\mathbb{R}/\mathbb{Z}} = 0$ for the same reason as the one-dimensional case.
\end{itemize}
Since each $N_i$ is prime and sufficiently larger than each $|\eta_i|$, and $\alpha$ has denominator $N$, and since $\alpha_{r' + 1}, \dots, \alpha_r \in \mathbb{Z}^d$ it follows that
$$\|\eta_i \cdot \alpha_j\|_{\mathbb{R}/\mathbb{Z}} = 0.$$
\end{proof}
\subsection{Corollary of Lemma~\ref{l:refined2}}
We now deduce the following corollary of Lemma~\ref{l:refined2} which will be used in our main result.
\begin{corollary}\label{c:bracketcorollary2}
Let $N, \delta, M, K, d, r > 0$ fixed with $\delta \in (0, 1/10)$, $d \in \mathbb{Z}$, $a_1, \dots, a_r, \alpha_1, \dots, \alpha_r$ be such that, $(a_1, \dots, a_r), (\alpha_1, \dots, \alpha_r) \in (\mathbb{R}^d)^r$, and $\beta \in \mathbb{R}^r$, $\gamma \in \mathbb{R}^{r^2}$ with $|a| \le M$. Suppose that for $\delta |\vec{N}|$ many elements $\vec{h} \in [\vec{N}]$,
$$\left\|\beta \cdot \vec{n} +  \sum_{j = 1}^r n_j a_j \cdot \left\{\sum_{i = 1}^r \alpha_i h_i\right\} + \sum_{i, j \in [r]} \gamma_{ij} n_ih_j\right\|_{C^\infty[\vec{N}]} \le K.$$
Then one of the following holds.
\begin{itemize}
    \item We have $N_i \ll (\delta/(dr)^{d + r}M)^{-O(1)}$ for some $i$ or $K/N_i > 1/(100r)$ for some $i$;
    \item or there exists linearly independent $w_1, \dots, w_{d'}$ and $\eta_1, \dots, \eta_{d - d'}$ in $\mathbb{Z}^{d}$ with size at most $(\delta/drM)^{-O(dr)^{O(1)}}$ such that for each $(i, k)$, $\langle w_i, \eta_k \rangle = 0$ and for each $i$ and $j$
    $$\|\eta_i \cdot \alpha_j\|_{\mathbb{R}/\mathbb{Z}}, \|w_i \cdot a_j\|_{\mathbb{R}/\mathbb{Z}} \le \frac{(\delta/drM)^{-O(dr)^{O(1)}}K}{N_j}.$$
\end{itemize}
\end{corollary}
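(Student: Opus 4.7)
The corollary differs from Lemma~\ref{l:refined2} only by the extra bilinear term $\sum_{i,j\in[r]}\gamma_{ij}n_i h_j$, and its conclusion is correspondingly weakened from $|w_i\cdot a_j|$ to $\|w_i\cdot a_j\|_{\mathbb{R}/\mathbb{Z}}$. The plan is to absorb the extra term into a bracket expression by extending the ambient dimension from $d$ to $d+r$, apply Lemma~\ref{l:refined2}, and descend back to $\mathbb{Z}^d$ via projection and Cramer's rule. The first alternative of the corollary (small $N_i$ or large $K/N_i$) is inherited directly from the corresponding alternative of Lemma~\ref{l:refined2} after the substitution $d\mapsto d+r$, with the resulting polynomial-in-$(d+r)$ adjustments absorbed into the implicit constants.

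\textbf{Key transformation.} I would define
\[
\tilde a_j := (a_j, f_j) \in \mathbb{R}^{d+r}, \qquad \tilde\alpha_i := (\alpha_i, \gamma_{\cdot i}) \in \mathbb{R}^{d+r},
\]
where $f_j\in\mathbb{R}^r$ is the $j$-th standard basis vector and $\gamma_{\cdot i} := (\gamma_{1i},\dots,\gamma_{ri})^{\top}$. A direct computation gives
\[
\tilde a_j\cdot\left\{\sum_i \tilde\alpha_i h_i\right\} \;=\; a_j\cdot\left\{\sum_i\alpha_i h_i\right\} + \left\{\sum_i\gamma_{ji}h_i\right\} \;\equiv\; a_j\cdot\left\{\sum_i\alpha_i h_i\right\} + \sum_i\gamma_{ji}h_i \pmod{1},
\]
where the congruence uses $h_i\in\mathbb{Z}$. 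Since the $C^{\infty}[\vec N]$ norm only sees each coefficient of $\vec n$ modulo $\mathbb{Z}$, replacing the original expression with $\beta\cdot\vec n + \sum_j n_j \tilde a_j\cdot\{\sum_i\tilde\alpha_i h_i\}$ preserves the hypothesis exactly. Crucially, $|\tilde a_j|=\max(|a_j|,1)\le M$, so no factor of $\vec N$ enters through the size bound; this is the key advantage over the naive alternative $\tilde a_j=(a_j,(N_k\gamma_{jk})_k)$, $\tilde\alpha_i=(\alpha_i,f_i/N_i)$, which would inflate $|\tilde a_j|$ to order $\max_k N_k$ and ruin the bound.

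\textbf{Descending via projection and Cramer's rule.} Applying Lemma~\ref{l:refined2} in dimension $d+r$ yields linearly independent $w_1,\ldots,w_p$ and $\eta_1,\ldots,\eta_q\in\mathbb{Z}^{d+r}$ with $p+q=d+r$, all of size at most $(\delta/(d+r)rM)^{-O(d+r)}$, mutually orthogonal, and satisfying $|w_i\cdot\tilde a_j|,\,\|\eta_i\cdot\tilde\alpha_j\|_{\mathbb{R}/\mathbb{Z}}\le (\delta/(d+r)rM)^{-O(d+r)}K/N_j$. Writing $w_i=(w_i',w_i'')$ and $\eta_k=(\eta_k',\eta_k'')$ with primes denoting the first $d$ coordinates, the identity $w_i\cdot\tilde a_j=w_i'\cdot a_j+w_{i,j}''$ together with $w_{i,j}''\in\mathbb{Z}$ immediately yields $\|w_i'\cdot a_j\|_{\mathbb{R}/\mathbb{Z}}\le|w_i\cdot\tilde a_j|$, so a maximal linearly independent subset of $\{w_i'\}$ furnishes the required $\tilde w$'s. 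For the $\eta$'s, I would apply Cramer's rule (or Siegel's lemma) to the $r\times q$ integer matrix with columns $\eta_1'',\ldots,\eta_q''$ to produce linearly independent integer combinations $\tilde\eta_k=\sum_m c_{km}\eta_m$ whose last $r$ coordinates vanish; then $\tilde\eta_k\cdot\tilde\alpha_j=\tilde\eta_k'\cdot\alpha_j$, and the triangle inequality for $\|\cdot\|_{\mathbb{R}/\mathbb{Z}}$ gives $\|\tilde\eta_k'\cdot\alpha_j\|_{\mathbb{R}/\mathbb{Z}}\le\sum_m|c_{km}|\,\|\eta_m\cdot\tilde\alpha_j\|_{\mathbb{R}/\mathbb{Z}}$.

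\textbf{Dimensions, orthogonality, and main obstacle.} A short linear-algebra computation, using $\dim(\mathrm{proj}_d W)+\dim(W^{\perp}\cap(\mathbb{R}^d\times\{0\}))=d$ where $W=\mathrm{span}_{\mathbb{R}}(w_1,\ldots,w_p)$, confirms that the $\tilde w$'s and $\tilde\eta$'s together span exactly $d$ linearly independent directions in $\mathbb{R}^d$; their mutual orthogonality in $\mathbb{R}^d$ follows from $w_i\cdot\eta_k=0$ in $\mathbb{R}^{d+r}$ combined with $\tilde\eta_k''=0$. The main technical obstacle is bookkeeping the size blow-up in the Cramer step: each coefficient $c_{km}$ is bounded by $r!\cdot(\max_m|\eta_m''|)^{r}$, leading to a final size of $|\tilde\eta_k|$ of the form $(\delta/(d+r)rM)^{-O(r(d+r))}$. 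Since the exponent is polynomial in $d+r$ and $(d+r)rM$ is polynomially comparable to $drM$, this is absorbed into the claimed bound $(\delta/drM)^{-O((dr)^{O(1)})}$ under the paper's convention that small values of $d$ or $r$ are handled implicitly.
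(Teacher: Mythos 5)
Your proposal is correct and follows essentially the same route as the paper: the same lift to dimension $d+r$ (absorbing the bilinear term into the extra coordinates of $\tilde\alpha_i$ and appending unit vectors to $\tilde a_j$), an application of Lemma~\ref{l:refined2}, and a Cramer's-rule descent back to $\mathbb{Z}^d$. The only cosmetic difference is in the descent: you form bounded integer combinations of the lemma's $\eta$'s whose last $r$ coordinates vanish, whereas the paper takes an integer basis $u_1,\dots,u_m$ of the orthogonal complement of the projected $w_i'$ and notes that $(u_i,0)$ lies in the span of the $\eta$'s; these are two descriptions of the same subspace, so the arguments coincide.
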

\begin{proof}
We write $\sum_{i, j} \gamma_{ij} n_ih_j = \sum_i n_i \left\{\sum_j \gamma_{ij} h_j \right\} \pmod{1}.$ Define $\tilde{a}_j = (a_j, e_{d+1}, e_{d+2}, \dots, e_{d+r})$ where $e_j$ is the $j$-th elementary basis vector and $\tilde{\alpha}_i = \left(\alpha_i, \gamma_{i}\right)$ where $\gamma_i = \sum_{\ell \in [r]}\gamma_{i\ell} e_{i+d}$. We have for $\delta|\vec{N}|$ many $\vec{h} \in [\vec{N}]$ that
$$\|\beta \cdot \vec{n} + \sum_{j = 1}^r \tilde{a}_j n_j \cdot \{\sum_{i = 1}^r \tilde{\alpha}_i h_i\} \|_{C^\infty[\vec{N}]} \le K.$$
By Lemma~\ref{l:refined2}, we find $w_1, \dots, w_{d'}$ and $\eta_1, \dots, \eta_{d + r - d'}$ of size at most $(\delta/drM)^{-O(d + r)}$ such that
$$\|\eta_i \cdot \tilde{\alpha}_j\|_{\mathbb{R}/\mathbb{Z}}, |w_i \cdot \tilde{a}_j| \le \frac{(\delta/drM)^{-O(d + r)}K}{N_j}$$
for each $j \in [d + r]$. \\\\
We denote $\eta_i = (\eta_i', \eta_i^{d+1}, \dots, \eta_i^{d + r})$ and $w_i = (w_i', w_i^{d+1}, \dots, w_i^{d + r})$. Fixing $j$ (and noting that $w_i^{d + k}e_{d+k} \cdot e_{d+k} \in \mathbb{Z}$), we see that
$$\|\eta_i' \cdot \alpha_j+\sum_k\eta_i^{d+k}\gamma_{ik}\|_{\mathbb{R}/\mathbb{Z}} \le \frac{(\delta/M)^{-O(r)}(dr)^{O(dr + r^2)}K}{N_j}$$
and
$$\|w_i' \cdot a_j\|_{\mathbb{R}/\mathbb{Z}} \le \frac{(\delta/M)^{-O(r)}(dr)^{O(dr + r^2)}K}{N_j}.$$
Consider $u_1, \dots, u_m$ linearly independent which span the orthogonal complement of the $w_i'$'s. It follows that $(u_i, 0)$ lies in the orthogonal complement of the span of the $w_i$'s, so it lies in the span of $\eta_i$'s. By Cramer's rule we may choose $u_1, \dots, u_m$ to be integer vectors with size at most $(\delta/drM)^{-O(dr^{O(1)})}$, so $(u_1, 0), \dots, (u_i, 0)$ is in an $(\delta/drM)^{-O(dr^{O(1)})}$-integer span of the $\eta_i'$. It follows that
$$\|u_i \cdot \alpha_j\|_{\mathbb{R}/\mathbb{Z}} \le \frac{(\delta/drM)^{-O(dr^{O(1)})}K}{N_j}.$$
\end{proof}

\section{Equidistribution for multiparameter nilsequences}\label{s:equidistribute}
We shall recall the main theorem here.

\mainresulta*

We begin with the following lemma.
\begin{lemma}\label{l:commutatorlemma}
Let $G$ be a group and $G_2$ a subgroup of $G$ containing $[G, G]$. Then any $t - 1$-fold commutator containing $t - 1$ copies of $G$ and one copy of $G_2$ lies inside the $t - 1$-fold commutator $[G_2, G, \dots, G]$.
\end{lemma}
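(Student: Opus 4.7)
The plan is to ``move'' the $G_2$-factor to the first position using the three subgroups lemma, thereby reducing to the case in which the left-normed commutator manifestly lies in $[G_2, G, \dots, G]$. Write $\Gamma_k := [G_2, G, \dots, G]$ for the left-normed commutator subgroup with $k$ copies of $G$, so that $\Gamma_0 = G_2$ and $\Gamma_{k+1} = [\Gamma_k, G]$. A quick observation I would make first is that every $\Gamma_k$ is normal in $G$: since $G_2 \supseteq [G, G]$, the quotient $G/G_2$ is abelian and $\Gamma_0$ is normal; and if $\Gamma_k$ is normal, conjugating a generator via $[h, g]^x = [h^x, g^x]$ shows that $\Gamma_{k+1}$ is normal as well.

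The key technical step is the auxiliary containment $[G_{(k)}, \Gamma_j] \subseteq \Gamma_{j+k}$ for all $j \geq 0$ and $k \geq 1$, where $G_{(k)}$ denotes the $k$-th term of the lower central series. I would prove this by induction on $k$, with $j$ allowed to vary freely. The base case $k = 1$ is simply the defining relation $[G, \Gamma_j] = \Gamma_{j+1}$. For the inductive step I would invoke the three subgroups lemma with $A = G$, $B = G_{(k-1)}$, $C = \Gamma_j$ and normal subgroup $N = \Gamma_{j+k}$: the inductive hypothesis (applied once with $j$ and once with $j+1$) supplies $[A, [B, C]] \subseteq [G, \Gamma_{j+k-1}] = \Gamma_{j+k}$ and $[B, [C, A]] = [G_{(k-1)}, \Gamma_{j+1}] \subseteq \Gamma_{j+k}$, whence the three subgroups lemma delivers $[C, [A, B]] = [\Gamma_j, G_{(k)}] \subseteq \Gamma_{j+k}$.

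With this in hand the main lemma follows by induction on $t$. For $t = 2$, the identity $[g_1, g_2] = [g_2, g_1]^{-1}$ handles either position for the $G_2$-factor, giving $[G_2, G] = [G, G_2] = \Gamma_1$. For the inductive step I would write $[g_1, \dots, g_t] = [[g_1, \dots, g_{t-1}], g_t]$ and split on the position $i$ of the $G_2$-element. If $i \leq t - 1$ then by the inductive hypothesis $[g_1, \dots, g_{t-1}] \in \Gamma_{t-2}$, so the full commutator lies in $[\Gamma_{t-2}, G] = \Gamma_{t-1}$. If $i = t$, then $g_1, \dots, g_{t-1} \in G$, so $[g_1, \dots, g_{t-1}] \in G_{(t-1)}$, and the full commutator lies in $[G_{(t-1)}, G_2] = [G_{(t-1)}, \Gamma_0] \subseteq \Gamma_{t-1}$ by the auxiliary containment applied with $j = 0$.

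The hard part will be the auxiliary containment: this is where the hypothesis $G_2 \supseteq [G, G]$ really enters, as it is what lets us treat $G_2$ as having ``weight two'' and keeps the weight bookkeeping on the nose. Aligning these weights forces both the three subgroups lemma and the choice to induct on $k$ while letting $j$ float (so that the $[B, [C, A]]$ term, which shifts $j$ to $j+1$ but lowers $k$ to $k-1$, still has net weight $j+k$). Once that containment is established, the remaining induction on $t$ is routine; the only subtlety is separating the ``generic-slot'' case (which reduces directly to a smaller $t$) from the extreme right-slot case (which is where the auxiliary containment gets used).
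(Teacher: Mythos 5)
Your proof is correct, and it takes a genuinely different route from the paper's. The paper works at the level of elements: for fixed $m$ it runs a backwards induction on $r$, applying the Hall--Witt identity to the split commutators $[[g_1,\dots,g_{m-r-1}],[g_m,g_{m-1},\dots,g_{m-r}]]$ (with the $G_2$-entry leading the right-hand block), thereby walking the $G_2$-element leftwards one slot at a time, with the conjugations discarded modulo $G_{(m+1)}$. You instead isolate the subgroup-level containment $[G_{(k)},\Gamma_j]\subseteq\Gamma_{j+k}$ (in your notation $\Gamma_j=[G_2,G,\dots,G]$ with $j$ copies of $G$), proved by induction on $k$ via the three subgroups lemma --- itself a consequence of Hall--Witt, so the two arguments run on the same engine --- and then finish with a routine induction on $t$ according to the slot occupied by the $G_2$-entry. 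Your organization buys a clean, reusable graded statement, and it recovers the paper's more general split forms in one line, since $[[g_1,\dots,g_{m-r-1}],[g_m,\dots,g_{m-r}]]\in[G_{(m-r-1)},\Gamma_r]\subseteq\Gamma_{m-1}$; the paper's version avoids naming any auxiliary filtration inequality at the cost of the heavier double induction. Two minor remarks: the three subgroups lemma does require $N=\Gamma_{j+k}$ to be normal, which you have (in fact $[H,G]$ is always normal in $G$, being normalized by both $H$ and $G$, so this is automatic once $j+k\ge 1$); and, contrary to your closing comment, the hypothesis $[G,G]\subseteq G_2$ is not what powers the auxiliary containment --- in your argument it is only used to make $\Gamma_0=G_2$ normal, and no application of the three subgroups lemma ever takes $N=\Gamma_0$, so that containment holds verbatim for an arbitrary subgroup $G_2$.
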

\begin{proof}
We claim by induction that for $0 \le r \le m - 1$ that if $g_m \in G_2$ and $g_j \in G$ for all other $j$ that
$$[[g_1, \dots, g_{m - r - 1}], [g_m, g_{m - 1}, \dots, g_{m - r}]] \in [G_2, G, \dots, G]$$
where there are $m - 1$ copies of $G$ in above relation. Here, we would like to remind the reader that as indicated in Definition~\ref{d:lowercentraseries}, $G_{(u)}$ is the $u - 1$-fold commutator $[G, G, \dots, G]$. Our main tool for proving this claim is the Hall-Witt identity, which states that if $x^y = y^{-1}xy = x[x, y]$, then
$$[[x, y], z^x][[z, x], y^z][[y, z], x^y] = \mathrm{id}_G.$$
We proceed by induction on $m$. The cases $m = 1, 2$ are trivial. For fixed $m$, we proceed by backwards induction on $r$. The case of $r = m$ follows straight from the fact that $[g_m, \dots, g_1]$ lies inside $[G_2, \dots, G]$. Now, assuming the case for $\ge r$, we wish to show
$$[[g_1, \dots, g_{m - r}], [g_k, g_{m - 1}, \dots, g_{m - r + 1}]] \in [G_2, G, \dots, G].$$
By the Hall-Witt identity applied with
$$x = [g_1, \dots, g_{m - r - 1}], y = g_{m - r}, \text{ and } z = [g_m, g_{m - 1}, \dots, g_{m - r + 1}],$$
and noticing that $G_{(m + 1)}$ lies in the $m - 1$-fold commutator $[G_2, G, \dots, G]$ so the conjugation is irrelevant, it suffices to show that
$$[[[g_1, \dots, g_{m - r - 1}], [g_m, g_{m - 1}, \dots, g_{m - r + 1}]], g_{m - r}] \in [G_2, G, \dots, G]$$
and
$$[[g_1, \dots, g_{m - r - 1}], [g_m, \dots, g_{m -r}]] \in [G_2, G, \dots, G].$$
Both of these are true by the induction hypothesis and result follows.
\end{proof}
We next define the following two notions.
\begin{definition}[Linear and nonlinear component]\label{d:gling2}
Given a polynomial sequence $g \in \mathrm{poly}(\mathbb{Z}^\ell, G)$ with $g(0) = \mathrm{id}_G$, we define
$$g_{\mathrm{lin}}(\vec{n}) = g(e_1)^{n_1} g(e_2)^{n_2} \cdots g(e_\ell)^{n_\ell} \text{ and } g_2(\vec{n}) = g(\vec{n})g_{\mathrm{lin}}(\vec{n})^{-1}.$$
\end{definition}
\begin{definition}\label{d:joining}
For a group $G$ and a normal subgroup $H$ of $G$, we define
$$G \times_{H} G := \{(g, g') \in G^2: g'g^{-1} \in H\}.$$
\end{definition}

We finally require a lemma which given that allows one to compile information regarding various $C^{\infty}[\vec{N}]$ norms of various shifts into information regarding the polynomials themselves; this is the analog of \cite[Lemma~7.6]{GT12}. 
\begin{lemma}\label{l:polynomialseparation}
Fix $\rho\in (0,1/2)$, an integer $\ell$, and integers $N_1,\ldots,N_{\ell}$. Let $P$ and $Q$ be polynomials in $n_1,\ldots,n_{\ell}$ of degree at most $k$ and $\sigma(h):[\vec{N}]\to \mathbb{R}^{\ell}$ an arbitrary map. 

Suppose that $P(0) = 0$, $Q(\vec{e}_i) = 0$ for $i\in [\ell]$, $Q(0) = 0$, and for at least $\rho N_1\ldots N_{\ell}$ many $\vec{h} \in [\vec{N}]$ we have 
\[\|P(\vec{n}) + Q(\vec{n} + \vec{h}) - Q(\vec{n}) + \sigma(\vec{h}) \cdot \vec{n}\|_{C^{\infty}[\vec{N}]}\le \rho^{-1}.\]

Then there there exists a nonzero integer $q$ with $|q|\le \rho^{-O_{k,\ell}(1)}$ such that for $\rho N_1\ldots N_{\ell}$ shifts $\vec{h} \in [\vec{N}]$ we have
\begin{align*}
\left\|q\left(\sigma(\vec{h}) \cdot \vec{n} + \sum_{i\in [\ell]}P(\vec{e}_i) n_i+ \sum_{i, j \in [\ell]} (\partial_{e_i}\partial_{e_j}Q(0)) n_ih_j\right)\right\|_{C^\infty[\vec{N}]} \le \rho^{-O_{k,\ell}(1)}.
\end{align*}

If furthermore, $\sigma$ is identically zero we have that there exists a nonzero integer $q$ with $|q|\le \rho^{-O_{k,\ell}(1)}$ such that 
\[\|qP(\vec{n})\|_{C^{\infty}[\vec{N}]}\le \rho^{-O_{k,\ell}(1)}\text{ and } \|qQ(\vec{n})\|_{C^{\infty}[\vec{N}]}\le \rho^{-O_{k,\ell}(1)}.\]
\end{lemma}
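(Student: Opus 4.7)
The plan is to expand $P$ and $Q$ in the binomial basis $\binom{\vec{n}}{\vec{j}}$, read the hypothesis coefficient-by-coefficient in $\vec{n}$, and isolate the linear-in-$\vec{n}$ contributions, which are the only place $\sigma$ enters. Writing $P(\vec{n}) = \sum_{\vec{j}}\alpha_{\vec{j}}\binom{\vec{n}}{\vec{j}}$ and $Q(\vec{n}) = \sum_{\vec{j}}\beta_{\vec{j}}\binom{\vec{n}}{\vec{j}}$, the assumptions $P(\vec{0}) = Q(\vec{0}) = 0$ and $Q(\vec{e}_j) = 0$ force $\alpha_{\vec{0}} = \beta_{\vec{0}} = 0$ and $\beta_{\vec{e}_j} = 0$. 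The Vandermonde identity $\binom{\vec{n}+\vec{h}}{\vec{j}} = \sum_{\vec{a}+\vec{b}=\vec{j}}\binom{\vec{n}}{\vec{a}}\binom{\vec{h}}{\vec{b}}$ gives, for the $\binom{\vec{n}}{\vec{j_1}}$ coefficient of the polynomial inside the hypothesis,
\[
c_{\vec{j_1}}(\vec{h}) := \alpha_{\vec{j_1}} + \sum_{\vec{j_2}\neq\vec{0}}\beta_{\vec{j_1}+\vec{j_2}}\binom{\vec{h}}{\vec{j_2}}\quad\text{when } |\vec{j_1}|\ge 2,
\]
and $c_{\vec{e}_i}(\vec{h}) := \alpha_{\vec{e}_i} + \sigma(\vec{h})_i + \sum_{\vec{j_2}\neq\vec{0}}\beta_{\vec{e}_i+\vec{j_2}}\binom{\vec{h}}{\vec{j_2}}$. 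The hypothesis then reads $\vec{N}^{\vec{j_1}}\|c_{\vec{j_1}}(\vec{h})\|_{\mathbb{R}/\mathbb{Z}}\le \rho^{-1}$ simultaneously for all nonzero $\vec{j_1}$, at $\rho|\vec{N}|$ values of $\vec{h}$.

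For $|\vec{j_1}|\ge 2$, $c_{\vec{j_1}}$ is a genuine polynomial in $\vec{h}$ of degree at most $k-|\vec{j_1}|$ with no $\sigma$-contamination. Applying a multivariable quantitative Vinogradov/Weyl-type lemma (the polynomial generalization of \cite[Proposition~3.1]{GT12}, used in multivariable form as in \cite[Appendix~A]{TT21}) to each such $c_{\vec{j_1}}$ and taking the lowest common multiple of the resulting denominators produces a single nonzero $q\le \rho^{-O_{k,\ell}(1)}$ with $\|q c_{\vec{j_1}}\|_{C^\infty[\vec{N}]}\le \rho^{-O_{k,\ell}(1)}/\vec{N}^{\vec{j_1}}$ for every $|\vec{j_1}|\ge 2$. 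Reading off $\binom{\vec{h}}{\vec{j_2}}$-coefficients gives $\|q\beta_{\vec{j}}\|_{\mathbb{R}/\mathbb{Z}}\le \rho^{-O_{k,\ell}(1)}/\vec{N}^{\vec{j}}$ for every $|\vec{j}|\ge 3$ (split $\vec{j} = \vec{j_1}+\vec{j_2}$ with $|\vec{j_1}|\ge 2$, $\vec{j_2}\neq \vec{0}$), and similarly $\|q\alpha_{\vec{j_1}}\|_{\mathbb{R}/\mathbb{Z}}\le \rho^{-O_{k,\ell}(1)}/\vec{N}^{\vec{j_1}}$ for $|\vec{j_1}|\ge 2$.

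The key new step is to handle $\vec{j_1} = \vec{e}_i$. The tail $\sum_{|\vec{j_2}|\ge 2}\beta_{\vec{e}_i+\vec{j_2}}\binom{\vec{h}}{\vec{j_2}}$ only involves $\beta_{\vec{j}}$ with $|\vec{j}|\ge 3$, which are controlled by the previous step; the triangle inequality together with the trivial bound $|\binom{\vec{h}}{\vec{j_2}}|\ll_{k,\ell}\vec{N}^{\vec{j_2}}$ on $[\vec{N}]$ then yields
\[
N_i\,\big\|\,q\textstyle\sum_{|\vec{j_2}|\ge 2}\beta_{\vec{e}_i+\vec{j_2}}\binom{\vec{h}}{\vec{j_2}}\,\big\|_{\mathbb{R}/\mathbb{Z}}\le \rho^{-O_{k,\ell}(1)}
\]
for \emph{every} $\vec{h}\in[\vec{N}]$. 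Combined with the hypothesis bound $N_i\|qc_{\vec{e}_i}(\vec{h})\|_{\mathbb{R}/\mathbb{Z}}\le q\rho^{-1}\le \rho^{-O_{k,\ell}(1)}$ at the $\rho|\vec{N}|$ good shifts, subtracting the tail leaves $N_i\|q(\alpha_{\vec{e}_i} + \sigma(\vec{h})_i + \sum_j \beta_{\vec{e}_i+\vec{e}_j}h_j)\|_{\mathbb{R}/\mathbb{Z}}\le \rho^{-O_{k,\ell}(1)}$ for those $\vec{h}$. Identifying $\alpha_{\vec{e}_i} = P(\vec{e}_i)$ and $\beta_{\vec{e}_i+\vec{e}_j} = \partial_{e_i}\partial_{e_j}Q(\vec{0})$ (both consequences of the normalization $\beta_{\vec{0}} = \beta_{\vec{e}_j} = 0$) and taking the supremum over $i$ gives the advertised $C^\infty[\vec{N}]$-bound on $q$ times the displayed polynomial.

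For the $\sigma\equiv 0$ clause, $c_{\vec{e}_i}$ is itself a polynomial and the same Vinogradov step applies to it, giving $\|q\beta_{\vec{j}}\|_{\mathbb{R}/\mathbb{Z}}\le \rho^{-O_{k,\ell}(1)}/\vec{N}^{\vec{j}}$ and $\|q\alpha_{\vec{j}}\|_{\mathbb{R}/\mathbb{Z}}\le \rho^{-O_{k,\ell}(1)}/\vec{N}^{\vec{j}}$ for every nonzero $\vec{j}$---the constant-in-$\vec{h}$ coefficients $\alpha_{\vec{j_1}}$ being recovered by evaluating the $C^\infty[\vec{N}]$-controlled polynomial $qc_{\vec{j_1}}$ at any good shift from the hypothesis set. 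Summing over $\vec{j}$ then delivers $\|qP\|_{C^\infty[\vec{N}]},\|qQ\|_{C^\infty[\vec{N}]}\le \rho^{-O_{k,\ell}(1)}$. The main obstacle I anticipate is the quantitative bookkeeping in the Vinogradov step---producing a single denominator $q\le \rho^{-O_{k,\ell}(1)}$ that works uniformly across all $O_{k,\ell}(1)$ many indices $\vec{j_1}$, and handling the degenerate regime where some $N_i$ is comparable to or smaller than $\rho^{-O_{k,\ell}(1)}$ (in which case the corresponding Vinogradov conclusion is vacuous and one must augment $q$ by factorials of the small $N_i$ to preserve the announced polynomial bound).
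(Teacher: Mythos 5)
Your proposal is correct, but it follows a genuinely different route from the paper, so a comparison is worthwhile. The paper never expands coefficient-by-coefficient in $\vec{n}$: it first kills $\sigma$ by taking a discrete difference $\partial_{\vec{h}'}$ in the $\vec{n}$-variable (which turns $\sigma(\vec{h})\cdot\vec{n}$ into a constant, invisible to $C^\infty[\vec{N}]$, with Lemma~\ref{l:multiparameterextrapolation} controlling the cost), differences again, and applies the multiparameter Vinogradov lemma (Lemma~\ref{l:vinogradov}) to the resulting trilinear expression $(\partial_{\vec{n}}\partial_{\vec{h}'}\partial_{\vec{h}}Q)(0)+(\partial_{\vec{n}}\partial_{\vec{h}'}P)(0)$ in $(\vec{n},\vec{h}',\vec{h})$; specializing $(\vec{h},\vec{n})=(\vec{0},\vec{h}')$ and then $\vec{n}=\vec{h}=\vec{h}'$ recovers the coefficients of $P$ of degree $\ge 2$ and of $Q$ of degree $\ge 3$ (and degree $1$, via the vanishing conditions), and it finishes, exactly as you do, by substituting back into the hypothesis at the good shifts to isolate the linear-in-$\vec{n}$ data. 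You instead read the hypothesis directly in the binomial basis via Vandermonde: for $|\vec{j_1}|\ge 2$ the coefficient $c_{\vec{j_1}}(\vec{h})$ is a genuine, $\sigma$-free polynomial in $\vec{h}$, and one application of Lemma~\ref{l:vinogradov} per index (only $O_{k,\ell}(1)$ of them, so the lcm of the denominators stays $\rho^{-O_{k,\ell}(1)}$) yields the same coefficient information; subtracting the controlled tail from $c_{\vec{e}_i}$, and using the (correct) identifications $\alpha_{\vec{e}_i}=P(\vec{e}_i)$ and $\beta_{\vec{e}_i+\vec{e}_j}=\partial_{e_i}\partial_{e_j}Q(\vec{0})$ forced by the normalizations, gives the displayed conclusion with the same density $\rho$ and quality $\rho^{-O_{k,\ell}(1)}$. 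Your route avoids the triple differencing and the trilinear Vinogradov application and is arguably more transparent; the paper's route avoids any case analysis on which $\vec{N}^{\vec{j_1}}$ are large, since it only ever invokes Vinogradov with error $\tau^{1/2}$ against density $\rho$ on a rescaled box.

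One correction to the caveat you raise at the end: when $\vec{N}^{\vec{j_1}}\le 10\rho^{-2}$ the Vinogradov step for $c_{\vec{j_1}}$ is indeed unavailable, but no augmentation of $q$ is needed, and multiplying $q$ by factorials of the small $N_i$ would in fact destroy the bound $|q|\le\rho^{-O_{k,\ell}(1)}$ (those $N_i$ can be of size $\rho^{-O(1)}$, so $N_i!$ is not polynomial in $\rho^{-1}$). The resolution is simpler: the bounds you would have extracted are vacuously true there. For $\vec{j}$ with $|\vec{j}|\ge 3$, choose $\vec{j_2}=\vec{e}_{i_0}$ deleting the coordinate minimizing $N_i$ over the support of $\vec{j}$; if even this splitting is degenerate then $\vec{N}^{\vec{j}}\le (10\rho^{-2})^{|\vec{j}|/(|\vec{j}|-1)}\le (10\rho^{-2})^{3/2}$, whence $\vec{N}^{\vec{j}}\|q\beta_{\vec{j}}\|_{\mathbb{R}/\mathbb{Z}}\le \vec{N}^{\vec{j}}/2\le\rho^{-O(1)}$ for any $q$, and likewise $\vec{N}^{\vec{j_1}}\|q\alpha_{\vec{j_1}}\|_{\mathbb{R}/\mathbb{Z}}$ is trivially acceptable when $\vec{N}^{\vec{j_1}}$ is small; in the $\sigma\equiv 0$ clause the $|\vec{j}|=2$ coefficients of $Q$ are obtained from $c_{\vec{e}_j}$ whenever at least one of $N_i,N_j$ exceeds $10\rho^{-2}$ and trivially otherwise. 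With that observation your argument is complete.
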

\begin{proof}
We prove this via a trick. Given a vector $\vec{h}$ and a polynomial $R$, we define 
\[\partial_{\vec{h}}R(\vec{n}) = R(\vec{n}+\vec{h}) - R(\vec{n}).\]
Take $\tau = \rho^{O_{k,\ell}(1)}$ sufficiently small. We have for each $\vec{h} \in [\vec{N}]$ such that 
\[\|P(\vec{n}) + Q(\vec{n} + \vec{h}) - Q(\vec{n}) + \sigma(\vec{h}) \cdot \vec{n}\|_{C^{\infty}[\vec{N}]}\le \rho^{-1}\]
that for all $\vec{h}'\in [\vec{N}]$, 
\[\|\partial_{\vec{h}'}(P(\vec{n}) + Q(\vec{n} + \vec{h}) - Q(\vec{n}))\|_{C^{\infty}[\vec{N}]}=\|\partial_{\vec{h}'}(P(\vec{n}) + Q(\vec{n} + \vec{h}) - Q(\vec{n}) + \sigma(h) \cdot \vec{n})\|_{C^{\infty}[\vec{N}]} \le \tau^{-1/4}\]
by (say) Lemma~\ref{l:multiparameterextrapolation} and the triangle inequality.
This implies that for at least $\rho|\vec{N}|^2$ elements in $(\vec{h}',\vec{h})\in [\vec{N}]\times [\vec{N}]$, we have  
\begin{align*}
&\| (\partial_{\vec{n}}\partial_{\vec{h}'}\partial_{\vec{h}}Q)(0)+ (\partial_{\vec{n}}\partial_{\vec{h}'}P)(0)\|_{C^{\infty}[\vec{N}]}\le \tau^{-1/4}.
\end{align*}

Thus for at least $\rho$ fraction of $(\vec{h},\vec{h}',\vec{n})\in [\vec{N}]\times [\vec{N}]\times \tau \cdot [\vec{N}]$, we have that 
\begin{align*}
&\|(\partial_{\vec{n}}\partial_{\vec{h}'}\partial_{\vec{h}}Q)(0)+ (\partial_{\vec{n}}\partial_{\vec{h}'}P)(0)\|_{\mathbb{R}/\mathbb{Z}}\le \tau^{1/2}.
\end{align*}

This implies via Lemma~\ref{l:vinogradov} that there exists a nonzero integer $q$ such that $|q|\le \rho^{-O_{k,\ell}(1)}$ and 
 \begin{align*}
&\|q((\partial_{\vec{n}}\partial_{\vec{h}'}\partial_{\vec{h}}Q)(0)+ (\partial_{\vec{n}}\partial_{\vec{h}'}P)(0))\|_{C^{\infty}[\vec{N},\vec{N},\vec{N}]}\le \rho^{-O_{k,\ell}(1)}.
\end{align*}

Taking $\vec{h} = \vec{0}$ and $\vec{n} = \vec{h}'$, we have
\begin{align*}
&\|q(P(2\vec{n}) - 2P(\vec{n}) + P(0))\|_{C^{\infty}[\vec{N}]}\le \rho^{-O_{k,\ell}(1)}.
\end{align*}
Let $P(\vec{n}) = \sum_{|\vec{i}|\le k}\alpha_{\vec{i}}\vec{n}^{\vec{i}}= \sum_{|\vec{i}|\le k}\alpha_{\vec{i}}'\binom{\vec{n}}{\vec{i}}$. This implies that there exists $q_1 = C_k \cdot q$ and $q_2 = C_k^{2} \cdot q$ such that 
\[\|q_1\cdot \alpha_{\vec{i}}\|_{\mathbb{R}/\mathbb{Z}} \le \rho^{-O_{k,\ell}(1)}\vec{N}^{-\vec{i}}\]
for $|\vec{i}|\ge 2$ and via Lemma~\ref{l:cinfinity} that
\[\|q_2\cdot \alpha_{\vec{i}}'\|_{\mathbb{R}/\mathbb{Z}} \le \rho^{-O_{k,\ell}(1)}\vec{N}^{-\vec{i}}\]
for $|\vec{i}|\ge 2$. Thus 
 \begin{align*}
&\|q_2((\partial_{\vec{n}}\partial_{\vec{h}'}\partial_{\vec{h}}Q)(0))\|_{C^{\infty}[\vec{N},\vec{N},\vec{N}]}\le \rho^{-O_{k,\ell}(1)}.
\end{align*}
Taking $\vec{n} = \vec{h} = \vec{h}'$, and an analogous argument to above, there exists a nonzero integer $|q_3| \le \rho^{-O_{k,\ell}(1)}$ such that if $Q(\vec{n}) = \sum_{|\vec{i}|\le k}\beta_{\vec{i}}\binom{\vec{n}}{\vec{i}}$ then 
\[\|q_3\cdot \beta_{\vec{i}}\|_{\mathbb{R}/\mathbb{Z}} \le \rho^{-O_{k,\ell}(1)}\vec{N}^{-\vec{i}}\]
for $|\vec{i}|\ge 3$ and $|\vec{i}| = 1$ (using the vanishing conditions on $Q$). We thus have that for at least $\rho|\vec{N}|$ values of $\vec{h}\in [\vec{N}]$ that 
\[\left\|q_3\left(P(\vec{n}) + Q(\vec{n} + \vec{h}) - Q(\vec{n}) + \sigma(\vec{h}) \cdot \vec{n}\right)\right\|_{C^\infty[\vec{N}]}\le \rho^{-O_{k,\ell}(1)}.\]
For such $\vec{h}$ and plugging in the conditions obtained, we have that 
\begin{align*}
&\bigg\|q_3\bigg(\sum_{i\in[\ell]}P(\vec{e}_i)n_i + \sum_{1\le i<j\le \ell}(\partial_{e_i}\partial_{e_j}Q)(0)((n_i+h_i)(n_j+h_j)-n_in_j)\\
&\qquad+ \sum_{i\in \ell}(\partial_{e_i}\partial_{e_i}Q)(0)\bigg(\binom{n_i+h_i}{2}-\binom{n_i}{2}\bigg) +  \sigma(\vec{h}) \cdot \vec{n}\bigg)\bigg\|_{C^\infty[\vec{N}]}\le \rho^{-O_{k,\ell}(1)}
\end{align*}
and simplifying gives the first part of the conclusion.

When $\sigma(\vec{h}) = \vec{0}$, we have that for $\rho |\vec{N}|$ many $\vec{h} \in [\vec{N}]$ has
\begin{align*}
\left\|q_3\left(\sum_{i\in [\ell]}P(\vec{e}_i) n_i+ \sum_{i, j \in [\ell]}(\partial_{e_i}\partial_{e_j}Q)(0) n_ih_j\right)\right\|_{C^\infty[\vec{N}]} \le \rho^{-O_{k,\ell}(1)}.
\end{align*}
This implies taking $\tau_2 = \rho^{-O_{k,\ell}(1)}$ that at least a $\rho$-fraction of $(\vec{n},\vec{h})\in [\vec{N}]\times \tau_2\cdot [\vec{N}]$ that 
\begin{align*}
\left\|q_3\left(\sum_{i\in [\ell]}P(\vec{e}_i) n_i+ \sum_{i, j \in [\ell]} (\partial_{e_i}\partial_{e_j}Q)(0) n_ih_j\right)\right\|_{\mathbb{R}/\mathbb{Z}} \le\tau_2^{1/2}.
\end{align*}
Via Lemma~\ref{l:vinogradov}, we then have that there exists a nonzero integer $q_4$ with $|q_4|\le \rho^{-O_{k,\ell}(1)}$ such that 
\[\left\|q_4P(\vec{e}_i)\right\|_{\mathbb{R}/\mathbb{Z}}\le \rho^{-O_{k,\ell}(1)}N_i^{-1}\text{ and } \left\|q_4(\partial_{e_i}\partial_{e_j}Q)(0)  \right\|_{\mathbb{R}/\mathbb{Z}}\le \rho^{-O_{k,\ell}(1)}(N_iN_j)^{-1}.\]
This gives the remaining coefficients to imply the final part of the lemma.    
\end{proof}

The proof of Theorem~\ref{t:mainresult1} will eventually split into whether or not the $s - 1$-fold commutator
$$\xi([G_2, G, \dots, G]) = 0$$
holds. For more motivation and simpler cases of this argument, we encourage the reader to consult \cite{LenNew}. We now turn to the proof of Theorem~\ref{t:mainresult1}.
\subsection{Proof of Theorem~\ref{t:mainresult1}}
If $s = 1$, then we may write $F(g(\vec{n})\Gamma)$ as $e(\xi(g(\vec{n})))$. A Weyl-type exponential sum estimate (e.g., \cite[Proposition 7]{Tao12}) immediately gives us that there exists some integer $q \le c(\delta)^{-1}$ such that
$$\|q \xi \circ g\|_{C^\infty[\vec{N}]} \le c(\delta)^{-1}.$$
Let $\eta = q\xi$ be the horizontal character. We see that $|\eta| \le c(\delta)^{-1}$ and defining $G' = \mathrm{ker}(\eta)$, we see that if $g \in G'$, then $\xi(g) = 0$. This proves the result for $s = 1$. \\\\
We now assume that $s > 1$. We make a set of preliminary reductions. We first reduce via Lemma~\ref{l:trick1} to the case of when $g(0) = \mathrm{id}_G$ and $|\psi(g(e_i))| \le \frac{1}{2}$ for all $i$ and apply Lemma~\ref{l:trick2} to quotient out by the kernel of $\xi$ to have a one-dimensional vertical torus. We next make a slight modification to the filtration: we replace $G_\ell$ for $\ell \ge s$ by $G_\ell G_{(s)}$. This preserves normality and the filtration property since $G_{(s)}$ lies in the center of $G$. By Lemma~\ref{l:ConstructingMalcev}, there exists a complexity $M^{O_k(d^{O(1)})}$ Mal'cev basis adapted to this filtration. \\\\
We now apply Lemma~\ref{l:nilcharacters} to Fourier expand
$$F = \sum_{|\zeta| \le c(\delta)^{-1}} F_\zeta + O(c(\delta))$$
with $F_\zeta$ being a $G_k$-character of frequency $\zeta$. Integrating the expression
$$\int_{G_{(s)}/G_{(s)} \cap \Gamma} F(g_{(s)}x\Gamma) e(-\xi(g_{(s)})) dg_{(s)} = \sum_{|\zeta| \le c(\delta)^{-1}} \int_{G_{(s)}/G_{(s)} \cap \Gamma} F_\zeta(g_{(s)}x\Gamma) e(-\xi(g_{(s)})) dg_{(s)} + O(c(\delta))$$
it follows that we may assume that $(\zeta - \xi)(G_{(s)}) = 0$. By the Pigeonhole principle, we may assume that $F = F_\zeta$ for such a $\zeta$ and then apply van der Corput's inequality to obtain
$$|\mathbb{E}_{\vec{n} \in [\vec{N}]} F(g(\vec{n})\Gamma)\overline{F(g(\vec{n} + \vec{h})\Gamma)}| \ge c(\delta)$$
for $c(\delta)|\vec{N}|$ many $\vec{h} \in [\vec{N}]$.\\\\
Note that the Taylor coefficients of $g_2(\vec{n})$ are of the form $g_{\vec{j}}^{\binom{\vec{n}}{\vec{j}}}$ with $|j| \ge 2$ and $g_{\vec{j}} \in G_{|\vec{j}|}$. We may rewrite the above as
$$|\mathbb{E}_{\vec{n} \in [\vec{N}]} F_{\vec{h}}(g_{\vec{h}}(\vec{n})\Gamma^2)| \ge c(\delta)$$
where
$$F_{\vec{h}}(x, y) = F(\{g_{\mathrm{lin}}(\vec{h})\} x)\overline{F(y)}$$
$$g_{\vec{h}}(\vec{n}) := (\{g_{\mathrm{lin}}(\vec{h})\}^{-1}g_2(\vec{n} + \vec{h})g_{\mathrm{lin}}(\vec{n} + \vec{h})[g_{\mathrm{lin}}(\vec{h})]^{-1}, g(\vec{n})).$$
Define $G^\square := G \times_{G_2} G$. By Lemma~\ref{l:annoyingcomplexitybounds}, this is an $\le s$-step nilpotent Lie group and has a natural filtration of $(G^\square)_i = G_i \times_{G_{i + 1}} G_i$ and by Lemma~\ref{l:vandercorputpolynomial}, $g_{\vec{h}}$ is a polynomial sequence on $G^\square$ with respect to this filtration. Since $F$ has $G_k$-frequency $\zeta$, and since $G_k$ lies in the center of $G$ and $F_{\vec{h}}$ is invariant under $G_k^\triangle :=\{(g_k,g_k) : g_k\in G_k\}$, it follows that $F_{\vec{h}}$ descends to a function on $\overline{G^\square} := G^\square/G_k^\triangle$. By Lemma~\ref{l:annoyingcomplexitybounds}, it follows that $G^\square$ and $\overline{G^\square}$ have complexity at most $M^{O_{k}(d^{O(1)})}$.\\\\
Now by Lemma~\ref{l:annoyingcomplexitybounds}, given a horizontal character $\eta\colon G^\square \to \mathbb{R}$, we may decompose it as $\eta(g_2g, g) = \eta_1(g) + \eta_2(g_2)$ where $g \in G$ and $g_2 \in G_2$ where $\eta_1$ is a horizontal character on $G$ and $\eta_2$ is a horizontal character on $G_2$ which annihilates $[G, G_2]$. The same lemma tells us that if $\eta$ has size at most $c(\delta)^{-1}$, then $\eta_1$ and $\eta_2$ have size at most $c(\delta)^{-1}$. We now compute $\eta \circ g_{\vec{h}}$, following \cite[p. 13-14]{GT14}. Given a horizontal character $\eta$ on $G^\square$, we have
$$\eta(g_{\vec{h}}(\vec{n})) = \eta_1(g(\vec{n})) + \eta_2(\{g_{\mathrm{lin}}(\vec{h})\}^{-1}g_2(\vec{n} + \vec{h})g_{\mathrm{lin}}(\vec{n} + \vec{h})[g_{\mathrm{lin}}(\vec{h})]^{-1}g(\vec{n})^{-1}).$$
As $\eta_2$ vanishes on $[G, G_2]$, the above is equal to
$$\eta_1(g(\vec{n})) + \eta_2(g_2(\vec{n} + \vec{h})) - \eta_2(g_2(\vec{n})) + \eta_2(\{g_{\mathrm{lin}}(\vec{h})\}^{-1}g_{\mathrm{lin}}(\vec{n} + \vec{h})[g_{\mathrm{lin}}(\vec{h})]^{-1}g_{\mathrm{lin}}(\vec{n})^{-1}).$$
We may write $[g_{\mathrm{lin}}(\vec{h})]^{-1} = g_{\mathrm{lin}}(\vec{h})^{-1}\{g_{\mathrm{lin}}(\vec{h})\}$, so
\begin{align*}
\eta_2&(\{g_{\mathrm{lin}}(\vec{h})\}^{-1}g_{\mathrm{lin}}(\vec{n} + \vec{h})[g_{\mathrm{lin}}(\vec{h})]^{-1}g_{\mathrm{lin}}(\vec{n})^{-1}) \\
&= \eta_2(\{g_{\mathrm{lin}}(\vec{h})\}^{-1}g_{\mathrm{lin}}(\vec{n} + \vec{h})g_{\mathrm{lin}}(\vec{h})^{-1}\{g_{\mathrm{lin}}(\vec{h})\}g_{\mathrm{lin}}(\vec{n})^{-1}) \\
&= \eta_2([\{g_{\mathrm{lin}}(\vec{h})\}, g_{\mathrm{lin}}(\vec{h})g_{\mathrm{lin}}(\vec{n} + \vec{h})^{-1}] g_{\mathrm{lin}}(\vec{n} + \vec{h}) g_{\mathrm{lin}}(\vec{h})^{-1} g_{\mathrm{lin}}(\vec{n})^{-1}) \\
&= \eta_2([\{g_{\mathrm{lin}}(\vec{h})\}, g_{\mathrm{lin}}(\vec{h})g_{\mathrm{lin}}(\vec{n} + \vec{h})^{-1}]) + \eta_2(g_{\mathrm{lin}}(\vec{n} + \vec{h})g_{\mathrm{lin}}(\vec{h})^{-1}g_{\mathrm{lin}}(\vec{n})^{-1})
\end{align*}
By Baker-Campbell-Hausdorff, we see that
$$g_{\mathrm{lin}}(\vec{n} + \vec{h})g_{\mathrm{lin}}(\vec{h})^{-1}g_{\mathrm{lin}}(\vec{n})^{-1} = \prod_{i < j} [g(e_i)^{h_i}, g(e_j)^{n_j}] \pmod{[G, G, G]}$$
$$[\{g_{\mathrm{lin}}(\vec{h})\}, g_{\mathrm{lin}}(\vec{h})g_{\mathrm{lin}}(\vec{n} + \vec{h})^{-1}] = \prod_{i = 1}^\ell [\{g_{\mathrm{lin}}(\vec{h})\}, g(e_i)^{-n_i}] \pmod{[G, G,G]}.$$
Hence,
\begin{align*}
\eta(g_{\vec{h}}(\vec{n})) &= \eta_1(g(\vec{n})) + \eta_2(g_2(\vec{n} + \vec{h})) - \eta_2(g_2(\vec{n})) \\
&+\sum_{i = 1}^\ell n_i\eta_2([g(e_i), \{g_{\mathrm{lin}}(\vec{h})\}]) + \sum_{1 \le i< j \le \ell} h_i n_j\eta_2([g(e_i), g(e_j)]).
\end{align*}
We will now proceed to Case 1 of the argument.
\subsection{Case 1}
Suppose first that the $s - 1$-fold commutator satisfies
$$\xi([G_2, G, G, \cdots, G]) = 0.$$
As we reduced to the case where $G_{(s)}$ is one-dimensional, $[G_2, G, G, \cdots, G] = 0$. Applying induction for our argument requires an understanding of vertical frequencies on $\overline{G^\square}$, which requires understanding the various $t - 1$-fold commutators $[\overline{G^\square}, \dots, \overline{G^{\square}}]$. First note that since $\xi([G_2, G, G, \cdots, G]) = 0$, it follows from Lemma~\ref{l:commutatorlemma} and $G_{(s - 1)}^\triangle$ not being central in $G^\square$ that $G_{(s - 1)}^\triangle/G_k^\triangle \neq \mathrm{Id}_{\overline{G^\square}}$ so $\overline{G^\square}$ is in fact $s - 1$-step. Thus, $F_{\vec{h}}$ descends to an $s - 1$-step nilsequence $\overline{F_h}$ on $\overline{G^\square}$. By Lemma~\ref{l:commutatorlemma}, the vertical component of $\overline{G^\square}$ is $G_{(s - 1)} \times_{K} G_{(s - 1)}/G_{k}^\triangle$
where $K$ is the $s - 2$-fold commutator $K = [G_2, G, G, \cdots, G]$. This group contains $G_{(s)}^2/G_k^\triangle$. We define the homomorphism $\xi \otimes \overline{\xi} \colon G_{(s)}^2 \to \mathbb{R}$ via $\xi \otimes \overline{\xi}(g, g') := \xi(g) - \xi(g')$; since $\xi \otimes \overline{\xi}$ annihilates $G_k^\triangle \cap G_{(s)}^2 = G_{(s)}^\triangle$, (and abusing notation) this descends to a homomorphism $\xi \otimes \overline{\xi} \colon G_{(s)}^2/G_k^\triangle \to \mathbb{R}$. In addition, note that $\xi \otimes\overline{\xi}((\Gamma \cap G_{(s)})^2/G_k^\triangle) \subseteq \mathbb{Z}$. \\\\
Observe that $\overline{F_{\vec{h}}}$ is a $G_{(s)}^2/G_k^\triangle$-vertical character with frequency $\xi \otimes \overline{\xi}$. Since $\xi$ plays a crucial role in the conclusion of Theorem~\ref{t:mainresult1}, we shall wish to preserve this property even after we Fourier expand $\overline{F_{\vec{h}}}$ into vertical characters. We do so as follows: Fourier expanding $F_{\vec{h}}$ via Lemma~\ref{l:nilcharacters}, we obtain
$$\overline{F_{\vec{h}}} = \sum_{|\alpha| \le c(\delta)^{-1}} F_{\vec{h}, \alpha} + O(c(\delta))$$
where $F_{\vec{h}, \alpha}$ are vertical characters of frequency $\alpha$. Now integrating both sides against $\xi \otimes \overline{\xi}$, we obtain
$$\int_{G_{(s)}^2/G_{k}^\triangle \Gamma} \overline{F_{\vec{h}}}(g_{(s)}x) e(-\xi \otimes \overline{\xi}(g_{(s)})) dg_{(s)} = \sum_{|\alpha| \le c(\delta)^{-1}} \int_{G_{(s)}^2/G_{k}^\triangle \Gamma} F_{\vec{h}, \alpha}(g_{(s)}x) e(-\xi \otimes \overline{\xi}(g_{(s)})) dg_{(s)} + O(c(\delta))$$
We rewrite this as
$$\overline{F_{\vec{h}}} = \sum_{|\beta| \le c(\delta)^{-1}} F_{\vec{h}, \beta} + O(c(\delta))$$
where $F_{\vec{h}, \beta}$ has frequency $\beta$. The upshot is that $F_{\vec{h},\beta}$ has frequency $\xi \otimes \overline{\xi}$ on $G_{(s)}^2/G_k^\triangle$, so (abusively) lifting $\beta$ to $G_{(s - 1)} \times_K G_{(s - 1)}$, $(\beta - \xi \otimes \overline{\xi})(G_{(s)}^2) = 0$. Thus, by the Pigeonhole principle, we may assume that $\overline{F_{\vec{h}}} = F_{\vec{h}, \beta}$ and by Pigeonholing in $h$, we may assume that $\beta$ is $h$-independent. \\\\
Let $\overline{g_{\vec{h}}}$ denote the projection of $g_{\vec{h}}$ to $\overline{G^\square}$. By our induction hypothesis and Pigeonholing in $\vec{h}$, there exists horizontal characters (independent of $\vec{h}$) $\eta_1, \dots, \eta_t$ such that
$$\|\eta_i \circ \overline{g_{\vec{h}}}\|_{C^\infty[\vec{N}]} \le c(\delta)^{-1}$$
and that the subgroup $H := \bigcap_{i = 1}^t \mathrm{ker}(\eta_i) \subseteq \overline{G^\square}$ satisfies
$\beta([H, H, \dots, H]) = 0$ where there are $s - 1$ copies of $H$. Abusing notation, we may lift $\eta_i$ to a horizontal character on $G^\square$ which annihilates $G_k^\triangle$ so that
$$\|\eta_i \circ g_{\vec{h}}\|_{C^\infty[\vec{N}]} \le c(\delta)^{-1}$$
Replacing $H$ with $HG_k^\triangle$, we see since $\eta_i$ annihilates $G_k^\triangle$ that (abusing notation) $H = \bigcap_{i = 1}^t \mathrm{ker}(\eta_i) \subseteq G^\square$ and that $\beta([H, H, \dots, H]) = 0$. Here, we would like to emphasize for the sake of clarity that as written, $\beta$ is a character on $G_{(s - 1)} \times_K G_{(s - 1)}$ that annihilates $G_k^\triangle$. Thus, if $\overline{a} \in \overline{G^\square}$, $\beta(\overline{a})$ is defined by lifting $\overline{a}$ to $a$ in $G_{(s - 1)} \times_{K} G_{(s - 1)}$ and then evaluating $\beta$ on $a$. In addition, if $\overline{a}_1, \dots, \overline{a}_{s - 1} \in H$ lifts to $a_1, \dots, a_{s - 1}$ in $G_{(s - 1)} \times_K G_{(s - 1)}$, then $[\overline{a}_1, \dots, \overline{a}_{s - 1}] \equiv [a_1, \dots, a_{s - 1}] \pmod{G_k^\triangle}$.\\\\
By Lemma~\ref{l:annoyingcomplexitybounds}, we may write
$\eta_i(g', g) = \eta_i^1(g) + \eta_i^2(g'g^{-1})$ where $\eta_i^1$ is a horizontal character on $G$ which annihilates $G_k$ and $\eta_i^2$ is a horizontal character on $G_2$ which annihilates $[G_2, G]$; by Lemma~\ref{l:annoyingcomplexitybounds}, both horizontal characters are size at most $c(\delta)^{-1}$. We have from the above procedure that
\begin{align*}
\eta_i(g(\vec{n})) &= \eta_i^1(g(\vec{n})) + \eta_i^2(g_2(\vec{n} + \vec{h})) - \eta_i^2(g_2(\vec{n}))  \\
&+\sum_{j} n_j\eta_i^2([g(e_j), \{g_{\mathrm{lin}}(\vec{h})\}]) + \sum_{1 \le i< j \le \ell} h_i n_j\eta_i^2([g(e_i), g(e_j)]).
\end{align*}
Let
$$P(\vec{n}) = \eta_i^1(g(\vec{n})), \text{ } Q(\vec{n}) = \eta_i^2(g_2(\vec{n}))$$
and $\sigma\colon [\vec{N}]^\ell \to \mathbb{R}^\ell$ be defined via
$$\sigma(\vec{h})\cdot \vec{n} = \sum_{j} n_j\eta_i^2([g(e_j), \{g_{\mathrm{lin}}(\vec{h})\}]) + \sum_{1 \le i< j \le \ell} h_i n_j\eta_i^2([g(e_i), g(e_j)]).$$
Applying Lemma~\ref{l:polynomialseparation} (and possibly scaling $\eta_i$ by a nonzero integer bounded by $c(\delta)^{-1}$) with this choice of $P, Q, \sigma$ gives for some $\gamma = (\gamma_{ij})_{i, j = 1}^\ell \in \mathbb{R}^{\ell^2}$ and $\beta \in \mathbb{R}^\ell$ that
$$\left\|\beta \cdot \vec{n} + \sum_{j} n_j\eta_i^2([g(e_j), \{g_{\mathrm{lin}}(\vec{h})\}]) + \sum_{i, j \in [\ell]} \gamma_{ij} n_ih_j\right\|_{C^\infty[\vec{N}]} \le c(\delta)^{-1}.$$
Let $\alpha$ be the first $d - d_2$ coordinates $\psi(g_{\mathrm{lin}}(\vec{h}))$ and $a$ the unique vector in $(\mathbb{R}^{d - d_2})^\ell$ that represents $(\eta_i^2([g(e_j),\cdot]))_{j = 1}^\ell$. By Lemma~\ref{l:multiplication} and Lemma~\ref{l:transition}, we see that $|a| \le c(\delta)^{-1}$. In addition, note that $\eta([\cdot, \cdot])$ descends to a map which only depends on the first $d - d_2$ Mal'cev coordinates of its arguments. Thus, we may apply Corollary~\ref{c:bracketcorollary2} to this choice of $a$, $\alpha$, $\beta$, and $\gamma$, obtaining linearly independent horizontal characters $x_{1, i}, \dots, x_{r_i, i}$ which annihilate $G_2$ and orthogonal elements to each of the independent horizontal characters $y_{1, i}, \dots, y_{d - d_{2} - r_i, i} \in \Gamma/(G_2\cap \Gamma)$ all of size at most $c(\delta)^{-1}$ such that for each $i, j$,
$$\|x_{j, i} \circ g\|_{C^\infty[\vec{N}]} \le c(\delta)^{-1}$$
$$\|\eta_i^2([y_{j, i}, g])\|_{C^\infty[\vec{N}]} \le c(\delta)^{-1}.$$

Let $\pi\colon \overline{G^\square} \to G/G_2$ be the projection map $(g', g) \mapsto g \pmod{G_2}$ and $V = \pi(H)$. Then since $H$ is $c(\delta)^{-1}$-rational, $V$ can be spanned by $c(\delta)^{-1}$-rational vectors with respect to the basis $(\bar{X}_1, \dots, \bar{X}_{d - d_2})$ (with $X \mapsto \bar{X}$ being the map from $\mathfrak{g} \to \mathfrak{g}/\mathfrak{g}_2$). By Cramer's rule (Lemma~\ref{l:Cramer} and Lemma~\ref{l:Cramer2}), there exists linearly independent horizontal characters $z_1, \dots, z_u$ of size at most $c(\delta)^{-1}$ that annihilate $G_2$ such that $V = \{x \in G_1/G_2: z_i(x) = 0 \text{ for all } i \in [u]\}$. Each $z_i$ lifts to a horizontal character $\tilde{z_i}$ of size at most $c(\delta)^{-1}$ on $G^\square$ that annihilates $G_2^2$ via $\tilde{z_i}(g', g) = z_i(g)$. Note that for these lifted characters that $\tilde{z_i}(H) = z_i(\pi(H)) = 0$. Thus $\tilde{z_i}$ is a $c(\delta)^{-1}$-rational combination of the $\eta_j$'s (as $H$ is defined as the joint kernel of $\eta_j$'s) and hence possibly scaling $z_i$ by an appropriate integer $q \le c(\delta)^{-1}$,
$$\|z_i \circ g\|_{C^\infty[\vec{N}]} \le c(\delta)^{-1}.$$
We now let 
$$H' = \{g \in G: x_{j, i} \circ g = 0 , \eta_i^2([y_{j, i}, g]) = 0, z_m \circ g = 0 \text{ for all } i, j, m\}.$$
Then $H'$ is a $c(\delta)^{-1}$-rational subgroup of $G$, and invoking Lemma~\ref{l:factorization1} on $g$, we may write $g = \eps g_1\gamma$ with $\eps$ $(c(\delta)^{-1}, \vec{N})$-smooth, $\gamma$ $c(\delta)^{-1}$-rational, and $g_1$ a polynomial sequence on $H'$ with the filtration $H_i' = G_i \cap H'$. It remains to check that the $s - 1$-fold commutator
$$\xi([H', \dots, H']) = 0.$$

Let $W = H'/G_2$. By Lemma~\ref{l:commutatorlemma}, we have that $\xi([W, W, \dots, W])$ is well-defined and 
$$\xi([H', \dots, H']) = \xi([W, W, \dots, W]).$$ 
So it suffices to show that $\xi([W, W, \dots, W]) = 0$. Let $a_1, \dots, a_s \in H'$ and denote $\overline{a}_i = a_i \pmod{G_2}$. First note that $(\mathrm{id}_G,[a_{1}, a_2])$ lies inside $H$. This is because 
$$\eta_i((\mathrm{id}_G, [a_1, a_2])) = \eta_i^1([a_1, a_2]) - \eta_i^2([a_1, a_2]) = -\eta_i^2([a_{1}, a_2]) = 0$$
for each $i$. To see this, note that $\overline{a_1}$ can be spanned by the elements $y_{j, i} \pmod{G_2}$, since it is orthogonal to all $x_{j,i}$, and then we use $\eta_i^2([y_{j,i},a_{2}]) = 0$.

Next note, denoting $\overline{a_i}^\triangle = (\overline{a_i}, \overline{a_i})$, that $\overline{a_3}^\triangle, \dots, \overline{a_{s}}^\triangle$ lie inside $H/G_2^2$. This is because $a_i$ lies inside $V$. Thus, there exists elements $\tilde{a_i} \in H$ such that $\tilde{a_i} = a_i^\triangle \pmod{G_2^2}$, and 
$$\beta([(\mathrm{id}_G, [a_1, a_2]), \tilde{a}_3, \dots, \tilde{a}_{s - 2}]) = 0.$$
By Lemma~\ref{l:commutatorlemma}, we see that the commutator $[(\mathrm{id}_G, [a_1, a_2]), a_3^\triangle, a_4^\triangle, \dots, a_{s}^\triangle]$ is invariant under multiplying $a_i^\triangle$ by an element of $G_2^2$. Since $\tilde{a}_i \equiv a_i^\triangle \pmod{G_2^2}$, we have that
$$\beta([(\mathrm{id}_G, [a_1, a_2]), [\tilde{a}_3, \dots, \tilde{a}_{s}]]) = \beta((\mathrm{id}_G, [a_1, \dots, a_s])) = -\xi([a_1, \dots, a_s])$$
where in the last equality we used that $(\beta - \xi \otimes \overline{\xi})(G_{(s)}^2) = 0$. Hence, $\xi([a_1, \dots, a_s]) = 0$, completing the proof that $\xi([W, W, \dots, W]) = 0$ and Case 1 of the argument.
\subsection{Case 2}
Now suppose that we have
$$\xi([G_2, G, G, \dots, G]) \neq 0.$$
Our goal is to reduce to Case 1. To be precise, the induction on $s$ proceeds by handling first Case 1 and only then Case 2. In this case, while $\overline{G^\square}$ is no longer $s - 1$-step, $F_{\vec{h}}$ is invariant under the action of $(G \times_{G_2} G)_k = G_k^\triangle$ and $F_{\vec{h}}(g_{\vec{h}}(n)\Gamma)$ descends to a degree $k - 1$ nilsequence on $\overline{G^\square}/\overline{\Gamma^\square}$. Here, we define $\Gamma^\square = \Gamma \times_{G_2} \Gamma$ and $\overline{\Gamma^\square} = \Gamma \times_{G_2} \Gamma/G_k^\triangle$. By Lemma~\ref{l:commutatorlemma}, the vertical component of $\overline{G^\square}$ is $(G_{(s)} \times_{K} G_{(s)})/G_k^\triangle$
where $K$ is the $s - 1$-fold commutator $[G_2, G, \dots, G]$, so $F_{\vec{h}}$ is a vertical character on $G^\square$ of frequency $\xi \otimes \overline{\xi}$ where we define $\xi \otimes \overline{\xi}(g, g') = \xi(g) - \xi(g')$; this horizontal character on $G_{(s)} \times_K G_{(s)}$ descends to a character on $(G_{(s)} \times_K G_{(s)})/G_k^\triangle$ since $\xi \otimes \overline{\xi}(G^\triangle_{(s)}) = 0$. \\\\
Now let 
$$\overline{g_{\vec{h}}} = g_{\vec{h}} \pmod{G_k^\triangle}.$$
By induction on $k$ and pigeonholing in $\vec{h}$, and applying Lemma~\ref{l:factorization1}, we have
$$\overline{g_{\vec{h}}} = \varepsilon_{\vec{h}} g_{\vec{h}}' \gamma_{\vec{h}}$$
where $\varepsilon_{\vec{h}}$ is $(c(\delta)^{-1}, \vec{N})$-smooth, $\gamma_{\vec{h}}$ is $c(\delta)^{-1}$-rational, and $g_{\vec{h}}'$ lies in a subgroup $H$ of $\overline{G^\square}$ with the $s - 1$-fold commutator $\xi \otimes \overline{\xi}([H, H, \dots, H]) = 0$.\footnote{We would like to emphasize once again that as written, for $\overline{a} \in G_{(s)} \times_K G_{(s)}/G_k^\triangle$, $\xi \otimes \overline{\xi}(\overline{a})$ is evaluated by first lifting $\overline{a}$ to some $a \in G_{(s)} \times_K G_{(s)}$. In addition, if $\overline{a_1}, \dots, \overline{a_s} \in \overline{G^\square}$ lifts to $a_1, \dots, a_s \in G^\square$, then $[\overline{a}_1, \dots, \overline{a}_s] = [a_1, \dots, a_s] \pmod{G_k^\triangle}$.} Note here that we may Pigeonhole in $\vec{h}$ to make $H$ independent of $\vec{h}$. Pigeonholing $\vec{h}$ in the rationality of $\gamma_{\vec{h}}$, we may also assume that each $\gamma_{\vec{h}}$ is a fixed rationality of height at most $c(\delta)^{-1}$. \\\\
Let $\pi\colon \overline{G^\square} \to \overline{G^\square}/[G, G]^2 = G^\square/[G, G]^2G_k^\triangle$ denote the projection map and let $V = \pi(H)$. Since $H$ is $c(\delta)^{-1}$-rational, it follows that $V$ is $c(\delta)^{-1}$-rational with respect to a spanning subset of the projection of the Mal'cev basis of $G^\square/[G, G]^2G_k^\triangle$. Hence, by Cramer's rule (Lemma~\ref{l:Cramer} and Lemma~\ref{l:Cramer2}), there exists linearly independent horizontal characters $\eta_1, \dots, \eta_t$ on $G^\square$ which annihilate $[G, G]^2G_k^\triangle$ and are size at most $c(\delta)^{-1}$ such that
$$V = \{g \in G^\square \pmod{[G, G]^2G_k^\triangle}: \eta_i(g) = 0 \text{ for all } i \in [t]\}.$$
Here, we are abusively descending $\eta_i$ to $G^\square/[G, G]^2G_k^\triangle$. We now define
$$W = \{g \in G^\square \pmod{[G, G]^2}: \eta_i(g) = 0 \text{ for all } i \in [t]\}.$$
A key claim is that the $s - 1$-fold commutator 
$$\xi \otimes \overline{\xi}([W, W, \dots, W])$$ 
vanishes. To prove this, note that each coordinate of the commutator $[\cdot,\cdot, \dots, \cdot]$ is invariant under multiplying by an element of $G_k^\triangle$. Thus, this repeated commutator map descends to a map on $W/G_k^\triangle$. However, each element $g \in W/G_k^\triangle$ satisfies $\eta_i(g) = 0$, so $\pi(g)$ belongs to $V$. As $\xi \otimes \overline{\xi}([V, V, \dots, V]) = 0$, it follows that $\xi \otimes \overline{\xi}([W, W, W, \dots, W]) = 0$.

Now, by scaling each $\eta_i$ by some integer $q \le c(\delta)^{-1}$, we have 
$$\|\eta_i \circ g_{\vec{h}}\|_{C^\infty[\vec{N}]} \le c(\delta)^{-1}.$$
Decomposing $\eta_i(g', g) = \eta_i^1(g) + \eta_i^2(g'g^{-1})$, and using the fact that $\eta_i$ annihilates $[G, G]^2$ we see  using the earlier expansion of $\eta_i(g_{\vec{h}})$ that
$$\eta_i(g_{\vec{h}}(\vec{n})) = \eta_i^1(g(\vec{n})) + \eta_i^2(g_2(\vec{n} + \vec{h})) - \eta_i^2(g_2(\vec{n})).$$

Thus, applying Lemma~\ref{l:polynomialseparation} to $\eta_i^1 \circ g = P$, $\eta_i^2 \circ g_2 = Q$, and $\sigma$ identically zero for each $i$, we have (possibly scaling $\eta_i$ by a nonzero integer bounded by $c(\delta)^{-1}$) that
$$\|\eta_i^1 \circ g\|_{C^\infty[\vec{N}]}, \|\eta_i^2 \circ g_2\|_{C^\infty[\vec{N}]} \le c(\delta)^{-1}.$$
We now define
\begin{align*}
\tilde{G} &= \{g \in G: \eta_i^1(g) = 0 \text{ for all } 1 \le i \le t\} \\
\tilde{G}_2 &= \{g \in \tilde{G} \cap G_2: \eta_i^2(g) = 0 \text{ for all } 1 \le i \le t\}
\end{align*}
First note that $[\tilde{G}, \tilde{G}] \subseteq \tilde{G}_2$. This is because $\eta_i^2$ vanishes on $[G, G]$ so it vanishes on $[\tilde{G}, \tilde{G}]$. We claim that the following $s - 1$-fold commutator identity holds: 
$$\xi([\tilde{G_2}, \tilde{G}, \dots, \tilde{G}]) = 0.$$
It suffices to show that for any $y \in \tilde{G}_2$ and $x_1, \dots, x_{s - 1}$ be inside $\tilde{G}$, 
$$\xi([y, x_1, \dots, x_{s - 1}]) = 0.$$
Let $\pi'\colon G \to G/[G, G]$. Note that both $(\pi'(y), \mathrm{id}_{G/[G,G]})$ and $(\pi'(x_j), \pi'(x_j))$ lie inside $W$. This is because for each $i$, $\eta_i((\pi'(y), \mathrm{id}_{G/[G,G]}])) = \eta_i((\pi'(x_j), \pi'(x_j))) = 0$ since $\eta_i((\pi'(y), \mathrm{id}_{G/[G, G]})) = \eta_i^2(\pi'(y)) = 0$ and $\eta_i(\pi'(x_j), \pi'(x_j)) = \eta_i^1(\pi'(x_j)) = 0$. Thus as $\xi \otimes \overline{\xi}([W, W, \dots, W]) = 0$, we have
\begin{align*}
0 &= \xi \otimes \overline{\xi}([(\pi'(y), \mathrm{id}_{G/[G,G]}), (\pi'(x_1), \pi'(x_1)), \dots, (\pi'(x_{s - 1}), \pi'(x_{s - 1}))]) \\
&= \xi \otimes \overline{\xi}([(y, \mathrm{id}_G), (x_1, x_1), \dots, (x_{s - 1}, x_{s - 1})]) \\
&= \xi([y, x_1, \dots, x_{s - 1}]).
\end{align*}

Now using Lemma~\ref{l:factorization1}, we may write $g(\vec{n}) = \varepsilon'(\vec{n})g'(\vec{n})\gamma'(\vec{n})$ where $\varepsilon'$ is $(c(\delta)^{-1}, \vec{N})$-smooth, $\gamma'$ is $c(\delta)^{-1}$-rational, and $g' \in \mathrm{poly}(\mathbb{Z}^\ell, G')$ with the filtration $(G')_i = \tilde{G} \cap G_i$. We thus have $g'(\vec{n}) = \varepsilon'(\vec{n)}^{-1}g(\vec{n})\gamma'(\vec{n})^{-1}$. Since $\pi_{\mathrm{horiz}}$ is a homomorphism and $G/G_{(2)}$ is abelian, we have
$$g_{\mathrm{lin}}'(\vec{n}) \equiv \varepsilon_{\mathrm{lin}}'(\vec{n})^{-1} g_{\mathrm{lin}}(\vec{n}) \gamma_{\mathrm{lin}}'(\vec{n})^{-1} \pmod{[G, G]}.$$
Hence
$$g_{2}'(\vec{n}) \equiv \varepsilon_2'(\vec{n})^{-1}g_2(\vec{n})\gamma_2'(\vec{n})^{-1} \pmod{[G, G]}.$$
Since $\eta_i^2$ annihilates $[G, G]$, by scaling $\eta_i^2$ up by an appropriate integer $q \le c(\delta)^{-1}$, we have that
$$\|\eta_i^2 \circ g_{2}'\|_{C^\infty[\vec{N}]} \le c(\delta)^{-1}.$$
Hence, by Lemma~\ref{l:factorization2}, we may write
$$g'(\vec{n}) = \varepsilon^*(\vec{n})g_1(\vec{n})\gamma^*(\vec{n})$$
$\varepsilon^*$ is $(c(\delta)^{-1}, \vec{N})$-smooth, $\gamma^*$ is $c(\delta)^{-1}$-rational, and $g_1 \in \mathrm{poly}(\mathbb{Z}^\ell, \tilde{G})$ with the filtration $\tilde{G}_i = \tilde{G}_2 \cap G_i$ for $i \ge 2$. Letting $\varepsilon = \varepsilon' \varepsilon^*$ and $\gamma = \gamma'\gamma^*$, we have
$$g(\vec{n}) = \varepsilon(\vec{n})g_1(\vec{n})\gamma(\vec{n})$$
where by Lemma~\ref{l:multiplyrational}, $\gamma$ is $c(\delta)^{-1}$-rational. To show that $\varepsilon$ is smooth, we have from Lemma~\ref{l:multiplication} that $d(\varepsilon(0), \mathrm{id}_G) \le c(\delta)^{-1}$. In addition, by right invariance , we have
$$d(\varepsilon(\vec{n}), \varepsilon(\vec{n} + e_i)) = d(\varepsilon'(\vec{n}), \varepsilon'(\vec{n} + e_i)\varepsilon^*(\vec{n} + e_i) \varepsilon^*(\vec{n})^{-1})$$
so by Lemma~\ref{l:leftinvariance} and Lemma~\ref{l:distancecomparison} and since $d(\varepsilon'(\vec{n} + e_i), \mathrm{id}_G) \le c(\delta)^{-1}$ by smoothness, we have
\begin{align*}
d(\varepsilon'(\vec{n}), \varepsilon'(\vec{n} + e_i)\varepsilon^*(\vec{n} + e_i) \varepsilon^*(\vec{n})^{-1}) &\le d(\varepsilon'(\vec{n} + e_i)^{-1}\varepsilon'(\vec{n}), \varepsilon^*(\vec{n} + e_i)\varepsilon^*(\vec{n})^{-1}) \\
&\le c(\delta)^{-1}(|\psi(\varepsilon'(\vec{n} + e_i)^{-1}\varepsilon'(\vec{n}))| + |\psi(\varepsilon^*(\vec{n} + e_i)\varepsilon^*(\vec{n})^{-1})|) \\
&\le \frac{c(\delta)^{-1}}{N_i}.
\end{align*}
Hence, $\varepsilon$ is $(c(\delta)^{-1}, \vec{N})$-smooth. \\\\
Let $P$ be the period of $\gamma$. By Lemma~\ref{l:rationalpolynomialsequence}, we have $P \le c(\delta)^{-1}$. Since $\varepsilon$ is $(c(\delta)^{-1}, \vec{N})$-smooth, on subprogressions $P \cdot [\vec{N'}] + r$ of $[\vec{N}]$ with common difference $P \le c(\delta)^{-1}$ and size $c(\delta)|[\vec{N}]|$, we may approximate $F(\varepsilon(P \cdot \vec{n} + r)g_1(P \cdot \vec{n} + r)\gamma(P \cdot \vec{n} + r)\Gamma)$ with $F(\varepsilon_0 g_1(P \cdot \vec{n} + r) \gamma_0\Gamma)$ for some elements $\varepsilon_0, \gamma_0$ of $G$. Define $\tilde{F}(\cdot) := F(\epsilon_0 \{\gamma_0\} \cdot )$. We see that $\tilde{F}$ is a vertical character with frequency $\xi$ and
$$F(\varepsilon_0 g_1(P \cdot \vec{n} + r) \gamma_0\Gamma) = \tilde{F}(\{\gamma_0\}^{-1}g_1(P \cdot \vec{n} + r)\{\gamma_0\}\Gamma).$$
Since $g_1$ is a polynomial sequence with respect to the filtration on $\tilde{G}$, the same holds for $g_1(P \cdot \vec{n} + r)$. In addition, since $[G, G] \subseteq \tilde{G}_2$, we see that $\tilde{G}_i$ is normal in $G$ and thus $\{\gamma_0\}^{-1} g_1(P \cdot \vec{n} + r) \{\gamma_0\}$ also defines a polynomial sequence in $\mathrm{poly}(\mathbb{Z}^\ell, \tilde{G})$. \\\\
Pigeonholing in a subprogression of common difference $P$ and length $c(\delta)|\vec{N}|$ and pplying Case 1 and Lemma~\ref{l:factorization1} to $\{\gamma_0\}^{-1} g_1(P \cdot \vec{n} + r)\{\gamma_0\}$, we obtain a factorization $\{\gamma_0\}^{-1} g_1(P \cdot \vec{n} + r)\{\gamma_0\} = \varepsilon'(\vec{n}) g_1'(\vec{n}) \gamma'(\vec{n})$ with $\varepsilon'$ $(c(\delta)^{-1}, \vec{N})$-smooth, $\gamma'$ $c(\delta)^{-1}$-rational, and $g_1' \in \mathrm{poly}(\mathbb{Z}^\ell, H)$ where $H$ is at most $s - 1$-step. Since $H$ is $c(\delta)^{-1}$-rational in $\tilde{G}$, it is $c(\delta)^{-1}$-rational in $G$. Applying Lemma~\ref{l:lowerstep} to $H$, we obtain horizontal characters $\alpha_1, \dots, \alpha_r$ on $G$ such that for each $j \in \{1, \dots, r\}$,
$$\|\alpha_j \circ g_1'\|_{C^\infty[\vec{N}']} = 0$$
and thus
$$\|\alpha_j \circ g_1'\|_{C^\infty[\vec{N}]} = 0$$
where $P \cdot [\vec{N}'] + r$ was the subprogression that we Pigeonholed to and for any $\beta_1, \dots, \beta_s$ orthogonal to $\alpha_1, \dots, \alpha_r$, we have
$$\xi([\beta_1, \dots, \beta_s]) = 0.$$
From Lemma~\ref{l:multiparameterextrapolation}, there exists some nonzero integer $|q| \le c(\delta)^{-1}$ such that
$$\|q\alpha_i \circ g_1\|_{C^\infty[\vec{N}]} \le c(\delta)^{-1}$$
and thus for some nonzero integer $|q'| \le c(\delta)^{-1}$
$$\|q'\alpha_i \circ g\|_{C^\infty[\vec{N}]} \le c(\delta)^{-1}$$
which proves the desired result. This completes the proof of Theorem~\ref{t:mainresult1}.

\section{Application to linear equations in primes}\label{s:linearprimes}
In this section, we deduce Theorem~\ref{t:quantthm}. Define $\mu_{\mathrm{Siegel}}$ and $\Lambda_{\mathrm{Siegel}}$ as follows: let $\beta$ be a possible Siegel zero of level $Q = \exp(\log^{1/10}(N))$ with conductor $q_{\mathrm{Siegel}} \le Q$. We let
$$P(Q) := \prod_{\substack{p < Q \\ p \text{ prime}}} p,$$
$\chi_{\mathrm{Siegel}}$ is the character for the Siegel zero $\beta$, and
\begin{align*}
\alpha &:= \frac{1}{L'(\beta, \chi_{\mathrm{Siegel}})}\prod_{p < Q} \left(1 - \frac{1}{p}\right)^{-1}\left(1 - \frac{\chi_{\mathrm{Siegel}}(p)}{p^\beta}\right)^{-1} \\
\mu_{\mathrm{Siegel}}(n) &:= (1_{n | P(Q)} \mu(n)) * (\alpha n^{\beta - 1} \chi_{\mathrm{Siegel}}(n)1_{(n, P(Q)) = 1}) \\
\Lambda_Q(n) &:= \frac{P(Q)}{\phi(P(Q))}1_{(n, P(Q)) = 1} \\
\Lambda_{\mathrm{Siegel}}(n) &:= \Lambda_Q(n)(1 - n^{\beta - 1} \chi_{\mathrm{Siegel}}(n)).
\end{align*}
Our main estimate is the following; the reader can find the definition and basic properties of the $U^{s + 1}[N]$ norm in \cite[Chapter 1.3]{Tao12}.
\begin{theorem}\label{t:mobiusuniformity}
There exists some real $c_s > 0$ such that
$$\|\mu - \mu_{\mathrm{Siegel}}\|_{U^{s + 1}([N])} \ll \exp(-\log(N)^{c_s})$$
$$\|\Lambda - \Lambda_{\mathrm{Siegel}}\|_{U^{s + 1}([N])} \ll \exp(-\log(N)^{c_s}).$$
\end{theorem}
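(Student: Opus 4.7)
The plan is to follow the strategy of Tao and Ter{\"a}v{\"a}inen \cite{TT21} and the author's earlier work \cite{Len22b}, but inserting the improved $U^{s+1}[N]$-inverse theorem of \cite{LSS24b} and Theorem~\ref{t:mainresult1} so that quasi-polynomial bounds propagate through the argument. Suppose for contradiction that $\|\Lambda - \Lambda_{\mathrm{Siegel}}\|_{U^{s+1}[N]} \ge \delta$ with $\delta = \exp(-\log(N)^{c_s})$ for a small $c_s > 0$ to be determined. By the quasi-polynomial $U^{s+1}[N]$-inverse theorem from \cite{LSS24b}, there exists a degree $s$ nilsequence $F(g(n)\Gamma)$ on a nilmanifold $G/\Gamma$ of dimension, step, and complexity at most $\exp(\log(1/\delta)^{O_s(1)})$ such that
\[|\mathbb{E}_{n \in [N]} (\Lambda(n) - \Lambda_{\mathrm{Siegel}}(n)) \overline{F(g(n)\Gamma)}| \ge \exp(-\log(1/\delta)^{O_s(1)}).\]
The objective is to rule this out, together with the corresponding statement for $\mu - \mu_{\mathrm{Siegel}}$; the M\"obius case is handled identically, so I will focus on $\Lambda$.

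Next I would invoke Theorem~\ref{t:mainresult1} (more precisely, the Ratner-type factorization consequence stated as Theorem~\ref{t:RatnerFactorization} but with the improved single-exponential-in-dimension bounds coming from Theorem~\ref{t:mainresult1}) to write $g = \varepsilon g_1 \gamma$ with $\varepsilon$ smooth, $\gamma$ periodic of period at most quasi-polynomial in $1/\delta$, and $g_1$ totally $\delta^A$-equidistributed on a subnilmanifold $G'/\Gamma'$ whose complexity is again quasi-polynomial in $1/\delta$. Pigeonholing in a residue class modulo the period of $\gamma$ on which $\varepsilon$ is essentially constant, the problem reduces to bounding correlations of $\Lambda - \Lambda_{\mathrm{Siegel}}$ restricted to a progression with a totally equidistributed nilsequence on a smaller nilmanifold. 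At this stage I would apply a combinatorial identity (Vaughan or Heath-Brown) to decompose $\Lambda$ and $\Lambda_{\mathrm{Siegel}}$ into ``type I'' pieces of the form $\sum_{d \le D} a_d \sum_{m} F(g(dm)\Gamma)$ and ``type II'' pieces of the form $\sum_{d \sim D}\sum_{m \sim N/D} a_d b_m F(g(dm)\Gamma)$, where $D$ ranges over a suitable dyadic range and the coefficients are divisor-bounded.

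For the type I contribution, the polynomial sequence $m \mapsto g(dm)$ inherits equidistribution on $G'$ (up to a uniform loss) from that of $g_1$, and cancellation follows from the quantitative equidistribution of Theorem~\ref{t:mainresult1} applied to this sequence. For the type II contribution, one Cauchy-Schwarzes in $d$ to reduce to equidistribution of $(g(dm), g(d'm))$ on the product nilmanifold $G \times_{G_2} G$ introduced in Section~\ref{s:equidistribute}, which is again controlled by Theorem~\ref{t:mainresult1}; the key point is that the van der Corput step costs only a squaring of the error, which remains quasi-polynomial. The role of $\Lambda_{\mathrm{Siegel}}$ is to subtract off the contribution of the principal character on small moduli together with the Siegel character correction, so that when the decomposition pushes us into the ``major arc'' regime (small $d$ and small $m$), the remaining sum is directly bounded by standard $L$-function estimates, i.e.\ by $\exp(-\log(N)^{c_s'})$ for some $c_s' > 0$, via Siegel's theorem (which is the source of the ineffectivity noted in the Remark).

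The main obstacle will be to verify that at every stage --- the inverse theorem, the factorization, the Heath-Brown decomposition, and the Cauchy-Schwarz in the type II sums --- the quantitative losses are at worst quasi-polynomial in $1/\delta$, i.e.\ of the form $\exp(\log(1/\delta)^{O_s(1)})$, rather than exponential. This is exactly what the improved equidistribution of Theorem~\ref{t:mainresult1} and the improved inverse theorem of \cite{LSS24b} are designed to deliver: in \cite{TT21}, the corresponding bounds are only iterated logarithmic because the old Green-Tao equidistribution theorem loses double exponentially in dimension. Once the quasi-polynomial propagation is verified, choosing $c_s$ sufficiently small relative to the implicit $O_s(1)$ constants and comparing with the Siegel-type savings on $\Lambda - \Lambda_{\mathrm{Siegel}}$ in the major arcs yields the contradiction and completes the proof; the M\"obius version is then entirely analogous, using the same Heath-Brown decomposition and the same equidistribution inputs.
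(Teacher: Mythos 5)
Your starting point (applying the quasi-polynomial $U^{s+1}[N]$ inverse theorem of \cite{LSS24b} directly to $\Lambda-\Lambda_{\mathrm{Siegel}}$, with no transference) agrees with the paper, but there is a genuine gap at your second step. You propose to upgrade the Ratner-type factorization (Theorem~\ref{t:RatnerFactorization}) using Theorem~\ref{t:mainresult1} so as to write $g=\varepsilon g_1\gamma$ with $g_1$ \emph{totally} $\delta^A$-equidistributed on a subnilmanifold at only quasi-polynomial cost. Example~\ref{e:badexample} shows this is impossible: even for abelian, degree-one sequences the factorization into a totally equidistributed piece cannot have bounds single exponential in dimension, and since the nilmanifold produced by the inverse theorem has dimension $d\le\log(1/\delta)^{O(1)}$, the unavoidable double-exponential-in-$d$ loss destroys the quasi-polynomial dependence on $1/\delta$ that you need to conclude $\exp(-\log(N)^{c_s})$. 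Theorem~\ref{t:mainresult1} does not supply the factorization you invoke: for a nilsequence with nonzero vertical frequency $\xi$ it produces horizontal characters $\eta_1,\dots,\eta_r$ whose common kernel $G'$ satisfies $\xi([w_1,\dots,w_s])=0$, i.e.\ a one-shot reduction in \emph{step}, not equidistribution of a factor. So the plan ``equidistributed factorization first, then read off cancellation in type I/II sums from equidistribution'' is exactly the classical route the paper is built to avoid, and at the required precision it fails.

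The paper's proof is organized in the opposite order and iterates on step rather than producing equidistribution. After the inverse theorem one reduces to a vertical character with one-dimensional vertical component (Lemma~\ref{l:nilcharacters}, Lemma~\ref{l:trick2}), then decomposes $\mu,\Lambda,\mu_{\mathrm{Siegel}},\Lambda_{\mathrm{Siegel}},\Lambda_Q$ into type I, twisted type I, and type II sums (Lemma~\ref{l:type1type2decomposition}, Proposition~\ref{p:type1type2correlation}); the large-conductor case $q_{\mathrm{Siegel}}\ge c(\delta)^{-1}$ is disposed of via \cite[Proposition 7.1]{TT21} so that twisted sums only arise with small conductor. The correlation estimates for these sums (Propositions~\ref{p:type1}, \ref{p:twistedtype1}, \ref{p:type2}) apply Theorem~\ref{t:mainresult1} to the sequences $m\mapsto g(\ell m)$, respectively $(g(\ell m),g(\ell'm))$ on $G\times G$, and conclude only a factorization $g=\varepsilon g_1\gamma$ with $g_1$ valued in a $c(\delta)^{-1}$-rational subgroup of \emph{strictly smaller step}. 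One then pigeonholes onto a progression where $\gamma$ is constant and $\varepsilon$ is essentially constant, replaces $F$ and $g$ accordingly, and repeats; since the step drops each time, the loop runs only $O_k(1)$ times (this is where the quasi-polynomial losses are kept under control), and it terminates with $g\equiv\mathrm{id}_G$, where Siegel--Walfisz finishes the estimate. If you want to salvage your write-up, you should replace the appeal to a strengthened Theorem~\ref{t:RatnerFactorization} by this step-lowering iteration, and make the twisted type I sums and the $q_{\mathrm{Siegel}}$ dichotomy explicit rather than folding them into a generic ``major arc'' step.
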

We remark that as in \cite{TT21}, the bounds in Theorem~\ref{t:mobiusuniformity} are effective. We will defer the proof of Theorem~\ref{t:mobiusuniformity} and Theorem~\ref{t:quantthm} in Section~\ref{s:theoremmobius}. In order to prove Theorem~\ref{t:mobiusuniformity}, we record \cite[Proposition 7.2]{TT21} below, which states we may decompose $\mu$, $\Lambda$, $\mu_{\mathrm{Siegel}}$, $\Lambda_{\mathrm{Siegel}}$, and $\Lambda_Q$ into type I, twisted type I, and type II sums. 
\begin{definition}
We say a sequence $a_d$ is divisor-bounded if there exists some $k$ such that $a_d \ll \tau(d)^k\log(N)^k$ where $\tau(d)$ is the sum of the divisors of $d$.
\end{definition}

\begin{lemma}\label{l:type1type2decomposition}
Any of the five functions $\mu, \mu_{\mathrm{Siegel}}, \Lambda, \Lambda_{\mathrm{Siegel}}, \Lambda_Q$ on $[N]$ can be expressed as a convex linear combination of functions of one of the following four classes (with uniform constants in the bounds).
\begin{itemize}
    \item[(i)] (Type I sum) A function of the form
    $$n \mapsto \sum_{d \le N^{2/3}} a_d 1_{d|n} 1_{[N']}(n)$$
    where the coefficients $a_d$ are divisor-bounded and $1 \le N' \le N$.
    \item[(ii)] (Twisted type I sum) A function of the form
    $$n \mapsto \sum_{d \le N^{2/3}} a_d 1_{d|n} \chi_{\mathrm{Siegel}}(n/d)1_{[N']}(n)$$
    where the coefficients $a_d$ are divisor-bounded and $1 \le N' \le N$.
    \item[(iii)] (Type II sum) A function of the form
    $$n \mapsto \sum_{d, w > N^{1/3}} a_d b_w 1_{dw = n}$$
    for some divisor-bounded coefficients $a_d, b_w$.
    \item[(iv)] (Negligible sum) A divisor-bounded function $n \mapsto f(n)$ with
    $$\sum_{n \in [N]} |f(n)| \ll N \exp(-\log^{1/2}(N)).$$
\end{itemize}
\end{lemma}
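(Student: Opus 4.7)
The plan is to handle the five functions by splitting them into two groups: $\mu$ and $\Lambda$ are treated by a classical sieve identity, while the sieved functions $\Lambda_Q$, $\mu_{\mathrm{Siegel}}$, $\Lambda_{\mathrm{Siegel}}$ are treated by unwinding their Dirichlet-convolution definitions and splitting the outer divisor at the threshold $N^{2/3}$.

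For $\mu$ and $\Lambda$, I would apply Vaughan's identity with parameters $U = V = N^{1/3}$. Each resulting summand is a Dirichlet convolution $a \ast b$ in which either one of $a, b$ is supported in $[1, N^{1/3}]$ (so that after swapping the order of summation and absorbing the constraint $dw \le N$ into an indicator $1_{[N']}(n)$, the sum becomes a Type I sum with divisors $\le UV = N^{2/3}$ and coefficients that are short convolutions of $\mu$'s and $\log$'s, hence divisor-bounded), or both of $a, b$ are supported in $[N^{1/3}, \infty)$ (giving a Type II sum directly). Standard divisor-function estimates show that the resulting coefficients satisfy the required divisor bound.

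For $\Lambda_Q = \frac{P(Q)}{\phi(P(Q))} 1_{(n, P(Q))=1}$, Möbius inversion gives
\[
\Lambda_Q(n) = \frac{P(Q)}{\phi(P(Q))} \sum_{d \mid P(Q),\, d \mid n} \mu(d),
\]
and splitting at $d \le N^{2/3}$ versus $d > N^{2/3}$ produces a Type I sum plus a residual. Since the divisor $d$ is forced to be $Q$-smooth with $Q = \exp(\log^{1/10}N)$, the residual is bounded in $L^1([N])$ by the $Q$-smooth number count $\Psi(N, Q) \ll N u^{-u}$ with $u = (\log N)/(\log Q) = (\log N)^{9/10}$, which is far smaller than $N\exp(-\log^{1/2}(N))$ and is therefore a negligible sum. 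For $\mu_{\mathrm{Siegel}}$ and $\Lambda_{\mathrm{Siegel}}$, the defining convolutions produce summands of the shape $\mu(d) 1_{d \mid P(Q)} f(n/d)$ with $f(m) = c \cdot m^{\beta-1} \chi_{\mathrm{Siegel}}(m) 1_{(m, P(Q))=1}$ for a bounded constant $c$ (and for $\Lambda_{\mathrm{Siegel}}$ the $\Lambda_Q$ piece has already been handled). One again splits at $d \le N^{2/3}$, with the large-$d$ tail dominated by the same smooth-number bound, and on the small-$d$ part dyadically decomposes $n \in [N_0, 2N_0]$ so that $(n/d)^{\beta-1}$ is locally constant up to a bounded factor. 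The coprimality $1_{(n/d, P(Q))=1}$ is expanded by a second Möbius inversion whose auxiliary divisor is also $Q$-smooth; absorbing the local constant, the character $\chi_{\mathrm{Siegel}}(d)$, and the auxiliary divisor into $a_d$ presents each dyadic piece as a twisted Type I sum, with $1_{[N']}$ recovering the dyadic indicator as a difference of two.

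The main obstacle is the $n^{\beta-1}$ weight in the Siegel-corrected pieces, since neither the Type I nor the twisted Type I form admits such a weight: the dyadic decomposition localises it, but one must check that the resulting coefficients remain divisor-bounded. This uses crucially that $\beta$ is very close to $1$, so that the localised value $N_0^{\beta-1}$ is of size $O(1)$ uniformly across the $O(\log N)$ dyadic scales. A secondary bookkeeping issue is the slight inflation of the outer divisor threshold by a $Q$-smooth factor coming from the auxiliary Möbius inversion; one reabsorbs this either by shrinking the threshold from $N^{2/3}$ by a negligible amount at the outset, or by pushing the inflated tail into the negligible-sum term via the smooth-number bound.
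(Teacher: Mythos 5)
First, note that the paper does not actually prove this lemma: it is recorded verbatim as \cite[Proposition 7.2]{TT21}, so there is no internal proof to compare against. Your overall architecture is the standard one behind that result (Vaughan's identity with $U=V=N^{1/3}$ for $\mu$ and $\Lambda$; unfolding the defining Dirichlet convolutions of $\Lambda_Q$, $\mu_{\mathrm{Siegel}}$, $\Lambda_{\mathrm{Siegel}}$; M\"obius expansion of the coprimality to $P(Q)$; and smooth-number bounds to push the large, $Q$-smooth divisor tails into class (iv)). Those parts are sound: the tail estimate via $\Psi(N,Q)\ll N u^{-u}$ with $u=(\log N)^{9/10}$ comfortably beats $\exp(-\log^{1/2}N)$, and after combining the coprimality divisor $e$ with $d$ one still has $\chi_{\mathrm{Siegel}}(n/(de))$ up to the factor $\chi_{\mathrm{Siegel}}(e)$, which goes into the coefficient, so the twisted shape survives.

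The genuine gap is your treatment of the weight $(n/d)^{\beta-1}$ (and, in the same vein, the $\log(n/d)$ weight produced by Vaughan's identity for $\Lambda$, which you silently folded into ``coefficients''). The lemma asserts an \emph{exact} expression of each function as a combination of the four classes, none of which admits a weight depending on $n/d$. Your dyadic localisation only makes $(n/d)^{\beta-1}$ constant ``up to a bounded factor'': on a block $n\in[N_0,2N_0]$ the weight varies by a relative amount of order $1-\beta$, and since a Siegel zero of level $Q$ only guarantees $1-\beta\ll 1/\log Q=(\log N)^{-1/10}$, the resulting $L^1$ discrepancy is of size roughly $N(\log N)^{-1/10}$, which is far too large to be absorbed into the negligible class (iv). So as written you do not obtain the claimed identity. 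The standard repair is to remove such monotone weights exactly by partial summation / layer cake: for instance
\[
m^{\beta-1} \;=\; 1-(1-\beta)\int_{1}^{m} t^{\beta-2}\,dt,
\]
so that $1_{d\mid n}\chi(n/d)(n/d)^{\beta-1}$ becomes an average (of total mass $O(1)$) over $t$ of terms $1_{d\mid n}\chi(n/d)1_{n\ge dt}$, and each sharp cutoff $1_{n\ge dt}$ is a difference of two indicators $1_{[N']}(n)$, with signs absorbed into the divisor-bounded coefficients; the same device (with $dt/t$, total mass $\log N$, allowed by the divisor-bounded convention) handles $\log(n/d)$. With that substitution your decomposition becomes exact and matches the argument of \cite{TT21}.
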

Thus, to study the correlation between a nilsequence with $\mu, \mu_{\mathrm{Siegel}}, \Lambda, $ or $\Lambda_{\mathrm{Siegel}}$, it suffices to study the correlation of the nilsequence with a type I, twisted type I, and a type II sum. The advantage of working with these sums is evident in the following.
\begin{proposition}\label{p:type1type2correlation}
Let $g(n)$ be a type I sum, $g_1(n)$ be a twisted type I sum and $g_2(n)$ be a type II sum. Suppose $f\colon [N] \to \mathbb{C}$ is a one-bounded function and $\delta \in (0, 1/10)$.
\begin{itemize}
    \item Suppose $\left|\sum_{n \in [N]} f(n)g(n)\right| \ge \delta N$. Then there exists $c > 0$ such that if $\delta \gg \exp(-c\log^{1/3}(N))$, $N' \le N$, there exists $L \le N^{2/3}$ such that for $\delta \log^{-O(1)}(N)L$ many $\ell \in [L/2, L]$,
    $$\left|\sum_{n \in [N'/\ell]} f(\ell n)\right| \ge \frac{\delta N}{L\log^{O(1)}(N)}.$$
    \item Suppose $\left|\sum_{n \in [N]} f(n)g_1(n)\right| \ge \delta N $. Then there exists some $c > 0$ such that if $\delta \gg \exp(-c\log^{1/3}(N))$ and $N' \le N$, there exists $L \le N^{2/3}$ such that $\delta \log^{-O(1)}(N)L$ many $\ell \in [L/2, L]$,
    $$\left|\sum_{n \in [N'/\ell]} f(\ell n)\chi_{\mathrm{Siegel}}(n)\right| \ge \frac{\delta N}{L\log^{O(1)}(N)}.$$
    \item Suppose $\left|\sum_{n \in [\delta N, N]} f(n)g_2(n)\right| \ge \delta N$. Then there exists some $c > 0$ such that if $\delta \gg \exp(-c\log^{1/3}(N))$ and $N^{1/3} \le L \le N^{2/3}$ and $M \in [\delta N/L, N/L]$ then
    $$\sum_{\ell, \ell' \in [L/2, L]} \sum_{m, m' \in [M/2, M]} f(\ell m)\overline{f(\ell'm)f(\ell m')}f(\ell'm') \ge \frac{\delta^{O(1)}N^2}{\log^{O(1)}(N)}.$$
\end{itemize}
\end{proposition}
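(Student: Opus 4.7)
\medskip

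\noindent\textbf{Proof proposal.} The plan is to treat each of the three parts in a uniform way: expand the arithmetic sum, localize variables dyadically, and then apply either a direct triangle--inequality/pigeonhole step (in the type I cases) or two applications of Cauchy--Schwarz (in the type II case). The threshold $\delta \gg \exp(-c \log^{1/3} N)$ is chosen precisely so that when we combine these estimates with \cref{l:type1type2decomposition}, the contribution of the negligible class (which is bounded by $N\exp(-\log^{1/2} N)$ in $\ell^1$) is negligible compared to $\delta N$, and so we may pretend our correlations come entirely from the type I, twisted type I, or type II piece.

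For the first (type I) estimate, I would write
\[
\sum_{n \in [N]} f(n) g(n) \;=\; \sum_{d \le N^{2/3}} a_d \sum_{m \in [N'/d]} f(dm),
\]
and dyadically pigeonhole the outer variable into a range $d \in [L/2,L]$ for some $L \le N^{2/3}$, losing only $\log N$. Because the coefficients are divisor-bounded, $\sum_{d \in [L/2,L]} |a_d| \ll L \log^{O(1)} N$; combining with the trivial bound $|\sum_m f(dm)| \le N/d \ll N/L$ and pigeonholing over $d$, the number of $d \in [L/2,L]$ contributing an inner sum $\ge \delta N/(L\log^{O(1)}N)$ is itself $\gg \delta L / \log^{O(1)} N$. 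Renaming $d$ as $\ell$ gives the claim. The twisted type I case is identical, except that after swapping summations one factors $\chi_{\mathrm{Siegel}}(n/d)$ out of the inner sum, so the inner correlation picks up a character twist.

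For the type II estimate, I would again dyadically pigeonhole both variables $d \in [L/2,L]$, $w \in [M/2,M]$ with $LM \asymp N$ and $L \in [N^{1/3}, N^{2/3}]$, obtaining
\[
\Bigl|\sum_{\ell \in [L/2,L]} \sum_{m \in [M/2,M]} a_\ell b_m f(\ell m)\Bigr| \;\gg\; \delta N / \log^{O(1)} N.
\]
Apply Cauchy--Schwarz in the $m$ variable and use $\sum_m |b_m|^2 \ll M \log^{O(1)} N$ to obtain
\[
\sum_{m}\Bigl|\sum_\ell a_\ell f(\ell m)\Bigr|^2 \;\gg\; \frac{\delta^2 N^2}{M \log^{O(1)} N},
\]
then expand the square and Cauchy--Schwarz again in $(\ell,\ell')$, using $\sum_\ell |a_\ell|^2 \ll L \log^{O(1)} N$, to deduce
\[
\sum_{\ell,\ell' \in [L/2,L]} \sum_{m,m' \in [M/2,M]} f(\ell m) \overline{f(\ell' m) f(\ell m')} f(\ell' m') \;\gg\; \frac{\delta^{O(1)} N^2}{\log^{O(1)} N},
\]
which is the desired bound (after correcting the apparent typo $f(dw)$ for $f(\ell m)$).

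The main obstacle is bookkeeping rather than ideas: one must track the divisor-bounded coefficients carefully through each Cauchy--Schwarz so that the losses remain of the form $\log^{O(1)} N$, and one must verify that the negligible component from \cref{l:type1type2decomposition} genuinely contributes $o(\delta N)$ in the regime $\delta \gg \exp(-c \log^{1/3} N)$. Both of these are standard and routine given the divisor-bound hypothesis and the choice of threshold; no input from the new equidistribution theorem is required for this proposition.
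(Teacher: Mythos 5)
The paper itself gives no proof of this proposition: it is recorded as a standard consequence of the type I/type II structure, with the details deferred to \cite{TT21, Len22b}, and your overall route --- swap the order of summation, pigeonhole the divisor variable(s) into dyadic ranges at a cost of $\log^{O(1)}N$, then a triangle-inequality/pigeonhole count in the type I and twisted type I cases and a double Cauchy--Schwarz in the type II case --- is exactly the intended argument. Your type II treatment is correct as written: the first Cauchy--Schwarz in $m$ uses $\sum_m|b_m|^2\ll M\log^{O(1)}N$, the second in $(\ell,\ell')$ uses $\sum_\ell|a_\ell|^2\ll L\log^{O(1)}N$, the resulting quadruple sum equals $\sum_{m,m'}\bigl|\sum_\ell f(\ell m)\overline{f(\ell m')}\bigr|^2\ge 0$, and $LM\asymp N$ converts the bound into $\delta^{O(1)}N^2\log^{-O(1)}N$ (and indeed $f(dw)$ in the display is a typo for $f(\ell m)$). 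The remarks about the negligible class are not needed for this proposition --- that only enters later, in the proof of Theorem~\ref{t:mobiusuniformity} --- but they are harmless.

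The one step that does not work as written is the count of good $\ell$ in the type I and twisted type I cases. Writing $S_d=\sum_{m\le N'/d}f(dm)$, from $\sum_{d\in[L/2,L]}|a_d||S_d|\gg\delta N/\log N$, $|S_d|\ll N/L$, and the $\ell^1$ bound $\sum_d|a_d|\ll L\log^{O(1)}N$ you correctly get that the set $E$ of $d\in[L/2,L]$ with $|S_d|\ge\delta N/(L\log^{C}N)$ carries weight $\sum_{d\in E}|a_d|\gg\delta L/\log^{O(1)}N$; but you cannot pass from this to $|E|\gg\delta L/\log^{O(1)}N$ by ``pigeonholing,'' because divisor-bounded coefficients are not pointwise $\log^{O(1)}N$-bounded: on a sparse set $\tau(d)^k$ is as large as $\exp(c\log N/\log\log N)$, which dwarfs every power of $\log N$ and, in the regime $\delta\gg\exp(-c\log^{1/3}N)$, can also dwarf $\delta^{-1}$. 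The standard patch is the second moment $\sum_{d\in[L/2,L]}|a_d|^2\ll L\log^{O(1)}N$ together with Cauchy--Schwarz, $\delta N/\log^{O(1)}N\ll\bigl(\sum_d|a_d|^2\bigr)^{1/2}\bigl(|E|\,(N/L)^2\bigr)^{1/2}$, which yields $|E|\gg\delta^2L/\log^{O(1)}N$ (alternatively, first discard the sparse set of $d$ with abnormally large divisor function). This produces $\delta^{O(1)}L$ many $\ell$ rather than the $\delta L\log^{-O(1)}N$ you claim, which is all that the downstream Propositions~\ref{p:type1} and~\ref{p:twistedtype1} require, so the loss is immaterial; but the counting step should be justified via the second moment rather than a naive pigeonhole.
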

This motivates us to prove estimates for type I, twisted type I, and type II estimates, which occupy the next two subsections.

\subsection{The type I and twisted type I case}
We first record the type I and twisted type I estimates we must prove.
\begin{proposition}[Type I estimate]\label{p:type1}
Let $G/\Gamma$ be a nilmanifold with dimension $d$, step $s$, degree $k$, complexity $M$, and one-dimensional vertical component. Let $F\colon G/\Gamma \to \mathbb{C}$ be a $1$-Lipschitz function and a vertical character with frequency $\xi$ such that $|\xi| \le M/\delta$. Let $P$ be an arithmetic progression in $[N]$ and $F_P(g(n)\Gamma) = F(g(n)\Gamma)1_P(n)$. Suppose for more than $\delta^{O(1)}L$ many $\ell \in [L/2, L]$ with $L \le N^{1/3}$,
\begin{equation}\label{e:maintype1inequality}
\left|\sum_{\substack{n \in [N] \\ n \equiv 0 \nmod{\ell}}} F_P(g(n)\Gamma)\right| \ge \delta N/L.    
\end{equation}
Then there exists some $c_k > 0$ such that if $\delta \gg \exp(-\log^{c_k}(N))$, then there exists a factorization $g(n) = \varepsilon(n)g_1(n)\gamma(n)$ such that $g_1$ takes values in a $(\delta/M)^{-O_k(d^{O_k(1)})}$-rational subgroup of $G$ whose step is strictly smaller than the step of $G$, $\varepsilon$ is $((\delta/M)^{-O_k(d^{O_k(1)})}, N)$-smooth, and $\gamma$ is $(\delta/M)^{-O_k(d^{O_k(1)})}$-rational.
\end{proposition}

\begin{proposition}[Twisted type I estimate]\label{p:twistedtype1}
Let $G/\Gamma$ be a nilmanifold with dimension $d$, step $s$, degree $k$, complexity $M$, and one-dimensional vertical component. Let $F\colon G/\Gamma \to \mathbb{C}$ be a $1$-Lipschitz function and a vertical character with frequency $\xi$ such that $|\xi| \le M/\delta$. Let $P$ be an arithmetic progression in $[N]$ and $F_P(g(n)\Gamma) = F(g(n)\Gamma)1_P(n)$. Suppose for more than $\delta^{O(1)}L$ many $\ell \in [L/2, L]$ with $L \le N^{1/3}$,
\begin{equation}\label{e:maintwistedtype1inequality}
\left|\sum_{\substack{n \in [N]\\ n \equiv 0 \nmod{\ell}}}  \chi_{\mathrm{Siegel}}(n/d)F_P(g(n)\Gamma)\right| \ge \delta N/L.    
\end{equation}
Then there exists some $c_k > 0$ such that if $\delta \gg \exp(-\log^{c_k}(N))$ and $q_{\mathrm{Siegel}} \le (\delta/M)^{-O_k(d^{O_k(1)})}$, then there exists a factorization $g(n) = \varepsilon(n)g_1(n)\gamma(n)$ such that $g_1$ takes values in a $(\delta/M)^{-O_k(d^{O_k(1)})}$-rational subgroup of $G$ whose step is strictly smaller than the step of $G$, $\varepsilon$ is $((\delta/M)^{-O_k(d^{O_k(1)})}, N)$-smooth, and $\gamma$ is $(\delta/M)^{-O_k(d^{O_k(1)})}$-rational.
\end{proposition}
\begin{proof}[Proof of Proposition~\ref{p:type1} and Proposition~\ref{p:twistedtype1}.]
First, observe we can take $|P| \ge c(\delta)N$, for otherwise, we may upper-bound \eqref{e:maintype1inequality} or \eqref{e:maintwistedtype1inequality} by $c(\delta)N/L$, which is a contradiction to the lower bound of $\delta N/L$.  \\\\
We now prove Proposition~\ref{p:type1}. Write $P = q \cdot [N'] + r$. Splitting $[\delta N/L, N/L]$ into subprogressions of common difference $q$, we see by the Pigeonhole principle that there exists a residue class $b$ modulo $q$ such that for $c(\delta)L$ many $\ell \in [L/2, L]$, 
$$\left|\sum_{\substack{m \in [N/L] \\ m \equiv b \nmod{q}}} 1_P(m\ell)F(g(m\ell)\Gamma)\right| \ge \delta \frac{N}{Lq}.$$
Note that since $m\ell \equiv b\ell \pmod{q}$, the indicator function of $P$ can be dropped after restricting to a sufficiently short interval. Letting $m = qm' + b$, we have intervals $I_{\ell}$ of size at least $c(\delta)N/L$ such that for at least $c(\delta)L$ many $\ell \in [L, 2L]$,
$$\left|\mathbb{E}_{m' \in I_\ell} F(g((qm' + b)\ell)\Gamma)\right| \ge c(\delta).$$
Let $\tilde{g}_\ell = g((q\cdot + b)\ell)$ define a polynomial sequence in $\mathrm{poly}(\mathbb{Z}, G)$. Applying Theorem~\ref{t:mainresult1} (and shifting $I_\ell$ by its least element), there exists linearly independent horizontal characters $\eta_1, \dots, \eta_r$ of size at most $c(\delta)^{-1}$ such that for $c(\delta)L$ many elements $\ell \in [L/2, L]$, we have
$$\|\eta_i \circ \tilde{g}_\ell\|_{C^\infty[|I_\ell|]} \le c(\delta)^{-1}$$
which implies since $|I_\ell| \ge c(\delta)N/L$,
$$\|\eta_i \circ \tilde{g}_\ell\|_{C^\infty[N/L]} \le c(\delta)^{-1}.$$ 
Furthermore, the subgroup
$$G^* := \{g \in G: \eta_i(g) = 0 \text{ for all } i\}$$
has step strictly less than $s$. Now let $g_\ell = g(\ell \cdot)$; by Lemma~\ref{l:multiparameterextrapolation}, we have by scaling $\eta_i$ appropriately that
\begin{equation}\label{e:type1inequality}
\|\eta_i \circ g_\ell\|_{C^\infty[N/L]} \le c(\delta)^{-1}.
\end{equation}
Inequality \eqref{e:type1inequality} implies via Lemma~\ref{l:cinfinity}, Lemma~\ref{l:vinogradov}, and the fact that $L \le N^{1/3}$ that for some integer $q = O_k(1)$,
$$\|q\eta_i \circ g\|_{C^\infty[N]} \le c(\delta)^{-1}.$$
By Lemma~\ref{l:factorization1}, it follows that we may decompose $g = \varepsilon g_1\gamma$ with $g_1 \in \mathrm{poly}(\mathbb{Z}, G^*)$ with $G^*$ equipped with the filtration $G^*_i = G_i \cap G^*$. \\\\
We now turn to Proposition~\ref{p:twistedtype1}. We use the hypothesis of $q_{\mathrm{Siegel}} \le c(\delta)^{-1}$ to Pigeonhole $\chi_{\mathrm{Siegel}}$ in a progression modulo $q_{\mathrm{Siegel}}$, obtaining for at least $c(\delta)L$ many $\ell \in [L/2, L]$ 
$$|\sum_{m \in [N/\ell]} 1_{m\ell \equiv a\nmod{q_{\mathrm{Siegel}}}}1_P(m\ell) F(g(m\ell)\Gamma)| \ge c(\delta)N/\ell$$
Note that $1_{\cdot \equiv a \nmod{q_{\mathrm{Siegel}}}}1_P$ is the indicator function of yet another progression $1_{P'}$ with $|P'| \ge c(\delta)N$. Thus, we may now proceed exactly as in the previous part.
\end{proof}

\subsection{The type II case}
We now need the following type II estimate.
\begin{proposition}[Type II estimate]\label{p:type2}
Let $G/\Gamma$ be a nilmanifold with dimension $d$, step $s$, degree $k$, complexity $M$, and one-dimensional vertical component. Let $F\colon G/\Gamma \to \mathbb{C}$ be a $1$-Lipschitz function and vertical character with frequency $\xi$ such that $|\xi| \le M/\delta$. Let $P$ be an arithmetic progression in $[N]$ and $F_P(g(n)\Gamma) = F(g(n)\Gamma)1_P(n)$. Suppose there exists $L$ and $M$ with $N^{1/3}\le L \le N^{2/3}$, $\delta N \le LM \le N/4$, and
\begin{equation}\label{e:maintype2inequality}
 \left|\sum_{\substack{\ell, \ell' \in [L/2, L] \\ m, m' \in [M/2, M]}}  F_P(g(m\ell)\Gamma)F_P(g(m'\ell')\Gamma)\overline{F_P(g(m'\ell)\Gamma)F_P(g(m\ell')\Gamma)} \right| \ge \delta L^2M^2.   
\end{equation}
Then there exists some $c_k > 0$ such that if $\delta \gg \exp(-\log^{c_k}(N))$, then there exists a factorization $g(n) = \varepsilon(n)g_1(n)\gamma(n)$ such that $g_1$ takes values in a $(\delta/M)^{-O_k(d^{O_k(1)})}$-rational subgroup of $G$ whose step is strictly smaller than the step of $G$, $\varepsilon$ is $((\delta/M)^{-O_k(d^{O_k(1)})}, N)$-smooth, and $\gamma$ is $(\delta/M)^{-O_k(d^{O_k(1)})}$-rational.
\end{proposition}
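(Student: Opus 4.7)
The plan is to apply Cauchy--Schwarz to recast the type II sum as a square, invoke Theorem~\ref{t:mainresult1} on the product nilmanifold, and then decode the resulting horizontal characters on $G\times G$ back to the desired factorization of $g$ itself.

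\textbf{Step 1 (Cauchy--Schwarz).} Setting $A(\ell,\ell') := \sum_{m\in[M/2,M]} F_P(g(m\ell)\Gamma)\overline{F_P(g(m\ell')\Gamma)}$, one checks that the left-hand side of \eqref{e:maintype2inequality} equals $\sum_{\ell,\ell'}|A(\ell,\ell')|^2 \ge \delta L^2 M^2$. Since the trivial bound on $|A(\ell,\ell')|$ is $O(M)$, Markov yields $|A(\ell,\ell')| \ge \delta^{O(1)} M$ for at least $\delta^{O(1)} L^2$ pairs $(\ell,\ell')\in[L/2,L]^2$. Writing $P = q\cdot[N']+r$ with $q \le c(\delta)^{-1}$ and Pigeonholing $m$ into a residue class mod $q/(q,\ell)(q,\ell')$ on which both $1_P(m\ell)$ and $1_P(m\ell')$ are identically $1$ over a subprogression of length $\gg c(\delta) M$, we may drop the indicators and work with the honest nilsequence $(F\otimes\overline{F})(g(m\ell)\Gamma, g(m\ell')\Gamma)$.

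\textbf{Step 2 (Equidistribution on the product).} The product nilmanifold $G\times G/\Gamma\times\Gamma$ is $s$-step with one-dimensional vertical component $G_{(s)}\times G_{(s)}$ (of dimension two), and $F\otimes\overline{F}$ is a vertical character with nonzero frequency $\xi\otimes(-\xi)$. Applying Theorem~\ref{t:mainresult1} to the polynomial sequence $G_{\ell,\ell'}(m) := (g(m\ell), g(m\ell'))$ for each good $(\ell,\ell')$, we obtain horizontal characters $\eta_1^{(\ell,\ell')},\dots,\eta_r^{(\ell,\ell')}$ on $G\times G$ of modulus $\le c(\delta)^{-1}$ such that $\|\eta_i^{(\ell,\ell')}\circ G_{\ell,\ell'}\|_{C^\infty[M]}\le c(\delta)^{-1}$, with the joint kernel $H_{\ell,\ell'}$ satisfying the $(s-1)$-fold commutator condition against $\xi\otimes(-\xi)$. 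Pigeonholing in the $\le c(\delta)^{-O(d)}$ possible collections of such characters, we may take a single collection $\eta_1,\ldots,\eta_r$ valid for $\delta^{O(1)}L^2$ many pairs $(\ell,\ell')$.

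\textbf{Step 3 (Decoding and conclusion).} Decompose $\eta_i(x,y) = \alpha_i(x) + \beta_i(y)$ with $\alpha_i,\beta_i$ horizontal characters on $G$. The above gives
\[\|\alpha_i\circ g(m\ell) + \beta_i\circ g(m\ell')\|_{C^\infty[M]} \le c(\delta)^{-1}\]
for many pairs $(\ell,\ell')$. Expanding $\alpha_i\circ g(n) = \sum_j a_{i,j}\binom{n}{j}$ and $\beta_i\circ g(n)=\sum_j b_{i,j}\binom{n}{j}$, the coefficient of $m^j$ is an $O_k(1)$-combination of $a_{i,j}\ell^j$ and $b_{i,j}(\ell')^j$, hence $\|a_{i,j}\ell^j+b_{i,j}(\ell')^j\|_{\mathbb{R}/\mathbb{Z}} \le c(\delta)^{-1}/M^j$. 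Since the range $N^{1/3}\le L\le N^{2/3}$ and $LM \ge \delta N$ give $M\ge N^{1/3}$, this furnishes a Vinogradov input. First freezing a good $\ell'$ and varying $\ell$, then freezing a good $\ell$ and varying $\ell'$, two iterated applications of Lemma~\ref{l:vinogradov} (combined with Lemma~\ref{l:polynomialseparation}) yield a nonzero integer $q\le c(\delta)^{-1}$ with $\|q\alpha_i\circ g\|_{C^\infty[N]}, \|q\beta_i\circ g\|_{C^\infty[N]} \le c(\delta)^{-1}$. Setting $G^* := \bigcap_i \ker(\alpha_i)\cap\ker(\beta_i)$ and using that the diagonal embedding $g\mapsto(g,g)$ sends $G^*$ into $H_{\ell,\ell'}$, the $(s-1)$-fold commutator condition on $H_{\ell,\ell'}$ together with $\xi\otimes(-\xi)(G_{(s)}^\triangle)=0$ forces $\xi([G^*,\dots,G^*])=0$, so $G^*$ is strictly sub-$s$-step (as in Case~1 of the proof of Theorem~\ref{t:mainresult1}). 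Applying Lemma~\ref{l:factorization1} produces the factorization $g = \varepsilon g_1 \gamma$ with $g_1\in\mathrm{poly}(\mathbb{Z},G^*)$.

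\textbf{Main obstacle.} The delicate part is Step 3: inverting the Cauchy--Schwarz to pass from control on the asymmetric mix $\alpha_i\circ g(m\ell)+\beta_i\circ g(m\ell')$ to individual control on $\alpha_i\circ g$ and $\beta_i\circ g$. This is where the range restriction $N^{1/3}\le L\le N^{2/3}$ is essential, and where the iterated application of Vinogradov's lemma must be carried out carefully degree-by-degree. A secondary subtlety is propagating the commutator annihilation condition on $H_{\ell,\ell'}\subseteq G\times G$ to the claim that $G^*\subseteq G$ is sub-$s$-step, which follows the same diagonal/off-diagonal embedding trick used in Case~1 of Theorem~\ref{t:mainresult1}.
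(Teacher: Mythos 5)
Your proposal follows essentially the same route as the paper: rewrite the type II sum as $\sum_{\ell,\ell'}|A(\ell,\ell')|^2$, pigeonhole to many good pairs, remove the progression $P$ by passing to a residue class mod $q$, apply Theorem~\ref{t:mainresult1} to the sequence $(g(m\ell),g(m\ell'))$ on $G\times G$ with the nonzero $(G\times G)_{(s)}$-vertical frequency $\xi\otimes(-\xi)$, split each horizontal character as $\eta_i(x,y)=\alpha_i(x)+\beta_i(y)$, and use the coefficient identities $a_{i,j}\ell^j+b_{i,j}(\ell')^j$ together with Vinogradov's lemma over $\ell$ and $\ell'$ (this is exactly where the paper invokes Lemma~\ref{l:cinfinity} and Lemma~\ref{l:vinogradov}, and where $LM\ge\delta N$ with $L,M\ge N^{1/3}$ is used) before finishing with Lemma~\ref{l:factorization1}. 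A cosmetic slip: $G\times G$ has two-dimensional vertical component, not one-dimensional, but Theorem~\ref{t:mainresult1} does not need one-dimensionality, so this is harmless.

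There is, however, one step that fails as written: the passage from the commutator condition on $H_{\ell,\ell'}$ to $\xi([G^*,\dots,G^*])=0$ via the \emph{diagonal} embedding $w\mapsto(w,w)$. For diagonal elements one has $[(w_1,w_1),\dots,(w_s,w_s)]=([w_1,\dots,w_s],[w_1,\dots,w_s])$, and $\xi\otimes(-\xi)$ of any such diagonal element is identically zero, so the hypothesis $\xi\otimes(-\xi)([H,\dots,H])=0$ gives no information whatsoever through this embedding; your appeal to $\xi\otimes(-\xi)(G_{(s)}^\triangle)=0$ only restates this vacuity. The correct (and what the paper does) is the off-diagonal embedding $w\mapsto(w,\mathrm{id}_G)$: for $w\in G^*$ one has $\eta_i(w,\mathrm{id}_G)=\alpha_i(w)=0$, so $G^*\times\{\mathrm{id}_G\}\subseteq H_{\ell,\ell'}$, and then $[(w_1,\mathrm{id}_G),\dots,(w_s,\mathrm{id}_G)]=([w_1,\dots,w_s],\mathrm{id}_G)$, on which $\xi\otimes(-\xi)$ evaluates to $\xi([w_1,\dots,w_s])$; this forces $\xi([G^*,\dots,G^*])=0$, and since $G_{(s)}$ is one-dimensional and $\xi$ is nonzero on it, $G^*$ has step strictly less than $s$. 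Note also that for this embedding only the kernels of the $\alpha_i$ are needed, though intersecting with the kernels of the $\beta_i$ (as you do, and as the paper does to control $\eta_j^1\circ g$ as well) does no harm. With this substitution your argument is complete and coincides with the paper's proof.
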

\begin{proof}
We first prove that we can take $|P| \ge c(\delta)N$. If not, then \eqref{e:maintype2inequality} can be upper bounded by $c(\delta)N^2$, which contradicts the lower bound of $\delta L^2M^2$. \\\\
By hypothesis, we have
$$\mathbb{E}_{\ell, \ell' \in [L/2, L]} |\mathbb{E}_{m \in [M/2, M]}  F_P(g(m\ell)\Gamma)\overline{F_P(g(m\ell')\Gamma)}|^2 \ge \delta.$$
This implies that there exists $\delta^{O(1)}L^2$ many $\ell, \ell' \in [L, 2L]$ such that
$$|\mathbb{E}_{m \in [M/2, M]}  F_P(g(m\ell)\Gamma)\overline{F_P(g(m\ell')\Gamma)}| \ge \delta^{O(1)}.$$
Write $P = q \cdot [N'] + r$. By the pigeonhole principle, there exists some residue class $b \pmod{q}$ such that for $c(\delta)L^2$ many $\ell, \ell' \in [L/2, L]$,
$$|\mathbb{E}_{\substack{m \in [M/2, M] \\ m \equiv b \nmod{q}}} 1_P(m\ell)1_P(m\ell') F(g(m\ell)\Gamma)\overline{F(g(m\ell')\Gamma)}| \ge \delta^{O(1)}.$$
Note that since $m\ell \equiv b\ell \pmod{q}$ and $m\ell' \equiv b\ell' \pmod{q}$, the indicator function of $P$ can be dropped after restricting to a sufficiently short interval. Letting $m = qm' + b$, we have intervals $I_{\ell, \ell'}$ of size at least $c(\delta)N/L$ such that for at least $c(\delta)L^2$ many $\ell, \ell' \in [L/2, L]$, we have
$$|\mathbb{E}_{m' \in I_{\ell, \ell'}} F(g((pm' + b)\ell)\Gamma)\overline{F(g((qm' + b)\ell')\Gamma)}| \ge c(\delta).$$
Let $\tilde{g}_{\ell, \ell'}(m') = (g((qm' + b)\ell), g((qm' + b)\ell'))$ be a polynomial sequence in $\mathrm{poly}(\mathbb{Z}, G^2)$. \\\\
Applying Theorem~\ref{t:mainresult1} and Lemma~\ref{l:multiparameterextrapolation}, it follows that there exists horizontal characters $\eta_1, \dots, \eta_r$ on $G \times G$ of size at most $c(\delta)^{-1}$ such that the subgroup $\tilde{G}_1 := \{g \in G_1: \eta_j(g) = 0 \text{ for all } j\}$ has step at most $s - 1$ and for those values of $\ell, \ell'$, we have
$$\|\eta_j \circ \tilde{g}_{\ell, \ell'}\|_{C^\infty[|I_{\ell, \ell'}|]} \le c(\delta)^{-1}$$
and since $|I_{\ell, \ell'}| \ge c(\delta)N/L$,
$$\|\eta_j \circ \tilde{g}_{\ell, \ell'}\|_{C^\infty[N/L]} \le c(\delta)^{-1}.$$
Let $g_{\ell, \ell'} = g(\cdot \ell, \cdot \ell')$. Applying Lemma~\ref{l:multiparameterextrapolation} and scaling up $\eta_j$ appropriately, we thus have
$$\|\eta_j \circ g_{\ell, \ell'}\|_{C^\infty[N/L]} \le c(\delta)^{-1}.$$
Writing $\eta_j(g, g') = \eta_j^0(g) + \eta_j^1(g')$, an application of Lemma~\ref{l:cinfinity} and Lemma~\ref{l:vinogradov}  gives that (upon scaling $\eta_j^i$ appropriately)
$$\|\eta_j^i \circ g\|_{C^\infty[N]} \le c(\delta)^{-1}.$$
We now show that the subgroup $\tilde{G} := \{g \in G: \eta_j^i(g) = 0 \text{ for all } i, j\}$ is $\le s - 1$-step, from which the type II case would follow from an application of Lemma~\ref{l:factorization1}. We see that if $\eta_j^0$ annihilates $w$, then $\eta_j$ annihilates $(w, \mathrm{id}_G)$. Thus, since $\tilde{G}$ is the subgroup generated by the annihilators of $\eta_j^0$, we see that $\tilde{G} \times \mathrm{id}_G$ is a subgroup of the group generated by the annihilators of $\eta_j$, which is $\tilde{G}_1$. Hence, $\tilde{G}$ has step strictly less than $s$.
\end{proof}

\subsection{Deducing Theorem~\ref{t:mobiusuniformity} and Theorem~\ref{t:quantthm}}\label{s:theoremmobius}
We are now ready to prove Theorem~\ref{t:mobiusuniformity}.
\begin{proof}[Proof of Theorem~\ref{t:mobiusuniformity}]
We will now prove Theorem~\ref{t:mobiusuniformity}. We work with either hypothesis
$$\|\mu - \mu_{\mathrm{Siegel}}\|_{U^{s + 1}([N])} \ge \delta$$
$$\|\Lambda - \Lambda_{\mathrm{Siegel}}\|_{U^{s + 1}([N])} \ge \delta$$
and optimize in $\delta$. We will assume throughout that $\delta \le \log(N)^{-1}$. We show that if $\delta$ is sufficiently large, then these hypotheses cannot hold. By \cite[Lemma 2.4]{TT21}, we have the bounds
$$\mu, \mu_{\mathrm{Siegel}} \ll 1 \text{ and } \Lambda, \Lambda_{\mathrm{Siegel}} \ll \log(N).$$
Thus, we may apply \cite[Theorem 1.2]{LSS24b} directly to obtain a nilsequence $F(g(\cdot)\Gamma)$ such that
\begin{equation}\label{e:beginningmobius}
|\mathbb{E}_{n \in [N]} (\mu - \mu_{\mathrm{Siegel}})(n) F(g(n)\Gamma) | \ge \exp(-\log(1/\delta)^{O(1)})   
\end{equation}
or 
\begin{equation}\label{e:beginningvonmango}
|\mathbb{E}_{n \in [N]} (\Lambda - \Lambda_{\mathrm{Siegel}})F(g(n)\Gamma)| \ge \exp(-\log(1/\delta)^{O(1)})    
\end{equation}
where $G/\Gamma$ has degree $k \le s$, dimension $d \le \log(1/\delta)^{O(1)}$, and complexity $M \le \exp(\log(1/\delta)^{O(1)})$. By Fourier expanding on the vertical torus via Lemma~\ref{l:nilcharacters}, we may further assume that $F$ is a vertical character with frequency $\xi$ with $|\xi| \le \delta^{-O(d)^{O(1)}}$. Finally, by Lemma~\ref{l:trick2}, we may assume that the vertical component is one-dimensional. \\\\
We now iterate the conclusions of Proposition~\ref{p:type1}, Proposition~\ref{p:twistedtype1}, and Proposition~\ref{p:type2} in order to one-by-one lower the step of $G$ until $g \equiv \mathrm{id}_G$, maintaining that $\|F\|_{\mathrm{Lip}(G/\Gamma)} \le c(\delta)^{-1}$, $\delta \ge \exp(-\log(N)^{c_k})$, $M \le c(\delta)^{-1}$, and $|P| \ge c(\delta)N$. We begin with 
\begin{equation} \label{e:beginningprogressionmobius}
|\mathbb{E}_{n \in [N]} (\mu - \mu_{\mathrm{Siegel}})(n) F_P(g(n)\Gamma)| \ge c(\delta)
\end{equation}
or
\begin{equation}\label{e:beginningprogressionvonmango}
|\mathbb{E}_{n \in [N]} (\Lambda - \Lambda_{\mathrm{Siegel}})(n) F_P(g(n)\Gamma)| \ge c(\delta).
\end{equation}
Applying Lemma~\ref{l:type1type2decomposition} and Proposition~\ref{p:type1type2correlation}, we obtain the hypotheses of \eqref{e:maintype1inequality} or \eqref{e:maintwistedtype1inequality} or \eqref{e:maintype2inequality} with $c(\delta)$ in place of $c(\delta)$.  \\\\
We now take care of the case of when $q_{\mathrm{Siegel}} \ge c(\delta)^{-1}$ where $c(\delta)$ is sufficiently large. By \cite[Theorem~2.5]{TT21} and the converse to the inverse theorem of Gowers norm (see \cite[Lemma~B.5]{LSS24b}), we have
$$|\mathbb{E}_{n \in [N]} \mu_{\mathrm{Siegel}}(n) F_P(g(n)\Gamma)| \ll c(\delta)^{-1} q_{\mathrm{Siegel}}^{1/d^{O_s(1)}}$$
or 
$$|\mathbb{E}_{n \in [N]} (\Lambda_{\mathrm{Siegel}}(n) - \Lambda_Q(n))F_P(g(n)\Gamma)| \ll c(\delta)^{-1} q_{\mathrm{Siegel}}^{1/d^{O_s(1)}}.$$
Thus, if $q_{\mathrm{Siegel}}$ is sufficiently large, then by the triangle inequality and \eqref{e:beginningprogressionmobius} or \eqref{e:beginningprogressionvonmango}, we have
\begin{equation}
|\mathbb{E}_{n \in [N]} \mu(n)F_P(g(n)\Gamma)| \ge c(\delta)
\end{equation}
or
\begin{equation}
|\mathbb{E}_{n \in [N]} (\Lambda(n) - \Lambda_Q(n))F_P(g(n)\Gamma)| \ge c(\delta).    
\end{equation}
Each of the three functions $\mu$, $\Lambda$, and $\Lambda_Q$ can be decomposed into purely type I and type II sums. Thus, when considering twisted type I sums, we may assume that $q_{\mathrm{Siegel}} \le c(\delta)^{-1}$. \\\\
We have now verified the conditions to apply Proposition~\ref{p:type1}, Proposition~\ref{p:twistedtype1}, and Proposition~\ref{p:type2}. Each of these lemmas states that we may write $g(n) = \varepsilon(n)g_1(n)\gamma(n)$ where $g_1$ is a polynomial sequence on a $c(\delta)^{-1}$-rational subgroup of step at most $s - 1$. Thus, either
$$|\mathbb{E}_{n \in [N]} F_P(\varepsilon(n)g_1(n)\gamma(n)\Gamma)(\mu(n) - \mu_{\mathrm{Siegel}}(n))| \ge c(\delta)$$
or
$$|\mathbb{E}_{n \in [N]} F_P(\varepsilon(n)g_1(n)\gamma(n)\Gamma)(\Lambda(n) - \Lambda_{\mathrm{Siegel}}(n))| \ge c(\delta).$$
Let $Q$ be the period of $\gamma$, so $\gamma(Qn + b)\Gamma$ is constant in $n$ for each $b$. Thus, there exists an arithmetic progression $P' \subseteq P$ of common difference $Q$ and size $c(\delta)N$ such that for any two elements $m, m' \in P'$, $d(\varepsilon(m), \varepsilon(m')) \le c(\delta)$, and
$$|\mathbb{E}_{n \in [N]} F_{P'}(\varepsilon(n)g_1(n)\gamma(n)\Gamma)(\mu(n) - \mu_{\mathrm{Siegel}}(n))| \ge c(\delta)$$
or
$$|\mathbb{E}_{n \in [N]} F_{P'}(\varepsilon(n)g_1(n)\gamma(n)\Gamma)(\Lambda(n) - \Lambda_{\mathrm{Siegel}}(n))| \ge c(\delta).$$
Thus, there exists some $\varepsilon_0$ and $\gamma_0$ such that
$$|\mathbb{E}_{n \in [N]} F_{P'}(\varepsilon_0 g_1(n)\gamma_0\Gamma)(\mu(n) - \mu_{\mathrm{Siegel}}(n))| \ge c(\delta)$$
or
$$|\mathbb{E}_{n \in [N]} F_{P'}(\varepsilon_0 g_1(n)\gamma_0\Gamma)(\Lambda(n) - \Lambda_{\mathrm{Siegel}}(n))| \ge c(\delta).$$
By Lemma~\ref{l:decomposition}, we have the factorization $\gamma_0 = \{\gamma_0\}[\gamma_0]$ where $[\gamma_0] \in \Gamma$ and $\psi(\{\gamma_0\}) \in [-1/2, 1/2)^d$. We may now write $\varepsilon_0 g_1(n)\gamma_0 = \varepsilon_0\{\gamma_0\} (\{\gamma_0\}^{-1}g_1(n)\{\gamma_0\})[\gamma_0]$. Replacing $F$ by $F(\varepsilon_0\{\gamma_0\} \cdot)$, $g$ by $\{\gamma_0\}^{-1}g_1(n)\{\gamma_0\}$, and $P$ by $P'$, we may repeat the iteration. Since the iteration happens at most $O_k(1)$ many times, we can maintain $\delta \ge \exp(-\log^{c_k}(N))$. The iteration terminates when $g(n) = \mathrm{id}_G$ for each $n$, in which case we have that $F = c1_{P'}$ for $0 < |c| \le 1$ and may invoke \cite[Proposition~2.2]{TT21} (a variant of the Siegel--Walfisz Theorem which accounts for a Siegel correction to obtain quasipolynomial bounds)  to finish.    
\end{proof}

We are finally ready to prove Theorem~\ref{t:quantthm}.
\begin{proof}[Proof of Theorem~\ref{t:quantthm}.]
For $\Psi = (\psi_1, \psi_2, \dots, \psi_t)$ as in the statement of Theorem~\ref{t:quantthm}, functions $f_1, \dots, f_t\colon [-N, N] \to \mathbb{Z}$, and $N'$ a prime between $10N$ and $20N$, we may embed $[-N, N]$ inside $\mathbb{Z}/N'\mathbb{Z}$ and extend $1_{\Omega}$, $f_1, \dots, f_t$ to be zero outside $[-N, N]$. We now define the quantities:
\begin{align*}
\Lambda_{\Psi, \Omega}(f_1, \dots, f_t) &= \frac{1}{N^d}\sum_{\vec{n} \in \Omega} \prod_{i = 1}^t f_i(\psi_i(\vec{n}))  \\
\Lambda_{\Psi, N'}(f_1, \dots, f_t) &= \mathbb{E}_{\vec{n} \in (\mathbb{Z}/N'\mathbb{Z})^d } \prod_{i = 1}^t f_i(\psi_i(\vec{n})).
\end{align*}
By \cite[Exercise 1.3.23]{Tao12} and monotonicity of the Gowers norms, we see that for $f_i$ one-bounded and for some integer $s > 1$ depending only on $\Psi$ that
\begin{equation}\label{e:cauchyschwarzcomplexity}
 |\Lambda_{\Psi, N'}(f_1, \dots, f_t)| \le \min_{i \in [t]} \|f_i\|_{U^{s + 1}(\mathbb{Z}/N'\mathbb{Z})} \ll \min_{i \in [t]} \|f_i\|_{U^{s + 1}[N]}   
\end{equation}
In addition, by \cite[Corollary A.3]{GT10} and Lemma~\ref{l:FourierExpansion}, we may Fourier expand
$$1_\Omega = \sum_{i = 1}^J a_i e(\alpha_i \cdot ) + g$$
where $\|g\|_{L^1[N']} \le \log^{-B}(N)$, $\sum_{i = 1}^J |a_i| \ll \log^{O_{d, B}(1)}(N)$, and $\alpha_i$ are rationals with denominator $N'$. If $N'(\alpha_i \cdot n)$ lies in the span of $(\psi_j)_{j = 1}^k$ modulo $N'$, then we may write $N' \alpha_i \cdot n$ into a linear combination of $\psi_j$ so $e(\alpha_i \cdot n)$ gets factored into a multiplicative combination of $e(\psi_j(n)/N')$. These terms may be absorbed in $f_j(\psi_j(n))$. Since $s > 1$, these phases don't contribute to the $U^{s + 1}(\mathbb{Z}/N'\mathbb{Z})$ norm of $f_j$. If $N'(\alpha_i \cdot n)$ does not lie in the span of $(\psi_j)_{j = 1}^t$, then by orthogonality,
$$\mathbb{E}_{n \in (\mathbb{Z}/N'\mathbb{Z})^d} e(\alpha_i \cdot n) \prod_{j = 1}^t f_j(\psi_j(n)) = 0.$$
Thus, by \eqref{e:cauchyschwarzcomplexity}, we have
\begin{equation}\label{e:gowersbound}
|\Lambda_{\Psi, \Omega} (f_j)_{j = 1}^t| \ll \min_{j} \log^{O_{t, B}(1)}(N)\|f_j\|_{U^{s + 1}[N]} + O(\log^{-B}(N)).
\end{equation}
Letting $\Lambda_{\mathrm{Siegel}}$ be the approximant of $\Lambda$ we defined, we may write
$$\Lambda_{\Psi, \Omega}((\Lambda)_{j = 1}^t) - \Lambda_{\Psi, \Omega}((\Lambda_{\mathrm{Siegel}})_{j = 1}^t)$$
as a sum of $\Lambda_{\Psi, \Omega}((g_j)_{j = 1}^t)$ where at least one of $g_j$ is equal to $\Lambda - \Lambda_{\mathrm{Siegel}}$ and the rest of the terms are $O(\log(N))$-bounded. By Theorem~\ref{t:mobiusuniformity} and \eqref{e:gowersbound}, it follows from choosing $A$ to be sufficiently large that each
$$|\Lambda_{\Psi, \Omega}((g_j)_{j = 1}^t)| \ll_{A, t,L} \log^{-A}(N).$$
We may also write $\Lambda_{\mathrm{Siegel}} = \Lambda_Q + n^{\beta - 1}\frac{\phi(P(Q))}{P(Q)} 1_{(n, P(Q)) = 1}$. In addition, we may separate 
$$\Lambda_{\Psi, \Omega}((\Lambda_{\mathrm{Siegel}})_{j = 1}^t) - \Lambda_{\Psi, \Omega}((\Lambda_Q)_{j = 1}^t)$$
into $O_t(1)$ terms of the form $\Lambda_{\Psi, \Omega}((h_j)_{j = 1}^t)$, each of which has at least one $h_j$ that is of the form $\Lambda_{\mathrm{Siegel}} - \Lambda_Q$ with the rest of the terms $O(\log(N))$-bounded. Then \cite[Theorem 2.5]{TT21} and \eqref{e:gowersbound} gives us that each
$$|\Lambda_{\Psi, \Omega}((h_j)_{j = 1}^t)| \ll_{A, t, L} \log^{-A}(N).$$
Here, we invoke Siegel's theorem of $1 - \beta \gg_\varepsilon^{\mathrm{ineff}} q_{\mathrm{Siegel}}^{-\varepsilon}$ rather than the estimate $1 - \beta \gg q_{\mathrm{Siegel}}^{-1/2}\log^{-2}q_{\mathrm{Siegel}}$ used in \cite{TT21}; this is the source of ineffectivity in our bounds. Finally, \cite[Proposition 5.2]{TT21} gives us
$$\Lambda_{\Psi, \Omega}((\Lambda_Q)_{j = 1}^t) = \beta_\infty \prod_{p \text{ prime}} \beta_p + O_{L, d, t}(\exp(-\log^{c_s}(N))).$$
This gives us Theorem~\ref{t:quantthm}.
\end{proof}

\appendix

\section{Auxiliary lemmas}\label{s:aux}
In this section, we shall state auxiliary lemmas we use in the proof of our main theorem. The first two lemmas are slight modifications of \cite[Proposition 9.2]{GT12} and \cite[Lemma 7.9]{GT12}, respectively.
\begin{lemma}[Factorization lemma I]\label{l:factorization1}
Let $G/\Gamma$ be a nilmanifold of step $s$, degree $k$, dimension $d$, and complexity $M$. Let $g \in \mathrm{poly}(\mathbb{Z}^\ell, G)$ and $\eta_1, \dots, \eta_r$ are a set of linearly independent nonzero horizontal characters of size at most $L$. Suppose $\|\eta_j \circ g\|_{C^\infty[\vec{N}]} \le \delta^{-1}$ for each $i$. Then we have a factorization
$$g(\vec{n}) = \varepsilon(\vec{n})g_1(\vec{n})\gamma(\vec{n})$$
where
\begin{itemize}
    \item $\varepsilon$ is $((ML/\delta)^{O_{k, \ell}(d^{O_{k, \ell}(1)})}, \vec{N})$-smooth;
    \item $\gamma$ is $(ML)^{O_{k, \ell}(d^{O_{k, \ell}(1)})}$-rational;
    \item $g_1(\vec{n})$ lies in $\tilde{G} = \bigcap_{j = 1}^r \operatorname{ker}(\eta_j)$
which has complexity at most $(ML)^{O_{k, \ell}(d^{O_{k, \ell}(1)})}$.
\end{itemize} 
Furthermore, if $g(0) = \mathrm{id}_G$, then we can take $\varepsilon(0) = \gamma(0) = \mathrm{id}_G$.
\end{lemma}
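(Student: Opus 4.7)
The plan is to reduce to the single-character case $r=1$ and then iterate. Suppose first that $r=1$ and write $\eta=\eta_1$. Expand $\eta\circ g(\vec{n}) = \sum_{\vec{i}} \alpha_{\vec{i}}\binom{\vec{n}}{\vec{i}}$; the hypothesis yields $\|\alpha_{\vec{i}}\|_{\mathbb{R}/\mathbb{Z}} \le \delta^{-1}/\vec{N}^{\vec{i}}$ for $\vec{i}\ne\vec{0}$. Split $\alpha_{\vec{i}} = \beta_{\vec{i}} + \gamma_{\vec{i}}$ with $|\beta_{\vec{i}}| \le \delta^{-1}/\vec{N}^{\vec{i}}$ and $\gamma_{\vec{i}}\in\mathbb{Z}$ for $\vec{i}\ne\vec{0}$; set $\beta_{\vec{0}} = \alpha_{\vec{0}}$, $\gamma_{\vec{0}} = 0$ so the integer piece vanishes at the origin. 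Using Cramer's rule and the complexity bound on $\eta$, select an element $X\in\log G$ with $\eta(\exp X)=1$ and $|\psi(\exp X)| \le (ML)^{O_{k,\ell}(d^{O_{k,\ell}(1)})}$. Define
$$\varepsilon(\vec{n}) := \exp\bigl(P(\vec{n})\,X\bigr), \qquad \gamma(\vec{n}) := \exp\bigl(Q(\vec{n})\,X\bigr), \qquad g_1 := \varepsilon^{-1} g\, \gamma^{-1},$$
where $P(\vec{n}) = \sum_{\vec{i}}\beta_{\vec{i}}\binom{\vec{n}}{\vec{i}}$ and $Q(\vec{n}) = \sum_{\vec{i}\ne\vec{0}}\gamma_{\vec{i}}\binom{\vec{n}}{\vec{i}}$. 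Both $\varepsilon$ and $\gamma$ are then polynomial sequences with respect to the degree-$k$ filtration. Smallness of $\beta_{\vec{i}}$ together with a telescoping estimate on $d(\varepsilon(\vec{n}+e_j), \varepsilon(\vec{n}))$ (controlled via the rationality of $\exp X$ and right invariance of $d$) yields the required smoothness of $\varepsilon$; integrality of $Q$ combined with the bounded denominator of $\exp X$ gives the rationality of $\gamma$. Since $\eta$ factors through $G/[G,G]$ and is therefore additive, $\eta\circ g_1 = -P - Q + \eta\circ g \equiv 0$, so $g_1 \in \ker(\eta)$.

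For general $r$, I would iterate. Apply the single-character case to $\eta_1$ to write $g = \varepsilon^{(1)} g^{(1)}\gamma^{(1)}$ with $g^{(1)}\in\ker(\eta_1)$. The key sub-claim for iteration is that each remaining character satisfies $\|\eta_j\circ g^{(1)}\|_{C^\infty[\vec{N}]}\le (ML/\delta)^{O_{k,\ell}(d^{O_{k,\ell}(1)})}$. This follows by writing
$$\eta_j\circ g^{(1)} = \eta_j\circ g - \eta_j\circ \varepsilon^{(1)} - \eta_j\circ \gamma^{(1)}$$
and estimating each term: the first by hypothesis; the second because $|\eta_j|\le L$ and $\varepsilon^{(1)}$ is smooth, producing Taylor coefficients of the required size; the third because $\gamma^{(1)}$ is rational with bounded period, so its Taylor coefficients are rationals of bounded height and contribute boundedly to the $C^\infty$ norm. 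Then apply the inductive hypothesis to $g^{(1)}$ with respect to $\eta_2,\ldots,\eta_r$ to obtain $g^{(1)} = \varepsilon^{(2)} g_1 \gamma^{(2)}$ with $g_1 \in \bigcap_{j\ge 2}\ker(\eta_j|_{\ker(\eta_1)}) = \tilde{G}$. Set $\varepsilon := \varepsilon^{(1)}\varepsilon^{(2)}$ and $\gamma := \gamma^{(2)}\gamma^{(1)}$; products of smooth sequences are smooth and products of rational sequences are rational, with the constants multiplying at worst by $(ML/\delta)^{O_{k,\ell}(d^{O_{k,\ell}(1)})}$. Since $r\le d_{\mathrm{horiz}}\le d$, the total accumulated loss is still $(ML/\delta)^{O_{k,\ell}(d^{O_{k,\ell}(1)})}$. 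The complexity bound on $\tilde G$ follows from Cramer's rule (Lemmas~\ref{l:Cramer} and~\ref{l:Cramer2}) together with the Malcev-basis construction in Lemma~\ref{l:ConstructingMalcev}. If $g(0)=\mathrm{id}_G$, then by construction $P(\vec{0})=\alpha_{\vec{0}}=0$ and $Q(\vec{0})=0$ at every stage, so $\varepsilon(0)=\gamma(0)=\mathrm{id}_G$.

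The main obstacle is not the conceptual decomposition (which is essentially abelian, carried out on the horizontal torus and lifted via $X$) but the quantitative bookkeeping: verifying smoothness of $\varepsilon$ and its products requires controlling the $G$-metric in terms of the Malcev coordinates, which interacts non-trivially with the non-abelian structure; and propagating the complexity bound on $\tilde G$ through the iteration requires care, since at each step we must re-express polynomial sequences in $\ker(\eta_1)\cap\cdots\cap\ker(\eta_{j-1})$ relative to a Malcev basis of that subgroup before applying the single-character case again. These are handled by the auxiliary lemmas on multiplication and metric estimates (e.g., Lemmas~\ref{l:multiplication}, \ref{l:leftinvariance}, \ref{l:distancecomparison}) collected in Appendix~\ref{s:aux} and \ref{s:quantnil}, and once they are in place the iteration is routine.
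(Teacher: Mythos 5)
Your single-character construction is broadly plausible, but the reduction of the general case to it by iterating over $\eta_1,\dots,\eta_r$ has two genuine problems. First, the estimate you use to restart the iteration is false as stated: you claim that $\eta_j\circ\gamma^{(1)}$ ``contributes boundedly to the $C^\infty$ norm'' because its Taylor coefficients are rationals of bounded height. But the $C^\infty[\vec{N}]$ norm measures $\vec{N}^{\vec{i}}\|\alpha_{\vec{i}}\|_{\mathbb{R}/\mathbb{Z}}$, and a non-integral rational coefficient with denominator $q$ is at distance at least $1/q$ from the nearest integer, so its contribution is of size at least $\vec{N}^{\vec{i}}/q$, which is enormous for large $N_i$. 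The standard repair is to replace $\eta_j$ by $q\eta_j$ (same kernel), but this multiplies the character sizes by $q\le (ML)^{O_k(d^{O_k(1)})}$ at every step. Second, and more fundamentally, even with that repair the losses compound multiplicatively in the exponent: each application of the single-character case replaces the data $(L_j,\delta_j)$ by quantities of the form $(ML_j/\delta_j)^{O_{k,\ell}(d^{O_{k,\ell}(1)})}$, so after $r$ steps the exponent is of order $O_{k,\ell}(d^{O_{k,\ell}(1)})^{r}$; since $r$ can be as large as $d_{\mathrm{horiz}}$, the final bounds are of the shape $(ML/\delta)^{d^{O(d)}}$, not the claimed $(ML/\delta)^{O_{k,\ell}(d^{O_{k,\ell}(1)})}$. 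This is exactly the induction-on-dimension loss the paper is structured to avoid, and your sentence asserting that the total accumulated loss is still $(ML/\delta)^{O_{k,\ell}(d^{O_{k,\ell}(1)})}$ does not hold under this scheme.

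The paper's proof avoids iteration entirely: after normalizing $g(0)=\mathrm{id}_G$, it writes $\psi(g(\vec{n}))=\sum_{\vec{i}\ne 0}\binom{\vec{n}}{\vec{i}}t_{\vec{i}}$ and, using the hypothesis for all $j$ \emph{simultaneously}, chooses for each $\vec{i}$ a single vector $u_{\vec{i}}$ with $|t_{\vec{i}}-u_{\vec{i}}|\le (ML/\delta)^{O_{k,\ell}(d^{O_{k,\ell}(1)})}\vec{N}^{-\vec{i}}$ and $\eta_j\cdot u_{\vec{i}}\in\mathbb{Z}$ for every $j$, then by one application of Cramer's rule a rational $v_{\vec{i}}$ of height $(dL)^{O(d)}$ with $\eta_j\cdot v_{\vec{i}}=\eta_j\cdot u_{\vec{i}}$; setting $\psi(\varepsilon(\vec{n}))=\sum_{\vec{i}\ne 0}\binom{\vec{n}}{\vec{i}}(t_{\vec{i}}-u_{\vec{i}})$, $\psi(\gamma(\vec{n}))=\sum_{\vec{i}\ne 0}\binom{\vec{n}}{\vec{i}}v_{\vec{i}}$ and $g_1=\varepsilon^{-1}g\gamma^{-1}$ kills all $r$ characters in one shot, so no loss is ever compounded. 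A smaller issue with your $r=1$ step: concentrating the whole correction along a single direction $X\in\log G$ need not produce elements of $\mathrm{poly}(\mathbb{Z}^\ell,G)$ for the given degree-$k$ filtration, since the degree-$\vec{i}$ Taylor coefficients $\exp(\beta_{\vec{i}}X)$ would have to lie in $G_{|\vec{i}|}$; the paper avoids this by perturbing the coordinates of $g$ itself rather than introducing a new one-parameter direction. If you wish to salvage an iterative argument, you would need the per-step loss to be additive rather than multiplicative in the exponent, which is precisely what the simultaneous coordinate choice accomplishes.
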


\begin{proof}
    We first show that we can reduce to the case that $g(0) = \mathrm{id}_G$. To see this, we write $g'(n) = \{g(0)\}^{-1}g(n)[g(0)]^{-1}$. It follows that $g'(0) = \mathrm{id}_G$, and we may replace $g$ with $g'$ and continue the analysis. \\\\
    We write in coordinates that
    $$\psi(g(\vec{n})) = \sum_{\vec{i} \neq 0} \binom{\vec{n}}{\vec{i}}t_{\vec{i}}$$
    where $t_{\vec{i}}$ are vectors representing the coordinates of the degree $\vec{i}$ component of $g$ in Mal'cev coordinates. By hypothesis, we may select coordinates $u_{\vec{i}}$ such that
    $$|t_{\vec{i}} - u_{\vec{i}}| \le \frac{(ML/\delta)^{O_{k, \ell}(d^{O_{k, \ell}(1)})}}{\vec{N}^i}$$
    with $\eta_j \cdot u_{\vec{i}} \in \mathbb{Z}$ for each $\vec{i}$. By Cramer's rule, we may pick a rational vector $v$ with height at most $(dL)^{O(d)}$ such that $\eta_j \cdot v = \eta_j \cdot u_{\vec{i}}$. We define $\gamma$ and $\varepsilon$ via
    $$\psi(\gamma(\vec{n})) = \sum_{\vec{i} \neq 0} \binom{\vec{n}}{\vec{i}} v_{\vec{i}},~~\text{and}~\psi(\varepsilon(\vec{n})) = \sum_{\vec{i} \neq 0} \binom{\vec{n}}{\vec{i}}(t_{\vec{i}} - u_{\vec{i}}).$$
    We see from here that $\gamma(0) = \varepsilon(0) = \mathrm{id}_G$. Now let
    $$g_1(\vec{n}) := \varepsilon(\vec{n})^{-1}g(\vec{n})\gamma(\vec{n})^{-1}.$$
    We claim that this lies inside $\mathrm{poly}(\mathbb{Z}^\ell, \tilde{G})$. To see this, note that it lies inside $\mathrm{poly}(\mathbb{Z}^\ell, G)$, so it suffices to show that the image of $g_1$ lies inside $\tilde{G}$. To see this, we have that
    $$\eta_j(g_1(\vec{n})) = \sum_{\vec{i} \neq 0} \eta_{j} \cdot (u_{\vec{i}} - v_{\vec{i}}) \binom{\vec{n}}{\vec{i}} = 0.$$
\end{proof}

Recall the definition of $g_{\mathrm{lin}}$ and $g_2$ from Definition~\ref{d:gling2}.
\begin{lemma}[Factorization lemma II]\label{l:factorization2}
Let $G/\Gamma$ be a nilmanifold of step $s$, degree $k$, dimension $d$, and complexity $M$. Let $g \in \mathrm{poly}(\mathbb{Z}^\ell, G)$ and $\eta_1, \dots, \eta_r$ are a set of linearly independent nonzero horizontal characters on $G_2$ which annihilate $[G, G]$ and is size at most $L$. Suppose $\|\eta_j \circ g_2\|_{C^\infty[\vec{N}]} \le \delta^{-1}$ for each $i$. Then we have the factorization
$$g(\vec{n}) = \varepsilon(\vec{n})g_1(\vec{n})\gamma(\vec{n})$$
where
\begin{itemize}
    \item $\varepsilon$ is $((ML/\delta)^{O_{k, \ell}(d^{O_{k, \ell}(1)})}, \vec{N})$-smooth;
    \item $\gamma$ is $(ML)^{O_{k, \ell}(d^{O_{k, \ell}(1)})}$-rational;
    \item $g_1 \in \mathrm{poly}(\mathbb{Z}^\ell, \tilde{G})$ where $\tilde{G}$ is given the filtration
$\tilde{G}_0 = \tilde{G}_1 = G,~\tilde{G}_2 = \bigcap_{j = 1}^r \operatorname{ker}(\eta_j)$, and $\tilde{G}_i = \tilde{G}_2 \cap G_i$ for all $i \ge 2.$
\end{itemize}
Furthermore, if $g(0) = \mathrm{id}_G$, then we may take $\gamma(0) = \varepsilon(0) = \mathrm{id}_G$.
\end{lemma}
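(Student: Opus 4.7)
The plan is to mirror the proof of Lemma~\ref{l:factorization1} with $g$ replaced by the nonlinear part $g_2$ and with the horizontal characters $\eta_j$ now living on $G_2$. The crucial enabling structural observation is that each $\eta_j$ annihilates $[G,G]$, and in particular $[G,G_2]\subseteq [G,G]$, so $\eta_j$ descends to a homomorphism $G_2/[G,G_2]\to \mathbb{R}$; consequently $\eta_j$ is $G$-conjugation invariant on $G_2$ and additive on ordinary products of $G_2$-elements, so the Hall--Petresco analysis of $\eta_j\circ g_2$ behaves exactly as in the abelian case.

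First, as in Lemma~\ref{l:factorization1}, I would reduce to $g(0) = \mathrm{id}_G$, which forces $g_2(\vec{0}) = g_2(e_i) = \mathrm{id}_G$, so the Hall--Petresco expansion takes the form $g_2(\vec{n}) = \prod_{|\vec{i}|\ge 2}g_{\vec{i}}^{\binom{\vec{n}}{\vec{i}}}$ with $g_{\vec{i}}\in G_{|\vec{i}|}\subseteq G_2$. Setting $t_{\vec{i}} := \log g_{\vec{i}}$, additivity gives $\eta_j(g_2(\vec{n})) = \sum_{|\vec{i}|\ge 2}\binom{\vec{n}}{\vec{i}}\eta_j(t_{\vec{i}})$, so the hypothesis $\|\eta_j\circ g_2\|_{C^{\infty}[\vec{N}]}\le\delta^{-1}$ yields $\|\eta_j(t_{\vec{i}})\|_{\mathbb{R}/\mathbb{Z}}\le\delta^{-1}\vec{N}^{-\vec{i}}$. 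A rounding step plus Cramer's rule, identical to that of Lemma~\ref{l:factorization1}, then produces for each $\vec{i}$ vectors $u_{\vec{i}}\in\log G_{|\vec{i}|}$ with $\eta_j(u_{\vec{i}})\in\mathbb{Z}$ and $|t_{\vec{i}}-u_{\vec{i}}|\le (ML/\delta)^{O_{k,\ell}(d^{O_{k,\ell}(1)})}\vec{N}^{-\vec{i}}$, together with $(ML)^{O_{k,\ell}(d^{O_{k,\ell}(1)})}$-rational $v_{\vec{i}}\in \log G_{|\vec{i}|}$ satisfying $\eta_j(v_{\vec{i}}) = \eta_j(u_{\vec{i}})$. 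I then take $\varepsilon$ and $\gamma$ to be the Hall--Petresco products with coefficients $\exp(t_{\vec{i}}-u_{\vec{i}})$ and $\exp(v_{\vec{i}})$ for $|\vec{i}|\ge 2$ (trivial for $|\vec{i}|\le 1$), and set $g_1 := \varepsilon^{-1}g\gamma^{-1}$. Smoothness of $\varepsilon$ and rationality of $\gamma$ follow exactly as in Lemma~\ref{l:factorization1}.

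The main step is to verify $g_1\in\mathrm{poly}(\mathbb{Z}^\ell,\tilde{G})$. The filtration $\tilde{G}_\bullet$ is valid because $[G,G_2']\subseteq [G,G_2]\subseteq G_3\cap \ker\eta_j = G_3\cap G_2'\subseteq \tilde{G}_3$, and the rest of $[\tilde{G}_i,\tilde{G}_j]\subseteq\tilde{G}_{i+j}$ follows similarly. Since $\varepsilon(e_i) = \gamma(e_i) = \mathrm{id}_G$, we have $(g_1)_{\mathrm{lin}} = g_{\mathrm{lin}}$, and the higher Hall--Petresco coefficients of $g_{\mathrm{lin}}$ are iterated commutators of the $g(e_i)$'s and so lie in $[G,G]\cap G_{|\vec{i}|}\subseteq \tilde{G}_{|\vec{i}|}$; hence $g_{\mathrm{lin}}\in\mathrm{poly}(\mathbb{Z}^\ell,\tilde{G})$. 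For the remainder
\[h(\vec{n}) := g_1(\vec{n})g_{\mathrm{lin}}(\vec{n})^{-1} = \varepsilon(\vec{n})^{-1}g_2(\vec{n})\bigl(g_{\mathrm{lin}}(\vec{n})\gamma(\vec{n})^{-1}g_{\mathrm{lin}}(\vec{n})^{-1}\bigr),\]
which takes values in $G_2$, additivity of $\eta_j$ on $G_2$ together with the identity $g_{\mathrm{lin}}\gamma^{-1}g_{\mathrm{lin}}^{-1} = [g_{\mathrm{lin}},\gamma^{-1}]\gamma^{-1}$ and $\eta_j([G,G_2])=0$ gives
\[\eta_j(h(\vec{n})) = -\eta_j(\varepsilon(\vec{n})) + \eta_j(g_2(\vec{n})) - \eta_j(\gamma(\vec{n})) = \sum_{|\vec{i}|\ge 2}\binom{\vec{n}}{\vec{i}}\bigl(\eta_j(u_{\vec{i}}) - \eta_j(v_{\vec{i}})\bigr) = 0.\]
Linear independence of the $\binom{\vec{n}}{\vec{i}}$ then forces each Hall--Petresco coefficient $h_{\vec{i}}\in G_{|\vec{i}|}$ of $h$ to satisfy $\eta_j(h_{\vec{i}}) = 0$, i.e.\ $h_{\vec{i}}\in G_2'\cap G_{|\vec{i}|} = \tilde{G}_{|\vec{i}|}$; so $h\in\mathrm{poly}(\mathbb{Z}^\ell,\tilde{G})$ and therefore $g_1 = h\cdot g_{\mathrm{lin}}\in\mathrm{poly}(\mathbb{Z}^\ell,\tilde{G})$, as required.

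The main technical obstacle is precisely this last verification: organizing the Hall--Petresco and Baker--Campbell--Hausdorff bookkeeping so that the conjugation $g_{\mathrm{lin}}\gamma^{-1}g_{\mathrm{lin}}^{-1}$, and more generally every commutator correction incurred in multiplying out $\varepsilon^{-1}g\gamma^{-1}$, is genuinely absorbed by the $[G,G_2]$-invariance of $\eta_j$. Once this is set up correctly, the remainder of the argument is parallel to Lemma~\ref{l:factorization1}.
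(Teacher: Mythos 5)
Your proposal is correct and follows essentially the same route as the paper's proof: reduce to $g(0)=\mathrm{id}_G$, round the degree $\ge 2$ Taylor data of $g_2$ so that the $\eta_j$-values become integers, define $\varepsilon$ and $\gamma$ from the small and rational parts, and use additivity of the $\eta_j$ on $G_2$ (they annihilate $[G,G]\supseteq[G,G_2]$) to verify that $g_1=\varepsilon^{-1}g\gamma^{-1}$ is a polynomial sequence for the filtration $\tilde G_\bullet$. The only differences are bookkeeping -- you work with Hall--Petresco coefficients and $\exp/\log$ where the paper uses Mal'cev coordinates of the second kind, and you absorb the conjugation $g_{\mathrm{lin}}\gamma^{-1}g_{\mathrm{lin}}^{-1}$ where the paper writes out the commutator correction $[g_{\mathrm{lin}},\gamma^{-1}]$ explicitly -- so the two arguments are essentially identical.
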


\begin{proof}
    As before, we first reduce to the case that $g(0) = \mathrm{id}_G$. We write $g'(n) = \{g(0)\}^{-1}g(n)[g(0)]^{-1}$, so we see that $g'(0) = \mathrm{id}_G$ and we may replace $g$ with $g'$ in the below analysis.\\\\
    Recall the conventions of $g_{\mathrm{lin}}$ and $g_2$ given in Definition~\ref{d:gling2}. We write in coordinates that
    $$\psi(g_2(\vec{n})) = \sum_{|\vec{i}| > 1} \binom{\vec{n}}{\vec{i}}t_{\vec{i}}$$
    where $t_{\vec{i}}$ are vectors representing the coordinates of the degree $\vec{i}$ component of $g$ in Mal'cev coordinates. By hypothesis, we may select coordinates $u_{\vec{i}}$ such that
    $$|t_{\vec{i}} - u_{\vec{i}}| \le \frac{(ML/\delta)^{O_{k, \ell}(d^{O_{k, \ell}(1)})}}{\vec{N}^i}$$
    with $\eta_j \cdot u_{\vec{i}} \in \mathbb{Z}$. By Cramer's rule, we may pick a rational vector $v$ with height at most $(dL)^{O(d)}$ such that $\eta_j \cdot v_{\vec{i}} = \eta_j \cdot u_{\vec{i}}$ for each $\vec{i}$ and the linear components of $v$ is zero. We define $\gamma$ and $\varepsilon$ via
    $$\psi(\gamma(\vec{n})) = \sum_{|\vec{i}| > 1} \binom{\vec{n}}{\vec{i}} v_{\vec{i}},~~\text{and}~\psi(\varepsilon(\vec{n})) = \sum_{|\vec{i}| > 1} \binom{\vec{n}}{\vec{i}}(t_{\vec{i}} - u_{\vec{i}}).$$
    Note that $\gamma(0) = \varepsilon(0) = \mathrm{id}_G$. Now let
    $$g_1(\vec{n}) := \varepsilon(\vec{n})^{-1}g(\vec{n})\gamma(\vec{n})^{-1};$$
    it suffices to prove that $g_1$ lies in $\mathrm{poly}(\mathbb{Z}^\ell, \tilde{G})$. Writing
    $$g(\vec{n}) = g_2(\vec{n})g(e_1)^{n_1} \cdots g(e_\ell)^{n_\ell}$$
    we see that
    $$\varepsilon(\vec{n})^{-1}g(\vec{n})\gamma(\vec{n})^{-1} = \varepsilon(\vec{n})^{-1}g_2(\vec{n})\gamma(\vec{n})^{-1}g(e_1)^{n_1} \cdots g(e_\ell)^{n_\ell}[g(e_1)^{n_1} \cdots g(e_\ell)^{n_\ell}, \gamma(\vec{n})^{-1}].$$
    The sequence $\vec{n} \mapsto [g(e_1)^{n_1} \cdots g(e_\ell)^{n_\ell}, \gamma(\vec{n})^{-1}]$ lies inside $\mathrm{poly}(\mathbb{Z}, \tilde{G})$. This is because $[G, G] \subseteq \tilde{G}$. In addition, we see that
    \begin{align*}
    \eta_j(\varepsilon(\vec{n})^{-1}g_2(\vec{n})\gamma(\vec{n})^{-1}) &= -\eta_j(\varepsilon(\vec{n})) + \eta_j(g_2(\vec{n})) - \eta_j(\gamma(\vec{n})) \\
    &= \sum_{|\vec{i}| > 1} \binom{\vec{n}}{\vec{i}} \eta_j \cdot (u_{\vec{i}} - t_{\vec{i}}) + \eta_j \cdot t_{\vec{i}} - \eta_j \cdot v_{\vec{i}} \\
    &= \sum_{|\vec{i}| > 1} \binom{\vec{n}}{\vec{i}} \eta_j \cdot (u_{\vec{i}} - v_{\vec{i}}) = 0
    \end{align*}
    Hence, the image of $g_1$ lies inside $\tilde{G}$. This completes the proof.
\end{proof}

We recall the conventions for $G \times_H G$ given by Definition~\ref{d:joining}. If $G/\Gamma$ is a nilmanifold equipped with a filtration $(G_i)_{i = 0}^\infty$, the proof of Theorem~\ref{t:mainresult1} studies $G^\square/\Gamma^\square$ where $G^\square := G \times_{G_2} G$ and $\Gamma^\square := \Gamma \times_{\Gamma \cap G_2} \Gamma$. We record various properties from \cite[Proposition 7.2, Lemma 7.4, Lemma 7.5]{GT12} regarding this group in the following lemma; we sketch a proof for convenience of the reader.
\begin{lemma}[Properties of $G^\square$]\label{l:annoyingcomplexitybounds}
The following properties hold for $G^\square$.
\begin{itemize}
    \item The sequence of subgroups $(G^\square)_i = G_i \times_{G_{i + 1}} G_i$ forms a filtration of $G^\square$.
    \item If $G$ has complexity $M$, then $G^\square$ has complexity at most $M^{O_k(d^{O(1)})}$. 
    \item Let $F\colon G/\Gamma \to \mathbb{C}$ with $\|F\|_{\mathrm{Lip}(G/\Gamma)} \le L$. If $F^\square(x, y) = F(gx)\overline{F}(y)$ where $|\psi_G(g)| \le 1/2$, then $\|F^\square\|_{\mathrm{Lip}(G^\square/\Gamma^\square)} \le (ML)^{O_k(d^{O(1)})}$. Furthermore, if $F$ is a $G_k$-character with vertical frequency $\xi$, then $F$ invariant under $G_k^\triangle$.
    \item If $w \in \Gamma^\square$ uniquely decomposes as $(u, vu)$ with $u \in \Gamma$ and $v \in \Gamma \cap G_2$, and if $|w| \le Q$, then $|u|, |v| \le (MQ)^{O_k(d^{O(1)})}$.  
    \item Each $\eta$ on $G^\square$ may be decomposed uniquely as
    $$\eta(g', g) = \eta_1(g) + \eta_2(g'g^{-1})$$
    where $\eta_1$ is a horizontal character on $G$, $\eta_2$ is a horizontal character on $G_2$ which annihilates $[G, G_2]$. Furthermore, if $|\eta|$ is bounded by $K$, then $|\eta_1|, |\eta_2|$ are bounded above by $(KM)^{O_k(d^{O(1)})}$.
\end{itemize}
\end{lemma}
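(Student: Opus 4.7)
The plan is to prove the five items in the natural order, with the technical crux being the construction of a Mal'cev basis of $G^\square$ out of the Mal'cev basis of $G$; once this basis is in hand, the remaining assertions are essentially bookkeeping in Mal'cev coordinates.

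\textbf{Algebraic structure.} I would first verify that $(G^\square)_i = G_i \times_{G_{i+1}} G_i$ is a subgroup (since $G_i/G_{i+1}$ is abelian, the defining condition $ab^{-1} \in G_{i+1}$ is multiplicative), and then that $[(G^\square)_i, (G^\square)_j] \subseteq (G^\square)_{i+j}$. Writing $(a,b)$ as $(ub, b)$ with $u \in G_{i+1}$ and $(c,d)$ as $(vd, d)$ with $v \in G_{j+1}$, the commutator is $([ub, vd], [b, d])$. Applying the standard identity $[xy, z] = [x, z]^y[y, z]$ twice and using $[G_{i+1}, G_j], [G_i, G_{j+1}] \subseteq G_{i+j+1}$, one obtains $[ub, vd] \equiv [b, d] \pmod{G_{i+j+1}}$, which gives the claim.

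\textbf{Mal'cev basis and complexity.} Starting from the Mal'cev basis $\{X_1, \dots, X_d\}$ of $\mathfrak{g}$, I would form the weak basis
\[\{(X_i, X_i) : 1 \le i \le d\} \cup \{(X_j, 0) : X_j \in \mathfrak{g}_2\}\]
of $\mathfrak{g}^\square = \{(X, Y) : X - Y \in \mathfrak{g}_2\}$, then apply the weak-to-Mal'cev-basis results in Appendix~\ref{s:quantnil} to obtain a genuine Mal'cev basis adapted to $((G^\square)_i)$ with structure constants bounded by $M^{O_k(d^{O(1)})}$. The explicit splitting into diagonal versus difference directions is what makes the remaining items tractable.

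\textbf{Horizontal character decomposition.} Every horizontal character $\eta$ on $G^\square$ annihilates $[G^\square, G^\square]$, so from $(g', g) = (g'g^{-1}, 1_G)(g, g)$ one obtains $\eta(g', g) = \eta_2(g'g^{-1}) + \eta_1(g)$, where $\eta_1(g) := \eta(g, g)$ and $\eta_2(h) := \eta(h, 1_G)$ for $h \in G_2$. That $\eta_1$ is a horizontal character on $G$ is immediate via the diagonal embedding, and the identity $[(g, g), (h, 1_G)] = ([g, h], 1_G)$ combined with $\eta$ vanishing on commutators yields $\eta_2([G, G_2]) = 0$. The size bounds $|\eta_1|, |\eta_2| \le (KM)^{O_k(d^{O(1)})}$ follow from re-expressing $X_j$ and $(X_j, 0)$ in the $G^\square$-basis constructed above.

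\textbf{Lipschitz, lattice, and vertical invariance.} Invariance of $F^\square$ under $G_k^\triangle$ is immediate from centrality of $G_k$ and the vertical character relation $F(xg_k) = e(\xi(g_k))F(x)$. For the Lipschitz bound, I would use right-invariance of the metrics together with the basis comparison to obtain $d_{G^\square}((x, y), (x', y')) \ll (d_G(x, x') + d_G(y, y')) \cdot M^{O_k(d^{O(1)})}$, then absorb the translation by $g$ using Lemma~\ref{l:multiplication}. For $w = (u, vu) \in \Gamma^\square$ with $|w| \le Q$, the coordinates of $u$ and $v$ appear as distinct coordinate groups in the constructed basis; converting back to the Mal'cev basis of $G$ (and of $G_2$) loses only a polynomial factor in $d$. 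The main obstacle throughout is ensuring that each change of basis between $G^\square$, $G$, and $G_2$ costs only $M^{O_k(d^{O(1)})}$ rather than exponential in $d$, which is precisely what the quantitative Mal'cev basis machinery in Appendix~\ref{s:quantnil} is designed to deliver.
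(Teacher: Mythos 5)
Your proposal is correct and follows essentially the same route as the paper's proof: commutator/Baker--Campbell--Hausdorff identities for the filtration property, the quantitative Mal'cev machinery of Appendix~\ref{s:quantnil} to produce a basis of $G^\square$ of complexity $M^{O_k(d^{O(1)})}$, metric and coordinate comparisons (plus Lemma~\ref{l:multiplication}) for the Lipschitz, lattice, and vertical-invariance claims, and the same definitions $\eta_1(g)=\eta(g,g)$, $\eta_2(h)=\eta(h,\mathrm{id}_G)$ together with $[(g,g),(h,\mathrm{id}_G)]=([g,h],\mathrm{id}_G)$ for the character splitting. The only cosmetic difference is that the paper gets the Mal'cev basis by viewing $G^\square$ as an $(dM)^{O(d)}$-rational subgroup of $G\times G$ with the product basis and invoking Lemma~\ref{l:Subgroupfiltration}, rather than writing down your explicit diagonal-plus-difference weak basis; both reduce to the same appendix lemmas.
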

\begin{proof}
    For the first point, note that if $(g_i, g_{i + 1}g_i)$ and $(h_i, h_{i + 1}h_i)$ are elements in $(G \times_{G_2} G)_i$ and $(G \times_{G_2} G)_j$, respectively, then 
    $$[(g_i, g_{i + 1}g_i), (h_i, h_{i + 1}h_i)] = ([g_i, h_i], [g_{i + 1}g_i, h_{i + 1}h_i])$$
    Baker-Campbell-Hausdorff immediately give that
    $$[g_{i + 1}g_i, h_{i + 1}h_i] = [g_i, h_i] \pmod{G_{i + j + 1}}.$$
    Hence, $(G^\square)_i$ forms a filtration. To show the second point, denoting $\{X_1, \dots, X_d\}$ as the Mal'cev basis, consider
    $$\{(X_1, 0), (0, X_1), \dots, (X_d, 0), (0, X_d)\}.$$
    This is a Mal'cev basis for $G/\Gamma \times G/\Gamma$, and by Cramer's rule, $G^\square$ is $(dM)^{O(d)}$-rational with respect to this basis. By Lemma~\ref{l:Subgroupfiltration}, it follows that there exists a Mal'cev basis on $G \times_{G_2} G$ which is an $(dM)^{O_k(d^{O(1)})}$-rational combination of $(X_i, X_j)$. For the third point, note that $F$ restricted to $G/\Gamma \times G/\Gamma$ has Lipschitz constant $3L^2$. We see that $G \times_{G_2} G$ has rationality $(dM)^{O_k(d^{O(1)})}$, so if $x, y \in G^\square$, then $d_{G/\Gamma \times G/\Gamma}(x, y) \le (dM)^{O_k(d^{O(1)})}d_{G \times_{G_2} G} (x, y)$. The third point follows from $F(g_kx\Gamma)\overline{F}(g_{k}y\Gamma) = F(x\Gamma)\overline{F}(y\Gamma)$ where $g_k \in G_{k}$. For the fourth point, by Lemma~\ref{l:Subgroupfiltration} applied to $G \times G$ and $G^\square$, it follows that if $w$ is $Q$-bounded with respect to the Mal'cev basis on $G \times_{G_2} G$, then it is $(MQ)^{O_k(d^{O(1)})}$-bounded with respect to the product Mal'cev basis on $G \times G$. The result follows by projecting to each coordinate and using Lemma~\ref{l:multiplication} to show that the Mal'cev coordinates of the product of two elements with bounded Mal'cev coordinates is bounded. Finally, for the fifth point, we define $\eta_1(g) = \eta(g, g)$ and $\eta_2(h) = \eta(h, \mathrm{id}_G)$. Since $\eta$ annihilates $[G \times_{G_2} G, G \times_{G_2} G]$ which contains $[G_2 \times \mathrm{id}_G, G^\triangle] = [G_2, G] \times \mathrm{id}_G$, we see that $\eta_2$ must annihilate $[G, G_2]$ and $\eta_1$ must annihilate $[G, G]$. We also see that since $\eta(\Gamma^\square) \subseteq \mathbb{Z}$, this contains both $\Gamma^\triangle$ and $(\Gamma \cap G_2) \times \mathrm{id}_G$, so $\eta_1(\Gamma) \subseteq \mathbb{Z}$ and $\eta_2(\Gamma \cap G_2) \subseteq \mathbb{Z}$. \\\\
    To check the boundedness conditions, we see that the Mal'cev coordinates of $G \times_{G_2} G$ are rational combinations of $G \times G$ with coefficients that have denominator at most $(dM)^{O_k(1)}$. It follows that $\eta_1, \eta_2$ are bounded by $K(dM)^{O_k(d^{O(1)})}$.
\end{proof}
We also need the following lemma about polynomial sequences on $G \times_{G_2} G$, where we recall from Definition~\ref{d:joining} and Definition~\ref{d:gling2} the conventions for $G \times_H G$ and $g_{\mathrm{lin}}$ and $g_2$. The proof follows \cite[Proposition 4.2]{GT14}.
\begin{lemma}\label{l:vandercorputpolynomial}
Given a polynomial sequence $g$ in $\mathrm{poly}(\mathbb{Z}^\ell, G)$ with $g(0) = \mathrm{id}_G$, and $\vec{h} \in \mathbb{Z}^\ell$, we can define a polynomial sequence
$$g_{\vec{h}}(\vec{n}) := (\{g_{\mathrm{lin}}(\vec{h})\}^{-1}g_2(\vec{n} + \vec{h})g_{\mathrm{lin}}(\vec{n} + \vec{h})[g_{\mathrm{lin}}(\vec{h})]^{-1}, g(\vec{n}))$$
where $g_{\mathrm{lin}}(\vec{n}) = g(e_1)^{n_1}g(e_2)^{n_2} \cdots g(e_\ell)^{n_\ell}$ and $g_2(\vec{n}) = g(\vec{n})g_{\mathrm{lin}}(\vec{n})^{-1}$. Then $g_{\vec{h}}(\vec{n}) \in \mathrm{poly}(\mathbb{Z}^\ell, G^\square)$. 
\end{lemma}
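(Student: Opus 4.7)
First I would simplify. Since $g_2(\vec{n}+\vec{h}) g_{\mathrm{lin}}(\vec{n}+\vec{h}) = g(\vec{n}+\vec{h})$, the first coordinate of $g_{\vec{h}}(\vec{n})$ telescopes to $h_L^{-1} g(\vec{n}+\vec{h}) h_R^{-1}$, where $h_L := \{g_{\mathrm{lin}}(\vec{h})\}$ and $h_R := [g_{\mathrm{lin}}(\vec{h})]$ so that $h_L h_R = g_{\mathrm{lin}}(\vec{h})$ and $h_R \in \Gamma$. I would then factor $g_{\vec{h}}(\vec{n}) = A(\vec{n}) \cdot D(\vec{n})$, where $D(\vec{n}) := (g(\vec{n}), g(\vec{n}))$ is the diagonal embedding of $g$ and $A(\vec{n}) := (Q(\vec{n}), \mathrm{id}_G)$ with $Q(\vec{n}) := h_L^{-1} g(\vec{n}+\vec{h}) h_R^{-1} g(\vec{n})^{-1}$. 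The sequence $D$ lies in $\mathrm{poly}(\mathbb{Z}^\ell, G^\square)$ for free, since $x \mapsto (x, x)$ carries $G_j$ into the diagonal $G_j^\triangle \subseteq G_j \times_{G_{j+1}} G_j = (G^\square)_j$. Because $\mathrm{poly}(\mathbb{Z}^\ell, G^\square)$ is a group under pointwise multiplication (by \cite[Chapter 14]{HK18}), the task reduces to showing $A \in \mathrm{poly}(\mathbb{Z}^\ell, G^\square)$.

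Next I would handle $A$. Its second coordinate is trivial, and $(G^\square)_j \cap (G \times \{\mathrm{id}_G\}) = G_{j+1} \times \{\mathrm{id}_G\}$, so $A \in \mathrm{poly}(\mathbb{Z}^\ell, G^\square)$ is equivalent to $Q$ being a polynomial sequence on $G$ with respect to the shifted filtration $(G_{j+1})_{j\geq 0}$ — that is, the Taylor coefficient of $Q$ at multidegree $\vec{i}$ should lie in $G_{|\vec{i}|+1}$. As a sanity check, I would first verify that $Q(\vec{n}) \in G_2$ for every $\vec{n}$: working modulo $[G, G] \subseteq G_2$, both $g$ and $g_{\mathrm{lin}}$ coincide and induce a group homomorphism from $\mathbb{Z}^\ell$ into $G/[G, G]$, whence
\[
Q(\vec{n}) \equiv g_{\mathrm{lin}}(\vec{n} + \vec{h}) g_{\mathrm{lin}}(\vec{n})^{-1} g_{\mathrm{lin}}(\vec{h})^{-1} \equiv \mathrm{id}_G \pmod{[G, G]}.
\]

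To finish, I would decompose $Q(\vec{n}) = \bigl(h_L^{-1}(\partial_{\vec{h}} g)(\vec{n})\bigr) \cdot \bigl(g(\vec{n}) h_R^{-1} g(\vec{n})^{-1}\bigr)$ with $\partial_{\vec{h}} g(\vec{n}) := g(\vec{n}+\vec{h}) g(\vec{n})^{-1}$, and show that each factor has the shifted-filtration polynomial structure. The first factor inherits this from the standard Leibman-type fact (see \cite[Chapter 14]{HK18}) that the discrete derivative of a sequence in $\mathrm{poly}(\mathbb{Z}^\ell, G, (G_i))$ is itself a polynomial sequence in the shifted filtration $(G_{i+1})_{i \geq 0}$, and left-multiplication by the constant $h_L^{-1}$ preserves this. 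For the second factor, rewrite $g(\vec{n}) h_R^{-1} g(\vec{n})^{-1} = [g(\vec{n}), h_R^{-1}] \cdot h_R^{-1}$: its non-constant Taylor coefficients land in $[G_{|\vec{i}|}, G_1] \subseteq G_{|\vec{i}|+1}$ by the filtration inclusion $[G_i, G_j] \subseteq G_{i+j}$, and right-multiplication by $h_R^{-1}$ only conjugates the higher coefficients (preserving memberships by normality of each $G_k$) and shifts the constant term. The product then lies in $\mathrm{poly}(\mathbb{Z}^\ell, G, (G_{j+1}))$, completing the argument. The hard part, as is typical with such Leibman-type manipulations, is precisely this final bookkeeping: rigorously tracking Taylor coefficients through iterated non-abelian products requires a Hall--Petresco or Baker--Campbell--Hausdorff computation and is the step most prone to off-by-one slips in the filtration indices, so the cleanest presentation is to cite the classical shift-of-filtration theorem (as in \cite[Chapter 14]{HK18} or the parallel \cite[Proposition 4.2]{GT14}) and apply it as a black box.
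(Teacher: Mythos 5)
Your decomposition is genuinely different from the paper's. The paper checks pointwise membership in $G^\square$ modulo $G_2$, conjugates by $(\{g_{\mathrm{lin}}(\vec{h})\}^{-1}, \mathrm{id}_G)$ (using $[G_k \times_{G_{k+1}} G_k, G \times \{\mathrm{id}_G\}] \subseteq G_{k+1} \times \{\mathrm{id}_G\}$ to discard that factor), and then splits the pair into the nonlinear part $(g_2(\vec{n}+\vec{h}), g_2(\vec{n}))$ — reduced to the building blocks $(x_i^{\binom{n_i+h_i}{i}}, x_i^{\binom{n_i}{i}})$, handled by Taylor expansion — and the linear part, handled by an iterative peel-and-conjugate argument. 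You instead factor $g_{\vec{h}} = (Q, \mathrm{id}_G)\cdot(g,g)$ and reduce, via the group property of $\mathrm{poly}(\mathbb{Z}^\ell, G^\square)$ and the observation $(G^\square)_j \cap (G \times \{\mathrm{id}_G\}) = G_{j+1}\times\{\mathrm{id}_G\}$, to showing that $Q(\vec{n}) = \{g_{\mathrm{lin}}(\vec{h})\}^{-1}g(\vec{n}+\vec{h})[g_{\mathrm{lin}}(\vec{h})]^{-1}g(\vec{n})^{-1}$ is polynomial for the shifted filtration $(G_{i+1})$ with constant term in $G_2$. This is a clean and legitimate alternative: the statement about $\partial_{\vec{h}}g$ follows at once from the equivalence of the Taylor-coefficient and iterated-derivative (Host--Kra) descriptions of $\mathrm{poly}$, since a $j$-fold derivative of $\partial_{\vec{h}}g$ is a $(j+1)$-fold derivative of $g$. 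What your route buys is conceptual transparency (everything is reduced to two standard Leibman-type facts); what the paper's route buys is that it never needs the shifted-filtration formalism and stays entirely inside explicit products of building blocks.

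Two points need repair. First, your sanity check asserts that $g$ and $g_{\mathrm{lin}}$ agree modulo $[G,G]$; this is false for a general degree-$k$ filtration, since $g_2$ takes values in $G_2$, which may be strictly larger than $[G,G]$ (e.g.\ $G=\mathbb{R}$ with the degree-$2$ filtration and $g(n)=\alpha n^2$). Run the same computation modulo $G_2$ instead — the quotient $G/G_2$ is still abelian — which gives exactly what you need, namely $Q(\vec{n})\in G_2$; note also that this value check is not merely a sanity check but is required, because membership of $(Q,\mathrm{id}_G)$ in $\mathrm{poly}(\mathbb{Z}^\ell,G^\square)$ forces the constant coefficient into $G_2$, while the shifted filtration alone only puts it in $G_1$. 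Second, the step you yourself flag as the hard part — that the nonconstant Taylor coefficients of $\vec{n}\mapsto g(\vec{n})[g_{\mathrm{lin}}(\vec{h})]^{-1}g(\vec{n})^{-1}$ lie in $G_{|\vec{i}|+1}$ — is only gestured at, and your proposed black box, \cite[Proposition 4.2]{GT14}, is precisely the one-parameter version of the lemma being proven, so citing it is circular. The honest way to close this is the same derivative characterization: writing $C(\vec{n}) = g(\vec{n})\gamma g(\vec{n})^{-1}$ one has $\partial_{\vec{k}}C(\vec{n}) = [\partial_{\vec{k}}g(\vec{n}), C(\vec{n})]$, and an induction of Lazard--Leibman type shows all $j$-fold derivatives of $C$ take values in $G_{j+1}$ for $j\ge 1$; either carry out that short induction or cite the commutator lemma from the Leibman/Host--Kra machinery rather than the Green--Tao proposition.
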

\begin{proof}
Since $g(\vec{n}) = g_{\mathrm{lin}}(\vec{n}) \pmod{G_2}$ and $g_{\mathrm{lin}}(\vec{n} + \vec{h}) = g_{\mathrm{lin}}(\vec{n})g_{\mathrm{lin}}(\vec{h}) \pmod{G_2}$, it follows that each element of $g_{\vec{h}}$ lies inside $G^\square$. In addition, by conjugating by $(\{g_{\mathrm{lin}}(\vec{h})\}^{-1}, \mathrm{id}_G)$, and noting that $[G_k \times_{G_{k + 1}} G_k, G \times \mathrm{id}_G] \subseteq G_{k + 1} \times \mathrm{id}_G$, it follows that it suffices to show that
$$(g_2(\vec{n} + \vec{h})g_{\mathrm{lin}}(\vec{n} + \vec{h})g_{\mathrm{lin}}(\vec{h}), g(\vec{n}))$$
is a polynomial sequence on $G^\square$. It suffices to show that
$$(g_2(\vec{n} + \vec{h}), g_2(\vec{n})), (g_{\mathrm{lin}}(\vec{n + h}) g_{\mathrm{lin}}(\vec{h})^{-1}, g_{\mathrm{lin}}(\vec{n}))$$
are polynomial sequences on $G^\square$. To show the first one, it suffices to show that
$$(x_i^{\binom{n_i + h_i}{i}}, x_i^{\binom{n_i}{i}})$$
for $x_i \in G_i$. This is obvious by Taylor expansion. To show that $(g_{\mathrm{lin}}(\vec{n} + \vec{h}) g_{\mathrm{lin}}(\vec{h}), g_{\mathrm{lin}}(\vec{n}))$ is a polynomial sequence on $G^\square$, we expand
$$(g_{\mathrm{lin}}(\vec{n} + \vec{h}) g_{\mathrm{lin}}(\vec{h}), g_{\mathrm{lin}}(\vec{n})) = (g(e_1)^{n_1 + h_1} \cdots g(e_\ell)^{n_\ell + h_\ell} g(e_\ell)^{-h_\ell} \cdots g(e_1)^{-h_1}, g(e_1)^{n_1} \cdots g(e_\ell)^{n_\ell}).$$
By conjugating by $(g(e_1)^{h_1}, 1)$ and factoring out $(g(e_1)^{n_1}, g(e_1)^{n_1})$, it suffices to show that
$$(g(e_2)^{n_2 + h_2} \cdots g(e_\ell)^{n_\ell + h_\ell} g(e_\ell)^{-h_\ell} \cdots g(e_2)^{-h_2}, g(e_2)^{n_2} \cdots g(e_\ell)^{n_\ell})$$
is a polynomial sequence on $G^\square$. Iterating this procedure, we have the desired result.
\end{proof}

The next lemma is a standard Fourier expansion lemma for the torus; we include it for completeness.
\begin{lemma}[Fourier expansion lemma]\label{l:FourierExpansion}
There exists a constant $C \ge 1$ such that the following holds. Let $f\colon \mathbb{T}^d \to \mathbb{C}$ be an $L$-Lipschitz function. Then for each $x \in \mathbb{T}^d$,
$$\Bigg|f(x) - \sum_{n_i\in \mathbb{Z}^d: |n_i| \le (C\delta)^{-2d- 1}} a_i e(n_i x)\Bigg| \le 3Ld\delta \text{ and } \sum_{i} |a_i| \le (\delta/CL)^{-3d^2}.$$
\end{lemma}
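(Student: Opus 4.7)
The plan is to approximate $f$ by mollifying and then truncating the resulting Fourier series. Fix once and for all a nonnegative bump $\phi\in C_c^{\infty}(\mathbb{R}^d)$ with support in $[-1,1]^d$ and $\int\phi=1$, and for $0<\delta<1/4$ set $\phi_\delta(y):=\delta^{-d}\phi(y/\delta)$, which we periodize to a probability density on $\mathbb{T}^d$. Define $f_\delta:=f*\phi_\delta$ and the approximating trigonometric polynomial
\[
P(x):=\sum_{|k|_\infty\le R}\hat f(k)\,\hat\phi(\delta k)\,e(k\cdot x),\qquad R:=(C\delta)^{-2d-1},
\]
where the constant $C\ge 1$ will be chosen large enough (in terms of $\phi$ and $d$) to make every error bound below hold.

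For the pointwise bound, I split $f-P=(f-f_\delta)+(f_\delta-P)$. The smoothing error follows from the Lipschitz condition on $f$: writing $f(x)-f_\delta(x)=\int(f(x)-f(x-y))\phi_\delta(y)\,dy$ and using $|f(x)-f(x-y)|\le L|y|_{2}\le L\sqrt{d}\,\delta$ on the support of $\phi_\delta$ gives $\|f-f_\delta\|_{\infty}\le Ld\delta$. For the truncation error, integration by parts applied to the smooth compactly supported $\phi$ yields $|\hat\phi(\xi)|\le C_{\phi}(1+|\xi|)^{-(2d+1)}$ for some $C_\phi$ depending only on $\phi$ and $d$, and combining with $|\hat f(k)|\le\|f\|_{\infty}\le L$,
\[
\|f_\delta-P\|_{\infty}\le L C_{\phi}\sum_{|k|_\infty>R}(1+\delta|k|)^{-(2d+1)}.
\]
Comparing this tail sum with a radial integral (justified once $\delta R\ge 1$) bounds it by $LC_{\phi}'\delta^{-(2d+1)}R^{-(d+1)}$, which upon substituting $R=(C\delta)^{-(2d+1)}$ becomes $LC_{\phi}'C^{-(2d+1)(d+1)}\delta^{2d^2+d-1}$, and this is at most $2Ld\delta$ provided $C$ is large enough. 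The triangle inequality then yields $|f(x)-P(x)|\le 3Ld\delta$.

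For the $L^1$ bound on the coefficients, note $|\hat\phi|\le 1$ since $\phi$ is a probability density, so $|a_k|\le|\hat f(k)|\le L$. The number of integer points with $|k|_\infty\le R$ is at most $(3R)^d$, whence $\sum|a_k|\le 3^{d}L(C\delta)^{-d(2d+1)}$; the inequality $d(2d+1)=2d^2+d\le 3d^2$ for $d\ge 1$ shows that after further enlarging $C$ this is bounded by $(CL/\delta)^{3d^2}$, finishing the proof. The argument is classical and the only real work is calibrating $C$: the exponents $2d+1$ and $3d^2$ in the statement are generous compared to what this method actually needs (a bandlimited $\phi$ with $\hat\phi$ compactly supported would even give $R\sim\delta^{-1}$ and $\sum|a_k|\lesssim (CL/\delta)^d$), so no genuine obstacle arises.
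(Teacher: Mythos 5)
Your overall strategy is the same as the paper's: smooth $f$ by convolving with an approximate identity, truncate the Fourier series of the smoothed function using decay of the mollifier's Fourier transform, control the smoothing error by the Lipschitz hypothesis, and bound $\sum_i |a_i|$ by counting the retained frequencies. (The paper takes $K = Q_\delta * Q_\delta$ with a product bump, giving $|\hat K(\xi)| \le C^d \delta^{-2d}\prod_i (1+|\xi_i|^2)^{-1}$; your single bump with $(1+|\xi|)^{-(2d+1)}$ decay plays the identical role.)

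There is, however, a concrete calibration error in your truncation step. With $R = (C\delta)^{-2d-1}$ one has $R^{-(d+1)} = (C\delta)^{+(2d+1)(d+1)}$, so the tail bound is $L C_\phi' \, C^{+(2d+1)(d+1)}\delta^{2d^2+d}$, not $LC_\phi' C^{-(2d+1)(d+1)}\delta^{2d^2+d-1}$. Enlarging $C$ \emph{shrinks} the cutoff $R$, hence \emph{increases} the truncation error, and it also violates your own hypothesis $\delta R \ge 1$, which reads $C^{-(2d+1)}\delta^{-2d}\ge 1$. So the conclusion ``this is at most $2Ld\delta$ provided $C$ is large enough'' is backwards: the large-$C$ lever helps only your coefficient bound, not this one. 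What actually saves the estimate is the smallness of $\delta$, not the size of $C$: since the $\delta$-exponent $2d^2+d$ exceeds $1$ by at least $2$, the inequality $C_\phi' C^{(2d+1)(d+1)}\delta^{2d^2+d}\le 2d\delta$ holds with $C=O(1)$ once $\delta$ is small relative to the ($d$-dependent) constants --- note $C_\phi$ necessarily grows with $d$, since you integrate by parts $2d+1$ times, and it is precisely this slack in the $\delta$-exponent that absorbs it --- while the complementary range $\delta \gtrsim 1/d$ is trivial because $\|f\|_\infty \le L$, so one may take all $a_i = 0$ there. This is also how the paper calibrates: its cutoff is of the form $(C_1/\delta)^{3d}$, with the constant in the numerator, rather than a cutoff that shrinks as the constant grows. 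With that repair (and the fixed, bounded choice of $C$) your argument goes through.
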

\begin{proof}
We may reduce to the case of $\|f\|_{\mathrm{Lip}(\mathbb{T}^d)} \le 1$ by dividing by $L$. Let $\phi\colon \mathbb{R} \to \mathbb{R}_{\ge 0}$ be a smooth, one-bounded function supported in $(-1, 1)$ and with integral one. Let $Q_\delta(x) = \prod_{i = 1}^d \delta^{-1}\phi(x_i/\delta)$ and let $K = Q_\delta * Q_\delta$. We may identify $\mathbb{T}$ with the interval $[-1/2, 1/2)$ and thus, $Q_\delta$ defines a smooth function on $\mathbb{T}^d$ for $\delta < 1/4$. Since $|\hat{\phi}(\xi)| \ll_k |\xi|^{-k}$ for any $\xi \in \mathbb{Z}^d$, we have
$$|\hat{K}(\xi)| \le C^d \delta^{-2d} \prod_{i = 1}^d (1 + |\xi_i|^2)^{-1}$$
so
$$\sum_{k \in \mathbb{Z}^d, |k| \ge M} |\hat{K}(\xi)| \le C_1^dM^{-1}\delta^{-2d}$$
for a constant $C_1 \ge 1$. Setting, 
$$f_{\mathrm{Approx}}(x) = \sum_{k \in \mathbb{Z}^d: |k| \le M} \hat{f}(k) \hat{K}(k) e(k \cdot x),$$
we have by Fourier inversion that if $M \ge (C_1/\delta)^{3d}$,
$$\|f_{\mathrm{Approx}} - f * K\|_\infty \le \delta.$$
Thus, it suffices to estimate $\|f - f * K\|_{\infty}$. We have
$$\|f - f * K\|_\infty \le \sup_{x \in \mathbb{T}^d}\int |f(x) - f(y)|K(x - y) dy \le \sum_{i = 1}^d  \int \|z_i\|_{\mathbb{R}/\mathbb{Z}} K(z) dz \le 2d\delta$$
since $K$ has integral one and is supported on $\{x \in \mathbb{T}^d: \|x_i\|_{\mathbb{R}/\mathbb{Z}} \le 2\delta \text{ for all } 1 \le i \le d\}$. It follows that $\|f_{\mathrm{Approx}} - f\|_\infty \le 3d\delta$. The sum of the Fourier coefficients of $f_{\mathrm{Approx}}$ is at most $(C/\delta)^{3d^2}$ and multiplying everything by $L$ gives the desired.
\end{proof}
This allows one to Fourier expand functions on $G/\Gamma$ to functions with a vertical frequency as we show in the below lemma.
\begin{lemma}[Expansion into functions with vertical frequency]\label{l:nilcharacters}
Let $G/\Gamma$ be a nilmanifold with dimension $d$, complexity $M$, degree $k$, and step $s$. Let $0 < \delta < 1/100$, and $F\colon G/\Gamma \to \mathbb{C}$ be $L$-Lipschitz. Given a $Q$-rational subgroup $H \subseteq Z(G)$, we have an approximation
$$\sup_{x \in G} \left|F(x\Gamma) - \sum_{|\xi| \le (\delta/(QL))^{-O_k(d^{O(1)})}} F_\xi(x\Gamma)\right| \le \delta$$
where $F_\xi$ is an $H$-vertical character of frequency $\xi$ and Lipschitz norm at most $(\delta/(QML))^{-O_k(d)^{O(1)}}$.
\end{lemma}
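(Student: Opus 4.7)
The plan is to perform a slicewise Fourier expansion on the central torus $H/(H\cap\Gamma)$, reducing the lemma to the torus Fourier statement of Lemma~\ref{l:FourierExpansion}. Since $H$ is abelian, connected, and simply connected, $\log H \cong \mathbb{R}^{d_H}$ with $d_H=\dim H\le d$, and $H\cap\Gamma$ is a full-rank lattice. Using the $Q$-rationality of $H$ (together with a quantitative Mal'cev basis construction from Appendix~\ref{s:quantnil}), I would first choose a $\mathbb{Z}$-basis $Y_1,\ldots,Y_{d_H}$ of $\log(H\cap\Gamma)$ whose coordinates in the Mal'cev basis of $G$ have size at most $Q^{O_k(d^{O(1)})}$. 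The exponential map in these coordinates identifies $H/(H\cap\Gamma)$ with the standard torus $\mathbb{T}^{d_H}$, and comparing this identification to the $G$-induced metric through the change-of-basis matrix (controlled by Cramer's rule and the above bound) shows that the two metrics differ by a factor of at most $Q^{O_k(d^{O(1)})}$ in each direction.

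For fixed $x\in G$, define $\phi_x:\mathbb{T}^{d_H}\to\mathbb{C}$ by $\phi_x(t):=F(\exp(\sum_i t_iY_i)\,x\Gamma)$. This is well defined on the quotient because $H\subseteq Z(G)$: for $\gamma\in H\cap\Gamma$, centrality gives $\gamma x\Gamma=x\gamma\Gamma=x\Gamma$. The above metric comparison yields $\|\phi_x\|_{\mathrm{Lip}(\mathbb{T}^{d_H})}\le L\cdot Q^{O_k(d^{O(1)})}$. Applying Lemma~\ref{l:FourierExpansion} to $\phi_x$ (with tolerance $\delta/(3d_HL\cdot Q^{O_k(d^{O(1)})})$ in place of $\delta$) produces Fourier coefficients
\[a_\xi(x):=\int_{\mathbb{T}^{d_H}}\phi_x(t)\,e(-\xi\cdot t)\,dt\]
for integer frequencies $\xi\in\mathbb{Z}^{d_H}$ with $|\xi|_\infty\le K:=(\delta/(LQ))^{-O_k(d^{O(1)})}$, satisfying $\sup_{t}|\phi_x(t)-\sum_{|\xi|\le K}a_\xi(x)e(\xi\cdot t)|\le\delta$. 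Setting $t=0$ recovers the desired approximation for $F(x\Gamma)$.

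Now define $F_\xi(x\Gamma):=a_\xi(x)$; unwinding the identification gives $F_\xi(x\Gamma)=\int_{H/(H\cap\Gamma)}F(hx\Gamma)\,e(-\xi(h))\,dh$, which is manifestly right-$\Gamma$-invariant. A change of variables $h\mapsto hh'^{-1}$ in the integral shows $F_\xi(h'x\Gamma)=e(\xi(h'))F_\xi(x\Gamma)$, so $F_\xi$ is a genuine $H$-vertical character of frequency $\xi$ in the paper's sense, with $\xi:H\to\mathbb{R}$ the homomorphism dual to $(Y_i)$; its intrinsic ``size'' as a vertical character (Lipschitz norm of $e(\xi(\cdot))$ on $H/(H\cap\Gamma)$) is at most $Q^{O_k(d^{O(1)})}|\xi|_\infty\le (\delta/(QL))^{-O_k(d^{O(1)})}$, as required. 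For the Lipschitz bound, centrality of $H$ combined with right-invariance of the metric on $G$ yields $d_{G/\Gamma}(hx\Gamma,hy\Gamma)=d_{G/\Gamma}(x\Gamma,y\Gamma)$, and hence
\[|F_\xi(x\Gamma)-F_\xi(y\Gamma)|\le\int|F(hx\Gamma)-F(hy\Gamma)|\,dh\le L\cdot d_{G/\Gamma}(x\Gamma,y\Gamma),\]
together with $\|F_\xi\|_\infty\le L$; thus $\|F_\xi\|_{\mathrm{Lip}(G/\Gamma)}\le 2L$, comfortably inside the loose bound $(\delta/(QML))^{-O_k(d^{O(1)})}$.

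There is no genuine analytic obstacle here: once one reduces to the slicewise torus expansion, the whole lemma is a bookkeeping exercise around Lemma~\ref{l:FourierExpansion}. The one point that requires care, and which I would expect to be the main technical burden, is the quantitative translation between integer frequencies in $\mathbb{Z}^{d_H}$ and the paper's intrinsic notion of the size of a vertical character on $H/(H\cap\Gamma)$; this hinges on having a $Q^{O_k(d^{O(1)})}$-rational basis for $\log(H\cap\Gamma)$ inside the $G$-Mal'cev basis, and any slippage in that bound propagates directly into the bound on $|\xi|$. Assuming the Mal'cev basis machinery of Appendix~\ref{s:quantnil} supplies such a basis (as is standard), the proof is complete.
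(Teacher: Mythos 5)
Your proposal is essentially the paper's own proof: both identify $H/(H\cap\Gamma)$ with $\mathbb{T}^{d_H}$ via a $Q^{O_k(d^{O(1)})}$-rational basis coming from the quantitative Mal'cev machinery, apply the torus expansion of Lemma~\ref{l:FourierExpansion} fiberwise over the central action, and take $F_\xi(x\Gamma)$ to be (essentially) the fiberwise Fourier integral $\int_{H/(H\cap\Gamma)}F(hx\Gamma)e(-\xi(h))\,dh$, whose vertical-character property and Lipschitz bound follow exactly as you argue from centrality and right-invariance of the metric.

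One small correction is needed: Lemma~\ref{l:FourierExpansion} does not assert that the truncated \emph{raw} Fourier series $\sum_{|\xi|\le K}\hat{\phi}_x(\xi)e(\xi\cdot t)$ uniformly approximates $\phi_x$; the coefficients it produces are the kernel-smoothed ones $\hat{\phi}_x(\xi)\hat{K}(\xi)$, where $K=Q_\delta*Q_\delta$ is the mollifier from its proof, and the uniform approximation is for $\phi_x * K$ plus the estimate $\|\phi_x-\phi_x*K\|_\infty\ll d\delta\,\|\phi_x\|_{\mathrm{Lip}}$. So you should define $F_\xi(x\Gamma):=\hat{K}(\xi)\int_{H/(H\cap\Gamma)}F(hx\Gamma)e(-\xi(h))\,dh$ (this is what the paper does); since $\hat{K}(\xi)$ is a constant with $|\hat{K}(\xi)|\le 1$, the vertical-character property, the frequency bound, and your Lipschitz estimate are unaffected, and the rest of your argument goes through verbatim.
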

\begin{remark}
$G_{(s)}$ is easily seen to be a $M^{O_k(d^{O(1)})}$-rational subgroup of $G$.
\end{remark}
\begin{proof}
We first divide $F$ by $L$ to assume that it has Lipschitz norm $1$. Next, we pick a Mal'cev basis for $H/\Gamma_H$ where $\Gamma_{H} = \Gamma \cap H$ and set $d_H = \mathrm{dim}(H)$. We may pick a Mal'cev basis $\mathcal{Y}$ of $H$ to be an $(QM)^{O_k(d^{O(1)})}$-linear combinations of the Mal'cev basis of $G/\Gamma$. Thus, under that choice of Mal'cev basis, it follows that we may represent $\xi$ as an integer vector $k$ in $\mathbb{Z}^{d_H}$ and with the property that, then by Lemma~\ref{l:distancecomparison}, $\|k\|_{\infty}$ and $|\xi|$ agree up to a factor of $(QM)^{O_k(d^{O(1)})}$. Let $|\xi|_{H/\Gamma_{H}} := \|k\|_\infty$. Let $K$ denote the kernel constructed in Lemma~\ref{l:FourierExpansion} adapted to $H$ via $\mathcal{Y}$. We have thus identified $H/\Gamma_H$ as $\mathbb{T}^{d_H}$ via $\mathcal{Y}$. Now define
$$\tilde{F}(x\Gamma) = \int_{H/\Gamma_H} F(h x\Gamma) K(h) dh.$$
As in Lemma~\ref{l:FourierExpansion}, we have
$$\sum_{\xi \in \mathbb{Z}^{d_H}, |\xi| \le P} |\hat{K}(\xi)| \le C_1^dP^{-1}\delta^{2d}(QM)^{O_k(d^{O(1)})}$$
for some constant $C_1 > 1$. Setting $M_1 = C^{-d} \delta^{-2d - 1}(QM)^{O_k(d^{O(1)})}$ and
$$\hat{F}(\xi)(x\Gamma) = \int_{H/\Gamma_H} F(h x\Gamma) e(-\xi(h)) dh$$
$$F_{\mathrm{Approx}}(x\Gamma) = \sum_{|\xi| \le M_1} \hat{F}(\xi)(x\Gamma) \hat{K}(k) e(\xi(x)).$$
By Fourier inversion, we have
$$\|\tilde{F} - F_{\mathrm{Approx}}\|_\infty \le \delta.$$
It suffices to estimate $\|F - \tilde{F}\|_\infty$. We have
\begin{align*}
\|F - \tilde{F}\|_\infty &\le \sup_{x \in G} \int_{\mathbb{T}^{d_H}} |F(hx\Gamma) - F(x\Gamma)|K(h) dh \\
&\le (QM)^{O_k(d^{O(1)})}\sum_{i = 1}^{d_H} \int_{\mathbb{T}^{d_H}} \|h_i\|_{\mathbb{R}/\mathbb{Z}}K(g - h) dh \\
&\le (QM)^{O_k(d^{O(1)})}\delta
\end{align*}
since $\mathrm{sup}(K) \subseteq \{x \in H/\Gamma_H \cong \mathbb{T}^{d_H} : \|x_i\|_{\mathbb{R}/\mathbb{Z}} \le 2\delta \text{ for all } 1 \le i \le d_H\}$. Thus, $\|F - F_{\mathrm{Approx}}\|_\infty \le (QM)^{O_k(d^{O(1)})}\delta$ and the result follows.
\end{proof}
The following linear algebraic lemma is used frequently.
\begin{lemma}[Corollary of Cramer's rule]\label{l:Cramer}
    Let $v_1, \dots, v_r\in \mathbb{Z}^d$ be linearly independent vectors of size at most $M \ge 2$. Then there exists $\eta_1, \dots, \eta_{d - r} \in \mathbb{Z}^d$ of size at most $(dM)^{O(d)}$ such that $v_1, \dots, v_r, \eta_1, \dots, \eta_{d - r}$ span $\mathbb{R}^d$ and $\langle v_i, \eta_j \rangle = 0$ for all $i$ and $j$.
\end{lemma}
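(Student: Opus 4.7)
The plan is to explicitly construct the $\eta_j$'s via the adjugate matrix, following the same Cramer's rule strategy already used in the main text (e.g., in the proof of Lemma~\ref{l:easierrefined1}). The starting observation is that since $v_1, \dots, v_r$ are linearly independent in $\mathbb{R}^d$, there exist standard basis vectors $e_{i_1}, \dots, e_{i_{d-r}}$ such that the $d$ vectors $v_1, \dots, v_r, e_{i_1}, \dots, e_{i_{d-r}}$ form a basis of $\mathbb{R}^d$. Such indices can be chosen greedily: at each stage, throw in any $e_i$ not in the current span.

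Next, I would form the $d\times d$ integer matrix $A$ whose rows are $v_1, \dots, v_r, e_{i_1}, \dots, e_{i_{d-r}}$, and consider the adjugate matrix $\operatorname{adj}(A)$, so that $A \cdot \operatorname{adj}(A) = \det(A)\cdot I$. Define $\eta_j$ for $j = 1, \dots, d-r$ to be the column of $\operatorname{adj}(A)$ corresponding to the position of $e_{i_j}$ (i.e., the $(r+j)$-th column). By construction $\eta_j \in \mathbb{Z}^d$, and the identity $A\cdot \operatorname{adj}(A)= \det(A)I$ gives exactly $\langle v_i, \eta_j\rangle = 0$ for all $1 \le i \le r$ and $1\le j\le d-r$.

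The size bound comes from Hadamard's inequality: each entry of $\operatorname{adj}(A)$ is (up to sign) a $(d-1)\times(d-1)$ minor of $A$, and since each row of $A$ has $\ell^2$-norm at most $\sqrt{d}\,M$, every such minor is bounded by $(\sqrt{d}\,M)^{d-1} \le (dM)^{O(d)}$. Hence $|\eta_j| \le (dM)^{O(d)}$.

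Finally, I need to check the spanning property. Because $A$ is invertible, $\det(\operatorname{adj}(A)) = \det(A)^{d-1} \neq 0$, so the columns of $\operatorname{adj}(A)$ are linearly independent; in particular $\eta_1, \dots, \eta_{d-r}$ are independent among themselves. They lie in the orthogonal complement of $\operatorname{span}(v_1, \dots, v_r)$, which has dimension $d-r$, so $v_1, \dots, v_r, \eta_1, \dots, \eta_{d-r}$ together span $\mathbb{R}^d$. There is no substantive obstacle here; the only mild bookkeeping is the Hadamard bound on the entries of the adjugate, which is entirely routine.
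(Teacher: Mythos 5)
Your proof is correct and follows essentially the same route as the paper: complete $v_1,\dots,v_r$ by standard basis vectors, form the matrix $A$, and read off the $\eta_j$ from the (scaled) inverse, which is exactly the adjugate columns you use, with the Hadamard bound giving the size estimate. Your write-up just makes explicit the clearing of denominators and the spanning check that the paper leaves implicit.
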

\begin{proof}
%Applying the Gram-Schmidt process, we may replace $v_1, \dots, v_r$ with orthogonal vectors that 
Let $e_1, \dots, e_d$ be the unit coordinate vectors in $\mathbb{R}^d$. Then there exists a subset, say, $E = \{e_{j_1}, \dots, e_{j_{d - r}}\}$ such that $\text{span}(E) \oplus \text{span}(v_1, \dots, v_r) = \mathbb{R}^d$. Let $A$ be the matrix whose rows are $v_1, \dots, v_r$ and $e_{j_1}, \dots, e_{j_{d - r}}$. Then in the matrix $A^{-1}$ has columns that are linearly independent, and letting $\eta_1, \dots, \eta_{d - r}$ be the last $d - r$ columns, we have $\langle v_i, \eta_j \rangle = 0$. Multiplying $\eta_j$ by some integer bounded by $(dM)^{O(d)}$ gives the result.
\end{proof}
A corollary of this is the following.
\begin{lemma}\label{l:Cramer2}
Let $G/\Gamma$ be a complexity $M$, degree $k$, and dimension $d$ nilmanifold equipped with a Mal'cev basis $(X_i)_{i = 1}^d$. Let $H$ be a $Q$-rational subgroup of $G$ containing $[G, G]$. 

Let $w_1, \dots, w_r$ be elements in $\Gamma$ with size at most $Q$ and let $\tilde{w_i}$ be the projection of $w_i$ to $G/H$. Then the subspace of horizontal characters that annihilate $\tilde{w_1}, \dots, \tilde{w_r}$ (when embedded in the Lie algebra of $\mathfrak{g}$ via the Mal'cev basis) has rationality at most $(MQ)^{O_k(d^{O(1)})}$.
\end{lemma}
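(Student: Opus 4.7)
The plan is to carry out two successive applications of Lemma~\ref{l:Cramer}, reducing the problem to pure linear algebra once everything is translated into Mal'cev coordinates.

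First, I would set up the parametrization. Horizontal characters on $G$ automatically vanish on $[G,G]$, so they are determined by integer vectors in (the dual of) a coordinate complement to $\mathfrak{g}_{(2)} := [\mathfrak{g},\mathfrak{g}]$. By the quantitative Mal'cev basis lemmas of Appendix~\ref{s:quantnil}, $[\mathfrak{g},\mathfrak{g}]$ is $M^{O_k(d^{O(1)})}$-rational with respect to $(X_i)_{i=1}^d$, so after clearing denominators we may realize horizontal characters as a $M^{O_k(d^{O(1)})}$-rational sublattice of $\mathbb{Z}^{d_{\mathrm{horiz}}}$.

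Next, I would handle the condition of annihilating $H$. Because $H \supseteq [G,G]$ is $Q$-rational, the image $\mathfrak{h}/[\mathfrak{g},\mathfrak{g}]$ in the horizontal Lie algebra is spanned, after clearing denominators, by integer vectors of size at most $(MQ)^{O_k(d^{O(1)})}$. Applying Lemma~\ref{l:Cramer} to this spanning set produces an integer basis $\xi_1,\ldots,\xi_m$ for its orthogonal complement in $\mathbb{Z}^{d_{\mathrm{horiz}}}$, with entries bounded by $(MQ)^{O_k(d^{O(1)})}$. This parametrizes precisely the horizontal characters on $G$ that annihilate $H$.

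Now for the condition of annihilating the $\tilde{w}_i$. Each $w_i\in\Gamma$ has Mal'cev coordinates of size at most $Q$, so its image $\bar{w}_i$ in the horizontal Lie algebra (relative to the aforementioned coordinate complement) has size at most $(MQ)^{O_k(d^{O(1)})}$. Writing a candidate character as $\eta = \sum_j a_j\xi_j$ in the basis built above, the condition $\eta(\tilde{w}_i)=0$ becomes the integer linear constraint $\sum_j a_j (\xi_j\cdot \bar{w}_i) = 0$; the resulting $r\times m$ coefficient matrix has entries bounded by $(MQ)^{O_k(d^{O(1)})}$. A second application of Lemma~\ref{l:Cramer}, applied to the rows of this matrix, yields an integer basis of the nullspace with coordinates of size $(MQ)^{O_k(d^{O(1)})}$. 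Pulling this basis back through the $\xi_j$'s, I obtain a spanning set of our target subspace whose Mal'cev coordinates have height $(MQ)^{O_k(d^{O(1)})}$, which is exactly the rationality bound claimed.

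The argument is essentially a bookkeeping exercise rather than one with a genuine obstacle; the only subtle point is tracking the rationality loss in identifying $G/[G,G]$ with a coordinate subspace of $\mathfrak{g}$ when $[\mathfrak{g},\mathfrak{g}]$ is not basis-aligned and in converting the three successive height bounds back to the original Mal'cev basis. Each such conversion introduces only a polynomial-in-$d$ power of $M$, which is absorbed into the $(MQ)^{O_k(d^{O(1)})}$ bound.
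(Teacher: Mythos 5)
Your argument is correct and is essentially the paper's proof: both reduce the statement to linear algebra in Mal'cev coordinates and invoke Lemma~\ref{l:Cramer}, with the same $(MQ)^{O_k(d^{O(1)})}$ losses from basis transitions. The only organizational difference is that you stay in the horizontal space and apply Lemma~\ref{l:Cramer} twice (first to parametrize the characters annihilating $H$, then to impose the conditions $\eta(\tilde{w}_i)=0$), whereas the paper passes to $G/H$, picks a sub-basis of the projected Mal'cev vectors, and applies Lemma~\ref{l:Cramer} once to the span of the $\tilde{w}_i$; aside from the routine step of extracting linearly independent subsets before invoking Lemma~\ref{l:Cramer}, nothing is missing.
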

\begin{proof}
Let $\psi_{H} : G \to G/H$ denote the quotient map and for $v \in \mathfrak{g}$, let $\overline{v}$ denote the image of $v$ under the induced map between the Lie algebras of $G$ and $G/H$. Since $(X_i)$ span the Lie algebra of $G$,  $(\overline{X_i})$ span the Lie algebra of $G/H$. Pick a subset of $(\overline{X_i})$ which forms a basis for the Lie algebra of $G/H$. Without a loss of generality, we may assume it is of the form $(\overline{X_i})_{i = 1}^r$. The dual space of $G/H$ can be identified with itself via the choice of the basis. 

Since $G/\Gamma$ has complexity $M$ with respect to the Mal'cev basis, it follows that $\langle \tilde{w_1}, \dots, \tilde{w_r} \rangle$ has rationality $(MQ)^{O_k(d^{O(1)})}$. By Lemma~\ref{l:Cramer}, it follows that we may find elements $\eta_1, \dots, \eta_{r*}$ with integer coordinates of at most $(MQ)^{O_k(d^{O(1)})}$ which form a basis for the annihilators of that subspace. Since $G/\Gamma$ has complexity $M$, $\eta_i$ also has size at most $(MQ)^{O_k(d^{O(1)})}$.
\end{proof}

\begin{lemma}\label{l:lowerstep}
    Let $G/\Gamma$ be an $s$-step nilmanifold of dimension $d$ and degree $k$ equipped with a Mal'cev basis of complexity at most $M$. Let $G'$ be a subgroup of $G$ with rationality at most $Q$ and such that $G'$ is nilpotent of step at most $s - 1$. Then there exists some $r \le d$ and linearly independent horizontal characters $\eta_1, \dots, \eta_r$ with size bounded by $(QM)^{O_k(d^{O_k(1)})}$ of $G$ such that $\eta_i(G') = 0$, and if $w_1, \dots, w_s$ are elements of $G$ with $\eta_i(w_j) = 0$ for each $i$ and $j$, then $[w_1, w_2, \dots, w_{s - 1}, w_s] = 0$.
\end{lemma}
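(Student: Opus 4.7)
The plan is to take $H := G' \cdot [G, G]$ and show that (i) $H$ is again at most $(s-1)$-step, and (ii) $H$ is the joint kernel of a small family of horizontal characters vanishing on $G'$. Step (ii) is an immediate consequence of Cramer's rule once one knows $H$ is rational of the right complexity; the real content is in step (i). Note that $H$ is a subgroup of $G$ because $[G,G]$ is normal, and its Lie algebra $\mathfrak{h} = \log(G') + [\mathfrak{g},\mathfrak{g}]$ is $(QM)^{O_k(d^{O_k(1)})}$-rational by concatenating a $Q$-rational basis of $\log(G')$ with a basis of $[\mathfrak{g},\mathfrak{g}]$ (which has complexity $M^{O_k(d^{O_k(1)})}$ by the Mal'cev basis results in Appendix~\ref{s:quantnil}).

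For step (i), I will fix $h_1,\dots,h_s \in H$, write each $h_i = g_i u_i$ with $g_i \in G'$ and $u_i \in [G,G]$, and expand $[h_1,h_2,\dots,h_s]$. Because $G$ is $s$-step, $G_{(s+1)} = \{\mathrm{id}\}$, so every iterated Hall--Witt correction vanishes and the $s$-fold commutator $G^s \to G_{(s)}$ is genuinely multilinear in each slot. Thus
\[
[h_1,\dots,h_s] \;=\; \prod_{(\epsilon_1,\dots,\epsilon_s) \in \{0,1\}^s} [x_1^{(\epsilon_1)},\dots,x_s^{(\epsilon_s)}],
\]
where $x_i^{(0)} = g_i$ and $x_i^{(1)} = u_i$. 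The unique all-$g_i$ term lies in $G'_{(s)} = \{\mathrm{id}\}$ since $G'$ is $(s-1)$-step by hypothesis; every other term features some $u_j \in [G,G]$, and Lemma~\ref{l:commutatorlemma} (applied with the $G_2$ of that lemma being $[G,G]$) places such a commutator in $[[G,G],G,\dots,G]$, which is a weight $s+1$ commutator in $G$ and hence contained in $G_{(s+1)} = \{\mathrm{id}\}$. Consequently $[h_1,\dots,h_s] = \mathrm{id}$, so $H_{(s)} = \{\mathrm{id}\}$ and $H$ is at most $(s-1)$-step.

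For step (ii), project $\mathfrak{h}$ to $\bar{\mathfrak{h}} := \pi(\mathfrak{h}) = \pi(\log G')$ in $\mathfrak{g}/[\mathfrak{g},\mathfrak{g}]$, which I identify with $\mathbb{R}^{d_1}$ (where $d_1 = \dim G/[G,G]$) via the horizontal Mal'cev coordinates. Then $\bar{\mathfrak{h}}$ inherits $(QM)^{O_k(d^{O_k(1)})}$-rationality, so it admits an integer basis $v_1,\dots,v_{d_1'}$ of size $(QM)^{O_k(d^{O_k(1)})}$. Applying Lemma~\ref{l:Cramer} to this basis produces $r := d_1 - d_1'$ linearly independent integer vectors $\eta_1,\dots,\eta_r \in \mathbb{Z}^{d_1}$ of size $(QM)^{O_k(d^{O_k(1)})}$, each orthogonal to every $v_i$. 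Reinterpreted as horizontal characters on $G$ (extended by zero on the $[G,G]$-Mal'cev directions), the $\eta_i$ annihilate $\mathfrak{h}$, hence vanish on $H$ and \emph{a fortiori} on $G'$. By a dimension count they span the full annihilator of $\bar{\mathfrak{h}}$, so their joint kernel inside $G$ is exactly $H$. Thus any $w_1,\dots,w_s \in G$ with $\eta_i(w_j) = 0$ for all $i,j$ lie in $H$, and the claim from step (i) yields $[w_1,w_2,\dots,w_s] = \mathrm{id}$.

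The only substantive point is step (i): carefully justifying the multilinearity of the $s$-fold commutator in an $s$-step group and cleanly invoking Lemma~\ref{l:commutatorlemma} to kill every mixed term. Once $H$ is shown to be $(s-1)$-step, the production of the horizontal characters is routine bookkeeping with Cramer's rule, and the quantitative bound on $|\eta_i|$ follows immediately from the rationality bound on $\mathfrak{h}$.
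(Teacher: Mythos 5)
Your proposal is correct and follows essentially the same route as the paper: the paper likewise uses Cramer's rule (Lemma~\ref{l:Cramer2}) to produce integer horizontal characters of size $(QM)^{O_k(d^{O_k(1)})}$ whose joint kernel has the same horizontal projection as $G'$ (equivalently, equals $G'[G,G]$), and then notes that in an $s$-step group the iterated commutator $[w_1,\dots,w_s]$ depends only on the horizontal components of the $w_i$, so it may be computed with representatives in $G'$ where it vanishes. Your multilinearity expansion of $[g_1u_1,\dots,g_su_s]$ into an all-$G'$ term plus mixed terms killed by Lemma~\ref{l:commutatorlemma} and $G_{(s+1)}=\{\mathrm{id}\}$ is just a more explicit rendering of that same observation.
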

\begin{proof}
    This is a consequence of the Lie algebra of $G'$ being a subspace of the Lie algebra of $G$. Since $G'$ is $Q$-rational, it follows by Lemma~\ref{l:Cramer2} that there exists linearly independent horizontal characters $\eta_1, \dots, \eta_r$ of size at most $Q^{O_k(d^{O(1)})}$ such that
    $$\psi_{\operatorname{horiz}}\left(\bigcap_{i = 1}^r \mathrm{ker}(\eta_i)\right) = \psi_{\operatorname{horiz}}(G').$$
    Furthermore, given elements $w_1, \dots, w_s$ inside $\bigcap \mathrm{ker}(\eta_i)$, we see that the expression
    $$[w_1, w_2, \dots, w_{s - 1}, w_s]$$
    only depends on the horizontal components of $w_i$, so we may replace them with elements inside $G'$, in which case the expression above is zero. 
\end{proof}

We also require the following lemmas regarding $C^\infty[\vec{N}]$; this is \cite[Lemma 2.1]{GT12}.
\begin{lemma}[Changing representation of $C^\infty$ norm]\label{l:cinfinity}
There exists a nonzero integer $q = O_k(1)$ such that if $p\colon \mathbb{Z}^\ell \to \mathbb{R}$ is $p(n) = \sum_{0 \le |\vec{i}| \le k} \beta_{\vec{i}} n^{\vec{i}}$
$$\|q\beta_j\|_{\mathbb{R}/\mathbb{Z}} \ll_k N^{-j}\|p\|_{C^\infty[\vec{N}]}$$
for all $j \neq 0$.
\end{lemma}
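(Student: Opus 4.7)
The plan is to change basis between the monomial representation $p(\vec{n}) = \sum_{0 \le |\vec{i}| \le k} \beta_{\vec{i}} \vec{n}^{\vec{i}}$ and the binomial-coefficient representation $p(\vec{n}) = \sum_{\vec{j}} \alpha_{\vec{j}} \binom{\vec{n}}{\vec{j}}$ used in the definition of $\|p\|_{C^{\infty}[\vec{N}]}$, and to track denominators explicitly. The one-dimensional identity
\[
\binom{n}{j} \;=\; \frac{1}{j!}\sum_{i=0}^{j} s(j,i)\, n^{i},
\]
where $s(j,i)$ are signed Stirling numbers of the first kind, extends coordinatewise to the multivariable setting:
\[
\binom{\vec{n}}{\vec{j}} \;=\; \prod_{a=1}^{\ell} \frac{1}{j_a!} \sum_{i_a=0}^{j_a} s(j_a,i_a)\, n_a^{i_a}.
\]
Expanding and matching coefficients of $\vec{n}^{\vec{i}}$ gives
\[
\beta_{\vec{i}} \;=\; \sum_{\substack{\vec{j}\ge \vec{i}\\ |\vec{j}|\le k}} \alpha_{\vec{j}} \prod_{a=1}^{\ell} \frac{s(j_a,i_a)}{j_a!}.
\]

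Next, I would clear denominators. Since every $j_a\le k$, setting $q := (k!)^{\ell}$ makes each coefficient $C_{\vec{i},\vec{j}} := q \prod_{a} s(j_a,i_a)/j_a!$ an integer, and the number of $\vec{j}$ appearing in the sum is $O_{k,\ell}(1)$ with $|s(j_a,i_a)|\le k!$ bounded, so $|C_{\vec{i},\vec{j}}|\ll_{k,\ell} 1$. Hence $q\beta_{\vec{i}}$ is an integer linear combination (with bounded coefficients) of the $\alpha_{\vec{j}}$'s with $\vec{j}\ge \vec{i}$.

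Finally, taking the $\mathbb{R}/\mathbb{Z}$ norm, applying the triangle inequality (note $\mathbb{Z}\cdot \alpha \equiv \mathbb{Z}\cdot\|\alpha\|_{\mathbb{R}/\mathbb{Z}}\pmod 1$), and invoking the definition $\|\alpha_{\vec{j}}\|_{\mathbb{R}/\mathbb{Z}}\le \vec{N}^{-\vec{j}}\|p\|_{C^{\infty}[\vec{N}]}$, we obtain
\[
\|q\beta_{\vec{i}}\|_{\mathbb{R}/\mathbb{Z}} \;\le\; \sum_{\substack{\vec{j}\ge \vec{i}\\ |\vec{j}|\le k}} |C_{\vec{i},\vec{j}}|\,\|\alpha_{\vec{j}}\|_{\mathbb{R}/\mathbb{Z}} \;\ll_{k,\ell}\; \sum_{\vec{j}\ge \vec{i}} \vec{N}^{-\vec{j}}\,\|p\|_{C^{\infty}[\vec{N}]} \;\ll_{k}\; \vec{N}^{-\vec{i}}\,\|p\|_{C^{\infty}[\vec{N}]},
\]
where the last step uses $\vec{N}^{-\vec{j}}\le \vec{N}^{-\vec{i}}$ whenever $\vec{j}\ge \vec{i}$ componentwise and all $N_a\ge 1$. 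This is exactly the claimed bound, with $q=(k!)^{\ell}=O_k(1)$ (absorbing $\ell$ into the implicit constants as in the paper's conventions). There is no real obstacle: the entire content is the bounded-denominator change of basis between the monomial and falling-factorial (binomial) bases, and the monotonicity $\vec{N}^{-\vec{j}}\le \vec{N}^{-\vec{i}}$.
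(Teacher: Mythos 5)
Your argument is correct and is essentially the paper's own proof: both express the change of basis between the monomial coefficients $\beta_{\vec{i}}$ and the binomial-basis coefficients $\alpha_{\vec{j}}$ (the paper via discrete Taylor expansion, you via Stirling numbers), observe that the coefficients are rationals of bounded height supported on $\vec{j}\ge\vec{i}$, and clear denominators before using $\vec{N}^{-\vec{j}}\le\vec{N}^{-\vec{i}}$. One small refinement: since $|\vec{j}|\le k$ forces $\prod_a j_a!$ to divide $k!$ (multinomial coefficients are integers), you may take $q=k!$ rather than $(k!)^{\ell}$, which gives the stated $q=O_k(1)$ without any dependence on $\ell$.
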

\begin{proof}
Writing $p(n) = \sum_{\vec{j}} \alpha_{\vec{j}} \binom{n}{\vec{j}}$ it follows from a simple Taylor expansion via the discrete difference operator that each $\beta_{\vec{j}}$ can be written as a rational combination of $(\alpha_{\vec{k}})_{\vec{k} \ge \vec{j}}$ with height at most $O_k(1)$; here $\vec{k} \ge \vec{j}$ denotes that each component of $k_p \ge j_p$ for each $p \in [\ell]$. The result follows via clearing denominators.
\end{proof}
We also require \cite[Lemma 2.3]{GT14}.
\begin{lemma}[Multivariate polynomial Vinogradov-type lemma]\label{l:vinogradov}
Suppose $g\colon \mathbb{Z}^\ell \to \mathbb{R}$ is a polynomial of degree $k$ such that $\|g(\vec{n})\|_{\mathbb{R}/\mathbb{Z}} \le \varepsilon$ for $\delta N_1 \cdots N_\ell$ many elements in $[\vec{N}]$ where $\varepsilon < \delta/10$. Then there is some integer $Q \ll \delta^{-O_{k, \ell}(1)}$ such that $$\|Qg\|_{C^\infty[\vec{N}]} \ll \delta^{-O_{k, \ell}(1)}\varepsilon \text{ and } \|Qg(0)\|_{\mathbb{R}/\mathbb{Z}} \ll \delta^{-O_{k, \ell}(1)}\varepsilon.$$
\end{lemma}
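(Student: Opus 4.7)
\medskip

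\textbf{Proof proposal.} The plan is induction on the number of variables $\ell$. The base case $\ell = 1$ is the classical single-variable polynomial Vinogradov lemma, which one can obtain for instance by iterating the difference operator $\Delta_h g(n) := g(n+h) - g(n)$: each application reduces the degree by one, and after $k-1$ applications one is left with many values of $\vec{h}\in [H]^{k-1}$ (with $H$ a small power of $\delta N$) such that $\|\Delta_{h_1}\cdots\Delta_{h_{k-1}}g(n)\|_{\mathbb{R}/\mathbb{Z}}\le \delta^{-O_k(1)}\varepsilon$ for many $n$; since this iterated difference is linear in $n$, a quantitative Dirichlet/Weyl-type argument produces an integer $q_1\ll \delta^{-O_k(1)}$ clearing the leading coefficient modulo $1/N^k$ (up to $\delta^{-O_k(1)}\varepsilon$), after which one subtracts and iterates down to the lower-degree coefficients. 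Converting from the monomial basis to the binomial basis via Lemma~\ref{l:cinfinity} then yields the desired bound $\|Q_1 g\|_{C^\infty[N]}\ll \delta^{-O_k(1)}\varepsilon$.

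For the inductive step, write $\vec{n} = (n_1,\vec{n}')$ with $\vec{n}' = (n_2,\dots,n_\ell)$ and $\vec{N}' = (N_2,\dots,N_\ell)$. By Pigeonhole in $\vec{n}'$, there are at least $(\delta/2)|\vec{N}'|$ values of $\vec{n}'$ for which $g_{\vec{n}'}(n_1) := g(n_1,\vec{n}')$ satisfies $\|g_{\vec{n}'}(n_1)\|_{\mathbb{R}/\mathbb{Z}}\le \varepsilon$ for at least $(\delta/2)N_1$ values of $n_1\in [N_1]$. Applying the single-variable case to each such $g_{\vec{n}'}$ produces integers $Q_{\vec{n}'}\ll \delta^{-O_k(1)}$ with $\|Q_{\vec{n}'}g_{\vec{n}'}\|_{C^\infty[N_1]}\ll \delta^{-O_k(1)}\varepsilon$. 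Pigeonholing once more in $\vec{n}'$ we may assume $Q_{\vec{n}'} = Q'$ is independent of $\vec{n}'$ for at least $\delta^{O_k(1)}|\vec{N}'|$ of these values. Writing $g(n_1,\vec{n}') = \sum_{i=0}^{k} c_i(\vec{n}') n_1^i$ (or in binomial form), this says that for each $1\le i\le k$ the polynomial $c_i\colon \mathbb{Z}^{\ell-1}\to \mathbb{R}$, which has degree at most $k$, satisfies $\|Q'c_i(\vec{n}')\|_{\mathbb{R}/\mathbb{Z}}\ll \delta^{-O_k(1)}\varepsilon N_1^{-i}$ for $\delta^{O_k(1)}|\vec{N}'|$ many $\vec{n}'$.

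We now feed each $c_i$ into the inductive hypothesis with parameter $\varepsilon_i := \delta^{-O_k(1)}\varepsilon N_1^{-i}$ and a correspondingly smaller $\delta$, producing integers $Q_i\ll \delta^{-O_{k,\ell}(1)}$ such that $\|Q_i Q' c_i\|_{C^\infty[\vec{N}']} \ll \delta^{-O_{k,\ell}(1)}\varepsilon N_1^{-i}$ and $\|Q_iQ'c_i(\vec{0})\|_{\mathbb{R}/\mathbb{Z}}\ll \delta^{-O_{k,\ell}(1)}\varepsilon N_1^{-i}$. Setting $Q := Q' \prod_i Q_i$ and expanding $g$ in the binomial basis, the control on the $c_i(\vec{n}')$ and their $C^\infty$ norms in $\vec{n}'$ translate (via Lemma~\ref{l:cinfinity} and a base change) into $\|Q\alpha_{\vec{j}}\|_{\mathbb{R}/\mathbb{Z}}\ll \delta^{-O_{k,\ell}(1)}\varepsilon \vec{N}^{-\vec{j}}$ for every nonzero multi-index $\vec{j}$, which is precisely $\|Qg\|_{C^\infty[\vec{N}]}\ll \delta^{-O_{k,\ell}(1)}\varepsilon$. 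For the bound on $Qg(0)$, pick any $\vec{n}_0\in [\vec{N}]$ with $\|g(\vec{n}_0)\|_{\mathbb{R}/\mathbb{Z}}\le \varepsilon$ (the hypothesis supplies many such $\vec{n}_0$); then $\|Qg(\vec{n}_0)\|_{\mathbb{R}/\mathbb{Z}}\le Q\varepsilon\ll \delta^{-O_{k,\ell}(1)}\varepsilon$, and writing $Qg(0) = Qg(\vec{n}_0) - \sum_{\vec{j}\neq \vec{0}} Q\alpha_{\vec{j}}\binom{\vec{n}_0}{\vec{j}}$, the $\mathbb{R}/\mathbb{Z}$ triangle inequality combined with $|\binom{\vec{n}_0}{\vec{j}}|\ll_{k,\ell} \vec{N}^{\vec{j}}$ gives the second conclusion.

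The main obstacle is bookkeeping: each Pigeonhole step costs a factor of $\delta^{-O(1)}$ in the density, and each conversion between the monomial and binomial bases (or application of the base case to a polynomial with an already-enlarged $\varepsilon_i$) risks compounding the exponents on $\delta$. One must be careful that the recursion, when unrolled $\ell$ times, still produces an $O_{k,\ell}(1)$ exponent rather than something genuinely growing with $\ell$; this is fine because the number of Pigeonholes per level is bounded by $O_k(1)$ and $\ell$ is fixed, so the total exponent is a $k$-dependent polynomial in $\ell$, hence an $O_{k,\ell}(1)$ constant as required.
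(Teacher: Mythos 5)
You are not being compared against an argument in the paper here: the paper imports this lemma wholesale from \cite[Lemma 2.3]{GT14} and gives no proof, so your proposal stands or falls on its own. The slicing strategy (one\hbox{-}dimensional case in $n_1$, pigeonhole the modulus, induct on the remaining variables) is a reasonable route, but as written it has a concrete gap: you only feed the coefficients $c_i$ with $1\le i\le k$ into the inductive hypothesis, so you never control the coefficients $\alpha_{\vec{j}}$ with $\vec{j}=(0,\vec{j}')$, $\vec{j}'\neq \vec{0}$ --- that is, the nonconstant part of the slice $c_0(\vec{n}')=g(0,\vec{n}')$. These multi-indices are part of $\|Qg\|_{C^\infty[\vec{N}]}$, so your closing claim ``for every nonzero multi-index $\vec{j}$'' is not established, and your final triangle-inequality step only recovers the single constant $g(\vec{0})$, not this slice. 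The repair is available inside your own scheme: the $\ell=1$ statement you are inducting with also yields $\|Q'g_{\vec{n}'}(0)\|_{\mathbb{R}/\mathbb{Z}}=\|Q'c_0(\vec{n}')\|_{\mathbb{R}/\mathbb{Z}}\ll \delta^{-O_k(1)}\varepsilon$ for the same $\vec{n}'$, so $c_0$ can and must be run through the induction as well (which then also gives the $\|Qg(0)\|$ clause directly; your triangle-inequality argument, which is correct, becomes redundant).

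The second problem is that every recursive application carries the precondition ``new $\varepsilon$ less than new $\delta$ over $10$,'' and you never verify it. For $c_i$ with $i\ge 1$ you need roughly $N_1\ge \delta^{-O_k(1)}$, which is not a hypothesis of the lemma; for $c_0$ (needed to close the first gap) there is no $N_1^{-i}$ gain, so you would need $\varepsilon\le \delta^{O_k(1)}$, strictly stronger than the assumed $\varepsilon<\delta/10$; and the same inflation occurs inside your base case when you subtract the leading coefficient and descend through the degrees. This is not harmless bookkeeping: with no largeness assumption on the $N_i$ the statement is genuinely delicate --- for $\ell=1$, $k=2$, $N=2$, $\delta=1/2$, $g(n)=\alpha_1 n-2\alpha_1\binom{n}{2}$ with $\alpha_1$ irrational has $g(2)=0$, so the hypothesis holds for every $\varepsilon>0$, while no $Q\ll \delta^{-O(1)}=O(1)$ can satisfy $N\|Q\alpha_1\|\ll\delta^{-O(1)}\varepsilon$ as $\varepsilon\to 0$; the cited version of the lemma accordingly carries an ``either $N_i\ll\delta^{-O_{k,\ell}(1)}$ for some $i$, or \dots'' escape clause (and is only applied in that regime). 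Any correct proof must either prove that version or, better, run the induction with the stronger ``recurrence into an arbitrary interval of length $\varepsilon$'' formulation as in \cite[Lemma 3.2]{GT12}, which composes cleanly because no constant-term conclusion and no recentering is needed at intermediate stages, with the $g(0)$ clause recovered at the very end exactly by your triangle-inequality trick. With those two repairs your slicing induction goes through; as written, it does not.
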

We finally require the following slight extension of \cite[Claim 5.8]{PSS23}.
\begin{lemma}[Multiparameter extrapolation]\label{l:multiparameterextrapolation}
    Suppose that $\ell, Q, N_1, \dots, N_\ell, k$ are integer parameters. Let $(b_i)_{i = 1}^t$ be integers of absolute value at most $Q$ with $b_i \neq 0$ and $a_i \in \mathbb{Z}, i = 1, \dots, t$ with $|a_i| \le QN_i$. Let $p\colon \mathbb{Z}^\ell \to \mathbb{R}/\mathbb{Z}$ be a polynomial map of degree at most $k$ and write
    $$\tilde{p}(\vec{n}) = p(a_1 + b_1n_1, a_2 + b_2n_2, \dots, a_\ell + b_\ell n_\ell).$$
    Then there exists $q \in \mathbb{Z}$ with $|q| \ll_{\ell, k} Q^{O_{\ell, k}(1)}$ such that
    $$\|qp\|_{C^\infty[\vec{N}]} \ll_{\ell, k} Q^{O_{\ell, k}(1)}\|\tilde{p}\|_{C^\infty[\vec{N}]}.$$
\end{lemma}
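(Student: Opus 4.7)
The plan is to pass to the monomial basis and exploit that the relation between the monomial coefficients of $p$ and of $\tilde p = p(\vec a + \vec b\cdot)$ is upper-triangular in the natural partial order on multi-indices. Write $p(\vec n) = \sum_{|\vec j|\le k}\beta_{\vec j}\vec n^{\vec j}$ and $\tilde p(\vec n) = \sum_{|\vec k|\le k}\tilde\beta_{\vec k}\vec n^{\vec k}$. The change of basis between $\vec n^{\vec j}$ and $\binom{\vec n}{\vec j}$ involves only integer factors of size $O_{k,\ell}(1)$ (Stirling numbers of the second kind and factorials), so by Lemma~\ref{l:cinfinity} and a direct argument in the reverse direction, it suffices to find an integer $q$ with $|q|\ll_{k,\ell}Q^{O_{k,\ell}(1)}$ such that $\vec N^{\vec j}\|q\beta_{\vec j}\|_{\mathbb R/\mathbb Z}\ll_{k,\ell}Q^{O_{k,\ell}(1)}\|\tilde p\|_{C^\infty[\vec N]}$ for every $\vec j\ne\vec 0$ with $|\vec j|\le k$.

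Expanding $\prod_i(a_i+b_in_i)^{j_i}$ via the binomial theorem and collecting monomials in $\vec n$ gives
\[\tilde\beta_{\vec k} = \vec b^{\vec k}\sum_{\vec j\ge\vec k}\binom{\vec j}{\vec k}\vec a^{\vec j-\vec k}\beta_{\vec j},\]
which is upper-triangular with diagonal entries $\vec b^{\vec k}\ne 0$. Let $q_0$ be the integer from Lemma~\ref{l:cinfinity}; I would take $q := q_0\prod_i b_i^{k}$, so $|q|\ll_{k,\ell}Q^{O_{k,\ell}(1)}$ and, crucially, $\prod_i b_i^{k-j_i}$ divides $q$ for every $\vec j$ with $|\vec j|\le k$. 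Rearranging the triangular identity and multiplying through by $q/\vec b^{\vec j}=q_0\prod_i b_i^{k-j_i}\in\mathbb Z$ produces
\[q\beta_{\vec j} = (q/\vec b^{\vec j})\tilde\beta_{\vec j}-\sum_{\vec k>\vec j}\binom{\vec k}{\vec j}\vec a^{\vec k-\vec j}\cdot q\beta_{\vec k},\]
in which every coefficient $\binom{\vec k}{\vec j}\vec a^{\vec k-\vec j}$ is an integer.

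I would then perform downward induction on $|\vec j|$. Taking $\mathbb R/\mathbb Z$-norms, weighting by $\vec N^{\vec j}$, and using $|a_i|\le QN_i$ to bound each ratio $|\vec a^{\vec k-\vec j}|/\vec N^{\vec k-\vec j}\le Q^{|\vec k-\vec j|}\le Q^k$ yields
\[\vec N^{\vec j}\|q\beta_{\vec j}\|_{\mathbb R/\mathbb Z} \le Q^{O_{k,\ell}(1)}\vec N^{\vec j}\|q_0\tilde\beta_{\vec j}\|_{\mathbb R/\mathbb Z} + O_{k,\ell}(Q^{k})\sum_{\vec k>\vec j}\vec N^{\vec k}\|q\beta_{\vec k}\|_{\mathbb R/\mathbb Z}.\]
The first term is $\ll_{k,\ell}Q^{O_{k,\ell}(1)}\|\tilde p\|_{C^\infty[\vec N]}$ by Lemma~\ref{l:cinfinity} applied to $\tilde p$, and the second is controlled by the inductive hypothesis; the base case $|\vec j|=k$ has an empty sum and follows directly.

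The main obstacle is to verify that the constants do not blow up through the recursion. This is not serious: there are only $\binom{k+\ell}{\ell}=O_{k,\ell}(1)$ multi-indices $\vec j$ with $|\vec j|\le k$ and the induction has depth at most $k\ell$, so at each stage we multiply the accumulated constant by $Q^{O_{k,\ell}(1)}$ and the final bound remains of the required shape $Q^{O_{k,\ell}(1)}$. A final conversion from control on $\vec N^{\vec j}\|q\beta_{\vec j}\|_{\mathbb R/\mathbb Z}$ back to control on $\|qp\|_{C^\infty[\vec N]}$ costs one further integer factor $O_{k,\ell}(1)$, absorbed by enlarging $q$.
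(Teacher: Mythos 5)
Your proposal is correct, but it takes a genuinely different route from the paper. The paper's proof first replaces each $a_i$ by a residue $a_i'\in[b_i]$ with $a_i'\equiv a_i\pmod{b_i}$, observes that $p(a_1'+b_1n_1,\dots,a_\ell'+b_\ell n_\ell)$ is just $\tilde p$ shifted by integers of size $O(QN_i)$, uses Vandermonde's identity $\binom{n_i+I_i}{j_i}=\sum_{t_i}\binom{n_i}{t_i}\binom{I_i}{j_i-t_i}$ to show that such a shift inflates the $C^\infty[\vec N]$ norm by only $Q^{O_k(1)}$, and then finishes by citing the bounded-shift/bounded-dilation extrapolation lemma of Green--Tao as a black box; the large shift and the dilation are thus treated in two separate steps. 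You instead work entirely in the monomial basis: the triangular relation $\tilde\beta_{\vec k}=\vec b^{\vec k}\sum_{\vec j\ge\vec k}\binom{\vec j}{\vec k}\vec a^{\vec j-\vec k}\beta_{\vec j}$, the choice $q=q_0\prod_i b_i^{k}$ which clears every denominator $\vec b^{\vec j}$ at once, and a downward induction on $|\vec j|$ in which the size condition $|a_i|\le QN_i$ is absorbed by the weights $\vec N^{\vec j}$ via $|\vec a^{\vec k-\vec j}|/\vec N^{\vec k-\vec j}\le Q^{k}$. Your recursion is sound: the coefficients $\binom{\vec k}{\vec j}\vec a^{\vec k-\vec j}$ and $q/\vec b^{\vec j}$ are integers so the $\mathbb R/\mathbb Z$-norm estimates are legitimate, the base case $|\vec j|=k$ has an empty sum, only nonzero multi-indices ever enter (so Lemma~\ref{l:cinfinity} applied to $\tilde p$ suffices for the driving term), the recursion depth and the number of multi-indices are $O_{k,\ell}(1)$, and the final passage back to $\|qp\|_{C^\infty[\vec N]}$ uses only integer changes of basis together with $\vec N^{\vec i}\le\vec N^{\vec j}$ for $\vec i\le\vec j$. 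What each approach buys: the paper's argument is shorter given the external reference and cleanly isolates the large-shift issue, while yours is self-contained (it in effect re-proves the multiparameter analogue of the cited lemma en route) and handles shift and dilation in a single induction, at the cost of slightly more bookkeeping.
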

\begin{proof}
     Pick $a_i' \in [b_i]$ such that $a_i' \equiv a_i \pmod{b_i}$. Then $p(a_1' + b_1n_1, a_2' + b_2n_2, \dots, a_\ell' + b_\ell n_\ell) = \tilde{p}\left(n_1 - \frac{a_i' - a_i}{b_i}, \dots, n_\ell - \frac{a_\ell' - a_\ell}{b_\ell}\right)$. Vandermonde's identity implies that
    $$\binom{n_1 + I_1}{j_1} \cdots \binom{n_\ell + I_\ell}{j_\ell} = \sum_{\substack{i \in [\ell] \\ 0 \le t_i \le j_i}} \prod_{i = 1}^\ell \binom{n_i}{t_i}\binom{I_i}{j_i - t_i}.$$
    Thus, since $\frac{a_i' - a_i}{b_i} \le QN_i$, it follows that 
    $$\left\|\tilde{p}\left(n_1 - \frac{a_i' - a_i}{b_i}, \dots, n_\ell - \frac{a_\ell' - a_\ell}{b_\ell}\right)\right\|_{C^\infty[\vec{N}]} \le Q^{O_k(1)} \|\tilde{p}\|_{C^\infty[\vec{N}]}.$$
    Thus,
    $$\|p(a_1' + b_1n_1, a_2' + b_2n_2, \dots, a_\ell' + b_\ell n_\ell)\|_{C^\infty[\vec{N}]} \le Q^{O_k(1)}\|\tilde{p}\|_{C^\infty[\vec{N}]}.$$
    We may then apply \cite[Lemma 8.4]{GT12} to find an appropriate $q$ to finish.
\end{proof}

\section{Quantitative rationality of nilmanifolds}\label{s:quantnil}
The purpose of this section is to quantify \cite[Appendix A]{GT12} and obtain bounds which are single exponential in the dimension of the nilmanifold. We say the dimension of $G/\Gamma$ is $d$ and degree is $k$ and step is $s$. Throughout this section, we will assume $Q \ge 2$. \\\\
The primary difference between these proofs presented here is the presence of a degree $k$ nesting condition; this ensures that a given basis certifies the nilpotence of a Lie algebra and is required to to guarantee bounds single exponential in dimension. In addition, the proof in \cite[Proposition A.9]{GT12} is slightly incorrect; we correct the proof in Lemma~\ref{l:ConstructingMalcev}. \\\\
We first require the definition of a Mal'cev basis for the first kind and the basis of a second kind.
\begin{definition}
Let $\mathcal{X} = \{X_1, \dots, X_d\}$ be a basis for $\mathfrak{g}$. If $g = \exp(t_1X_1 + \cdots + t_d X_d)$, then we say that $(t_1, \dots, t_d)$ are coordinates of the first kind for $g$ and we write this as $\psi_{\mathcal{X}, \exp}(g) = (t_1, \dots, t_d)$. If 
$$g = \exp(u_1X_1) \cdots \exp(u_d X_d)$$
then we say that $(u_1, \dots, u_d)$ are the coordinates of the second kind for $g$ with respect to $\mathcal{X}$. We write this as $(u_1, \dots, u_d) = \psi_{\mathcal{X}}(g)$.
\end{definition}
We next require the definition of rationality and degree $k$ nesting property of a basis.
\begin{definition}
$\mathcal{X}$ is said to be $Q$-rational if all the structure constants $c_{ij\ell}$ in
$$[X_i, X_j] = \sum_\ell c_{ij\ell} X_\ell$$
are rationals of height at most $Q$. We say that $\mathcal{X}$ has the nesting property if
$$[\mathfrak{g}, X_i] \subseteq \text{Span}(X_{i + 1}, \dots, X_d).$$
We say that it has the \emph{degree $k$ nesting property} if there exists $d_1, \dots, d_k$ with $d_1 = 0$ and $d_k = d$ such that denoting $\mathfrak{g}_\ell = \text{Span}(X_{d_\ell + 1}, \dots, X_{d})$, then $[\mathfrak{g}, \mathfrak{g}_\ell] \subseteq \mathfrak{g}_{\ell + 1}$ and $[\mathfrak{g}, \mathfrak{g}_k] = 0$.
\end{definition}
We have the following three lemmas regarding rational bases, change of coordinates, and comparisons between the metric on a nilpotent Lie group and coordinates induced by these bases. These encapsulate a quantified version of \cite[Lemma A.2--A.4]{GT12}.
\begin{lemma}\label{l:transition}
We have the following quantitative change-of-coordinate relations.
\begin{itemize}
\item Let $\mathcal{X}$ be a basis for $\mathfrak{g}$ with the degree $k$ nesting property. Then $\psi \circ \psi_{\exp}^{-1}$ and $\psi_{\exp} \circ \psi^{-1}$ are both polynomials on $\mathbb{R}^m$ with degree $O_k(1)$. If $\mathcal{X}$ is $Q$-rational, then the coefficients of these polynomials are rational with height at most $Q^{O_k(d^{O(1)})}$.
\item Suppose $G' \subseteq G$ is a closed, connected, and simply connected subgroup of dimension $d'$ with associated Lie algebra $\mathfrak{g}'$. Suppose $\mathcal{X}'$ is a basis of $\mathfrak{g}'$ with the degree $k$ nesting property. Then $\psi \circ (\psi')^{-1}$ and $\psi' \circ \psi^{-1}$ are polynomial maps. If $\mathcal{X}$ and $\mathcal{X}'$ are both $Q$-rational, and each element in $\mathcal{X}'$ is a $Q$-linear combination of elements in $\mathcal{X}$, then all polynomials are degree $O_k(1)$ and have coefficients with height $Q^{O_k(d^{O(1)})}$.
\end{itemize}
\end{lemma}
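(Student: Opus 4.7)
The plan is to exploit nilpotence of class at most $k$, guaranteed by the degree $k$ nesting property, to reduce both statements to a bounded number of applications of the Baker-Campbell-Hausdorff (BCH) formula
\[\log(\exp(X)\exp(Y)) = X + Y + \tfrac{1}{2}[X,Y] + \tfrac{1}{12}([X,[X,Y]] - [Y,[X,Y]]) + \cdots\]
which truncates after $O_k(1)$ nested commutators and whose universal rational coefficients have denominators of size $O_k(1)$. Expanding each iterated bracket of elements of $\mathcal{X}$ in the basis $\mathcal{X}$ costs at most $O_k(1)$ multiplications of structure constants, each of height $\le Q$, so a single BCH step produces coefficients of height $Q^{O_k(1)}$.

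For the first bullet, I will compute $\psi_{\exp}\circ\psi^{-1}$ by inductively applying BCH to collapse $\exp(u_1 X_1)\cdots\exp(u_d X_d)$ into $\exp(Z)$, reading off $Z$ in the basis $\mathcal{X}$. Since BCH has bounded depth, $Z$ is a polynomial in the $u_i$ of degree $O_k(1)$, and iterating the collapse over $d$ factors multiplies the heights a bounded number of times at each step, yielding coefficients of height $Q^{O_k(d^{O(1)})}$. To obtain $\psi\circ\psi_{\exp}^{-1}$, I use the key structural observation provided by the nesting property: the $X_j$-coefficient of $\log(\exp(u_1X_1)\cdots\exp(u_dX_d))$ equals $u_j$ plus a polynomial in $u_1,\ldots,u_{j-1}$ alone (because any bracket involving $X_j$ lies in $\mathrm{span}(X_{j+1},\ldots,X_d)$). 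Consequently the map $(u_1,\ldots,u_d)\mapsto(t_1,\ldots,t_d)$ is triangular, and its inverse can be computed by dual iterative BCH: set $u_1=t_1$, peel off $\exp(u_1X_1)^{-1}$ from $\exp(\sum t_iX_i)$ via BCH to land in the subalgebra $\mathrm{span}(X_2,\ldots,X_d)$, and continue. Each peeling step again uses bounded-depth BCH and produces height blowup by a factor of at most $Q^{O_k(1)}$; the fact that $\mathfrak{g}$ is nilpotent of class $\le k$ keeps the overall polynomial degree at $O_k(1)$ throughout, rather than the naive $k^d$ one might fear from blind substitution.

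For the second bullet, I factor $\psi\circ(\psi')^{-1}$ as the composite of three maps: second-kind $\mathcal{X}'$-coordinates $\to$ first-kind $\mathcal{X}'$-coordinates (by part~1 applied on $\mathfrak{g}'$, contributing a polynomial of degree $O_k(1)$ with height $Q^{O_k((d')^{O(1)})}\le Q^{O_k(d^{O(1)})}$); then first-kind $\mathcal{X}'$-coordinates $\to$ first-kind $\mathcal{X}$-coordinates, which is \emph{linear} with matrix of height $Q$ by the hypothesis that each $X'_i$ is a $Q$-linear combination of the $X_j$; then first-kind $\mathcal{X}$-coordinates $\to$ second-kind $\mathcal{X}$-coordinates (by part~1 on $\mathfrak{g}$). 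Composing these polynomial maps of degree $O_k(1)$ preserves polynomiality of degree $O_k(1)$ and the height grows multiplicatively in the exponents, remaining of the form $Q^{O_k(d^{O(1)})}$. The inverse $\psi'\circ\psi^{-1}$ is handled by the symmetric composition.

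The main obstacle I anticipate is the bookkeeping of heights through iterated BCH, where one must rule out a $k^d$ blowup in polynomial degree. The resolution is conceptual rather than computational: each application of BCH yields output whose Lie-algebra part lies in the image of a bounded-depth bracket expression, and nilpotence of class $\le k$ forces all such expressions across all iterations to have total bracket depth $\le k$, so the combined map from coordinates to coordinates has degree bounded purely in terms of $k$. The height accumulation is then a straightforward geometric-style bound giving $Q^{O_k(d^{O(1)})}$, as the number of ``structure constant multiplications'' performed in total is polynomial in $d$.
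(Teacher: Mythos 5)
Your proposal is correct and follows essentially the same route as the paper: bounded-depth BCH (the paper invokes the generalized BCH--Dynkin formula in one shot where you iterate pairwise), the block-triangular structure supplied by the degree $k$ nesting property to keep the degree at $O_k(1)$ when inverting, and the identical three-map factorization $\psi\circ(\psi')^{-1} = (\psi \circ \psi_{\exp}^{-1}) \circ (\psi_{\exp} \circ (\psi_{\exp}')^{-1}) \circ (\psi_{\exp}' \circ (\psi')^{-1})$ with the $Q$-height linear middle map for the second bullet. The only variation --- inverting by peeling off $\exp(u_jX_j)$ factors rather than by direct back-substitution in the block-triangular system --- is cosmetic, but when you justify the absence of degree blowup, run the induction block-by-block (corrections to block-$\ell$ coordinates involve only coefficients from earlier blocks, which already have $O_k(1)$ degree), since bounded bracket depth alone does not control the degree of the scalar coefficients entering those brackets.
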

\begin{proof}
\begin{itemize}
\item We first handle the first item. Let $g = \exp(u_1X_1) \cdots \exp(u_dX_d)$ be a generic element of $G$. Let
$$g = \exp(\sum_i t_i X_i)$$
and note that by the Baker-Campbell-Hausdorff formula that
\begin{align*}
t_1 &= u_1, t_2 = u_2, \dots, t_{d_1} = u_{d_1} \\
t_{d_1 + 1} &= u_{d_1 + 1} + P_{d_1 + 1}(u_1, \dots, u_{d_1}), \dots, t_{d_2} = u_{d_2} + P_{d_2}(u_1, \dots, u_{d_1}) \\
&\dots \\
t_d &= u_d + P_d(u_1, \dots, u_{d_{k - 1}})
\end{align*}
where each polynomial $P_\ell$ is of degree at most $s$ and coefficients are rational with height at most $Q^{O_k(d^{O(1)})}$. To prove this, we use the the generalized Baker-Campbell-Hausdorff-Dynkin formula (see \cite{Str87}). It states that if
$$\exp(z) = \exp(x_1)\exp(x_2)\exp(x_3) \cdots \exp(x_d),$$
then
$$z = \sum_{m = 1}^\infty \sum_{p_{j, \ell}} \frac{(-1)^{m - 1}}{m} \frac{(\operatorname{ad} x_d)^{p_{m, d}} \cdots (\operatorname{ad} x_1)^{p_{m, 1}} \cdots (\operatorname{ad} x_d)^{p_{1, d}} \cdots (\operatorname{ad} x_1)^{p_{1, 1}}}{(\sum_{j = 1}^m \sum_{\ell = 1}^d p_{j, \ell}) \prod_{j = 1}^m \prod_{\ell = 1}^d (p_{j, \ell}!)}$$ 
where $\operatorname{ad} x$ denotes the operator $[x, \cdot]$ with the convention that $(\operatorname{ad} x)^0$ acts as the identity, and $p_{j, k}$ range over all nonnegative integers for each $j = 1, \dots, m$ and $\ell = 1, \dots, d$ with $\sum_{\ell = 1}^d p_{j, \ell} > 0$. Observe that there are at most $s^3\binom{s + d - 1}{s} = d^{O_s(1)}$ many terms (where there are at most $s$ terms in the outermost sum, at most $s$ terms for $j = 1, \dots, m$, and for each $j$ at most $s\binom{s + d - 1}{s}$ many $p_{j, k}$ such that $\sum_{\ell = 1}^d p_{j, \ell} > 0$), and each involving a rational combination of elements of $\mathcal{X}$ with height at most $Q^{O_k(1)}$. This establishes the claim for $\psi_{\exp} \circ \psi^{-1}$. To establish the opposite claim, we simply invert this map. This gives
\begin{align*}
u_1 &= t_1, u_2 = t_2, \dots, \\
u_{d_1 + 1} &= t_{d_1 + 1} - P_{d_1 + 1}(t_1, \dots, t_{d_1}), \dots, \\
u_\ell &= t_\ell - P_\ell(t_1, t_2, \dots, t_{d_1}, t_{d_1 + 1} - P_{d_1 + 1}(t_1, \dots, t_{d_1}), \dots)
\end{align*}
Here, we see that each coefficient of each polynomial has height at most $Q^{O_k(1)}$, and has degree $O_k(1)$. This uses that the nesting of $P_j$'s has depth at most $k$; this completes the proof of the first item.
\item For the second item, we decompose
$$\psi \circ (\psi')^{-1} = (\psi \circ \psi_{\exp}^{-1}) \circ (\psi_{\exp} \circ (\psi_{\exp}')^{-1}) \circ (\psi_{\exp}' \circ (\psi')^{-1}).$$
The first and last map involves polynomials with coefficients of height at most $Q^{O_k(d^{O(1)})}$, absolute value at most $Q^{O_k(d^{O(1)})}$, and degree at most $O_k(1)$. For the middle map, note that if $g \in G'$, then we may write $g = \exp(t_1X_1' + \cdots + t_{d'} X_{d'})$. The middle map then consists of writing $X_i'$ as a $Q$-rational combination of elements of $(X_j)$'s and grouping the coefficients of $X_i$'s together. It follows that the middle map is a linear map with components of height at most $Q$. For the map $\psi' \circ \psi^{-1}$, we argue similarly, picking an orthogonal bases to $\mathcal{X}'$ and writing all elements in $\mathfrak{g}$ in terms of $\mathcal{X}$ and the orthogonal bases. The second point follows.
\end{itemize}
\end{proof}

\begin{lemma}\label{l:multiplication}
Let $\mathcal{X}$ be a basis for $\mathfrak{g}$ with the degree $k$ nesting property and letting $x, y \in G$ be $\psi(x) = t$, $\psi(y) = u$. We have
$$\psi(xy) = (t_1 + u_1, t_2 + u_2 + P_2(t_1, u_1), \dots, t_d + u_d + P_d(u_1, \dots, u_{d - 1}, t_1, \dots, t_{d - 1})$$
where $P_i$ has degree at most $O_k(1)$ and coefficients of height at most $Q^{O_k(d^{O(1)})}$ and absolute value at most $Q^{O_k(d^{O(1)})}$. Furthermore, the inverse map
$$\psi(x^{-1}) = (-t_1, -t_2 + \tilde{P}_2(t_1), \dots, -t_d + \tilde{P}_d(t_1, \dots, t_d))$$
where each $\tilde{P}_i$ has degree at most $O_k(1)$ and has coefficients of height at most $Q^{O_k(d^{O(1)})}$.
\end{lemma}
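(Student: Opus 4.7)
The plan is to reduce multiplication in second-kind Mal'cev coordinates to the Baker--Campbell--Hausdorff (BCH) formula in first-kind coordinates, sandwiched between two applications of Lemma~\ref{l:transition}. Setting $\tilde{t} := \psi_{\mathcal{X}, \exp}(x)$ and $\tilde{u} := \psi_{\mathcal{X}, \exp}(y)$, the first bullet of Lemma~\ref{l:transition} expresses both as polynomials of degree $O_k(1)$ in $t = \psi(x)$ and $u = \psi(y)$ with coefficients of height $Q^{O_k(d^{O(1)})}$. Because $G$ is $s$-step nilpotent with $s \leq k$, the BCH series truncates to a polynomial $Z(A, B)$ in $A$ and $B$ of degree $O_k(1)$, and $xy = \exp(Z(\tilde{t} \cdot X, \tilde{u} \cdot X))$. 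Each term of $Z$ is an iterated commutator of bounded depth which, when expanded via the structure constants (each of height $\leq Q$), produces a linear combination of basis vectors with coefficients of height $Q^{O_k(1)}$; summing over the at most $d^{O_k(1)}$ commutator patterns yields that $\psi_{\mathcal{X}, \exp}(xy)$ is a polynomial in $(\tilde{t}, \tilde{u})$ with coefficients of height $Q^{O_k(d^{O(1)})}$. A second application of Lemma~\ref{l:transition} converts back to second-kind coordinates, yielding $\psi(xy)$ as a polynomial in $(t, u)$ of degree $O_k(1)$ and coefficient height $Q^{O_k(d^{O(1)})}$.

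The specific form $\psi(xy)_i = t_i + u_i + P_i(u_{<i}, t_{<i})$ comes from analyzing how BCH corrections interact with the filtration $\mathfrak{g} = \mathfrak{g}_1 \supseteq \mathfrak{g}_2 \supseteq \cdots \supseteq \mathfrak{g}_k$. A nested commutator $[X_{j_1}, [X_{j_2}, \ldots]]$ in which some argument lies in $\mathfrak{g}_\ell$ belongs, by iterating $[\mathfrak{g}, \mathfrak{g}_\ell] \subseteq \mathfrak{g}_{\ell+1}$, to a strictly deeper subalgebra; consequently its $X_i$-component is nonzero only when $X_i$ itself lies in a layer strictly deeper than that of each argument. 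Hence the BCH correction to the $X_i$-coefficient of $Z$ depends only on those $\tilde{t}_j, \tilde{u}_j$ with $j$ in strictly earlier layers than the layer of $i$, which in particular satisfy $j < i$. The first- and second-kind coordinate transitions from Lemma~\ref{l:transition} are themselves layer-preserving (in its proof one sees $\tilde{t}_j = t_j + Q_j(t_{<\mathrm{layer}(j)})$), so translating the expansion $Z_i = \tilde{t}_i + \tilde{u}_i + (\text{earlier layers})$ back to second-kind coordinates yields exactly $\psi(xy)_i = t_i + u_i + P_i(u_1, \ldots, u_{i-1}, t_1, \ldots, t_{i-1})$.

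The inverse map is handled most cleanly in first-kind coordinates, where $x = \exp(\tilde{t} \cdot X)$ immediately gives $x^{-1} = \exp(-\tilde{t} \cdot X)$, so $\psi(x^{-1}) = (\psi \circ \psi_{\mathcal{X}, \exp}^{-1})(-F(t) \cdot X)$ where $F = \psi_{\mathcal{X}, \exp} \circ \psi^{-1}$ is the polynomial from Lemma~\ref{l:transition}. Composing two polynomials of degree $O_k(1)$ with coefficients of height $Q^{O_k(d^{O(1)})}$ produces another such polynomial, and the layer-preservation of both transitions implies the claimed form $\psi(x^{-1})_i = -t_i + \tilde{P}_i(t_1, \ldots, t_{i-1})$. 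The main obstacle I anticipate is the quantitative bookkeeping through these compositions: at each stage the coefficient height grows multiplicatively with the degree of the outer map, and additional multiplicative losses of order $d^{O_k(1)}$ arise from the monomial count after expansion, so one must verify that the compounded bound remains within $Q^{O_k(d^{O(1)})}$. The structural claims about which variables appear in $P_i$ are transparent once the layer interaction with commutators is spelled out; it is this height tracking, carried out by simultaneously bounding degree and height at each substitution, that occupies most of the work.
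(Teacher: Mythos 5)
Your proposal is correct and is essentially the paper's own argument: both proofs sandwich the Baker--Campbell--Hausdorff expansion in first-kind coordinates between two applications of Lemma~\ref{l:transition}, using the triangular (layer-respecting) form of the transition maps and of the BCH corrections to obtain $\psi(xy)_i = t_i + u_i + P_i(t_{<i},u_{<i})$, and handle the inverse via $\psi_{\exp}(x^{-1}) = -\psi_{\exp}(x)$. Your write-up simply makes explicit the layer bookkeeping and height tracking that the paper's proof leaves implicit.
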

\begin{proof}
By Lemma~\ref{l:transition}, we have write
$$\psi_{\exp}(x) = (t_1', \dots, t_d')$$
$$\psi_{\exp}(y) = (u_1', \dots, u_d')$$
where $t_i' = t_i + R_{i}(t_1, \dots, t_{i - 1})$, $u_i' = u_i + R_i(u_1, \dots, u_{i - 1})$ with $R_i$ having degree $O_k(1)$, and whose coefficients have height at most $Q^{O_k(d^{O(1)})}$. Thus,
$$\psi_{\exp}(xy) = (t_1' + u_1', t_2' + u_2' + S_2(t_1', u_1'), \dots)$$
with the $i$th coordinate being of the form $t_i' + u_i' + S_i(t_1', \dots, t_{i - 1}', u_1', \dots, u_{i - 1}')$ where $S_i$ has degree $O_k(1)$ and whose coefficients have height at most $Q^{O_k(d^{O(1)})}$. Thus, $\psi_{\exp}(xy)$ has $i$th coordinate of the form $t_i + u_i + T_i(t_1, \dots, t_{i - 1}, u_1, \dots, u_{i - 1})$ where $T_i$ has degree at most $O_k(1)$ and has coefficients rational with height at most $Q^{O_k(d^{O(1)})}$. Thus, inverting again by Lemma~\ref{l:transition}, we have 
$$\psi(xy) = (t_1 + u_1, t_2 + u_2 + P_2(t_1, u_1), \dots, t_d + u_d + P_d(u_1, \dots, u_{d - 1}, t_1, \dots, t_{d - 1})$$
of the desired form. To show the formula for $\psi(x^{-1})$, we simply take $\psi_{\exp}(x^{-1}) = (-t_1', \dots, -t_d')$ as before and then invert using Lemma~\ref{l:transition}.
\end{proof}
Below and throughout the appendix, we will often be working with two sets of bases $\mathcal{X} = \{X_1, \dots, X_d\}$ and $\mathcal{X}' = \{X_1', \dots, X_d'\}$ which are equipped with their distance functions, which will be denoted $d$ and $d'$ (see Definition~\ref{d:metricnilmanifold} for the definition of a metric given a basis), respectively. They will also have their respective Mal'cev coordinates of the first kind and second kind, denoted as $\psi = \psi_{\mathcal{X}}$, $\psi' = \psi_{\mathcal{X}'}$, $\psi_{\exp} = \psi_{\exp, \mathcal{X}}$, and $\psi_{\exp}' = \psi_{\exp, \mathcal{X}'}$. \\\\
The following proof differs very slightly from that of \cite[Lemma A.4]{GT12} since we must utilize the degree $k$ nesting property to obtain bounds single exponential in dimension.
\begin{lemma}\label{l:distancecomparison}
Suppose $Q \ge 2$. Suppose $\mathcal{X}$ and $\mathcal{X}'$ are two $Q$-rational bases for $\mathfrak{g}$, both satisfying the nesting condition of degree $k$. Suppose that each $X_i'$ is given by a $Q$-rational combination of $X_i$ and vice versa. Then for $x, y \in G$ with $|\psi'(x)| \le Q$ and $|\psi'(y)| \le Q$, we have
$$d(x, y) \ll_k Q^{O_k(d^{O(1)})}|\psi'(x) - \psi'(y)|$$
and for $x, y \in G$ with $d(x, \mathrm{id}_G), d(y, \mathrm{id}_G) \le Q$, we have
$$|\psi'(x) - \psi'(y)| \ll_k Q^{O_k(d^{O(1)})} d(x, y).$$
\end{lemma}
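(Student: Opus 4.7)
The plan is to reduce both bounds to the polynomial estimates afforded by Lemma~\ref{l:transition} (change of coordinates) and Lemma~\ref{l:multiplication} (group product formula). By Lemma~\ref{l:transition}, the hypothesis $|\psi'(x)|, |\psi'(y)| \le Q$ immediately yields $|\psi(x)|, |\psi(y)| \le Q^{O_k(d^{O(1)})}$, so we are working on a region of Mal'cev coordinates bounded by $Q^{O_k(d^{O(1)})}$, on which any polynomial of degree $O_k(1)$ with coefficients of height $Q^{O_k(d^{O(1)})}$ is automatically Lipschitz with constant $Q^{O_k(d^{O(1)})}$.

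For the first inequality, specialize the definition of $d$ to a path of length one, giving $d(x, y) \le |\psi(xy^{-1})|$. Composing the product formula of Lemma~\ref{l:multiplication} with the coordinate change from Lemma~\ref{l:transition} expresses $\psi(xy^{-1})$ as a polynomial $F(\psi'(x), \psi'(y))$ of degree $O_k(1)$ with coefficients of height $Q^{O_k(d^{O(1)})}$. Since $xy^{-1} = \mathrm{id}_G$ exactly when $\psi'(x) = \psi'(y)$, each component $F_j$ vanishes on the diagonal $\{t = u\}$ and so factors as $F_j(t, u) = \sum_k (t_k - u_k) G_{jk}(t, u)$ for polynomials $G_{jk}$ of similar degree and coefficient height (by a Taylor expansion around the diagonal). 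Bounding each $G_{jk}$ on the region $\{|t|, |u| \le Q\}$ then yields $|\psi(xy^{-1})| \le Q^{O_k(d^{O(1)})} |\psi'(x) - \psi'(y)|$ as required.

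For the second inequality, observe that when $d(x,y) \ge 1$ the claim is trivial since $|\psi'(x) - \psi'(y)| \le 2Q$, so assume $d(x,y) < 1$ and fix a path $x_0 = x, x_1, \ldots, x_n = y$ with $\sum_i \sigma_i \le d(x,y) + \varepsilon < 2$, where $\sigma_i := \min(|\psi(x_i x_{i-1}^{-1})|, |\psi(x_{i-1} x_i^{-1})|)$. The crucial step is to show by induction on $i$ that $|\psi(x_i)| \le Q^{O_k(d^{O(1)})}$. Writing $x_i = z x_{i-1}$ with $|\psi(z)| \le Q^{O_k(d^{O(1)})} \sigma_i$ (using Lemma~\ref{l:multiplication} to invert the factor if needed), the product formula of Lemma~\ref{l:multiplication} gives $|\psi(x_i) - \psi(x_{i-1})| \le (1 + Q^{O_k(d^{O(1)})} |\psi(x_{i-1})|^{O_k(1)}) |\psi(z)|$, using that each cross-term $P_j$ in that formula vanishes when $\psi(z) = 0$. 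A Gronwall-type summation exploiting $\sum_i \sigma_i < 2$ then keeps $|\psi(x_i)| \le Q^{O_k(d^{O(1)})}$ throughout the path. With this uniform bound in hand, Lemma~\ref{l:transition} makes $\psi' \circ \psi^{-1}$ Lipschitz with constant $Q^{O_k(d^{O(1)})}$ on the corresponding region, so $|\psi'(x_i) - \psi'(x_{i-1})| \le Q^{O_k(d^{O(1)})} \sigma_i$; telescoping and sending $\varepsilon \to 0$ finishes the proof. The main obstacle is the Gronwall argument controlling the intermediate coordinates, which requires exploiting that the cross-term $P_j$ has no constant term in the "small" factor---without this vanishing, coordinate growth could not be controlled uniformly over a potentially long path.
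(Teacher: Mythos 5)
Your proof of the first inequality is essentially the paper's: bound $d(x,y)$ by the one-step path $|\psi(xy^{-1})|$, express $\psi(xy^{-1})$ as a polynomial in $(\psi'(x),\psi'(y))$ of degree $O_k(1)$ with coefficients of height $Q^{O_k(d^{O(1)})}$ via Lemma~\ref{l:transition} and Lemma~\ref{l:multiplication}, and factor out $(t_j-u_j)$ using vanishing on the diagonal. That part is fine.

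The second inequality has a genuine gap at exactly the step you flag as the main obstacle. Your step-wise estimate only gives a recursion of the shape $a_i \le a_{i-1} + C\bigl(1 + a_{i-1}^{K}\bigr)\sigma_i$ with $a_i = |\psi(x_i)|$, $C = Q^{O_k(d^{O(1)})}$ and $K = O_k(1)$ (the multiplication polynomials are in general of degree $\ge 2$ in the coordinates of the ``large'' factor once the step exceeds $2$). With $\sum_i \sigma_i$ allowed to be of order $1$ (your reduction only gets it below $2$), this recursion does not keep $a_i \le Q^{O_k(d^{O(1)})}$: for $K\ge 2$ the continuous analogue $\dot y = Cy^{K}$ blows up after time $\asymp C^{-1}a_0^{-(K-1)} = Q^{-O_k(d^{O(1)})}$, far before total length $2$; and even if $K=1$ a Gr\"onwall summation only yields $a_n \ll a_0 e^{C\sum\sigma_i} = \exp\bigl(Q^{O_k(d^{O(1)})}\bigr)$, which destroys the single-exponential-in-$d$ bound that is the entire point of this lemma (and of the appendix). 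So ``exploiting $\sum_i\sigma_i<2$'' is not enough; the number of path steps is unbounded and the per-step losses compound.

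The paper circumvents this with two extra ideas you would need. First, it runs the naive iteration only while the accumulated length is below a small threshold $\approx C_k^{-1}Q^{-C_kd}$, on which the coordinates of intermediate points stay $O(1)$, so the polynomial factor is uniformly $Q^{O_k(d^{O(1)})}$ and no blow-up can occur inside the window. Second, for a path of total length up to $O(Q)$ it splits the path into at most $Q^{O_k(d^{O(1)})}$ consecutive blocks of that short length (plus boundedly many long single steps), bounds each block displacement $g_i$ by the first step, and then bounds $|\psi(g_r\cdots g_1)|$ by an induction over the $k$ levels of the nested filtration: the first-level coordinates are additive in the number of blocks, and each higher level picks up only a polynomial correction in the previous levels, so after $k$ stages the loss is polynomial in $r$ and $t$ rather than compounding per path step. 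Some such replacement for your Gr\"onwall step (a structural bound on products of boundedly many bounded elements, using the degree-$k$ nesting) is necessary; without it the claimed $Q^{O_k(d^{O(1)})}$ constant cannot be reached.
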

\begin{proof}
To prove the first inequality, we note that $d(x, y) \le |\psi(xy^{-1})|$. Writing $\psi'(x) = t$ and $\psi'(y) = u$, it follows that $\psi(xy^{-1})$ can be written as $(P_1(t, u), \dots, P_d(t, u))$ where $P_i$ has degree $O_k(1)$ and rational with height $Q^{O_k(d^{O(1)})}$. Each of these polynomials vanish when $t = u$, so by the factor theorem, $P_i(t, u) = \sum_j (t_j - u_j) R_{i, j}(t, u)$ with $R_{i, j}$ (by Taylor expanding) having coefficients bounded in absolute value by $Q^{O_k(d^{O(1)})}$. \\\\
To prove the second bound, we begin with the case of $\mathcal{X}' = \mathcal{X}$ and $y = \mathrm{id}_G$. We show that
$$|\psi(x)| \ll_k Q^{O_k(d^{O(1)})} d(x, \mathrm{id}_G)$$
whenever $d(x, \mathrm{id}_G) \le Q$. 
We define $\kappa(x, y) := \min(|\psi(xy^{-1})|, |\psi(yx^{-1})|)$. We claim that
$$|\psi(x) - \psi(y)| \le Q^{O_k(d^{O(1)})}\kappa(x, y)(1 + \kappa(x, y) + |\psi(y)|)^{O_k(1)}.$$
To prove this, if $\kappa(x, y) = |\psi(xy^{-1})|$, then set $z = xy^{-1}$, we have $\psi(x) - \psi(y) = \psi(zy) - \psi(y)$ so letting $t = \psi(z)$ and $u = \psi(y)$, it follows that $\psi(zy) - \psi(y)$ can be written as a polynomial in $t$ and $u$ which vanishes when $t = 0$. By the factor theorem, each component of the polynomial can be written as
$$\sum_j t_j R_{i, j}(t, u)$$
with $R_{i, j}$ having degree at most $O_k(1)$ and coefficients of absolute value at most $Q^{O_k(d^{O(1)})}$. Hence, we have
$$|\psi(x) - \psi(y)| \ll_k Q^{O_k(d^{O(1)})} |t|(1 + |t| + |u|)^{O_k(1)}$$
from which the claim follows. An analogous claim holds when $\kappa(x, y) = |\psi(yx^{-1})|$. This implies that if $\kappa(x, y) \le 1$ and $|\psi(y)| \le 1$, then
$$|\psi(x)| \le |\psi(y)| + C_k Q^{O_k(d)}\kappa(x, y).$$
Iterating this, we see that if $x_0, x_1, \dots, x_n$ are elements of $G$ with $x_0 = \mathrm{id}_G$, and $\kappa(x_0, x_1) + \cdots + \kappa(x_{n - 1}, x_n) \le C_k^{-1} Q^{-C_kd}$ then
$$|\psi(x_n)| \le C_k Q^{C_kd} (\kappa(x_0, x_1) + \cdots  + \kappa(x_{n - 1}, x_n)).$$
By definition of $d$, it follows that
$$|\psi(x)| \le C_kQ^{O_k(d^{O(1)})}d(x, \mathrm{id}_G)$$
whenever $d(x, \mathrm{id}_G) \le C_kQ^{O_k(d^{O(1)})}$. Since $d$ is right-invariant, we in fact have
$$|\kappa(x, y)| \le C_k Q^{O_k(d^{O(1)})} d(x, y)$$
whenever $d(x, y) \le C_k Q^{O_k(d^{O(1)})}$. By definition of the distance function, it suffices to show
$$|\psi(x_n)| \ll_k Q^{O_k(d^{O(1)})}(\kappa(x_0, x_1) + \cdots + \kappa(x_{n - 1}, x_n))$$
whenever $\kappa(x_0, x_1) + \cdots + \kappa(x_{n - 1}, x_n) \le 2Q$ and $x_0 = \mathrm{id}_G$. As $n$ may be arbitrarily large, we split $(x_0, \dots, x_n)$ into $Q^{O_k(d^{O(1)})}$ paths $(x_i, \dots, x_j)$ with
$$\kappa(x_i, x_{i + 1}) + \kappa(x_{i + 1}, x_{i + 2}) + \cdots + \kappa(x_{j - 1}, x_j) \le C_k^{-1}Q^{-C_k}$$
and possibly $Q^{O(1)}$ many singleton paths $(x_i, x_{i + 1})$ with $C_k^{-1}Q^{-C_k}\kappa(x_i, x_{i + 1}) \le 2Q$. Thus, there exists a path $y_1, \dots, y_r$ with $r \ll_k Q^{O_k(d^{O(1)})}$ such that $\kappa(y_i, y_{i + 1}) \ll_k Q^{O_k(d^{O(1)})}$. Let $g_i = y_iy_{i - 1}^{-1}$. We see that
$$x_n = g_r \cdots g_1.$$
We claim that if $g_1, \dots, g_r \in G$ are such that $|\psi(g_i)| \le t$, then
$$|\psi(g_1 \cdots g_r)| \ll_k (1 + t)^{O_k(1)} Q^{O_k(d^{O(1)})}.$$
To show this, we use induction on coordinates. The first $d_1$ coordinates are bounded by $rt$. If the first $d_\ell$ coordinates are bounded by $K$, then the coordinates of $d_\ell + 1, \dots, d_{\ell + 1}$ is bounded by $rt + Q^{O_k(d^{O(1)})} K^{O_k(1)}$. Thus,
$$|\psi(g_1, \cdots, g_r)| \ll_k (1 + t)^{O_k(1)} Q^{O_k(d^{O(1)})}.$$
This shows the desired inequality. We now deal with the case of $\mathcal{X}' = \mathcal{X}$ and $y$ arbitrary. If $d(x, \mathrm{id}_G) \le Q$ and $d(y, \mathrm{id}_G) \le Q$, then by Baker-Campbell-Hausdorff, it follows that $|\psi(xy^{-1})| \le_k Q^{O_k(d^{O(1)})}$ so $|\psi(xy^{-1})| \ll_k Q^{O_k(d^{O(1)})} d(xy^{-1}, \mathrm{id}_G) = Q^{O_k(d^{O(1)})} d(x, y).$ Using the fact that
$$|\psi(x) - \psi(y)| \ll_k Q^{O_k(d^{O(1)})} |t|(1 + |t| + |u|)^{O_k(1)},$$
the bound follows. Finally, for the case where $\mathcal{X}$ and $\mathcal{X}'$ are different, we have by the first direction and the case just proved that
$$d'(x, y) \ll_k Q^{O_k(d^{O(1)})}|\psi(x) - \psi(y)| \ll_k Q^{O_k(d^{O(1)})}d(x, y).$$
This implies that $d'(x, \mathrm{id}_G) \ll_k Q^{O_k(d^{O(1)})}$ and $d'(y, \mathrm{id}_G) \ll_k Q^{O_k(d^{O(1)})}$. Thus, we have
$$|\psi'(x) - \psi'(y)| \ll_k Q^{O_k(d^{O(1)})}d'(x, y) \ll_k Q^{O_k(d^{O(1)})} d(x, y)$$
as desired.
\end{proof}
We now get the following weak left invariance of a metric on $G$; this is analogous to \cite[Lemma A.5]{GT12}.
\begin{lemma}\label{l:leftinvariance}
Suppose that $Q \ge 2$ and $\mathcal{X}$ is a $Q$-rational basis for $\mathfrak{g}$ satisfying the degree $k$ nesting condition. Suppose that $g, x, y \in G$ are elements that satisfy $|\psi(x)|, |\psi(y)|, |\psi(g)| \le Q$. Then we have the bound
$$d(gx, gy) \ll_k Q^{O_k(d^{O(1)})}d(x, y).$$
\end{lemma}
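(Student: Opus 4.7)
The plan is to funnel the distance comparison through Mal'cev coordinates by means of Lemma~\ref{l:distancecomparison}, and reduce everything to a polynomial estimate given by Lemma~\ref{l:multiplication}. Concretely, I would produce the chain
\[d(gx, gy) \ll_k Q^{O_k(d^{O(1)})}|\psi(gx)-\psi(gy)| \ll_k Q^{O_k(d^{O(1)})}|\psi(x)-\psi(y)| \ll_k Q^{O_k(d^{O(1)})}d(x,y),\]
where the first and third inequalities come from the two directions of Lemma~\ref{l:distancecomparison} applied with $\mathcal{X}'=\mathcal{X}$, and the middle step is where the real work lies.

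First I would verify that Lemma~\ref{l:distancecomparison} can in fact be invoked on $gx$ and $gy$. By Lemma~\ref{l:multiplication}, each component of $\psi(gx)$ is a polynomial in $\psi(g)$ and $\psi(x)$ of degree $O_k(1)$ with coefficients of height at most $Q^{O_k(d^{O(1)})}$. Since $|\psi(g)|,|\psi(x)|\le Q$, evaluating these polynomials yields $|\psi(gx)| \le Q^{O_k(d^{O(1)})}$, and the same bound holds for $|\psi(gy)|$. Thus the hypothesis $|\psi(\cdot)| \le Q'$ of Lemma~\ref{l:distancecomparison} is satisfied with $Q' = Q^{O_k(d^{O(1)})}$, which produces only an extra $Q^{O_k(d^{O(1)})}$ factor when we apply it to pass from $d(gx,gy)$ to $|\psi(gx)-\psi(gy)|$.

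The heart of the argument is the estimate
\[|\psi(gx)-\psi(gy)| \ll_k Q^{O_k(d^{O(1)})}|\psi(x)-\psi(y)|.\]
Lemma~\ref{l:multiplication} writes each coordinate of $\psi(gx)$ as $g_i + x_i + P_i(g_1,\dots,g_{i-1}, x_1,\dots, x_{i-1})$, and similarly for $\psi(gy)$. Subtracting, each component of $\psi(gx)-\psi(gy)$ is a polynomial in $\psi(g),\psi(x),\psi(y)$ (of degree $O_k(1)$, with coefficients of height $Q^{O_k(d^{O(1)})}$) that vanishes identically when $\psi(x)=\psi(y)$. By the factor theorem, one can write each such component as $\sum_j (x_j-y_j)R_{ij}(\psi(g),\psi(x),\psi(y))$ with the $R_{ij}$ polynomials of degree $O_k(1)$ and coefficients of height $Q^{O_k(d^{O(1)})}$. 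Since all three arguments have entries bounded by $Q$, we get $|R_{ij}|\ll_k Q^{O_k(d^{O(1)})}$, which gives the claimed bound.

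Combining the three estimates yields $d(gx,gy)\ll_k Q^{O_k(d^{O(1)})}d(x,y)$. The main obstacle is not conceptual but rather careful bookkeeping: one has to check that none of the intermediate quantities blow up past the threshold where Lemma~\ref{l:distancecomparison} applies, and that all the $Q^{O_k(d^{O(1)})}$ factors that accumulate from Lemma~\ref{l:multiplication}, the factor theorem, and the two applications of Lemma~\ref{l:distancecomparison} can be absorbed into a single exponent of the same form.
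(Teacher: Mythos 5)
Your proposal is correct, but it follows a different route from the paper. The paper argues directly from the path definition of the metric: it first proves a conjugation estimate $|\psi(gzg^{-1})| \ll_k Q^{O_k(d^{O(1)})}(1+|\psi(z)|+|\psi(g)|)^{O_k(1)}|\psi(z)|$ via Baker--Campbell--Hausdorff and the factor theorem, then takes a near-optimal path $x_0,\dots,x_r$ from $x$ to $y$, uses right-invariance to bound $d(gx_i,gx_{i+1}) \le |\psi(gx_ix_{i+1}^{-1}g^{-1})|$, applies the conjugation estimate edge by edge with $z = x_ix_{i+1}^{-1}$, and notes that only paths whose edges satisfy $\min(|\psi(x_ix_{i+1}^{-1})|,|\psi(x_{i+1}x_i^{-1})|) \ll_k Q^{O_k(d^{O(1)})}$ need to be considered. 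You instead sandwich the metric between coordinate differences using both directions of Lemma~\ref{l:distancecomparison} (with $\mathcal{X}'=\mathcal{X}$) and prove the coordinate-level Lipschitz bound for left translation, $|\psi(gx)-\psi(gy)| \ll_k Q^{O_k(d^{O(1)})}|\psi(x)-\psi(y)|$, directly from Lemma~\ref{l:multiplication} together with the factor theorem. Both arguments are sound and there is no circularity, since Lemmas~\ref{l:multiplication} and~\ref{l:distancecomparison} are established independently beforehand; your key bookkeeping point -- that $|\psi(gx)|,|\psi(gy)| \le Q^{O_k(d^{O(1)})}$ so Lemma~\ref{l:distancecomparison} must be invoked at the enlarged scale $Q' = Q^{O_k(d^{O(1)})}$, which only composes exponents of the same shape -- is exactly the check needed and you handle it correctly. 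What each approach buys: yours uses the comparison lemma as a black box and avoids redoing the path-pruning argument, at the cost of passing through global coordinate comparison for the translated points; the paper's edge-by-edge conjugation argument stays local on the path and isolates the statement $|\psi(gzg^{-1})| \ll |\psi(z)|$ up to admissible factors, which is reusable in its own right, but for the bounded regime in the statement the two give bounds of identical quality.
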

\begin{proof}
We first claim that
$$|\psi(gzg^{-1})| \ll_k Q^{O_k(d^{O(1)})}(1 + |\psi(z)| + |\psi(g)|)^{O_k(1)}|\psi(z)|.$$
By Baker-Campbell-Hausdorff, $\psi(gzg^{-1})$ is a polynomial of degree $O_k(1)$ with $Q^{O_k(d^{O(1)})}$-rational coefficients in the coordinates of $v = \psi(g)$ and $w = \psi(z)$. This polynomial vanishes when $w = 0$, so by the factor theorem, we may write the polynomial as a product of coordinates of $w$ and another polynomial, also with degree $O_k(1)$ and rational coefficients of $Q^{O_k(d^{O(1)})}$. The claim follows. \\\\
Let $x_0, \dots, x_r$ be a path with $x_1 = x$ and $x_r = y$. Then
$$d(gx, gy) \le \sum_{i = 0}^{r - 1} d(gx_i, gx_{i + 1}) \le \sum_{i = 0}^{r - 1} |\psi(gx_ix_{i + 1}^{-1}g^{-1})|.$$
By applying the inequality we proved for $z = x_ix_{i + 1}^{-1}$, we can show the lemma, provided that we have the bound that $\min(|\psi(x_{i}x_{i + 1}^{-1})|, |\psi(x_{i + 1} x_i^{-1})|) \ll_k Q^{O_k(d^{O(1)})}$. Since $d(x, y) \ll_k Q^{O_k(d^{O(1)})}$, the paths we must consider in the distance function must satisfy that condition among consecutive elements.
\end{proof}
We next show that passing to rational subgroups does not substantially alter the metric. This is analogous to \cite[Lemma A.6]{GT12}.
\begin{lemma}\label{l:subgroupcomparison}
Suppose $G' \subseteq G$ is a closed, connected, and simply connected subgroup and $\mathcal{X}, \mathcal{X}'$ are bases for $\mathfrak{g}, \mathfrak{g}'$ which have the degree $k$ nesting property. Let $Q \ge 2$ and suppose that each $X_i'$ is a $Q$-rational combination of the $X_i$. Then
$$d'(x, y) \ll_k Q^{O_k(d^{O(1)})} d(x, y)$$
uniformly for all $x, y \in G'$ with $|\psi(x)|, |\psi(y)| \le Q$. Furthermore,
$$d(x, y) \ll_k Q^{O_k(d^{O(1)})} d'(x, y)$$
uniformly for all $x, y \in G'$ with $|\psi'(x)|, |\psi'(y)| \le Q$.
\end{lemma}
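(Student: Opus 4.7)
The plan is to prove both inequalities by chaining together three transfers: from a distance to coordinates, from one coordinate system to another, and then from coordinates back to the other distance. For the first direction, fix $x, y \in G'$ with $|\psi(x)|, |\psi(y)| \le Q$. First, applying Lemma~\ref{l:distancecomparison} within $G$ using $\mathcal{X}$ in both roles of the two bases, I obtain
\[|\psi(x) - \psi(y)| \ll_k Q^{O_k(d^{O(1)})}\, d(x, y).\]
Second, by the second item of Lemma~\ref{l:transition}, the map $\psi' \circ \psi^{-1}$ (restricted to $\psi(G')$) is a polynomial of degree $O_k(1)$ with coefficients of height $Q^{O_k(d^{O(1)})}$. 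On the ball $\{|\psi| \le Q\}$, Taylor expansion yields Lipschitz constant $\ll_k Q^{O_k(d^{O(1)})}$, hence
\[|\psi'(x) - \psi'(y)| \ll_k Q^{O_k(d^{O(1)})}\,|\psi(x) - \psi(y)|.\]
Third, by the first item of Lemma~\ref{l:transition} I have $|\psi'(x)|, |\psi'(y)| \ll_k Q^{O_k(d^{O(1)})}$, so Lemma~\ref{l:distancecomparison} may be applied within $G'$ with $\mathcal{X}'$ in both roles (after replacing $Q$ by $Q^{O_k(d^{O(1)})}$ throughout its hypothesis) to give
\[d'(x, y) \ll_k Q^{O_k(d^{O(1)})}\,|\psi'(x) - \psi'(y)|.\]
Composing the three bounds yields the claim. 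The second inequality is proven symmetrically: starting from $|\psi'(x)|, |\psi'(y)| \le Q$, I upgrade to $|\psi(x)|, |\psi(y)| \ll_k Q^{O_k(d^{O(1)})}$ via the first item of Lemma~\ref{l:transition}, then execute the chain $d(x,y) \to |\psi(x)-\psi(y)| \to |\psi'(x)-\psi'(y)| \to d'(x,y)$ using the same three lemmas in the reverse configuration.

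The main point requiring care is verifying the hypotheses of Lemma~\ref{l:distancecomparison} when it is invoked inside $G'$: that lemma requires the ambient basis to be $Q^{O_k(d^{O(1)})}$-rational. Although the hypothesis of the present lemma asserts only that each $X_i'$ is a $Q$-rational combination of the $X_j$, one can deduce rationality of $\mathcal{X}'$ as a basis of $\mathfrak{g}'$ as follows. Writing $X_i' = \sum_m a_{im} X_m$ and using $[X_m, X_n] = \sum_\ell c_{mn\ell} X_\ell$, the structure constants $c_{ijk}'$ of $\mathcal{X}'$ are determined by
\[\sum_k c_{ijk}' \, a_{kp} = \sum_{m,n} a_{im} a_{jn} c_{mnp}, \qquad 1 \le p \le d.\]
Since $X_1', \dots, X_{d'}'$ are linearly independent in $\mathfrak{g}$, the matrix $(a_{kp})$ has full row rank $d'$, and Cramer's rule solves for the $c_{ijk}'$ in terms of $Q$-rational quantities, producing height bounds $Q^{O_k(d^{O(1)})}$. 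Thus $\mathcal{X}'$ is $Q^{O_k(d^{O(1)})}$-rational as a basis of $\mathfrak{g}'$, and combined with the nesting property already assumed it satisfies the hypotheses of Lemma~\ref{l:distancecomparison} (after the usual adjustment of $Q$). No other step presents any real obstacle; the argument is a bookkeeping exercise to confirm that each bound in the chain costs only a factor of $Q^{O_k(d^{O(1)})}$.
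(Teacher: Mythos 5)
Your argument is correct, and it is organized somewhat differently from the paper's. You use Lemma~\ref{l:distancecomparison} as a black box twice, once in $G$ with the basis $\mathcal{X}$ and once in $G'$ with $\mathcal{X}'$, and bridge the two coordinate systems with the Lipschitz bound for the polynomial change of coordinates from Lemma~\ref{l:transition}, so the whole proof is the chain $d'(x,y) \ll |\psi'(x)-\psi'(y)| \ll |\psi(x)-\psi(y)| \ll d(x,y)$ up to $Q^{O_k(d^{O(1)})}$ factors (and its mirror image). The paper instead argues directly on paths: it uses the factor-theorem estimate $|\psi'(z)| \ll_k Q^{O_k(d^{O(1)})}(1+|\psi(z)|)^{O_k(1)}|\psi(z)|$ at a single group element together with the a priori bound $d'(x,y) \ll_k Q^{O_k(d^{O(1)})}$ to compare the contributions of path segments one by one. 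What your route buys is modularity, with no path bookkeeping repeated; the price is that you must check Lemma~\ref{l:distancecomparison} is applicable inside $G'$, i.e.\ that $\mathcal{X}'$ is $Q^{O_k(d^{O(1)})}$-rational as a basis of $\mathfrak{g}'$, and your Cramer's-rule computation of the structure constants does this correctly (the paper leaves this point implicit even though its own invocation of Lemma~\ref{l:transition} also needs it; note that both proofs tacitly use that $\mathcal{X}$ itself is $Q$-rational, which the statement of the lemma does not spell out).

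Two small slips, neither fatal. First, the coordinate bounds $|\psi'(x)|,|\psi'(y)| \ll_k Q^{O_k(d^{O(1)})}$ (and, in the reverse direction, $|\psi(x)|,|\psi(y)| \ll_k Q^{O_k(d^{O(1)})}$) follow from the \emph{second} item of Lemma~\ref{l:transition} --- evaluate the polynomial $\psi'\circ\psi^{-1}$ at a point of norm at most $Q$, or equivalently apply your Lipschitz bound against the identity --- not from the first item, which only relates $\psi$ and $\psi_{\exp}$ for a single basis. Second, when applying the Lipschitz bound you should remark that $\psi'\circ\psi^{-1}$ is (the restriction of) a genuine polynomial on $\mathbb{R}^d$, so it is irrelevant whether the straight segment between $\psi(x)$ and $\psi(y)$ stays inside $\psi(G')$; this is the same convention the paper relies on when it applies the factor theorem to the same map.
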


\begin{proof}
We now prove the first inequality. By Lemma~\ref{l:transition}, it follows that $\psi' \circ \psi^{-1}$ is a polynomial of degree $O_k(1)$ with coefficients at most $O_k(Q)^{O_k(1)}$. Furthermore, letting $\psi(z) = v$, $\psi'(z)$ as a polynomial in $v$ vanishes at $v = 0$. Thus, by the factor theorem, we may factor out components of $v$ out of the polynomial $\psi'(z)$ in $v$. Thus, we have
$$|\psi'(z)| \ll_k Q^{O_k(d^{O(1)})}(1 + |\psi(z)|)^{O_k(1)} |\psi(z)|.$$
It suffices to show that for any path $x_0, \dots, x_r$ from $x$ to $y$, that
$$|\psi(xy^{-1})| \ll_k Q^{O_k(d^{O(1)})} \sum_{i = 0}^{r - 1} \min(|\psi'(x_ix_{i + 1}^{-1})|, |\psi'(x_{i + 1}x_i^{-1})|).$$
Since $d'(x, y) \ll_k Q^{O_k(d^{O(1)})}$ (by Lemma~\ref{l:distancecomparison}), it follows that we may restrict ourselves to paths such that $\min(|\psi'(x_ix_{i + 1}^{-1})|, |\psi'(x_{i + 1}x_i^{-1})|) \ll_k Q^{O_k(d^{O(1)})}$. By the above inequality, we have
$$\min(|\psi'(x_ix_{i + 1}^{-1})|, |\psi'(x_{i + 1}x_i^{-1})|) \ll_k Q^{O_k(d^{O(1)})} \min(|\psi'(x_ix_{i + 1}^{-1})|, |\psi'(x_{i + 1}x_i^{-1})|).$$
Summing over $i$ yields the desired first inequality. To prove the second inequality, we argue analogously, using $\psi \circ (\psi')^{-1}$ instead.
\end{proof}
We next discuss factoring an element into a small part and a rational part. This appears as \cite[Lemma A.14]{GT12} and we include a proof for completeness.

\begin{lemma}\label{l:decomposition}
Let $\mathcal{X}$ be a Mal'cev basis with the nesting property on $G/\Gamma$. Suppose $g \in G$ and let $I = [a, a + 1)$ or equal to $(a, a + 1]$. Suppose that $g \in G$. Then we may write $g = \{g\}[g]$ in a unique way where $\psi(\{g\}) \in I^d$ and $[g] \in \Gamma$.
\end{lemma}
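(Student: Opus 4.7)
The plan is to reduce the claim to a coordinate-by-coordinate adjustment using the multiplication law established in Lemma~\ref{l:multiplication}. Writing $t = \psi(g)$ and $u = \psi(\gamma)$ for $\gamma \in \Gamma$, that lemma gives
\[\psi(g\gamma)_i = t_i + u_i + P_i(t_1,\dots,t_{i-1},u_1,\dots,u_{i-1}),\]
where $P_i$ depends only on the first $i-1$ coordinates of each argument. In particular, right multiplication by an element $\gamma \in \Gamma$ with $\psi(\gamma) = (0,\dots,0,u_i,\dots,u_d)$ leaves the first $i-1$ coordinates of $\psi(g)$ untouched. We also recall that $\mathcal{X}$ being a Mal'cev basis adapted to $\Gamma$ means $\psi^{-1}(\mathbb{Z}^d) = \Gamma$; in particular $\exp(n X_j) \in \Gamma$ for every integer $n$ and every $j$.

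For existence, I would construct $\gamma$ iteratively. Set $g^{(0)} := g$, and for $i = 1,2,\dots,d$ choose the unique integer $n_i$ such that the $i$-th coordinate of $\psi\bigl(g^{(i-1)}\exp(-n_i X_i)\bigr)$ lies in $I$; this is possible because the $i$-th coordinate changes by $-n_i$ under this right multiplication, and $I$ is an interval of length $1$. Set $g^{(i)} := g^{(i-1)}\exp(-n_i X_i)$. By the observation in the previous paragraph, $\psi(g^{(i)})_j = \psi(g^{(i-1)})_j$ for every $j<i$, so each coordinate placed into $I$ at step $j$ remains in $I$ forever after. Declaring $\{g\} := g^{(d)}$ and $[g] := \exp(n_d X_d)\cdots\exp(n_1 X_1) \in \Gamma$ then gives the desired factorization $g = \{g\}[g]$ with $\psi(\{g\}) \in I^d$.

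For uniqueness, suppose $g = h_1\gamma_1 = h_2\gamma_2$ with $\psi(h_1),\psi(h_2) \in I^d$, and set $\gamma := \gamma_2\gamma_1^{-1} \in \Gamma$, so that $h_1 = h_2\gamma$ and $u := \psi(\gamma) \in \mathbb{Z}^d$. Writing $s := \psi(h_2)$, the multiplication formula gives $\psi(h_1)_1 = s_1 + u_1$; since $\psi(h_1)_1, s_1 \in I$ and $u_1 \in \mathbb{Z}$, we must have $u_1 = 0$. Proceeding by induction on $i$, assume $u_1 = \cdots = u_{i-1} = 0$; then the formula reduces to $\psi(h_1)_i = s_i + u_i + P_i(s_1,\dots,s_{i-1},0,\dots,0)$. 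Since $P_i(t,0) = 0$ (the identity $g\cdot\mathrm{id}_G = g$ forces every $P_i$ to vanish when all $u$-arguments are zero), this collapses to $\psi(h_1)_i = s_i + u_i$, and the same interval argument yields $u_i = 0$. Hence $\gamma = \mathrm{id}_G$, so $\gamma_1 = \gamma_2$ and $h_1 = h_2$.

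The only subtle ingredient is that $P_i$ has no dependence on $u_i,u_{i+1},\dots,u_d$, so that setting the earlier $u$-coordinates to zero really kills $P_i$; this dependency structure is part of the statement of Lemma~\ref{l:multiplication}, so no serious obstacle arises. The rest is bookkeeping.
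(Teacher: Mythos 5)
Your proof is correct and follows essentially the same route as the paper: existence via coordinate-by-coordinate right multiplication by lattice elements of the form $\exp(n_iX_i)$, exploiting the triangular structure of the multiplication polynomials from Lemma~\ref{l:multiplication}, and uniqueness by induction on coordinates using that $P_i$ vanishes when its $u$-arguments are zero. The one point you flag --- that $P_i$ depends only on the first $i-1$ coordinates of each argument and vanishes when the $u$-block vanishes --- is exactly the ingredient the paper uses as well, so there is no gap.
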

\begin{proof}
By Lemma~\ref{l:multiplication}, if $\psi(x) = (t_1, \dots, t_d)$ and $\psi(y) = (u_1, \dots, u_d)$, then
$$\psi(xy) = (t_1 + u_1, t_2 + u_2 + P_2(t_1, u_1), \dots, t_d + u_d + P_{d }(t_1, \dots, t_{d - 1}, u_1, \dots, u_{d - 1}))$$
where $P_i$ are polynomials of degree $O_k(1)$ and have coefficient of size at most $O_k(Q)^{O_k(1)}$ and height at most $O_k(Q)^{O_k(d^{O(1)})}$ which vanish when one of their arguments (among $\vec{t}$ or $\vec{u}$) is zero. The point is if we let
$$\psi(g) = (s_1, \dots, s_d)$$
then we can inductively add an element to $g$ on step $i$ such that (denoting $g_i$ to be the value of $g$ at step $i$),
$$\psi(g_i) = (s_1^i, \dots, s_d^i)$$
where $s_1^i, \dots, s_i^i \in I$. Supposing we can do this in step $i$, then we take an element $y$ such that it is Mal'cev coordinates are zero everywhere except in the $(i + 1)^{\mathrm{st}}$ coordinate, where it is the unique integer $b$ such that $s_{i + 1}^i + b$ lies inside $I$. Then by the nesting property, we can take
$$\psi(g_ib) = (s_1^{i + 1}, \dots, s_d^{i + 1})$$
which has the desired property. \\\\
Uniqueness follows from the fact that if $\psi(x\gamma)$ and $\psi(x)$ both lie inside $I^d$ with $\gamma \in \Gamma$, then by equating coefficients, we see that the first component of $\gamma$ is zero. We claim inductively that this implies that all components of $\gamma$ are zero. Suppose the first $i$ components of $\gamma$ are zero. Then letting
$$\psi(x) = (t_1, \dots, t_d)$$
$$\psi(\gamma) = (u_1, \dots, u_d),$$
it follows that $P_{i + 1}(t_1, \dots, t_i, u_1, \dots, u_i) = 0$. Thus, $u_{i + 1} = 0$.
\end{proof}
We now repeat \cite[Lemma A.15]{GT12}, giving that the metric on $G/\Gamma$ is nondegenerate.
\begin{lemma}\label{l:nondegen}
Let $\mathcal{X}$ be a $Q$-rational basis for $G/\Gamma$ with the degree $k$ nesting property. Let $d$ be the distance with respect to this Mal'cev basis. Suppose that $x, y\in G$ and $d(x\Gamma, y\Gamma) = 0$. Then $x$ and $y$ descend to the same point in $G/\Gamma$.
\end{lemma}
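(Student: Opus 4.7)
The plan is to reduce the statement to the claim that $d$-convergence to the identity coincides with Mal'cev-coordinate convergence to the identity, and then to invoke discreteness of $\Gamma$ in $G$. By unpacking the definition of the quotient metric together with the right-invariance of $d$, I would first rewrite
\[d(x\Gamma, y\Gamma) = \inf_{\gamma \in \Gamma} d(x, y\gamma) = \inf_{\gamma \in \Gamma} d(x\gamma^{-1}y^{-1}, \mathrm{id}_G).\]
So the hypothesis yields a sequence $\gamma_n \in \Gamma$ with $h_n := x\gamma_n^{-1}y^{-1}$ satisfying $d(h_n, \mathrm{id}_G) \to 0$.

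The technical heart of the argument is to deduce from this that $|\psi(h_n)| \to 0$. The natural approach is to recycle the iterative estimate that already appears inside the proof of Lemma~\ref{l:distancecomparison}: there it is shown that whenever $x_0 = \mathrm{id}_G$ and $x_0,x_1,\ldots,x_N$ is a path with $\sum_i \min(|\psi(x_ix_{i-1}^{-1})|, |\psi(x_{i-1}x_i^{-1})|) \le C_k^{-1} Q^{-C_k d}$, one has $|\psi(x_N)| \ll_k Q^{O_k(d^{O(1)})} \sum_i \min(|\psi(x_ix_{i-1}^{-1})|, |\psi(x_{i-1}x_i^{-1})|)$. Applying this to any path from $\mathrm{id}_G$ to $h_n$ whose total length nearly realizes $d(h_n,\mathrm{id}_G)$ gives $|\psi(h_n)| \ll_k Q^{O_k(d^{O(1)})}\, d(h_n, \mathrm{id}_G) \to 0$.

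Once $|\psi(h_n)| \to 0$, Lemma~\ref{l:multiplication} (continuity of multiplication in Mal'cev coordinates) implies $\gamma_n = y^{-1}h_n^{-1}x \to x^{-1}y$ in the manifold topology defined by $\psi$. Now $\psi$ is a bijection onto $\mathbb{R}^d$ under which $\Gamma$ corresponds exactly to $\mathbb{Z}^d$, so $\Gamma$ is a discrete, closed subset of $G$. A convergent sequence in a discrete set is eventually constant, so $x^{-1}y \in \Gamma$; that is, $x$ and $y$ determine the same coset in $G/\Gamma$.

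The only delicate step is the second one, since a priori the near-minimizing paths could visit elements of $G$ whose Mal'cev coordinates are large, while Lemma~\ref{l:distancecomparison} itself controls $|\psi(x) - \psi(y)|$ only under an a priori bound on $|\psi(x)|,|\psi(y)|$. The iterative path estimate extracted from the proof of Lemma~\ref{l:distancecomparison} is exactly what sidesteps this issue: it propagates Mal'cev-coordinate smallness through concatenated small increments by repeatedly applying the Baker--Campbell--Hausdorff-type bounds from Lemma~\ref{l:multiplication}, whose polynomial error terms are controlled precisely because the basis $\mathcal{X}$ has the degree $k$ nesting property.
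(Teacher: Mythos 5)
Your argument is correct and reaches the same conclusion, but it is organized somewhat differently from the paper. The paper also starts from $d(x\Gamma,y\Gamma)=\inf_{\gamma\in\Gamma}d(x,y\gamma)$, but then shows this infimum is \emph{attained}: right-invariance and the triangle inequality give $d(\gamma,\mathrm{id}_G)\le d(x,y\gamma)+d(x,\mathrm{id}_G)+d(y,\mathrm{id}_G)$, so only finitely many $\gamma\in\Gamma$ are relevant (a bounded $d$-ball has bounded Mal'cev coordinates by the machinery of Lemma~\ref{l:distancecomparison}, and $\Gamma$ is discrete in coordinates); an attained zero infimum together with nondegeneracy of $d$ on $G$ then forces $x=y\gamma$. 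You bypass attainment entirely: you take a minimizing sequence $\gamma_n$, apply the path-iteration estimate from \emph{inside} the proof of Lemma~\ref{l:distancecomparison} --- and you correctly identify that this internal form, valid once $d(h_n,\mathrm{id}_G)$ drops below the threshold $C_k^{-1}Q^{-C_k d}$, is what removes the a priori coordinate bound that the stated lemma would require --- to get $|\psi(h_n)|\to 0$, and then use the polynomial (hence continuous) coordinate formulas of Lemma~\ref{l:multiplication} plus discreteness of $\psi(\Gamma)$ to conclude. Both routes ultimately rest on the same two ingredients (the comparison between $d$ and Mal'cev coordinates near the identity, and discreteness of $\Gamma$), with your sequential version making explicit what the paper compresses into ``the result follows.'' One cosmetic slip: $\gamma_n=y^{-1}h_n^{-1}x$ converges to $y^{-1}x$, not $x^{-1}y$; this is harmless, since $y^{-1}x\in\Gamma$ if and only if $x^{-1}y\in\Gamma$, and either membership gives $x\Gamma=y\Gamma$.
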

\begin{proof}
Since the distance $d$ on $G$ is right-invariant, it follows that
$$d(x\Gamma, y\Gamma) = \inf_{\gamma \in \Gamma} d(x, y\gamma).$$
It suffices to show this infimum is achieved. Thus, it suffices to prove each $M$, there are finitely many $\gamma \in \Gamma$ such that $d(x, y\gamma) \le M$. Letting $\gamma$ be such an element, we have (using right-invariance and the triangle inequality)
\begin{align*}
d(\gamma, \mathrm{id}_G) &= d(\gamma^{-1}, \mathrm{id}_G) \le d(\gamma^{-1}, y) + d(y, \mathrm{id}_G) \\
&= d(\mathrm{id}_G, y\gamma) + d(y, \mathrm{id}_G) \\
&\le d(x, y\gamma) + d(x, \mathrm{id}) + d(y, \mathrm{id}_G).
\end{align*}
The result follows.
\end{proof}
We next prove that nilmanifolds have bounded diameter. This is analogous to \cite[Lemma A.16]{GT12}.
\begin{lemma}\label{l:bounded}
Let $Q \ge 2$ and suppose $\mathcal{X}$ is $Q$-rational basis for $G/\Gamma$ with the degree $k$ nesting property. Then $d(x\Gamma, y\Gamma) \ll_k Q^{O_k(d^{O(1)})}$.
\end{lemma}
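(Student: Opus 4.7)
The plan is to reduce the bound on the diameter of $G/\Gamma$ to a bound on the distance from the identity of a single well-chosen coset representative. Since the metric $d$ on $G$ is right-invariant, it descends to a right-invariant metric on $G/\Gamma$, so $d(x\Gamma, y\Gamma) = d(xy^{-1}\Gamma, \Gamma)$. Thus it suffices to show that $d(g\Gamma, \Gamma) \ll_k Q^{O_k(d^{O(1)})}$ for an arbitrary $g \in G$.

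For this, I would apply Lemma~\ref{l:decomposition} to the element $g$ with the interval $I = [-1/2, 1/2)$, producing a factorization $g = \{g\}[g]$ where $\psi(\{g\}) \in [-1/2, 1/2)^d$ and $[g] \in \Gamma$. In particular $|\psi(\{g\})| \le 1/2$. Since $[g]$ lies in $\Gamma$ we have $g\Gamma = \{g\}\Gamma$, and hence
\[d(g\Gamma, \Gamma) \le d(\{g\}, \mathrm{id}_G).\]

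To finish it then suffices to bound $d(\{g\}, \mathrm{id}_G)$. Here I would invoke Lemma~\ref{l:distancecomparison} in the degenerate case $\mathcal{X}' = \mathcal{X}$: since $|\psi(\{g\})| \le 1/2 \le Q$ and $|\psi(\mathrm{id}_G)| = 0 \le Q$, the hypotheses are satisfied, and that lemma yields
\[d(\{g\}, \mathrm{id}_G) \ll_k Q^{O_k(d^{O(1)})} |\psi(\{g\}) - \psi(\mathrm{id}_G)| = Q^{O_k(d^{O(1)})}|\psi(\{g\})| \ll_k Q^{O_k(d^{O(1)})}.\]
Combining the three displayed bounds gives the claim.

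I do not anticipate any genuine obstacle here; the argument is a routine corollary of Lemmas~\ref{l:decomposition} and~\ref{l:distancecomparison}, with the only minor check being that the hypothesis $|\psi(\{g\})| \le Q$ needed to apply Lemma~\ref{l:distancecomparison} is automatic from the construction of $\{g\}$. The degree $k$ nesting property is used exclusively through the invocation of those two prior lemmas, so the exponent $O_k(d^{O(1)})$ propagates without any further work.
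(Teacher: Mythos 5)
Your main step (reduce to coordinates in $[-1/2,1/2)^d$ via Lemma~\ref{l:decomposition} and then apply Lemma~\ref{l:distancecomparison} with $\mathcal{X}'=\mathcal{X}$) is exactly the paper's argument, but your opening reduction is not valid as stated. The metric $d$ on $G$ is \emph{right}-invariant, and $\Gamma$ is not normal, so there is no right action of $G$ on $G/\Gamma$ and no sense in which the quotient metric is ``right-invariant'' in the way you use it. Concretely, right-invariance gives
\[d(x\Gamma,y\Gamma)=\inf_{\gamma,\gamma'\in\Gamma}d(x\gamma,y\gamma')=\inf_{\gamma''\in\Gamma}d\bigl(x\gamma''y^{-1},\mathrm{id}_G\bigr),\]
whereas
\[d(xy^{-1}\Gamma,\Gamma)=\inf_{\gamma''\in\Gamma}d\bigl(xy^{-1}\gamma'',\mathrm{id}_G\bigr),\]
and since $\gamma''$ and $y^{-1}$ need not commute these two quantities are not equal in general; the identity $d(x\Gamma,y\Gamma)=d(xy^{-1}\Gamma,\Gamma)$ would require left-invariance, which $d$ does not have. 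So the claim ``it suffices to bound $d(g\Gamma,\Gamma)$ for arbitrary $g$'' is not justified by your argument.

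The gap is easy to repair, and the repair is what the paper does: skip the reduction entirely, apply Lemma~\ref{l:decomposition} to $x$ and to $y$ separately to get $\gamma,\gamma'\in\Gamma$ with $|\psi(x\gamma)|,|\psi(y\gamma')|\le 1/2$, and then bound $d(x\Gamma,y\Gamma)\le d(x\gamma,y\gamma')\ll_k Q^{O_k(d^{O(1)})}|\psi(x\gamma)-\psi(y\gamma')|\ll_k Q^{O_k(d^{O(1)})}$ by Lemma~\ref{l:distancecomparison}. (Alternatively, your single-point bound $d(g\Gamma,\Gamma)\ll_k Q^{O_k(d^{O(1)})}$ is correct as you prove it, and you could combine it with the triangle inequality $d(x\Gamma,y\Gamma)\le d(x\Gamma,\Gamma)+d(\Gamma,y\Gamma)$, which does hold for the quotient of a right-invariant metric; but you must argue via one of these routes rather than the false invariance identity.)
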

\begin{proof}
By Lemma~\ref{l:decomposition}, we may pick $\gamma$ and $\gamma'$ such that $|\psi(x\gamma)| \le \frac{1}{2}$ and $|\psi(y\gamma')| \le \frac{1}{2}$. The claim then follows from Lemma~\ref{l:distancecomparison}.
\end{proof}

We next need a lemma regarding the comparison between metrics between a nilmanifold and a rational subnilmanifold. This is analogous to \cite[Lemma A.17]{GT12}.
\begin{lemma}
Suppose that $G' \subseteq G$ is a closed, connected, and simply connected subgroup and that $\mathcal{X}$, $\mathcal{X}'$ are $Q$-rational Mal'cev bases (with the degree $k$ nesting property) for $G/\Gamma$ and $G'/\Gamma'$ respectively such that each $X_i'$ is a $Q$-rational combination of $X_i$. Let $d, d'$ be the metrics induced on $G/\Gamma$ and $G'/\Gamma'$, respectively. Then for any $x, y \in G'$,
$$d'(x\Gamma', y\Gamma') \ll Q^{O_k(d^{O(1)})} d(x\Gamma, y\Gamma)$$
$$d(x\Gamma, y\Gamma) \ll Q^{O_k(d^{O(1)})} d'(x\Gamma', y\Gamma').$$
\end{lemma}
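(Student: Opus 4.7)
The plan is to reduce both inequalities, via the right-invariance of $d$ and $d'$, to estimates of the form $d'(z\Gamma',\Gamma') \asymp_{k} Q^{O_k(d^{O(1)})}\, d(z\Gamma,\Gamma)$ for $z \in G'$ (where $\asymp_k$ denotes a two-sided comparison with $Q^{O_k(d^{O(1)})}$ factors). Since $G'$ is $Q$-rational with $\mathcal{X}'$ a $Q$-rational combination of $\mathcal{X}$, we have $\Gamma' = G' \cap \Gamma$, which is the crucial algebraic input. Applying Lemma~\ref{l:decomposition} inside $G'/\Gamma'$ with basis $\mathcal{X}'$, I may multiply $z$ on the right by an element of $\Gamma' \subseteq \Gamma$ to arrange $|\psi'(z)| \le 1/2$, which by Lemma~\ref{l:transition} forces $|\psi(z)| \le Q^{O_k(d^{O(1)})}$ as well.

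The direction $d(z\Gamma,\Gamma) \ll Q^{O_k(d^{O(1)})}\, d'(z\Gamma',\Gamma')$ is the easier one and uses only $\Gamma' \subseteq \Gamma$. If $D' := d'(z\Gamma',\Gamma') \ge 1$, Lemma~\ref{l:bounded} immediately gives $d(z\Gamma,\Gamma) \le Q^{O_k(d^{O(1)})} \le Q^{O_k(d^{O(1)})} D'$; otherwise pick $\gamma' \in \Gamma'$ with $d'(z\gamma',\mathrm{id}_G) \le 2D' \le 2$, which by Lemma~\ref{l:distancecomparison} makes $|\psi'(\gamma')|$ bounded. Lemma~\ref{l:subgroupcomparison} then converts this into $d(z\gamma',\mathrm{id}_G) \ll Q^{O_k(d^{O(1)})} d'(z\gamma',\mathrm{id}_G)$, and the claim follows by passing to the infimum over $\gamma' \in \Gamma'$.

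For the reverse direction with $D := d(z\Gamma,\Gamma)$, I split on a threshold $\eta_0 := Q^{-A d^A}$ for a sufficiently large absolute constant $A$. The easy regime $D \ge \eta_0$ again follows from Lemma~\ref{l:bounded}: $d'(z\Gamma',\Gamma') \le Q^{O_k(d^{O(1)})} \le (Q^{O_k(d^{O(1)})}/\eta_0)\cdot D$. In the regime $D < \eta_0$, take a near-minimizer $\gamma \in \Gamma$ with $d(z\gamma,\mathrm{id}_G) \le 2D$; the plan is to show that such a $\gamma$ must in fact lie in $\Gamma' = G' \cap \Gamma$, at which point Lemma~\ref{l:subgroupcomparison} (applied as in the easy direction) converts the $d$-bound on $d(z\gamma,\mathrm{id}_G)$ into a $d'$-bound and finishes the proof.

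The crux, and the step I expect to be the main obstacle, is the separation lemma that a near-minimizer $\gamma$ necessarily lies in $\Gamma'$. To prove this, complete $\mathcal{X}'$ to a basis of $\mathfrak{g}$ by adjoining an appropriate subset of $\mathcal{X}$, giving $\mathfrak{g} = \mathfrak{g}' \oplus V$; by Cramer's rule the projection $\pi_V \colon \mathfrak{g} \to V$ has $\mathcal{X}$-matrix entries that are rationals of height $Q^{O_k(d^{O(1)})}$. On the one hand, Lemma~\ref{l:distancecomparison} gives $|\psi(z\gamma)| \ll Q^{O_k(d^{O(1)})} D$, and expanding $\log\gamma = \log(z^{-1}\cdot z\gamma)$ via the (finite, since $G$ is nilpotent) Baker--Campbell--Hausdorff formula, every term other than $\log(z^{-1})$ carries at least one factor of $\log(z\gamma)$, so using $|\psi(z)|,|\psi(z\gamma)| \le Q^{O_k(d^{O(1)})}$ yields $|\pi_V(\log\gamma)| \le Q^{O_k(d^{O(1)})} D$ (the $\pi_V(\log(z^{-1}))$ contribution is zero since $\log(z^{-1}) \in \mathfrak{g}'$). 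On the other hand, $\psi(\gamma) \in \mathbb{Z}^d$, so Lemma~\ref{l:transition} forces $\log\gamma = \psi_{\exp}(\gamma)$ to have $\mathcal{X}$-coordinates that are rationals of height $Q^{O_k(d^{O(1)})}$; applying $\pi_V$ preserves this, so any nonzero coordinate of $\pi_V(\log\gamma)$ is bounded below by $Q^{-O_k(d^{O(1)})}$. Choosing $\eta_0$ below this threshold forces $\pi_V(\log\gamma) = 0$, i.e., $\log\gamma \in \mathfrak{g}'$, hence $\gamma \in G'$, hence $\gamma \in \Gamma'$, as required.
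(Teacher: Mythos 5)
Your argument is correct and is essentially the paper's own proof: normalize coordinates with Lemma~\ref{l:decomposition}, get the easy direction from $\Gamma'\subseteq\Gamma$ plus Lemma~\ref{l:subgroupcomparison}, and for the hard direction split on a threshold $Q^{-O_k(d^{O(1)})}$ and show a near-minimizing $\gamma\in\Gamma$ must lie in $\Gamma'$ because the relevant quantity is both $\ll Q^{O_k(d^{O(1)})}D$ and a rational with denominator $\le Q^{O_k(d^{O(1)})}$ (your projection $\pi_V(\log\gamma)$ versus the paper's linear forms $\ell_i(\psi_{\exp}(\gamma))$ are the same dichotomy in different clothing). One minor point: the initial reduction to comparing distances to the identity uses the \emph{approximate left}-invariance of Lemma~\ref{l:leftinvariance} (with its boundedness hypotheses), not right-invariance alone, and the constant in your threshold should be allowed to depend on $k$, but both are harmless within the claimed $Q^{O_k(d^{O(1)})}$ losses.
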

\begin{proof}
We prove the second inequality first. First, we assume via Lemma~\ref{l:decomposition} that $|\psi'(x)|, |\psi'(y)| \le 1$. Since we showed in Lemma~\ref{l:nondegen} that the infimum in $d'(x\Gamma', y\Gamma')$ is achieved, there exists some $\gamma' \in \Gamma'$ such that
$$d'(x\Gamma', y\Gamma') = d'(x, y\gamma').$$
Thus, $d'(x, y\gamma') \ll Q^{O_k(d^{O(1)})}$, so $d'(\mathrm{id}_G, y\gamma') \ll Q^{O_k(d^{O(1)})}$ so $d(x, y\gamma') \ll Q^{O_k(d^{O(1)})}d'(x, y\gamma')$. This shows the second inequality. \\\\
Now we show the first inequality. Similar to above, we may assume that $|\psi(x)|, |\psi(y)| \le 1$ and that there exists some $\gamma \in \Gamma$ such that $d(x\Gamma, y\Gamma) = d(x, y\gamma)$. Let $\delta = Q^{-C_kd^2}$ for $C = C_k$ be a sufficiently large constant to be specified later. If $d(x, y\gamma) \ge \delta$, then by Lemma~\ref{l:bounded}, the first inequality follows. Thus, we may assume that $d(x, y\gamma) < \delta$. By Lemma~\ref{l:leftinvariance}, it follows that
$$d(z, \gamma) \ll Q^{O_k(d^{O(1)})}\delta$$
where $z = y^{-1}x$. Thus, $d(\gamma, \mathrm{id}_G) \ll Q^{O_k(d^{O(1)})}$. By Lemma~\ref{l:distancecomparison}, it follows that
$$|\psi(z) - \psi(\gamma)| \ll Q^{O_k(d^{O(1)})}\delta.$$
It follows from this and Lemma~\ref{l:transition} that
$$|\psi_{\exp}(z) - \psi_{\exp}(\gamma)| \ll Q^{O_k(d^{O(1)})}\delta.$$
This is because $\psi_{\exp}(z) - \psi_{\exp}(\gamma)$ can be written as a polynomial of $\psi(z) = t$ and $\psi(\gamma) = u$ that vanishes at $t = u$. Thus,
$$|\psi_{\exp}(z) - \psi_{\exp}(\gamma)| \ll Q^{O_k(d^{O(1)})} |\psi(z) - \psi(\gamma)| (1 + |\psi(z)| + |\psi(\gamma)|)^{O_k(1)}.$$
The subalgebra $\mathfrak{g}'$ is defined as the intersection of at most $d$ linear forms with rational coefficients of height $Q^{O(d)}$ (by Cramer's rule). For each $\gamma_1 \in \Gamma$, the coordinates of $\psi_{\exp}(\gamma_1)$ can be taken to be rationals with a fixed denominator of at most $Q^{O(d)}$. Let $\ell_1, \dots, \ell_j$ denote those linear forms. Without a loss of generality, we may assume that these linear forms have integer coefficients of size at most $Q^{O(d^2)}$. Thus, if for each $i$, $\ell_i(\gamma_1) \ll Q^{-O(d)}$, it follows that $\ell_i(\gamma_1) = 0$ for each $i$, or that $\gamma_1$ lies inside $G' \cap \Gamma = \Gamma'$. Thus, choosing $C$ to be sufficiently large, it follows that $\gamma$ must lie in $\Gamma'$. Thus, $d(x, y\gamma) \ll Q^{O_k(d^{O(1)})}$. By Lemma~\ref{l:subgroupcomparison}, it follows that $d'(x, y\gamma') \ll Q^{O_k(d^{O(1)})}d(x, y\gamma')$, from which the first inequality follows.
\end{proof}

We next turn to the construction of a Mal'cev basis given a weak basis. \cite[Definition A.7]{GT12} defines a weak basis of rationality $Q$ as a set $\{X_1, \dots, X_d\}$ of $\mathfrak{g}$ such that there exists some integer $q \le Q$ such that (with respect to the basis), $\frac{1}{q} \mathbb{Z}^d \supseteq \psi_{\exp}(\Gamma) \supseteq q\mathbb{Z}^d$, and the basis satisfies 
$$[X_i, X_j] = \sum_{k} a_{ijk}X_k$$
with $a_{ijk}$ having height at most $Q$. The next three lemmas quantify Green and Tao's construction given in \cite[Lemma A.8, Proposition A.9--A.10]{GT12}.
\begin{lemma}\label{l:weakbases}
Weak bases enjoy the following properties.
\begin{itemize}
\item Suppose $\mathcal{X}$ is a $Q$-rational weak basis for $G/\Gamma$ and $\mathcal{X}' = \{X_1', \dots, X_{d'}'\}$ is a basis for a $Q$-rational sub-Lie algebra $\mathfrak{g}_0$ with the property that each $X_i'$ is a $Q$-rational combination of $X_i$. Then $\mathcal{X}'$ is a $Q^{O(d^{O(1)})}$-rational weak basis for $G_0/\Gamma_0$ where $G_0 = \exp(\mathfrak{g}_0)$ and $\Gamma_0 = \Gamma \cap G_0$.
\item Suppose $\mathcal{X}$ is a Mal'cev basis of $G/\Gamma$ of complexity $Q$ adapted to some filtration $(G_i)$ of degree $k$. Then $\mathcal{X}$ is a $Q^{O_k(d^{O(1)})}$-rational weak basis for $G/\Gamma$.
\end{itemize}
\end{lemma}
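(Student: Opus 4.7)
The plan is to verify, for both parts, the two conditions defining a weak basis of controlled rationality: the structure-constant bound $[X_i,X_j] = \sum_k a_{ijk} X_k$ with $a_{ijk}$ of height $Q^{O(d^{O(1)})}$ (resp.\ $Q^{O_k(d^{O(1)})}$), and the sandwich inclusion $\tfrac{1}{q}\mathbb{Z}^{d'} \supseteq \psi_{\exp}(\Gamma_0) \supseteq q\mathbb{Z}^{d'}$ for a corresponding $q$.

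For part (2) the structure-constant bound is immediate: the complexity-$M$ Mal'cev-basis axiom already demands rational structure constants of height at most $M \le Q$. For the sandwich, invoke Lemma~\ref{l:transition}, which gives that $\psi \circ \psi_{\exp}^{-1}$ and $\psi_{\exp}\circ \psi^{-1}$ are polynomials of degree $O_k(1)$ with coefficients rational of height $Q^{O_k(d^{O(1)})}$. If $\gamma \in \Gamma$ then $\psi(\gamma) \in \mathbb{Z}^d$, so $\psi_{\exp}(\gamma) = (\psi_{\exp}\circ \psi^{-1})(\psi(\gamma))$ has coordinates with common denominator $q \le Q^{O_k(d^{O(1)})}$. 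Conversely, let $q'$ be divisible by every denominator appearing among the coefficients of $\psi \circ \psi_{\exp}^{-1}$ (noting that this polynomial has no constant term since $\psi(\mathrm{id}_G) = 0$); then for $v \in q'\mathbb{Z}^d$ each nontrivial monomial $v^\alpha$ absorbs $q'^{|\alpha|} \ge q'$ powers, enough to clear the coefficient's denominator, yielding $\psi(\exp v) \in \mathbb{Z}^d$ and hence $\exp(v) \in \Gamma$.

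For part (1), the structure constants of $\mathcal{X}'$ are computed by bilinearity: writing $X_i' = \sum_\alpha c_{i\alpha}X_\alpha$ with $c_{i\alpha}$ rational of height $\le Q$, we have $[X_i',X_j'] = \sum_{\alpha,\beta} c_{i\alpha}c_{j\beta}[X_\alpha,X_\beta]$, which is a $Q^{O(1)}$-rational combination of the $X_\gamma$; since $\mathfrak{g}_0$ is a subalgebra this sum lies in $\mathrm{span}(\mathcal{X}')$, and Cramer's rule applied to a full-rank $d'\times d'$ subblock of $(c_{i\alpha})$ re-expresses it as a $Q^{O(d^{O(1)})}$-rational combination of the $X_k'$, with the denominator bound coming from Hadamard's inequality on the cofactors. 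For the lattice sandwich on $\Gamma_0$, let $q$ come from the hypothesis on $\mathcal{X}$ and let $D \le Q^{O(d^2)}$ be the LCM of the denominators of the $c_{i\alpha}$. For the upper inclusion, if $\gamma \in \Gamma_0 \subseteq \Gamma$ then $\gamma = \exp(\sum t_i X_i)$ with $t_i \in \tfrac{1}{q}\mathbb{Z}$; since $\sum t_i X_i \in \mathfrak{g}_0$, Cramer's rule writes it as $\sum u_j X_j'$ with $u_j \in \tfrac{1}{qQ^{O(d^{O(1)})}}\mathbb{Z}$. For the lower inclusion, given $\vec{n} \in \mathbb{Z}^{d'}$ the element $qD\sum_j n_j X_j' = \sum_\alpha (qD\sum_j n_j c_{j\alpha})X_\alpha$ has first-kind $\mathcal{X}$-coordinates in $q\mathbb{Z}$, so $\exp(qD\sum_j n_j X_j') \in \Gamma \cap G_0 = \Gamma_0$, giving $\psi_{\exp,\mathcal{X}'}(\Gamma_0) \supseteq qD \cdot \mathbb{Z}^{d'}$.

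The main bookkeeping obstacle is controlling how denominators compound: through the inversion of a potentially ill-conditioned $d'\times d'$ subblock of $(c_{i\alpha})$ in part (1), and through the polynomial BCH-type transition formulas in part (2). In each case, Hadamard's inequality inside Cramer's rule (respectively the degree-$O_k(1)$ bound on the transition polynomials) keeps the accumulated denominator at $Q^{O_k(d^{O(1)})}$, which is precisely the slack afforded by the statement.
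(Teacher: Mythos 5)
Your proposal is correct and takes essentially the same approach as the paper: rational linear algebra with Cramer's rule and Hadamard-type determinant bounds to transfer structure constants and lattice coordinates between $\mathcal{X}$ and $\mathcal{X}'$ (using that $\mathfrak{g}_0$ is a subalgebra so the brackets and $\log\Gamma_0$ stay in $\mathrm{span}(\mathcal{X}')$) for the first item, and the bounded-height transition polynomials of Lemma~\ref{l:transition} together with $\Gamma=\psi^{-1}(\mathbb{Z}^d)$ for the second. The only differences are cosmetic: you invert a full-rank $d'\times d'$ subblock of $(c_{i\alpha})$ where the paper first extends $\mathcal{X}'$ to a rational basis of all of $\mathfrak{g}$, and you spell out the no-constant-term denominator-clearing step for the inclusion $\psi_{\exp}(\Gamma)\supseteq q\mathbb{Z}^d$ that the paper leaves implicit.
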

\begin{proof}
We first extend $\{X_1', \dots, X_{d'}'\}$ to a basis $\{X_1', \dots, X_{d}'\}$ such that each $X_i'$ is a $Q^{O(d^{O(1)})}$-rational combination of $\mathcal{X}$. For the first part, we note by Cramer's rule (or by Gaussian elimination) that we can write each element in $\mathcal{X}$ as a $Q^{O(d^2)}$-rational combination of elements in $\mathcal{X}'$. Thus, if we may write an element $\gamma \in \Gamma_0$ as $\exp(\sum_i t_i X_i)$ we may write
$$\gamma = \exp(\sum_i t_i' X_i')$$
where $t_i'$ is a combination of a product of $t_j$ and a rational with height at most $Q^{O(d^{O(1)})}$. The point, though, is that since $\log(\gamma) \subseteq \mathfrak{g}_0$, for $d' + 1 < i \le d$, its $X_i'$ components are zero. By clearing denominators, we have thus established that there exists some $q \le Q^{O(d^{O(1)})}$ such that $q^{-1}\mathbb{Z}^{d'} \supseteq \psi_{\exp, \mathcal{X}'} \supseteq q\mathbb{Z}^{d'}$. In addition, we may write for $i, j \le d'$, $[X_i', X_j']$ as a $Q^{O(d^{O(1)})}$-rational combination of $\mathcal{X}$, which we can then in turn write as a $Q^{O(d^{O(1)})}$-rational basis of the extended basis $\{X_1', \dots, X_d'\}$. However, since $[X_i', X_j']$ actually lies inside $\mathfrak{g}_0$, it follows that it may actually be written uniquely as a $Q^{O(d^{O(1)})}$-rational combination of combination of $\mathcal{X}'$. The second part of the lemma follows from the fact that the map $\psi \circ \psi_{\exp}^{-1}$ has polynomial coordinates with coefficients of height at most $Q^{O_k(d^{O(1)})}$.
\end{proof}
\begin{lemma}\label{l:ConstructingMalcev}
Suppose $\mathcal{X}$ is $Q$-rational weak basis for $G/\Gamma$ and that $(G_i)$ is a sequence of nested subgroups in which each subgroup is $Q$-rational with $G_0 = G_1 \subseteq G$ and $[G_i, G_1] \subseteq G_{i + 1}$ and $G_{k + 1} = \{\mathrm{id}_G\}$. Then taking $\Gamma_i = \Gamma \cap G_i$, there exists a Mal'cev basis $\mathcal{X}' = \{X_1', \dots, X_d'\}$ for $G_0/\Gamma_0$ adapted to the sequence of subgroups in which each $X_i'$ is a $Q^{O_k(d^{O(1)})}$-rational combination of the basis elements $X_i$.
\end{lemma}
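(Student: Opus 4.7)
The plan is to construct $\mathcal{X}'$ by downward induction on the filtration level $i$, starting from the deepest subgroup $G_k$ and working outward to $G_0 = G_1$. At each stage the induction produces basis elements $X'_{d-d_i+1}, \dots, X'_{d-d_{i+1}}$ of $\log G_i$ whose exponentials, together with the previously constructed elements spanning $\log G_{i+1}$, yield a Mal'cev basis for $G_i/\Gamma_i$ adapted to the induced filtration. The concrete strategy is to find $\gamma_j \in \Gamma_i$ whose images in the quotient $\Gamma_i/\Gamma_{i+1}$ form a $\mathbb{Z}$-basis, and set $X'_j := \log \gamma_j$.

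For the base case $i = k$, note $G_k$ is abelian: since $G_k \subseteq G_1$ and $[G_k, G_1] \subseteq G_{k+1} = \{\mathrm{id}_G\}$, any two elements of $G_k$ commute. Apply Lemma~\ref{l:weakbases} to obtain a $Q^{O_k(d^{O(1)})}$-rational weak basis for $G_k/\Gamma_k$ sitting inside $\mathfrak{g}$ as $Q^{O_k(d^{O(1)})}$-rational combinations of $\mathcal{X}$; the weak basis property gives an integer $q \le Q^{O_k(d^{O(1)})}$ with $q\mathbb{Z}^{d_k} \subseteq \psi_{\exp}(\Gamma_k) \subseteq q^{-1}\mathbb{Z}^{d_k}$. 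A Minkowski-style lattice reduction on $\log \Gamma_k \subseteq \mathbb{R}^{d_k}$ then extracts an integer basis with weak-basis coordinates bounded by $Q^{O_k(d^{O(1)})}$; these are the basis elements $X'_{d-d_k+1}, \dots, X'_d$, and since $G_k$ is abelian the map $\exp$ restricted to $\log \Gamma_k$ is a lattice isomorphism onto $\Gamma_k$, giving the Mal'cev integrality at this level.

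For the inductive step, suppose the basis has been built at levels $\ge i+1$. Consider the abelian quotient $\bar G := G_i/G_{i+1}$, which is abelian because $[G_i, G_i] \subseteq [G_i, G_1] \subseteq G_{i+1}$, together with the cocompact lattice $\bar \Gamma := \Gamma_i G_{i+1}/G_{i+1} \cong \Gamma_i/\Gamma_{i+1}$. Apply Lemma~\ref{l:weakbases} to $G_i/\Gamma_i$ and project to $\bar G$, obtaining a weak basis of $\log \bar G$ in which $\log \bar \Gamma$ is again sandwiched between rescaled integer lattices of parameter $Q^{O_k(d^{O(1)})}$. Apply lattice reduction to extract a $\mathbb{Z}$-basis $\bar X_{d-d_i+1}, \dots, \bar X_{d-d_{i+1}}$ of $\log \bar\Gamma$ of controlled height, then choose preimages $\gamma_j \in \Gamma_i$ of $\exp(\bar X_j) \in \bar \Gamma$; one may fold preimages by appropriate elements of $\Gamma_{i+1}$ (using the already-constructed basis of $\log G_{i+1}$ and the reduction scheme of Lemma~\ref{l:decomposition}) so that $X'_j := \log \gamma_j$ has coordinates in the original $\mathcal{X}$ bounded by $Q^{O_k(d^{O(1)})}$. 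The Mal'cev integrality $\exp(u_1 X'_1) \cdots \exp(u_d X'_d) \in \Gamma$ iff all $u_j \in \mathbb{Z}$ then follows by descending induction on filtration levels: the projection to $\bar G$ at the top level forces $u_1, \dots, u_{d-d_2} \in \mathbb{Z}$ since the $\bar \gamma_j$ form a $\mathbb{Z}$-basis of $\bar \Gamma$, after which the residual product lies in $\Gamma_2$ and the inductive hypothesis applies.

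Finally, the remaining pieces of the Mal'cev basis definition come cheaply: the filtration identity $\log G_i = \mathrm{span}(X'_j : j > d - d_i)$ is built in by the construction; the nesting condition $[X'_i, X'_j] \in \mathrm{span}(X'_\ell : \ell > \max(i,j))$ follows from $[G_a, G_b] \subseteq G_{\max(a,b)+1}$, which is a direct consequence of $[G_c, G_1] \subseteq G_{c+1}$ applied with $c = \max(a,b)$ (since $G_{\min(a,b)} \subseteq G_1$); and the structure constant heights $|a_{ij\ell}|$ are controlled by $Q^{O_k(d^{O(1)})}$ via Cramer's rule, using that $X'_i$ and $X_i$ are mutually $Q^{O_k(d^{O(1)})}$-rational linear combinations, together with the height bound $Q$ on the structure constants of the original weak basis. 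The principal obstacle is sustaining a single-exponential rationality bound across all $k$ levels: each iteration could a priori compose a polynomial-in-$d$ loss, so one must arrange that the Minkowski reduction and the lift-to-$\Gamma_i$ step at each level contribute a bound of shape $Q^{O_k(d^{O(1)})}$ only multiplicatively in $k$, which is harmless since $k$ is treated as a constant.
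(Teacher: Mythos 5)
Your construction is correct, and it is the same underlying idea as the paper's proof (build the basis so that exponentials of the $X_j'$ generate the successive quotients of $\Gamma$, reducing coordinates of the generators modulo the lattice to keep heights single-exponential), but the organization is genuinely different. The paper first builds a nested $Q^{O_k(d^{O(1)})}$-rational weak basis $\mathcal{Y}$ by extending upward from $\mathfrak{g}_k$, and then refines it \emph{coordinate by coordinate}: for each index $j$ the quotient $\bigl(\exp(\mathrm{Span}(Y_j,\dots,Y_d))\cap\Gamma\bigr)/\bigl(\exp(\mathrm{Span}(Y_{j+1},\dots,Y_d))\cap\Gamma\bigr)$ is rank one, a single generator is chosen, and its coordinates are reduced by multiplying by elements of $\exp(q\mathbb{Z}Y_\ell)\subseteq\Gamma$ using the sandwiching $q\mathbb{Z}^d\subseteq\psi_{\exp,\mathcal{Y}}(\Gamma_0)\subseteq\tfrac1q\mathbb{Z}^d$; this keeps every loss tied to the single parameter $q$ and needs no lattice-reduction input. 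You instead induct on the filtration level, work in the abelian quotients $G_i/G_{i+1}$, extract a whole $\mathbb{Z}$-basis of the projected lattice $\Gamma_i/\Gamma_{i+1}$ at once by Minkowski/Hermite reduction, and then lift and fold by $\Gamma_{i+1}$. That buys a cleaner conceptual picture (it is the classical Mal'cev construction made quantitative, and like the paper's argument it avoids the pitfall flagged in the remark after the lemma, since your $X_j'$ are not forced to be multiples of prescribed basis vectors), at the price of an extra geometry-of-numbers ingredient and a mild compounding of exponents across the $k$ levels, which, as you say, is harmless for $k=O(1)$ under the paper's loose conventions. Two details you should tighten: (i) to ``project to $\bar G$'' in coordinates you need the weak basis of $\mathfrak{g}_i$ to be \emph{adapted} to $\mathfrak{g}_{i+1}$ (extend a rational basis of $\log G_{i+1}$, e.g.\ the already-built $X_\ell'$, to one of $\log G_i$ before invoking Lemma~\ref{l:weakbases}), so that the image lattice is the first block of $\psi_{\exp}(\Gamma_i)$ and inherits the sandwiching; (ii) Lemma~\ref{l:decomposition} cannot be applied directly to the lift $\gamma_j$, since $\gamma_j\notin G_{i+1}$ — either first split off a bounded representative $g_j$ of $\exp(\bar X_j)$ and apply the decomposition to $g_j^{-1}\gamma_j\in G_{i+1}$, or simply reduce the $G_{i+1}$-directional coordinates by multiplying by elements $\exp(qm Y)$ with $m\in\mathbb{Z}$, which lie in $\Gamma_{i+1}$ by the weak-basis sandwiching (this is exactly the mechanism the paper uses). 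With these repairs your argument goes through and yields the stated $Q^{O_k(d^{O(1)})}$ rationality.
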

\begin{proof}
Take a basis of $\mathfrak{g}_d$ consisting of $Q$-rational linear combinations of $X_i$. By standard linear algebra, we may extend to a basis of $\mathfrak{g}_{d - 1}$, consisting of $Q$-rational linear combinations of the $X_i$. This can keep on be extended, obtaining a basis $\mathcal{Y} = \{Y_1, \dots, Y_d\}$. It follows that $\mathcal{Y}$ is a $Q^{O_k(d^{O(1)})}$-rational weak basis of $G_0/\Gamma_0$ satisfying a nesting property. We convert this basis $\mathcal{Y}$ into a desired basis $X_i'$ such that $X_i'$ is spanned by $Y_i, \dots, Y_d$ and so that
$$\exp(\text{Span}(Y_{i}, \dots, Y_d)) \cap \Gamma = \{\exp(n_i X_i') \cdots \exp(n_dX_d'): n_j \in \mathbb{Z}\}.$$
We show this inductively. By dimension counting, the quotient group
$$(\exp(\text{Span}(Y_j, \dots, Y_d)) \cap \Gamma) / (\exp(\text{Span}(Y_{j + 1}, \dots, Y_d)) \cap \Gamma_0)$$
(since $\mathcal{Y}$ satisfies the nesting property, both of the above are genuine groups with the second being a normal subgroup of the first) is generated by an element of the form
$$\overline{\exp\big(\sum_{\ell \ge j} c_\ell Y_\ell\big)}.$$
Let $q \le Q^{O_k(d^{O(1)})}$ be a nonzero integer so that $q\mathbb{Z}^d \subseteq \psi_{\exp, \mathcal{Y}}(\Gamma_0) \subseteq \frac{1}{q}\mathbb{Z}^d$. We claim that we can take $c_\ell$ for $\ell \ge j$ to have height at most $q^2$. Since $q\mathbb{Z}^d \subseteq \psi_{\exp}(\Gamma_0)$, it suffices to show that we can take $c_\ell$ to have absolute value at most $q$. \\\\
Fix some $\ell \ge j$. Suppose $(f_m)_{m = j}^d$ is a sequence of real numbers such that $f_j, \dots, f_\ell$ are all of absolute value at most $q$. We claim that there exists some $\gamma_{\ell+1} \in \Gamma_0 \cap \exp(\text{Span}(Y_\ell, \dots, Y_d))$ such that
$$\psi_{\exp, \mathcal{Y}}\big(\exp\big(\sum_{m \ge j} f_m Y_m\big) \gamma_{\ell+1}\big)$$
has coordinates $j, \dots, \ell+1$ that are of absolute value at most $q$. To do this, supposing this holds for $\ell$, select some integer $r \in q\mathbb{Z}$ such that $|d_{\ell + 1} - r| \le q$ and set $\gamma_{\ell + 1} = \exp(rY_{\ell + 1})$. Then by Baker-Campbell-Hausdorff (and using the fact that $\mathcal{Y}$ satisfies the nesting property), it follows that
$$\psi_{\exp, \mathcal{Y}}\big(\exp\big(\sum_{m \ge j} f_m Y_m\big) \gamma_{\ell + 1}\big)$$
has coordinates $j, \dots, \ell + 1$ that are of absolute value at most $q$. Now applying this claim iteratively to $(c_m)_{m = j}^d$, we obtain $\gamma_j, \dots, \gamma_d \in \Gamma_0$ such that 
$$\psi_{\exp, \mathcal{Y}}\big(\exp\big(\sum_{m \ge j} c_m Y_m\big)\gamma_j\gamma_{j + 1} \gamma_{j + 2} \dots \gamma_d\big) \in \{0\}^{j - 1} \times [-q, q]^{d - j + 1}.$$
In addition, since $\gamma_{j + 1}, \dots, \gamma_d$ lie inside $\exp(\text{span}(Y_{j + 1}, \dots, Y_d))$, we have
$$\overline{\exp\big(\sum_{m \ge j} c_m Y_m\big)\gamma_j\gamma_{j + 1} \gamma_{j + 2} \dots \gamma_d} = \overline{\exp\big(\sum_{\ell \ge j} c_\ell Y_\ell\big)}.$$
Hence, we have ensured that we may take $c_\ell$ to have absolute value at most $q$, and thus height at most $q^2 = Q^{O_k(d^{O(1)})}$. \\\\
We now let $X_j' = \sum_j c_j Y_j$. We claim inductively that this satisfies the property we want. This is obviously true in the base case for $j = d$. Assuming this is satisfied for $i + 1$, we see that if $\gamma \in \exp(\text{Span}(Y_{i}, \dots, Y_d)) \cap \Gamma$, then the image of $\gamma$ in 
$$\exp(\text{Span}(Y_i, \dots, Y_d)) \cap \Gamma / (\exp(\text{Span}(Y_{i + 1}, \dots, Y_d)) \cap \Gamma)$$
can be written as
$$\overline{\exp(t_i X_i')}$$
where $t_i$ is an integer. This is to say, $\exp(-t_iX_i')\gamma \in \exp(\text{Span}(Y_{i + 1}, \dots, Y_d))$ and it must also lie in $\Gamma$ since $t_i$ is an integer. We may thus write
$$ \exp(-t_iX_i')\gamma = \prod_{j = i + 1}^d \exp(t_i X_i')$$
so
$$\gamma = \prod_{j = i}^d \exp(t_i X_i')$$
as desired.
%Since $\psi \circ \psi_{\exp}^{-1}$ has polynomial coordinates with coefficients of height at most $O_k(Q^{O_k(d^{O(1)})})$, it follows also that
\end{proof}
\begin{remark}
The proof written down in \cite[Proposition A.9]{GT12} is incorrect but easily fixable (as we have shown). The issue is that they write their generators $X_i'$ to be a multiple of $Y_i$. This does not have to be true. For instance, consider the lattice $\{(x, y, z) \in \mathbb{Z}^3: 2x_1 + 3x_2 + 5x_3 = 0\}$. One can take $Y_1 = (-5, 0, 2)$ and $Y_2 = (-3, 2, 0)$. The proof written in \cite[Proposition A.9]{GT12} would give $X_i' = Y_i$, but the point $(1, 1, -1)$ lies inside the lattice, but is not in the integral span of $Y_1$ and $Y_2$.
\end{remark}
We have the following corollary.
\begin{lemma}\label{l:Subgroupfiltration}
Suppose $\mathcal{X} = \{X_1, \dots, X_d\}$ is $Q$-rational Mal'cev basis for $G/\Gamma$ adapted to a filtration $(G_i)_{i = 1}^k$ and $G' \subseteq G$ is a $Q$-rational subgroup, then each group in the adapted filtration $(G_i')_{i = 1}^k$ with $G_i' := G_i \cap G'$ is also $Q^{O_k(d^{O(1)})}$-rational and there exists an adapted Mal'cev basis $\{X_1', \dots, X_{d'}'\}$ where $X_i'$ is $Q^{O_k(d^{O(1)})}$-rational combination of $X_i$.
\end{lemma}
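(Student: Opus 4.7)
The plan is to reduce the statement to an application of Lemma~\ref{l:ConstructingMalcev} after carefully setting up a suitable weak basis of $\mathfrak{g}' := \log(G')$ adapted to the intersected filtration. We proceed in three stages: (i) bound the rationality of the intersections $G_i'$; (ii) choose a rational basis $\mathcal{Y}$ of $\mathfrak{g}'$ compatible with the $(G_i')$ nesting; (iii) upgrade $\mathcal{Y}$ to a Mal'cev basis via Lemma~\ref{l:ConstructingMalcev}.

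For (i), observe that $\mathfrak{g}_i := \log(G_i) = \operatorname{Span}(X_{d_{i-1}+1}, \dots, X_d)$ is $Q$-rational with respect to $\mathcal{X}$ by hypothesis, and $\mathfrak{g}'$ is $Q$-rational with respect to $\mathcal{X}$ by assumption that $G'$ is $Q$-rational. The intersection $\mathfrak{g}_i' = \mathfrak{g}_i \cap \mathfrak{g}'$ is then the kernel of an explicit linear system whose coefficients (with respect to $\mathcal{X}$) are rationals of height at most $Q$, so Cramer's rule (Lemma~\ref{l:Cramer} applied appropriately) produces a basis of $\mathfrak{g}_i'$ whose vectors are $Q^{O(d)}$-rational combinations of $\mathcal{X}$. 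This shows $G_i'$ is $Q^{O_k(d^{O(1)})}$-rational.

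For (ii), build $\mathcal{Y} = \{Y_1, \dots, Y_{d'}\}$ by first picking a $Q^{O(d)}$-rational basis for $\mathfrak{g}_k'$ using stage (i), then extending it (by standard linear algebra, still with $Q^{O(d)}$-rational coefficients) to a basis of $\mathfrak{g}_{k-1}'$, then to $\mathfrak{g}_{k-2}'$, and so on down to $\mathfrak{g}' = \mathfrak{g}_1'$. This yields an ordered basis $\mathcal{Y}$ such that for each $i$, a terminal segment of $\mathcal{Y}$ spans $\mathfrak{g}_i'$, and each $Y_j$ is a $Q^{O_k(d^{O(1)})}$-rational combination of $X_1, \dots, X_d$. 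Lemma~\ref{l:weakbases} (first bullet) applied to $\mathfrak{g}'$ (which itself is a sub-Lie algebra) then certifies that $\mathcal{Y}$ is a $Q^{O_k(d^{O(1)})}$-rational weak basis for $G'/\Gamma'$; its nesting with respect to the filtration $(G_i')$ is built in by construction.

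For (iii), apply Lemma~\ref{l:ConstructingMalcev} to the weak basis $\mathcal{Y}$ and the nested sequence of subgroups $(G_i')_{i=1}^k$ (whose $Q^{O_k(d^{O(1)})}$-rationality was proven in (i)). The lemma produces an adapted Mal'cev basis $\{X_1', \dots, X_{d'}'\}$ of $G'/\Gamma'$ whose elements are $Q^{O_k(d^{O(1)})}$-rational combinations of $\mathcal{Y}$. Composing with the expression of each $Y_j$ as a $Q^{O_k(d^{O(1)})}$-rational combination of $\mathcal{X}$ yields that each $X_i'$ is a $Q^{O_k(d^{O(1)})}$-rational combination of $\mathcal{X}$, as required. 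The main subtlety — and the only place where care is needed — is ensuring that each successive enlargement in step (ii) preserves the $Q^{O_k(d^{O(1)})}$ rationality bound, so that the final input to Lemma~\ref{l:ConstructingMalcev} has the claimed complexity; this is handled by keeping denominators under control via Cramer's rule at each extension.
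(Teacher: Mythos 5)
Your proposal is correct and follows essentially the same route as the paper: choose a rational basis of $\mathfrak{g}'$ compatible with the intersected filtration (with rationality controlled by Cramer's rule/linear algebra) and then invoke Lemma~\ref{l:ConstructingMalcev} to upgrade it to an adapted Mal'cev basis. The only difference is one of detail: the paper's two-line proof leaves the rationality of the intersections $G_i' = G_i \cap G'$ and the weak-basis verification (via both bullets of Lemma~\ref{l:weakbases}) implicit, whereas you spell them out, and your stage (ii) partly duplicates the nesting-compatible extension that the proof of Lemma~\ref{l:ConstructingMalcev} already performs internally.
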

\begin{proof}
By linear algebra, there is a basis $\mathcal{Y} = \{Y_1, \dots, Y_{d'}\}$ of $\mathfrak{g}'$ with an extension $\tilde{Y} = \{Y_1, \dots, Y_d\}$ such that each element of $Y_i$ can be written as a $Q$-rational combination of $X_i$. We now apply Lemma~\ref{l:ConstructingMalcev} to $\mathcal{Y}$.
\end{proof}

We next consider rational points. These next three lemmas are quantifications of \cite[Lemma A.11-A.13]{GT12}.

\begin{lemma}\label{l:multiplyrational}
Suppose $\mathcal{X}$ is a $Q$-rational Mal'cev basis for $G/\Gamma$ with the degree $k$ nesting property. \begin{itemize}
\item[(i)] If $\gamma \in G$ is $Q$-rational, then $\psi(\gamma) \in \frac{1}{Q'}\mathbb{Z}^d$ where $1 \le Q' \ll_k Q^{O_k(d^{O(1)})}$ which does not depend on $\gamma$.
\item[(ii)] If $\gamma \in G$ is such that $\psi(\gamma) \in \frac{1}{Q} \mathbb{Z}^d$, then $\gamma$ is $Q^{O_k(d^{O(1)})}$-rational.
\item[(iii)] If $\gamma, \gamma'$ are $Q$-rational, then $\gamma \gamma'$ and $\gamma^{-1}$ are $Q^{O_k(d^{O(1)})}$ rational.
\end{itemize}
\end{lemma}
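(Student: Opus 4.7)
I would prove (i) by direct coordinate conversion, (ii) by induction on the degree $k$ of the filtration, and (iii) as an easy consequence of (i), (ii), and Lemma~\ref{l:multiplication}. The main obstacle is the inductive step of (ii), where we need to control the denominators of $\psi(\delta^{-1}\gamma^{R_1})$ despite $\delta \in \Gamma$ potentially having $\psi$-coordinates of unbounded magnitude.

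\textbf{Part (i).} Given $\gamma^Q \in \Gamma$, we have $\psi(\gamma^Q) \in \mathbb{Z}^d$. By Lemma~\ref{l:transition}, the map $\psi_{\exp}\circ \psi^{-1}$ is a polynomial of degree $O_k(1)$ with rational coefficients of height at most $Q^{O_k(d^{O(1)})}$. Evaluating on an integer vector produces a value with denominator bounded only by the denominators of the coefficients, yielding $\psi_{\exp}(\gamma^Q) \in \frac{1}{Q^{O_k(d^{O(1)})}}\mathbb{Z}^d$. Since $\psi_{\exp}(\gamma) = \frac{1}{Q}\psi_{\exp}(\gamma^Q)$, this gives $\psi_{\exp}(\gamma) \in \frac{1}{Q^{O_k(d^{O(1)})}}\mathbb{Z}^d$. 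A second application of Lemma~\ref{l:transition} to $\psi \circ \psi_{\exp}^{-1}$ then yields $\psi(\gamma) \in \frac{1}{Q^{O_k(d^{O(1)})}}\mathbb{Z}^d$, since a degree $O_k(1)$ monomial evaluated on inputs with denominator $Q^{O_k(d^{O(1)})}$ and rational coefficients of denominator $Q^{O_k(d^{O(1)})}$ still has denominator $Q^{O_k(d^{O(1)})}$.

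\textbf{Part (ii), setup.} The base case $k=1$ forces $[G_1,G_1] \subseteq G_2 = \{\mathrm{id}_G\}$, so $G$ is abelian and $\gamma = \exp(\sum t_i X_i)$ with $t_i \in \frac{1}{Q}\mathbb{Z}$ immediately gives $\gamma^Q = \exp(\sum (Qt_i) X_i) \in \Gamma$. For the inductive step, consider the projection $\pi\colon G \to G/G_k$. The truncated basis $\{\overline{X}_1,\ldots,\overline{X}_{d-d_k}\}$ is a Mal'cev basis on $G/G_k$ adapted to the degree $k-1$ filtration $(G_i/G_k)$, of complexity at most $Q^{O_k(d^{O(1)})}$. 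Since $\psi_{G/G_k}(\pi(\gamma))$ is the first $d - d_k$ entries of $\psi(\gamma)$, it lies in $\frac{1}{Q}\mathbb{Z}^{d-d_k}$, so the inductive hypothesis furnishes $R_1 \le Q^{O_k(d^{O(1)})}$ with $\pi(\gamma)^{R_1} \in \pi(\Gamma)$, i.e., $\gamma^{R_1} = \delta g_k$ for some $\delta \in \Gamma$ and $g_k \in G_k$. We pick $\delta$ whose $\psi$-coordinates match those of $\gamma^{R_1}$ on the first $d-d_k$ entries (which are integers) and vanish elsewhere, so that $g_k = \delta^{-1}\gamma^{R_1}$ automatically lies in $G_k$.

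\textbf{Completing (ii) and Part (iii).} Arguing as in (i), the identity $\psi_{\exp}(\gamma^{R_1}) = R_1\psi_{\exp}(\gamma)$ preserves the denominator, so $\psi(\gamma^{R_1}) \in \frac{1}{Q^{O_k(d^{O(1)})}}\mathbb{Z}^d$. The key observation is that Lemma~\ref{l:multiplication} expresses $\psi(\delta^{-1}\gamma^{R_1})$ as a polynomial of degree $O_k(1)$ in the coordinates of $\psi(\delta^{-1}) \in \mathbb{Z}^d$ and $\psi(\gamma^{R_1})$, with rational coefficients of height $Q^{O_k(d^{O(1)})}$; although $\psi(\delta^{-1})$ may have large integer entries, this polynomial evaluation introduces no new denominators beyond those of the coefficients and the non-integer inputs, so $\psi(g_k) \in \frac{1}{Q^{O_k(d^{O(1)})}}\mathbb{Z}^d$. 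Centrality of $G_k$, a consequence of $[G,G_k] \subseteq G_{k+1} = \{\mathrm{id}_G\}$, lets us apply the abelian base case to $g_k$ (using the restricted Mal'cev basis $\{X_{d-d_k+1},\ldots,X_d\}$ on $G_k/(\Gamma\cap G_k)$), producing $R_2 \le Q^{O_k(d^{O(1)})}$ with $g_k^{R_2} \in \Gamma$; then $\gamma^{R_1R_2} = (\delta g_k)^{R_2} = \delta^{R_2} g_k^{R_2} \in \Gamma$ by commutativity, completing the induction. Finally, (iii) is immediate: by (i), $\psi(\gamma),\psi(\gamma') \in \frac{1}{Q^{O_k(d^{O(1)})}}\mathbb{Z}^d$; Lemma~\ref{l:multiplication} (applied to both multiplication and inversion) bounds the denominators of $\psi(\gamma\gamma')$ and $\psi(\gamma^{-1})$ by $Q^{O_k(d^{O(1)})}$; and (ii) then yields the claimed rationality bound.
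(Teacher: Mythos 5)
Your argument is correct, but it routes around the tools the paper actually uses, so it is worth comparing. For (i) the paper argues coordinate-by-coordinate, inductively up the nesting levels as in Lemma~\ref{l:distancecomparison}, extracting the rationality of each block of $\psi(\gamma)$ from $\psi(\gamma^r)\in\mathbb{Z}^d$; you instead pass to coordinates of the first kind, use the homogeneity $\psi_{\exp}(\gamma^r)=r\,\psi_{\exp}(\gamma)$, and convert back with Lemma~\ref{l:transition} — which is in fact the mechanism the paper reserves for part (ii), and it works equally well here since the conversion polynomials have a common denominator of size $Q^{O_k(d^{O(1)})}$ independent of $\gamma$ (one should note, as you implicitly do, that only denominators and not magnitudes of the inputs matter). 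For (ii) the paper has a two-line proof: by Lemma~\ref{l:weakbases} the Mal'cev basis is a $Q^{O_k(d^{O(1)})}$-weak basis, so $q\mathbb{Z}^d\subseteq\psi_{\exp}(\Gamma)$ for some controlled $q$, and homogeneity of $\psi_{\exp}$ immediately produces the power landing in $\Gamma$. Your induction on the degree of the filtration — projecting to $G/G_k$, choosing $\delta\in\Gamma$ matching the integer horizontal coordinates, controlling the denominators of $\psi(\delta^{-1}\gamma^{R_1})$ via Lemma~\ref{l:multiplication} (correctly observing that the large integer entries of $\psi(\delta^{-1})$ contribute no denominators), and finishing with the abelian case on the central subgroup $G_k$ — is a self-contained re-derivation of essentially the same fact that the weak-basis lemma packages; it is longer and accumulates $k$-dependent exponents through the induction, but those stay within $Q^{O_k(d^{O(1)})}$ since $k$ is bounded, and it has the mild advantage of not invoking Lemma~\ref{l:weakbases} at all. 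Part (iii) is handled the same way in both: coordinates from (i), Lemma~\ref{l:multiplication} for the product and inverse, and (ii) to convert bounded denominators back into rationality.
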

\begin{proof}
For (i), since $\gamma$ is $Q$-rational, there exists some integer $r \le Q$ such that $\gamma^r \in \Gamma$. We now argue inductively as in Lemma~\ref{l:distancecomparison}. Using the fact that $\psi(\gamma^r) \in \mathbb{Z}^d$, we obtain inductively that
\begin{itemize}
\item[1.] The first through $d_i$th coordinates of $\psi(\gamma)$ have rationality bounded by $Q$.
\item[2.] The $d_i + 1$th through $d_{i + 1}$th coordinates of $\psi_{\mathcal{Y}}(\gamma)$ have rationality bounded by $Q^{O_k(d^{O(1)})}$. 
\end{itemize}
Thus, each coordinate has rationality bounded by $Q^{O_k(d^{O(1)})}$. \\\\
For (ii), note that a $Q$-rational Mal'cev basis is a $Q^{O_k(d^{O(1)})}$-weak basis by Lemma~\ref{l:weakbases}. As $\psi(\gamma) \in \frac{1}{Q}\mathbb{Z}^d$, $\psi_{\exp}(\gamma) \in \frac{1}{Q'}\mathbb{Z}^d$ with $Q' \le Q^{O_k(d^{O(1)})}$. The result follows as $\psi_{\exp}(\gamma^\ell) = \ell \cdot \psi_{\exp}(\gamma)$. \\\\
(iii) is then an immediate consequence of Lemma~\ref{l:multiplication}.
\end{proof}

\begin{lemma}\label{l:rationalpolynomialsequence}
Suppose that $\gamma\colon \mathbb{Z}^r \to G$ is a polynomial sequence of degree $d$. Suppose that there is a $Q$-rational Mal'cev basis $\mathcal{X}$ with the degree $k$ nesting property for $G/\Gamma$ and that $\gamma$ is $Q$-rational. Then $\gamma(n)\Gamma$ is periodic with period $\ll Q^{O_k(d^{O(1)})}$. 
\end{lemma}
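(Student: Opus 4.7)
The plan is to work in Mal'cev coordinates of the second kind, express $\psi(\gamma(\vec{n}+q\vec{e}_s)\gamma(\vec{n})^{-1})$ as an explicit polynomial in $(\vec{n},q)$, and then choose $q$ to clear all denominators.

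First, since $\gamma(\vec{n})$ is $Q$-rational for every $\vec{n} \in \mathbb{Z}^r$, Lemma~\ref{l:multiplyrational}(i) yields $\psi(\gamma(\vec{n})) \in \frac{1}{q_1}\mathbb{Z}^d$ with $q_1 \ll Q^{O_k(d^{O(1)})}$. By Lemma~\ref{l:transition}, the components of $\vec{n}\mapsto \psi(\gamma(\vec{n}))$ are polynomials in $\vec{n}$ of degree $O_k(d)$; since finite differences preserve $\frac{1}{q_1}\mathbb{Z}$-valuedness (as they are $\mathbb{Z}$-linear combinations of evaluations), their coefficients in the binomial basis $\binom{\vec{n}}{\vec{i}}$ all lie in $\frac{1}{q_1}\mathbb{Z}$.

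Next, fix $s \in \{1,\ldots,r\}$ and set $\delta_s(\vec{n}) := \gamma(\vec{n}+q\vec{e}_s)\gamma(\vec{n})^{-1}$; it suffices to show $\psi(\delta_s(\vec{n})) \in \mathbb{Z}^d$ for all $\vec{n}$, for a suitable $q$. By Lemma~\ref{l:multiplication}, each coordinate of $\psi(\delta_s(\vec{n}))$ is a polynomial of degree $O_k(1)$ in the entries of $\psi(\gamma(\vec{n}+q\vec{e}_s))$ and $\psi(\gamma(\vec{n}))$, with rational coefficients of height $Q^{O_k(d^{O(1)})}$. Substituting the binomial expansions from the previous paragraph and applying Vandermonde's identity $\binom{n_s+q}{i} = \sum_{j\le i}\binom{q}{i-j}\binom{n_s}{j}$, we obtain a polynomial $\Psi_s(\vec{n},q)$ in $\vec{n}$ and $q$ whose coefficients in the basis $q^a\binom{\vec{n}}{\vec{i}}$ all lie in $\frac{1}{q_2}\mathbb{Z}$ for some $q_2 \ll Q^{O_k(d^{O(1)})}$.

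Since $\delta_s(\vec{n})=\mathrm{id}_G$ when $q=0$, the polynomial $\Psi_s$ vanishes identically at $q=0$, and by the factor theorem factors as $\Psi_s(\vec{n},q) = q\cdot\Psi_s'(\vec{n},q)$ with $\Psi_s'$ of the same coefficient-shape. Choosing $q$ to be a common multiple of $q_2$, bounded by $Q^{O_k(d^{O(1)})}$ and (since the construction is uniform in $s$) independent of $s$, ensures that $q\cdot\Psi_s'(\vec{n},q) \in \mathbb{Z}$ for all $\vec{n}\in\mathbb{Z}^r$ and all $s$. Thus $\delta_s(\vec{n}) \in \Gamma$ for all $s$ and all $\vec{n}$, yielding the periodicity with period $\ll Q^{O_k(d^{O(1)})}$. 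The main obstacle is the bookkeeping in the second paragraph: one must carefully track height bounds while composing the Baker--Campbell--Hausdorff-type expression supplied by Lemma~\ref{l:multiplication} with the binomial expansions and Vandermonde's identity, so as to keep $q_2$ of size $Q^{O_k(d^{O(1)})}$ rather than of some worse shape.
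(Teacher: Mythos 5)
Your argument follows essentially the same route as the paper's proof, which simply expands $\psi(\gamma(\vec{n}))$ in the binomial basis, observes via discrete differencing and Lemma~\ref{l:multiplyrational}(i) that every coefficient is $Q^{O_k(d^{O(1)})}$-rational, and leaves the concluding step implicit; your second and third paragraphs spell out that implicit step (Vandermonde expansion, composing with the multiplication polynomials of Lemma~\ref{l:multiplication}, and clearing denominators by a suitable choice of the period), and that bookkeeping is sound. One correction: you reduce the lemma to showing $\delta_s(\vec{n})=\gamma(\vec{n}+q\vec{e}_s)\gamma(\vec{n})^{-1}\in\Gamma$, but that only gives $\Gamma\gamma(\vec{n}+q\vec{e}_s)=\Gamma\gamma(\vec{n})$, i.e.\ periodicity of the left cosets in $\Gamma\backslash G$, whereas the statement concerns the points $\gamma(\vec{n})\Gamma$ of $G/\Gamma$, for which you need $\gamma(\vec{n})^{-1}\gamma(\vec{n}+q\vec{e}_s)\in\Gamma$ (these are not interchangeable, as left and right cosets of $\Gamma$ differ in a nonabelian $G$). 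The fix is immediate: run the identical computation with $\delta_s(\vec{n}):=\gamma(\vec{n})^{-1}\gamma(\vec{n}+q\vec{e}_s)$, since Lemma~\ref{l:multiplication} controls $\psi$ of products and inverses in either order, so the same denominator bounds and the factor-theorem step apply verbatim.
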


\begin{proof}
Write
$$\psi(\gamma(\vec{n})) = \sum_{\vec{i}} \binom{\vec{n}}{\vec{i}} t_{\vec{i}}.$$
By repeatedly taking discrete differences and invoking Lemma~\ref{l:multiplyrational}(i), we see that each coefficient is $Q^{O_k(d^{O(1)})}$-rational. The result follows.
\end{proof}

\begin{lemma}
Suppose $\mathcal{X} = \{X_1, \dots, X_d\}$ is a $Q$-rational Mal'cev basis for $G/\Gamma$ with the degree $k$ nesting property. Suppose that $\gamma \in G$ is $Q$-rational and additionally that the coordinates $\psi(\gamma)$ are all bounded in magnitude by $Q$. Suppose that $G' \subseteq G$ is a $Q$-rational subgroup. Then the conjugate $\gamma G' \gamma^{-1}$ is $Q^{O_k(d^{O(1)})}$-rational.
\end{lemma}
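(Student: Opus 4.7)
The plan is to compute the Lie algebra of $\gamma G' \gamma^{-1}$ as $\mathrm{Ad}(\gamma)\mathfrak{g}'$ and track rationality through the adjoint action. Since $G'$ is $Q$-rational, we may pick a basis $\{Y_1, \dots, Y_{d'}\}$ of $\mathfrak{g}'$ in which each $Y_j$ is a $Q$-rational combination of $\mathcal{X}$. The Lie algebra of the conjugate subgroup is then spanned by $\{\mathrm{Ad}(\gamma)Y_1, \dots, \mathrm{Ad}(\gamma)Y_{d'}\}$, so it suffices to show that each $\mathrm{Ad}(\gamma)Y_j$ is a $Q^{O_k(d^{O(1)})}$-rational combination of $\mathcal{X}$.

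To do this I would use the identity $\mathrm{Ad}(\gamma) = \exp(\mathrm{ad}(\log \gamma)) = \sum_{n = 0}^{s - 1} \tfrac{1}{n!}\mathrm{ad}(\log\gamma)^n$, which terminates at $n = s - 1$ because $\mathfrak{g}$ is $s$-step nilpotent and $s = O_k(1)$. By Lemma~\ref{l:multiplyrational}(i), $\psi(\gamma) \in \tfrac{1}{Q'}\mathbb{Z}^d$ with $Q' \le Q^{O_k(d^{O(1)})}$, and by Lemma~\ref{l:transition} the transition between coordinates of the first and second kind is a polynomial with coefficients of height $Q^{O_k(d^{O(1)})}$; applying this transition shows that the expansion of $\log\gamma$ in the basis $\mathcal{X}$ has coefficients rational of height $Q^{O_k(d^{O(1)})}$ (and of absolute value $Q^{O_k(d^{O(1)})}$, using the hypothesis $|\psi(\gamma)| \le Q$).

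With these bounds in hand, I would proceed by a short induction: using the structure constants $[X_i, X_j] = \sum_k a_{ijk} X_k$ with $a_{ijk}$ of height at most $Q$, each application of $\mathrm{ad}(\log \gamma)$ to a vector which is a rational combination of $\mathcal{X}$ of height $H$ and magnitude $\le M$ produces another such vector of height $H \cdot Q^{O(d)}$ and magnitude $\le M \cdot Q^{O_k(d^{O(1)})}$ (there are $d$ terms summed, each using structure constants and the coordinates of $\log\gamma$). Iterating this at most $s - 1 = O_k(1)$ times, starting from $Y_j$ which has height $Q$, yields that $\mathrm{ad}(\log \gamma)^n Y_j$ is an $\mathcal{X}$-combination with rational coefficients of height $Q^{O_k(d^{O(1)})}$. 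Dividing by $n!$ and summing the $O_k(1)$ terms preserves the height bound up to the same factor.

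The main bookkeeping obstacle is ensuring that the height bounds compound polynomially (not super-exponentially) in $d$, but this follows because only $O_k(1)$ multiplications take place and each factor contributes a polynomial-in-$d$ height inflation; the exponent $O_k(1)$ is absorbed into the exponent $O_k(d^{O(1)})$. Since the transformed basis $\{\mathrm{Ad}(\gamma)Y_j\}_{j=1}^{d'}$ consists of $Q^{O_k(d^{O(1)})}$-rational combinations of $\mathcal{X}$ and spans the Lie algebra of $\gamma G'\gamma^{-1}$, this yields that $\gamma G'\gamma^{-1}$ is $Q^{O_k(d^{O(1)})}$-rational, as required.
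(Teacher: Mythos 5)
Your proposal is correct and follows essentially the same route as the paper: pick a $Q$-rational basis of $\mathfrak{g}'$, push it forward by the adjoint automorphism $\mathrm{Ad}(\gamma)$ (which spans $\log(\gamma G'\gamma^{-1})$), and track rationality of the coefficients. The only cosmetic difference is that the paper bounds the rationality of $\log(\gamma\exp(X_i')\gamma^{-1})$ by citing the multiplication estimates of Lemma~\ref{l:multiplication}, whereas you expand $\mathrm{Ad}(\gamma)=\exp(\mathrm{ad}(\log\gamma))$ (terminating after $s\le k$ terms) and bound through the structure constants; both give the stated $Q^{O_k(d^{O(1)})}$ bound.
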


\begin{proof}
Let $X_1', \dots, X_{d'}'$ be a $Q$-rational basis for the Lie algebra $\mathfrak{g}'$ of $G'$. Letting $\text{Ad}(h)X = \log(h \exp(X) h^{-1})$ be the adjoint automorphism, it follows that since it is an automorphism, $\tilde{X}_i:= \log(\gamma \exp(X_i') \gamma^{-1})$ is a basis for the Lie algebra of $\gamma G' \gamma^{-1}$. Thus, by Lemma~\ref{l:multiplication}, this basis is $Q^{O_k(d^{O(1)})}$-rational.
\end{proof}

\bibliographystyle{amsplain1.bst}
\bibliography{main.bib}

\end{document}